\newcommand{\E}{\mathbb{E}}
\newcommand{\tkappa}{\tilde{\kappa}}
\newcommand{\ones}{\textbf{1}}
\newcommand{\calW}{\mathcal{W}}
\newcommand{\bigO}{\mathcal{O}}
\newcommand{\beq}{\begin{eqnarray}}
\newcommand{\eeq}{\end{eqnarray}}
\newcommand{\beqs}{\begin{eqnarray*}}
\newcommand{\eeqs}{\end{eqnarray*}}
\DeclarePairedDelimiter\ceil{\lceil}{\rceil}
\newcommand{\R}{\mathbb{R}}
\newcommand\str{\bgroup\markoverwith
{\textcolor{red}{\rule[0.5ex]{2pt}{1.5pt}}}\ULon} 
\newcommand{\rev}[1]{{\color{black}#1}} 
\begin{document}

\title{Robust Distributed Accelerated Stochastic Gradient Methods for Multi-Agent Networks}

\author{\name Alireza Fallah* \email afallah@mit.edu \\
       \addr Department of Electrical Engineering and Computer Science\\ 
       Massachusetts Institute of Technology\\
       Cambridge, MA 02139, United States of America
       \AND
       \name Mert G\"{u}rb\"{u}zbalaban*$^\clubsuit$ \email mg1366@rutgers.edu \\
       \addr Department of Management Science and Information Systems \\
       Rutgers Business School\\
       Piscataway, NJ 08854, United States of America
       \AND
       \name Asuman Ozdaglar* \email asuman@mit.edu \\
       \addr Department of Electrical Engineering and Computer Science\\
       Massachusetts Institute of Technology\\
       Cambridge, MA 02139, United States of America
       \AND
       \name Umut \c{S}im\c{s}ekli* \email umut.simsekli@inria.fr \\
       \addr 
       INRIA - D\'{e}partement d'Informatique de l'\'{E}cole Normale Sup\'{e}rieure \\
PSL Research University \\
Paris, France
       \AND
       \name Lingjiong Zhu* \email zhu@math.fsu.edu \\
       \addr Department of Mathematics \\
       Florida State University \\
       Tallahassee, FL 32306, United States of America\\
       ~\\
       \name * \addr The authors are in alphabetical order.\\
       \name $^\clubsuit$ \addr Corresponding author.\\
       }

\editor{}

\maketitle




\begin{abstract}
  We study distributed stochastic gradient (D-SG) method and its accelerated variant (D-ASG) for solving decentralized strongly convex stochastic optimization problems where the objective function is distributed over several computational units, lying on a fixed but arbitrary connected communication graph, subject to local communication constraints where noisy estimates of the gradients are available. We develop a framework which allows to choose the stepsize and the momentum parameters of these algorithms in a way to optimize performance by systematically trading off the bias, variance and dependence to network effects. When gradients do not contain noise, we also prove that D-ASG can \emph{achieve acceleration}, in the sense that it requires $\mathcal{O}(\sqrt{\kappa} \log(1/\varepsilon))$ gradient evaluations and $\mathcal{O}(\sqrt{\kappa} \log(1/\varepsilon))$ communications to converge to the same fixed point with the non-accelerated variant where $\kappa$ is the condition number and $\varepsilon$ is the target accuracy. 
  For quadratic functions, we also provide finer performance bounds that are tight with respect to bias and variance terms. Finally, we study a multistage version of D-ASG with parameters carefully varied over stages to ensure exact convergence to the optimal solution. It achieves optimal and accelerated $\bigO(-k/\sqrt{\kappa})$ linear decay in the bias term as well as optimal $\bigO(\sigma^2/k)$ in the variance term. We illustrate through numerical experiments that our approach results in accelerated practical algorithms that are robust to gradient noise and that can outperform existing methods.
\end{abstract}

\begin{keywords}
  Distributed Optimization, Accelerated Methods, Stochastic Optimization, Robustness, Multi-Agent Networks
\end{keywords}

\section{Introduction} 
Advances in sensing and processing technologies, communication capabilities and smart devices have enabled deployment of systems where a massive amount of data is collected by many distributed autonomous units to make decisions. There are numerous such examples including a set of sensors collecting and processing information about a time-varying spatial field (e.g., to monitor temperature levels or chemical concentrations) \rev{\citep{blatt2007convergent}, a collection of mobile robots performing dynamic tasks spread over a region \citep{decentralized-applications}, federated learning on edge devices \citep{konevcny2016federated,mcmahan2017communication}, on-device peer-to-peer learning \citep{koloskova2019decentralized}} and distributed model training across a network or computers \citep{arjevani2020ideal,gurbuzbalaban2020decentralized,scaman2018optimal}.
In such systems, most of the information is often collected in a decentralized, distributed manner, and processing of information has to go hand-in-hand with its communication and sharing across these units over an undirected network $\mathcal{G}=(\mathcal{V},\mathcal{E})$ defined by the set of (computational units) agents $\mathcal{V}=\{1,2,\dots,N\}$ connected by the edges $\mathcal{E}\subseteq \mathcal{V}\times \mathcal{V}$. 
In such a setting, we consider the group of agents (i.e., the nodes) collaboratively solving the following optimization problem:
\begin{equation}\label{opt-pbm}
\min_{x\in\mathbb{R}^{d}}f(x):=\frac{1}{N}\sum_{i=1}^{N}f_{i}(x), 
\end{equation}
where each $f_i:\mathbb{R}^d\to \mathbb{R}$ is known by agent $i$ only and therefore referred to as its \textit{local objective function}. We assume each $f_i$ is $\mu$-strongly convex with $L$-Lipschitz gradients (hence $f$ is also $\mu$-strongly convex with $L$-Lipschitz gradient and we refer to $\kappa = L/\mu$ as its condition number). We also use $x_*$ to denote the unique optimal solution of \eqref{opt-pbm}. In addition, we denote the local model of node $i$ at iteration $k$ by $x_{i}^{(k)}\in\mathbb{R}^{d}$.

We consider the setting where each agent $i$ has access to noisy estimates $\tilde{\nabla} f_i(x)$ of the actual gradients satisfying the following assumption:
\begin{assumption}\label{assump_1}
Recall that $x_i^{(k)}$ denotes the decision variable of node $i$ at iteration $k$. We assume at iteration $k$, node $i$ has access to $\tilde{\nabla} f_i \left(x_i^{(k)}, w_i^{(k)}\right)$ which is an estimate of $\nabla f_i \left(x_i^{(k)}\right)$ where $w_{i}^{(k)}$ is a random variable independent of $\left\{w_{j}^{(t)}\right\}_{j=1,\ldots,N, t=1,\ldots,k-1}$ and $\left\{w_{j}^{(k)}\right\}_{j \neq i}$. Moreover, we assume 

\begin{equation*}
\mathbb{E}\left[\tilde{\nabla} f_i \left(x_i^{(k)}, w_i^{(k)}\right)\Big|x_i^{(k)}\right]= \nabla f_i \left(x_i^{(k)}\right),
\quad
\mathbb{E}\left[\left\Vert \tilde{\nabla} f_i \left(x_i^{(k)}, w_i^{(k)}\right) - \nabla f_i \left(x_i^{(k)}\right) \right\Vert^{2}\Big|x_i^{(k)}\right]\leq\sigma^{2}. 
\end{equation*}	
To simplify the notation, we suppress the $w_i^{(k)}$ dependence, and denote $\tilde{\nabla} f_i \left(x_i^{(k)}, w_i^{(k)}\right)$ by $\tilde{\nabla} f_i \left(x_i^{(k)}\right)$.
\end{assumption}
This arises naturally in distributed learning problems where $f_i(x)$ represents the expected loss $\E_{\eta_i}\left [ f_i(x,\eta_i) \right]$ where $\eta_i$ are independent data points collected at node $i$ (see e.g. \cite{pu2018distributed,lan2017communication, olshevsky2019non}). For this setting, $\tilde{\nabla} f_i (x)$ is an unbiased estimator of $\nabla f_i(x)$ which we assume satisfies the bounded variance assumption of Assumption~\ref{assump_1}. {\color{black}In Appendix~\ref{sec:general:noise}, we will discuss
the unbounded variance assumption (Assumption~\ref{assump:unbounded}) that extends Assumption~\ref{assump_1}, 
and show that all the main results in the paper can be extended.}

Note that in our setting, a master node that can coordinate the computations is not available unlike the master/slave architecture studied in the literature (see e.g. \cite{mishchenko18a,duchi11,hakimi2019dana,lee2018distributed,meng2016asynchronous,jaggi2014communication,xin2018distributed}). Furthermore, our setting covers an arbitrary network topology that is more general than particular network topologies such as the complete graph or ring graph.

Deterministic variants of problem \eqref{opt-pbm} have been studied extensively in the literature. Much of the work builds on the Distributed Gradient (DG) method proposed in \cite{nedic2009distributed} where each agent keeps local estimates of the optimal solution of \eqref{opt-pbm} and updates by a combination of weighted average of neighbors' estimates and a gradient step (normalized by 
the stepsize $\alpha_k$) of the local objective function. \cite{nedic2009distributed} analyzed the case with convex and possibly nonsmooth local objective functions, constant stepsize $\alpha_k = \alpha >0$, and agents linked over an undirected connected graph and showed that the ergodic average of local estimates of the agents converge at rate $\bigO(1/k)$ to an $\bigO(\alpha)$ neighborhood of the optimal solution of problem \eqref{opt-pbm} (where $k$ denotes the number of iterations). \cite{yuan2016convergence} considered this algorithm for the case that local functions are smooth, i.e., $\nabla f_i(x)$ are Lipschitz continuous, and when $f_{i}(x)$ are either convex, restricted strongly convex or strongly convex. For the convex case, they show the network-wide mean estimate converges at rate $\bigO(1/k)$ to an $\bigO(\alpha)$ neighborhood of the optimal solution, and for the strongly convex case, all local estimates converge at a linear rate $\bigO(\exp(- k/\Theta(\kappa)))$ to an $\bigO(\alpha)$ neighborhood of $x_*$.
\footnote{For two real-valued functions $f$ and $g$, we say $f = \Theta(g)$ if there exist {positive} constants $C_\ell$ and $C_u$ such that $C_\ell  g(x)\leq f(x) \leq C_u g(x)$ for every $x$ in the domain of $f$ and $g$ with $\Vert x\Vert$ being sufficiently large.}

There have been many recent works on developing new distributed deterministic algorithms with faster convergence rate and exact convergence to the optimal solution $x_*$. We start by summarizing the literature in this area that are most relevant to this work. First, \cite{shi2015extra} provides a novel algorithm which can be viewed as a primal-dual algorithm for the constrained reformulation of problem \eqref{opt-pbm} (see \cite{mokhtari2016dsa} for this interpretation) that achieves \textit{exact convergence} with linear rate to the optimal solution; {\color{black} however the linear convergence rate with the recommended stepsize is $\rho = 1 - \mathcal{O}(\frac{1}{\kappa^2})$ where $\kappa$ is the condition number (see Table \ref{Dist_Stoch_Table}). This convergence guarantee will be slow for ill-conditioned problems when $\kappa$ is large}. Second, \cite{QuLi} proposes to update the DG method such that agents also maintain, exchange, and combine estimates of gradients of the global objective function of \eqref{opt-pbm}. This update is based on a technique called ``gradient tracking'' (see e.g. \cite{di2015distributed,di2016next}) which enables better control on the global gradient direction and yields a linear rate of convergence to the optimal solution (see \cite{Jakovetic19} for a unified analysis of these two methods). In a follow up paper, \cite{qu-lina-accelerated} also considered an acceleration of their algorithm and achieved a linear 
convergence rate $\bigO(\exp(- k/\Theta(\kappa^{5/7})))$ to the optimal solution. To our best knowledge, whether an accelerated primal variant of the DG algorithm can
achieve the non-distributed $\bigO(\exp(- k/\Theta(\sqrt{\kappa})))$ linear rate to a neighborhood of the optimum solution with $\sqrt{\kappa}$ dependence 
has been an open problem. Alternative distributed first-order methods besides DG have also been studied. In particular, if 
additional assumptions are made such as the explicit characterization of Fenchel dual of the local objective functions, referred to as {the} \emph{dualable} setting as in \cite{scaman2018optimal,uribe2018dual}), then it is known that the multi-step dual accelerated (MSDA) method of \cite{scaman2018optimal} achieves the $\bigO(\exp(- k/\Theta(\sqrt{\kappa})))$ linear rate to the optimum with $\sqrt{\kappa}$ dependence. For deterministic distributed optimization problems under smooth and strongly convex objectives, \cite{dvinskikh2019decentralized} proposed the PSTM algorithm and provided accelerated convergence guarantees. Recently, \cite{scaman2019optimal} provided lower bounds which matches the upper bounds of \cite{dvinskikh2019decentralized} up to logarithmic factors (see also \cite{scaman2019optimal} for a discussion of deterministic optimal algorithms under different assumptions (Lipschitz continuity, strong convexity, smoothness, and a combination of strong convexity and smoothness)).

This paper focuses on the Distributed Stochastic Gradient (D-SG) method (which is a stochastic version of the DG method) and its momentum enhanced variant, Distributed Accelerated Stochastic Gradient (D-ASG) method.
These methods are relevant for solving distributed learning problems and are natural decentralized versions of the stochastic gradient and its variant based on Nesterov's momentum averaging \citep{nesterov_convex,can-gur-ling19}. In this paper, we focus on strongly convex and smooth objectives. Several works studied D-SG under these assumptions although D-ASG remains relatively understudied except the deterministic case (see e.g. \cite{jakovetic2014fast,xi2017add,li2018sharp,qu2016accelerated}). \rev{The performance of distributed algorithms such as D-SG and their deterministic versions depend on the connectivity of the underlying network structure as expected. In particular, when D-SG and D-ASG are run on undirected graphs, the propagation of information among neighbors is governed by a symmetric mixing matrix $W$ which depend on the network structure and its eigenvalues affect the convergence rates. In particular; the largest eigenvalue of the matrix $W$ is one, and the second largest (in modulus) of the eigenvalues of $W$, which we refer to as $\gamma$ in this paper (formally defined in \eqref{eqn:asymp_suboptim}), arises in the study of distributed algorithms such as D-SG}. We summarize the existing convergence rate results for D-SG in Table~\ref{Dist_Stoch_Table}.\footnote{See also \cite{shamir2014distributed} for a different noise model than ours in the mini-batch setting, where each objective $f_i$ can be expressed as a finite sum.} Among these, \cite{rabbat2015multi} studied composite stochastic optimization problems and showed a $\bigO(\sigma^2/k)$ convergence rate for D-SG and its mirror descent variant. \cite{koloskova2019decentralized} studied  decentralized stochastic gradient algorithms when the nodes compress (e.g. quantize or sparsify) their updates. 
\cite{olshevsky2019non} provided an asymptotic network independent sublinear rate. 
In our approach, we use a dynamical system representation of these iterative algorithms (presented in \cite{lessard2016analysis} and further used in \cite{hu2017dissipativity, StrConvex, aybat2019universally}) to provide rate estimates for convergence of the local agent iterates to a neighborhood of the optimal solution of problem \eqref{opt-pbm}. Our bounds are presented in terms of three components: (\romannumeral 1) a \textit{bias term} that shows the decay rate of the initialization error (i.e., distance of the initial estimates to the optimal solution) independent of gradient noise, (\romannumeral 2) a \textit{variance term} that depends on the error level $\sigma^2$ of local objective functions' gradients, measuring the ``robustness'' of the algorithm to noise (in a sense that we will define precisely later), (\romannumeral 3) a \textit{network effect} that highlights the dependence on the structure of the network. In this paper, in addition to the convergence analysis for D-SG and D-ASG, our purpose is to study the trade-offs and interplays between these three terms that affect the performance.

\begin{table}[h!]
\begin{scriptsize}
\hspace{0.5cm}
\begin{tabular}{|l|l|l|l|}
\hline  
Algorithm  & \begin{tabular}[c]{@{}l@{}l} Extra \\ Assumption \end{tabular}  & \begin{tabular}[c]{@{}l@{}l} \rev{Stepsize} \\ $\alpha_k$ \end{tabular}  & Convergence Rate \\
\hline 
\begin{tabular}[c]{@{}l@{}l}{ {\color{black}EXTRA}} \\  {\tiny\cite{shi2015extra}} \end{tabular}
& {\color{black}$\sigma=0$} & \begin{tabular}[c]{@{}l@{}l} {\color{black}$\alpha$ as in}\\ {\tiny\cite{shi2015extra}}\\   \end{tabular}    &   {\color{black}$\mathbb{E} \left\|x_i^{(k)}-x_*\right\|^2 \leq \mathcal{O}\left(\rho^k \right)$ where $\rho = 1 - \mathcal{O}(1/\kappa^2)$} \\ 
\hline 
\begin{tabular}[c]{@{}l@{}l}{ D-SG} \\  {\tiny\cite{tsianos2012distributed}} \end{tabular}
& Yes$^{\ddagger}$ &  $\mathcal{O}\left(\frac{1}{k}\right)$       &   $\mathbb{E} f\left(x_i^{(k)}\right)-f_* \leq \mathcal{O}\left(G^2 \frac{\log(\sqrt{N}k)}{(1-\sqrt{\gamma})k}\right)$ \\ 
\hline 
\begin{tabular}[c]{@{}l@{}l}{D-SG} \\   {\tiny\cite{rabbat2015multi}} \end{tabular}
&  No & $\mathcal{O}\left(\frac{1}{k}\right)$ &  $\mathbb{E} f\left(x_i^{(k)}\right)-f_* \leq \mathcal{O} \left( \frac{\kappa^2 \|x_i^{(0)} - x_*\|^2}{k^2} + \frac{\sigma^2}{N \mu^2 k} \right) + o\left(\frac{1}{k}\right)$ \\ 
\hline
\begin{tabular}[c]{@{}l@{}l}D-SG \\  {\tiny\cite{olshevsky2019non}{$^\dagger$}} \end{tabular} 
&  No  & $\mathcal{O}\left(\frac{1}{k}\right)$ & $\mathbb{E}\left\|{\bar x}^{(k)} - x^*\right\|^2 \leq \mathcal{O}\left(\frac{\sigma^2 }{\mu^2 N k} + \frac{\kappa^2}{(1-\gamma)^2 \mu^2 k^2}
+\frac{\sigma^{2}\kappa^{2}}{(1-\gamma)\mu^{2}k^{2}}\right)$ \\ 
\hline
\begin{tabular}[c]{@{}l@{}l}D-SG \\  {\tiny\cite{koloskova2019decentralized}}{$^\dagger$} \end{tabular}
 & Yes$^{\ddagger}$ & $\mathcal{O}\left(\frac{1}{k}\right)$  & $\mathbb{E} f\left(x_{avg}^{(k)}\right)-f_* \leq  \mathcal{O}\left (\frac{\sigma^2}{\mu N k} + \frac{\kappa G^2}{\mu (1-\gamma)^4 k^2} + \frac{G^2}{\mu (1-\gamma)^6 k^3}  \right)$ \\ 
 \hline
\begin{tabular}[c]{@{}l@{}l@{}l} \textbf{D-SG} \\ \\  {\color{black}\textbf{Proposition~\ref{prop:dsg-average-optimal-rate}}}\\ \textbf{in this paper} \end{tabular}
 &  No &  $\alpha$ &   \begin{tabular}[c]{@{}l@{}l@{}l} {\color{black}$\mathbb{E}\left\|\bar{x}^{(k)} - x_*\right\|^2$} \\ {\color{black}$\leq \mathcal{O}\left((1-\alpha\mu)^{2k}\left\Vert \bar{x}^{(0)}-x_{\ast}\right\Vert^{2}+  \frac{\alpha\sigma^{2}}{\mu N}
+\kappa^{2}\alpha^{2}\left(\frac{D^{2}}{(1-\gamma)^{2}N}+\frac{\sigma^{2}}{1-\gamma^{2}}\right)\right)$}
\\
{\color{black}$\mathbb{E} \left\|x_{i}^{(k)}-x_*\right\|^2 $}
\\
{\color{black}$\leq \mathcal{O}\Big((1-\alpha\mu)^{2k}\left\Vert \bar{x}^{(0)}-x_{\ast}\right\Vert^{2}+  \frac{\alpha\sigma^{2}}{\mu N}
+\kappa^{2}\alpha^{2}\left(\frac{D^{2}}{(1-\gamma)^{2}N}+\frac{\sigma^{2}}{1-\gamma^{2}}\right)$}
\\
{\color{black}$\qquad\qquad\qquad
+\gamma^{k}\Vert x^{(0)}\Vert^{2}+\frac{D^{2}\alpha^{2}}{(1-\gamma)^{2}}+\frac{\sigma^{2}N\alpha^{2}}{1-\gamma^{2}}\Big)$}
\end{tabular} \\ 
\hline                                     
\begin{tabular}[c]{@{}l@{}l@{}l} \textbf{D-ASG} \\ \\ {\color{black}\textbf{Proposition~\ref{prop:dsg-average-optimal-rate-DASG}}} \\ \textbf{in this paper}\end{tabular} 
&   No      &   $\alpha    $ &   \begin{tabular}[c]{@{}l@{}l@{}l} 
{\color{black}$\mathbb{E}\left\|\bar{x}^{(k)} - x_*\right\|^2 $}  
\\ 
{\color{black}$\leq \mathcal{O}\left(\left(1-\frac{\sqrt{\alpha\mu}}{2}\right)^{k}\frac{\Vert x^{(0)}-x^{\ast}\Vert^{2}}{N}+\frac{\sigma^{2}\sqrt{\alpha}}{\mu\sqrt{\mu}N}+\frac{\kappa^{2}C_{0}\alpha}{N(1-\gamma)^{2}}\right) $} 
\\
{\color{black}$\mathbb{E} \left\|x_{i}^{(k)}-x_*\right\|^2 $}
\\
{\color{black} $\leq \mathcal{O}\left(\left(1-\frac{\sqrt{\alpha\mu}}{2}\right)^{k}\frac{\Vert x^{(0)}-x^{\ast}\Vert^{2}}{N}+\frac{\sigma^{2}\sqrt{\alpha}}{\mu\sqrt{\mu}N}+\left(\frac{\kappa^{2}}{N}+1\right)\frac{C_{0}\alpha}{(1-\gamma)^{2}}\right) $} 
\end{tabular} \\ 
\hline
\begin{tabular}[c]{@{}l@{}l@{}l} \textbf{D-MASG} \\ \\ {\color{black}\textbf{Corollary~\ref{cor_D-MASG:bar}}} \\ \textbf{in this paper} \end{tabular} 
&   No &  $\mathcal{O}\left(\frac{1}{k}\right)$    & \begin{tabular}[c]{@{}l@{}l@{}l} {\color{black}$\mathbb{E}\left\|\bar{x}^{(k)} - x_*\right\|^2$} 
\\  {\color{black}$ \leq \bigO\left ( \exp\left(-\frac{k}{\Theta(\sqrt{\tilde{\kappa}})}\right)\frac{\left\| x^{(0)} - x^* \right\|^2}{N} + \frac{  \sigma^2}{N\mu\sqrt{\mu} k}  + \frac{\kappa^2C_{0}}{N( 1-\gamma)^{2} k^4} \right )$} 
\\
{\color{black}$\mathbb{E} \left\|x_{i}^{(k)}-x_*\right\|^2$}
\\  {\color{black}$ \leq \bigO\left ( \exp\left(-\frac{k}{\Theta(\sqrt{\tilde{\kappa}})}\right)\frac{\left\| x^{(0)} - x^* \right\|^2}{N} + \frac{  \sigma^2}{N\mu\sqrt{\mu} k}  + \left(\frac{\kappa^{2}}{N}+1\right)\frac{C_{0}}{( 1-\gamma)^{2} k^4} \right )$}  
\end{tabular} \\ 
\hline  
\end{tabular}
\end{scriptsize}
\caption{\label{Dist_Stoch_Table}Summary for D-SG and D-ASG. ${\bar x}^{(k)}$ denotes the average of nodes' estimates at time $k$, i.e., $\bar{x}^{(k)}:=\frac{1}{N}\sum_{i=1}^{N}x_{i}^{(k)}$, and, $x_{avg}^{(k)}$ is a weighted average defined in \cite{koloskova2019decentralized}. 
Also, \rev{$\gamma \in (0,1)$ is the second largest modulus of the eigenvalues of the mixing matrix $W$ (formally defined in \eqref{eqn:asymp_suboptim}). In the table, $\mu$ denotes the strong convexity constant, $L$ is the gradient Lipschitz constant and $\kappa=L/\mu$ is the condition number, whereas
$\tilde{\kappa}:= \frac{\kappa + 1}{\lambda_N^W}$ is a scaled condition number (formally introduced in \eqref{tilde_kappa}), where $\lambda_N^W$ is the smallest positive eigenvalue of $W$,  $D^{2}$ is defined in \eqref{eqn:D1} such that $D^{2}=\mathcal{O}\left(L^{2}\mathbb{E}\Vert x^{(0)}-x^{\ast}\Vert^{2}+\Vert\nabla F(x^{\ast})\Vert^{2}\right)$ as $\alpha\rightarrow 0$, and $C_{0}$ is an explicitly computable constant such that $C_{0}=\mathcal{O}(1)$ as $\alpha\rightarrow 0$}.
\newline $\dagger$: The authors analyze a D-SG method with a slightly different update then ours. \newline $\ddagger$: The authors make the extra assumption $\sup_{i,j} \mathbb{E} \left\|\nabla f_i\left(x_i^{(j)}\right)\right\|^2 \leq G^2$.}
\end{table}

\textbf{Contributions.} 
We have three sets of contributions. 

First, we study the convergence rate of DSG with constant stepsize which is used in many practical applications \citep{sayed2019distributed,alghunaim2018distributed,dieuleveut2017bridging}. Our bounds provide tighter guarantees on the bias term as well as novel guarantees on the variance term for this algorithm. For quadratic functions, we provide sharper estimates for the bias, variance, and network effect terms that are tight, as there exist simple quadratic functions that achieve these bounds.

Second, we consider D-ASG with constant stepsize. We show that the bias term decays linearly with rate $\bigO(-k/\sqrt{\kappa})$ to a neighborhood of the optimal solution, and thus, it achieves an accelerated rate. We also provide an explicit characterization for this neighborhood, in terms of noise and network structure parameters, with the variance term dominating for small enough stepsize. 
When the objectives $f_i$ are all quadratic, we obtain non-asymptotic guarantees that are explicit in terms of their linear convergence rate and dependence to noise, generalizing available known guarantees for ASG to the distributed setting \citep{can-gur-ling19}. 

For both algorithms, following earlier work on non-distributed versions of these algorithms \citep{StrConvex}, we use our explicit characterization of bias, variance, and network effect terms to provide a computational framework that can choose algorithm parameters to trade-off these difference effects in a systematic manner. In the centralized setting, it has been observed and argued that accelerated algorithms are often more sensitive to noise than non-accelerated algorithms (see e.g.  \cite{flammarion2015averaging,d2008smooth,aybat2019universally,Hardt-blog}), however to our knowledge this behavior has not been systematically studied in the context of decentralized algorithms. We study the asymptotic variance of the D-SG and D-ASG iterates as a measure of robustness to random gradient noise and provide explicit expressions for this quantity for quadratic objectives as well as upper bounds for strongly convex objectives. This allows us to compare D-SG and D-ASG in terms of their robustness to random noise properties. Our results (see the discussion after Theorem~\ref{thm-rate-dasg}) show that indeed D-ASG can be less robust compared to D-SG depending on the choice of the momentum and stepsize parameters, shedding further light into the tuning of hyperparameters (stepsize and momentum) in the distributed setting.

Finally, we study a multistage version of D-ASG, building on the non-distributed method in \cite{aybat2019universally}, whereby a distributed accelerated stochastic gradient method with constant stepsize and momentum parameter is used at every stage, with parameters carefully varied over stages to ensure exact convergence to the optimal solution $x_*$. Similar to \cite{aybat2019universally}, a momentum restart is used to enable stitching the improvement obtained over consecutive stages. We show that
our proposed method achieves an accelerated $\bigO(-k/\sqrt{\kappa})$ linear decay in the bias term as well as a $\bigO(\sigma^2/k)$ term in the variance term and $\bigO((1-\gamma)^{-2} /k^4)$ in terms of network effect, where $1-\gamma$ is the spectral gap of the network, see \eqref{eqn:asymp_suboptim} for a formal definition. \rev{We also show that the node averages also achieves $O(\frac{1}{Nk})$ for the variance term with a tight dependency to the number of nodes $N$.} \rev{This dependency to $k$ and $\sqrt{\kappa}$ is optimal in the context of centralized black-box stochastic optimization. This suggests that our analysis is tight in terms of its $k$ and $\sqrt{\kappa}$ dependency, although the problems we consider is not black-box optimization but finite-sum problems. Such a dependency {on} $k$ and $\sqrt{\kappa}$ was obtained previously for the PBSTM algorithm of \cite{dvinskikh2019decentralized} which is optimal up to logarithmic terms. To the best of our knowledge, our analysis provides the best bounds for the D-ASG algorithm. Our results show that D-ASG without noise converges to a fixed point with the accelerated rate, i.e. the rate has a $\sqrt{\kappa}$ dependency to the condition number.} A summary of all our convergence results is provided in Table~\ref{Dist_Stoch_Table}. \rev{We also provide numerical experiments that show the efficiency of the D-ASG method in a number of decentralized optimization settings.}

\rev{\textbf{Other Related work.} There has been a growing recent interest in the dynamical system representation of distributed optimization algorithms to facilitate their analysis and design. In particular, \cite{sundararajan2020analysis} provides a framework to design a broad class of distributed algorithms for deterministic decentralized optimization for time-varying graphs. This framework provides worst-case certificates of linear convergence via semi-definite programming. Other related papers  \citep{sundararajan2017robust,sundararajan2019canonical} allow analysis and design of deterministic distributed optimization algorithms. However, these results and approaches are targeted for deterministic distributed algorithms and they do not directly apply to the stochastic algorithms we consider in this paper. 
}
\rev{Robustness of stochastic optimization algorithms to stepsize have also been considered in the literature. In particular, the accelerated gradient methods of \citet[Theorem 2, Corollary 1]{lan2012optimal} do enjoy various robustness properties to noise; in particular, for appropriate
stepsize choices, if $L$ is a Lipschitz constant of the gradient, $\sigma^2$ the noise, and $D$ the
diameter of the underlying domain, one may achieve rates roughly
\begin{equation*}
\mathbb{E}\left[f\left(x^{k}\right)-f(x_{\ast})\right]
\leq\frac{LD^{2}}{k^{2}}+\frac{\sigma D}{\sqrt{k}}\left(\gamma^{-1}+\gamma\right),
\end{equation*}
where $\gamma>0$ is a particular stepsize multiplier choice. Thus, misspecifying $\gamma$ does not force
a massive degradation in convergence rates, which reflects the robustness considerations
of \cite{nemirovski2009robust}. The work of \citet[Theorem 2.1.]{duchi2012randomized} also shows a similar robustness result to stepsize specification. 

}

\textbf{Notation.}
Let $\mathcal{S}_{\mu,L}(\mathbb{R}^{d})$ denote the set
of functions from $\mathbb{R}^{d}$ to $\mathbb{R}$ that are
$\mu$-strongly convex and $L$-smooth, that is, for every $x,y\in\mathbb{R}^{d}$,
\begin{align*}
\frac{L}{2}\Vert x-y\Vert^{2} \geq f(x)-f(y) - \nabla f(y)^{T}(x-y) \geq \frac{\mu}{2}\Vert x-y\Vert^{2},
\end{align*}
where we have the condition number $\kappa=L/\mu$.
Let $0_{a\times b}$ denote the zero matrix with $a$ rows and $b$ columns. Given a collection of square matrices $[A_i]_{i=1}^m$, the matrix $\textbf{\mbox{diag}}([A_i]_{i=1}^m)$ denotes the block diagonal square matrix with $i$-th diagonal block equal to $A_i$. For two matrices $A \in \mathbb{R}^{m \times n}$ and $B \in \mathbb{R}^{p \times q}$, we denote their Kronecker product by $A\otimes B$. For two functions $g,h$ defined over positive integers, we say $f = \bigO(g)$ if there exists a constant $C_u$ and a positive integer $n_0$ such that $f(n) \leq C_u g(n)$ for every positive integer $n\geq n_0$. \rev{We say $f = \tilde\bigO(g)$ if there exists a constant $C_u$ and a positive integer $n_0$ such that $f(n) \leq C_u g(n) \log(n)$ for every positive integer $n\geq n_0$. We use the notation $\|A\|_2$ to denote the 2-norm (largest singular value) of a matrix $A$, whereas we use $\|A\|_F$ to denote the Frobenius norm of $A$. For two real-valued functions $f$ and $g$, we say $f = \Theta(g)$ as $x\to 0$ if there exist {positive} constants $C_\ell$ and $C_u$ such that $C_\ell  g(x)\leq f(x) \leq C_u g(x)$ for every $x$ in a neighborhood of $0$ and lying in the domains of $f$ and $g$.}
\section{Distributed Stochastic Gradient and Its Accelerated Variant}\label{sec:strongly:convex}
We will first study the distributed stochastic gradient (D-SG) method which is the stochastic version of the distributed gradient (DG) method introduced in \cite{nedic2009distributed}, and then focus on its accelerated variant.

Consider {\color{black}an undirected} network $\mathcal{G}=(\mathcal{V},\mathcal{E})$ that is connected
by edges {\color{black}$\mathcal{E}\subseteq\mathcal{V}\times\mathcal{V}$}, 
where $\mathcal{V}=\{1,\ldots,N\}$ denotes the set of vertices. 
We associate this network with an $N\times N$ symmetric, doubly stochastic weight matrix $W$. 
We have $W_{ij}=W_{ji}>0$ if $(i,j)\in \mathcal{E}$ and $i\neq j$,
and $W_{ij}=W_{ji}=0$ if $(i,j)\not\in \mathcal{E}$ and $i\neq j$,
and finally $W_{ii}=1-\sum_{j\neq i}W_{ij}>0$
for every\footnote{We adopt the convention that the node is a neighbor of itself, i.e. $(i,i) \in \mathcal{E}$.} $1\leq i\leq N$.
{\color{black}It is known that the eigenvalues of a doubly stochastic matrix $W$ can be} ordered
in a descending manner satisfying:
\begin{equation*}
1=\lambda_{1}^{W}>\lambda_{2}^{W}\geq\cdots\geq\lambda_{N}^{W}>-1,
\end{equation*}
{\color{black}where the largest eigenvalue is $1$ with an all-one eigenvector, i.e. $W\ones = \ones$, and the smallest eigenvalue is greater than $-1$. The eigenvalues of $W$ can be used to study the properties of the network associated with the weight matrix $W$ (see e.g. \cite{chung1997spectral}). For example, if $W$ represents the transition matrix of a Markov chain, then $1-\max\{|\lambda_{2}^{W}|,|\lambda_{N}^{W}|\}$, known as the spectral gap, can be used to measure the mixing time
of the Markov chain, i.e. how fast the Markov chain converges to its stationary distribution (see e.g. \cite{Peres2009}).} 
Such a matrix $W$ always exists (see e.g. \cite{boyd2006randomized}) if the graph is not bi-partite and there can be different choices of $W$ \citep{shi2015extra}. For bi-partite graphs, one can also construct such a matrix $W$ by considering the transition matrix of a lazy random walk on the graph (see e.g. \cite{chung1997spectral}).

Next, we make a few definitions for the sake of subsequent analysis. First define the average iterates
\begin{equation}\label{defn:average}
\bar{x}^{(k)}:=\frac{1}{N}\sum_{i=1}^{N}x_{i}^{(k)}\in\mathbb{R}^{d}.
\end{equation}
Next we define the column vector
\begin{equation}\label{column_x-nonave}
x^{(k)}=\left[\left(x_{1}^{(k)}\right)^{T},\left(x_{2}^{(k)}\right)^{T},\ldots,\left(x_{N}^{(k)}\right)^{T}\right]^{T}\in\mathbb{R}^{Nd},
\end{equation}
which concatenates the local decision variables into a single vector. We also define $x^* \in \mathbb{R}^{Nd}$ as
\begin{equation}\label{x_*_def}
x^* = \begin{bmatrix} x_*^T & x_*^T & \cdots & x_*^T \end{bmatrix}^T,	
\end{equation}
which is the column vector of length $Nd$ that concatenates $N$ copies of the optimizer $x_*$ to the problem \eqref{opt-pbm}.

In addition, we define $F:\mathbb{R}^{Nd}\rightarrow\mathbb{R}$ as
\begin{equation*}
F(x):=F(x_{1},\ldots,x_{N})
=\sum_{i=1}^{N}f_{i}(x_{i}),
\end{equation*}
where
\begin{equation*}
\tilde{\nabla}F\left(x^{(k)}\right)=\left[\left(\tilde{\nabla} f_1 \left(x_1^{(k)}\right)\right)^{T},\left(\tilde{\nabla} f_2 \left(x_2^{(k)}\right)\right)^{T},\ldots,\left(\tilde{\nabla} f_N \left(x_N^{(k)}\right)\right)^{T}\right]^{T},
\end{equation*} 
which obeys 
\begin{equation}\label{assump}
\mathbb{E}\left[\tilde{\nabla}F\left(x^{(k)}\right)\Big|x^{(k)}\right]= \nabla F\left(x^{(k)}\right),
\quad
\mathbb{E}\left[\left\Vert \tilde{\nabla}F\left(x^{(k)}\right) - \nabla F\left(x^{(k)}\right) \right\Vert^{2}\Big|x^{(k)}\right]
\leq\sigma^{2}N,
\end{equation}
\noindent due to Assumption~\ref{assump_1}. Furthermore, $F\in\mathcal{S}_{\mu,L}(\mathbb{R}^{Nd})$
is $\mu$-strongly convex and $L$-smooth.  
\subsection{Distributed stochastic gradient (D-SG)}\label{sec:DSGD}
Recall that $x_i^{(k)}$ denotes the decision variable of node $i$ at iteration $k$. The D-SG iterations update this variable by performing a stochastic gradient descent update with respect to the local cost function $f_i$ together with a weighted averaging with the decision variables $x_j^{(k)}$ of node $i$'s immediate neighbors $j\in\Omega_{i}:=\{j : (i,j) \in \mathcal{E}\}$:
\begin{equation}\label{eqn:dsg_update}
x_{i}^{(k+1)}=\sum_{j\in\Omega_{i}}W_{ij}x_{j}^{(k)}-\alpha {\tilde{\nabla} f_{i}\left(x_{i}^{(k)}\right)},
\end{equation}
where $\alpha>0$ is the stepsize. Note that we can express the D-SG iterations as
\begin{equation}\label{eq-iter-dgd}
x^{(k+1)}=\mathcal{W}x^{(k)}-\alpha \tilde{\nabla} F\left(x^{(k)}\right),
\end{equation}
where $\mathcal{W} := W \otimes I_d$. 

Without noise, i.e., when $\tilde{\nabla} F(x^{(k)}) = \nabla F (x^{(k)})$, D-SG reduces to the DG algorithm. In this case,  \cite{yuan2016convergence} show that 
DG algorithm is \emph{inexact} in the sense that
the iterates $x_i^{(k)}$ of the DG algorithm do not converge to the optimum $x_*$ in general with constant stepsize, but instead converge linearly to a fixed point $x_i^{\infty}$ that is in a neighborhood of the solution satisfying 
\begin{equation}\label{eqn:asymp_suboptim}
\left\| x_i^{\infty} - x_*\right\| \leq C_1 \frac{\alpha}{1-\gamma}=  \mathcal{O}\left(\frac{\alpha}{1-\gamma}\right), \quad \mbox{where} 
\quad \gamma := \max\left\{\left|\lambda_2^W\right|, \left|\lambda_N^W\right|\right\},
\end{equation}
for some constant $C_1$ with the explicit expression
\begin{equation}\label{defn:C:1}
C_{1}:= \sqrt{2L\sum_{i=1}^N \left(f_i\left(0\right) -f_i^*\right)}\cdot \left(1 + \frac{2(L+\mu)}{\mu   }\right), \quad f_i^* := \min_{x\in\mathbb{R}^d} f_i(x),
\end{equation}
provided that the stepsize $\alpha$ satisfies some conditions \citep{yuan2016convergence}
{(see Lemma~\ref{lem:C1:gamma} in the Appendix for details). 

Similar to \eqref{x_*_def}, we define the column vector
\begin{equation}\label{def-fixed-pt}
x^\infty := 
\left[\left(x_{1}^{\infty}\right)^{T},\left(x_{2}^{\infty}\right)^{T},
\cdots,\left(x_{N}^{\infty}\right)^{T}\right]^{T} \in \mathbb{R}^{Nd},
\end{equation}
which is a concatenation of the fixed point $x_i^\infty$ of node $i$ over all the nodes. It can be checked that the unique fixed point $x^\infty$ to \eqref{eq-iter-dgd} in the noiseless setting is the solution to
\begin{equation}\label{eq-fixed-point}
(I_{Nd}-\calW) x^\infty +\alpha \nabla F(x^\infty)= 0.
\end{equation}

This means that the sequence $\xi_{k}:=x^{(k)}-x^{\infty}$ converges to zero with an appropriate choice of the stepsize. The performance of the algorithm can then be measured by the distance of $x^\infty$ to $x^* \in \mathbb{R}^{Nd}$ given by \eqref{x_*_def}.
\subsection{Distributed accelerated stochastic gradient (D-ASG)}\label{sec:DNGD}

Consider the following variant of D-SG:
\begin{equation}\label{def-dasg-iters-0}
\begin{aligned}
&x_{i}^{(k+1)}=\sum_{j\in\Omega_{i}}W_{ij}y_{j}^{(k)}-\alpha \tilde{\nabla} f_{i}\left(y_{i}^{(k)}\right),
\\
&{\color{black}y_{i}^{(k)}}=(1+\beta)x_{i}^{(k)}-\beta x_{i}^{(k-1)},
\end{aligned}
\end{equation}
where $\alpha>0$ is the stepsize and $\beta\geq 0$ is called the \emph{momentum parameter}. This algorithm has also been considered in the literature by \cite{jakovetic2014fast} in the noiseless setting.

We define the average iterates $\bar{x}^{(k)}$ and the column vector $x^{(k)}$ as in \eqref{defn:average} and \eqref{column_x-nonave}, respectively. Also, similar to \eqref{column_x-nonave}, we
define the column vector
\begin{align*}
&y^{(k)}=\left[\left(y_{1}^{(k)}\right)^{T},\left(y_{2}^{(k)}\right)^{T},\ldots,\left(y_{N}^{(k)}\right)^{T}\right]^{T}\in\mathbb{R}^{Nd}.
\end{align*}
Then, we can re-write the D-ASG iterates \eqref{def-dasg-iters-0} as:
\begin{equation}\label{def-dasg-iters}
\begin{aligned}
&x^{(k+1)}=\mathcal{W}y^{(k)}-\alpha \tilde{\nabla} F\left(y^{(k)}\right), \\
&y^{(k)}=(1+\beta)x^{(k)}-\beta x^{(k-1)},
\end{aligned}
\end{equation}
for $k\geq 0$ starting from the initial values $x_i^{(0)}\in\mathbb{R}$ and $x_i^{(-1)}\in\mathbb{R}$ for each node $i$. Here, $\alpha>0$ is the stepsize and $\beta\geq 0$ is the momentum parameter. Note that for $\beta=0$, D-ASG reduces to the D-SG algorithm. When there is a single node, i.e. $N=1$, D-ASG also reduces to the Nesterov's (non-distributed) accelerated stochastic gradient algorithm (ASG) \citep{nesterov_convex}. Note that this algorithm is also inexact in the sense that both $\{x^{(k)}\}$ and $\{y^{(k)}\}$ will also converge to the same point $x^{\infty}=y^{\infty}$ in the noiseless setting where $x^{\infty}$ is the fixed point of the distributed gradient (DG) algorithm defined by \eqref{eq-fixed-point}.

\subsection{Convergence Rates and Robustness to Gradient Noise}

Consider both D-SG and D-ASG algorithms,
subject to gradient noise satisfying Assumption~\ref{assump_1}. 
For this scenario, the noise is
\emph{persistent}, i.e., it does not
decay over time, and it is possible
that the limit of $x^{(k)}$ as $k\rightarrow\infty$
may not exist (even in the non-distributed setting), see \cite{can-gur-ling19}; therefore, one natural way\footnote{There are other possible ways
to define a robustness measure, see e.g. \cite{StrConvex}.}
of defining \emph{robustness}
of an algorithm to gradient noise is to consider
the worst-case limiting variance
along all possible subsequences, i.e.
\begin{equation}\label{def-robust}
J_{\infty}:= \frac{1}{\sigma^2 N} \limsup_{k\to\infty} \mbox{Var}\left(x^{(k)}\right).
\end{equation}
\rev{In the special case, when $F$ is a quadratic function and the gradient noise is i.i.d. with an isotropic Gaussian distribution, the quantity $J_\infty$ is equal to the square of the $H_2$ norm of the linear dynamical system corresponding to the D-ASG iterations \eqref{def-dasg-iters} (see e.g. \cite{zhou1996robust, StrConvex}). $H_2$ norm
is well-studied in the robust control theory as a robustness metric and has been considered in the distributed algorithms literature previously as a measure of robustness to white noise (see e.g. \cite{pirani,sarkar2018asymptotic,chapman2015semi}). Indeed, we observe from \eqref{def-robust} that $J_{\infty}$ is equal to the ratio of the output variance and the input noise variance $\sigma^2N$ (which is the variance of noise at the worst case), therefore it can be interpreted as a signal-to-noise ratio (SNR) measure, quantifying how robust the underlying algorithm is to white noise.  We also note that the same definition was recently applied to optimization to develop noise-robust non-distributed algorithms \citep{StrConvex}. Our definition \eqref{def-robust} of robustness is motivated by such connections to the robust control and optimization literature. }

In the next sections, we will provide bounds on the robustness level $J_{\infty}$ and the expected distance to both the fixed point and the optimum for the D-SG and D-ASG algorithms. In particular, in the non-distributed setting, it is known that ASG can be less robust to noise compared to gradient descent \citep{Hardt-blog,StrConvex}; we will later obtain bounds in Section~\ref{subsec:dasg} for the robustness of D-ASG and D-SG which suggests a similar behavior in the distributed setting when the stepsize is small enough.

For analysis purposes, we consider the penalized objective function $F_{\calW,\alpha}(x):\mathbb{R}^{Nd}\to \mathbb{R}$ defined as
\begin{equation}\label{def-pen-obj}
F_{\calW,\alpha}(x) := \frac{1}{2\alpha} x^T (I_{Nd}-\calW)x + F(x),\quad\alpha>0.
\end{equation}
Similar penalized objectives have also been considered in the past to analyze deterministic algorithms (see e.g. \cite[Section 2]{yuan2016convergence}, \cite{mansoori2017superlinearly}). It can be seen that its gradient (with respect to $x$) is 
$\nabla F_{\calW,\alpha}(x) = \frac{1}{\alpha}(I_{Nd}-\calW)x + \nabla F(x)$. Since  $0_{Nd} \preceq I_{Nd}-\mathcal{W}\preceq (1 - \lambda_N^W)I_{Nd}$, we have also
\begin{equation}\label{def-L-alpha} F_{\calW,\alpha}\in \mathcal{S}_{\mu,L_\alpha}\left(\mathbb{R}^{Nd}\right) \quad \mbox{with} \quad L_\alpha := \frac{1-\lambda_N^W}{\alpha} + L.\end{equation}
Furthermore, the unique minimizer $z^*$ of $F_{\calW,\alpha}$ satisfies the first-order conditions 
$$
\nabla F_{\calW,\alpha}(z^*)= (I_{Nd}-\calW)z^* + \alpha \nabla F(z^*) = 0.
$$
Then, it follows from \eqref{eq-fixed-point} that $z^* = x^\infty$, i.e. the minimizer of $F_{\calW,\alpha}$ coincides with the limit point $x^\infty$. 
In fact, 
we can re-write the D-SG iterations \eqref{eq-iter-dgd} as
\begin{equation}\label{eq-centr-dsg}
x^{(k+1)}=x^{(k)}-\alpha \tilde{\nabla} F_{\mathcal{W},\alpha}\left(x^{(k)}\right),
\end{equation}
which is equivalent to running a non-distributed stochastic gradient algorithm for minimizing an alternative objective $F_{\calW,\alpha}$ in dimension $Nd$.
We can also re-write the D-ASG iterations \eqref{def-dasg-iters} as
\begin{equation}\label{eq-centr-asg}
\begin{aligned}
&x^{(k+1)}= y^{(k)}-\alpha \tilde{\nabla} F_{\mathcal{W},\alpha}\left(y^{(k)}\right), \\
&y^{(k)}=(1+\beta)x^{(k)}-\beta x^{(k-1)}.
\end{aligned}
\end{equation}
These iterations are identical to the iterations of the (non-distributed) ASG. In other words, D-ASG applied to solve the problem \eqref{opt-pbm} in dimension $d$ is equivalent to running a non-distributed ASG algorithm for minimizing an alternative objective $F_{\calW,\alpha}$ in dimension $Nd$. 

This connection allows us to analyze both D-SG and D-ASG with existing techniques developed for non-distributed algorithms in \cite{StrConvex,aybat2019universally} that builds on dynamical system representation of optimization algorithms.

\subsubsection{Dynamical system representation}\label{sec:reformulate}
We first reformulate D-SG \eqref{eq-centr-dsg} and D-ASG update rules \eqref{eq-centr-asg} 
as a discrete-time dynamical system:

\begin{equation}\label{unified-dyn-sys}
\xi_{k+1}=A\xi_{k}+B \tilde{\nabla} F_{\calW,\alpha}(C\xi_{k}),
\end{equation} 
where $\xi_{k}$ is the state, and $A, B, C$ are system matrices that are appropriately chosen. For example, we can represent the D-SG iterates with the choice of
\begin{equation} \xi_{k}:=x^{(k)}-x^{\infty}, \quad A:=I_{Nd},\quad B:=-\alpha I_{Nd}, \quad C:=I_{Nd}.\label{DGD:xi} 
\end{equation}
Similarly, we can represent the D-ASG iterations as the dynamical system \eqref{unified-dyn-sys} with 
\begin{equation}\label{eqn:xi:DASG}
\xi_{k}:=\left[\left(x^{(k)}-x^{\infty}\right)^{T},\left(x^{(k-1)}-x^{\infty}\right)^{T}\right]^{T}, 
\end{equation}
and 
$A = \tilde{A}_{\text{dasg}}\otimes I_{Nd}, B:=\tilde{B}_{\text{dasg}}\otimes I_{Nd}, C:=\tilde{C}_{\text{dasg}}\otimes I_{Nd}$
where
\begin{equation}\label{ABC:DASG}
\tilde{A}_{\text{dasg}}
=\left[\begin{array}{cc}
1+\beta & -\beta
\\
1 & 0
\end{array}\right],
\quad
\tilde{B}_{\text{dasg}}
=\left[\begin{array}{c}
-\alpha
\\
0
\end{array}\right],
\quad
\tilde{C}_{\text{dasg}}
=\left[\begin{array}{cc}
1+\beta & -\beta
\end{array}\right].
\end{equation}
(see also \cite{lessard2016analysis} for such a dynamical system representation in the deterministic case). For studying the dynamical system \eqref{ABC:DASG}, we introduce the following Lyapunov function 
  \begin{equation}\label{def-lyap} V_{P,\alpha,c}(\xi) := \xi^T  P \xi+ c \left[F_{\calW,\alpha}(T\xi + x^\infty) - F_{\calW,\alpha}(x^\infty)\right],
  \end{equation}
where $c\geq 0$ is a scalar, $P$ is a positive semi-definite matrix and {\color{black}$T=I_{Nd}$ 
for D-SG and $T=[1\,\, 0]\otimes I_{Nd}$ for D-ASG}. Since $x^\infty$ is the minimum of $F_{\calW,\alpha}$, we observe that $V_{P,\alpha,c}(\xi)$ has non-negative values. In particular, $V_{P,\alpha,c}(0) = 0$. In the special case when $c=0$, we obtain 
 $$ V_{P}(\xi) := V_{P,\alpha,0}(\xi) =\xi^T  P \xi. $$
In the next section, we obtain convergence results for D-SG and D-ASG for constant stepsize and momentum which also implies guarantees on the robustness measure $J_\infty$. The analysis is based on studying the Lyapunov function \eqref{def-lyap} for different choices of the matrix $P$ and the scalar $c$. In particular, for D-SG we can choose $P$ to be the identity matrix and $c=0$, however for D-ASG, the choice of $P$ is less trivial and depends on the choice of the stepsize $\alpha$ and $\beta$ in general. Here, our choice of the Lyapunov function \eqref{def-lyap} is motivated by \cite{fazlyab2017analysis} which studied this Lyapunov function to analyze accelerated gradient methods in the centralized deterministic setting.  

\subsubsection{Analysis of Distributed Stochastic Gradient}\label{sec:DGD}
We next provide a performance bound for D-SG in Theorem~\ref{thm:DGD:explicit}. 
It shows that the expected distance square to the fixed point $\mathbb{E}\left[\left\Vert x^{(k)}-x^{\infty}\right\Vert^{2}\right]$ can be bounded as a sum of two terms: $i)$ A \emph{bias term} that depends on the initialization and decays with a linear rate $\rho^2(\alpha)$ where $ \rho(\alpha)= \max\left\{\left|1-\alpha\mu\right|, \left|\lambda_N^W - \alpha L\right|\right\}.
$ $(ii)$ A \emph{variance term} that scales linearly with the noise level $\sigma^2$ providing a bound on the asymptotic variance $\limsup_{k\to\infty} \mathbb{E}\left[\left\Vert x^{(k)}-x^{\infty}\right\Vert^{2}\right]$ and hence the robustness level $J_\infty$. When there is no noise (when $\sigma =0$), the variance term is zero, and we obtain a linear convergence rate for the (deterministic) DG algorithm with rate $\rho^2(\alpha)$.
This improves the previously best known convergence rate $\rho_{\delta}^{2}$ for DG obtained in \cite{yuan2016convergence}, 
where $\rho_{\delta}^{2}:=1-\frac{\alpha\mu L}{\mu+L}
+\alpha\delta-\alpha^{2}\delta\frac{\mu L}{\mu+L}$, 
which can get arbitrarily close to $1-\frac{\alpha\mu L}{\mu+L}$, 
see Theorem~7 in \cite{yuan2016convergence}. We also note that the convergence rate and robustness we provide in Theorem~\ref{thm:DGD:explicit} is tight for D-SG in the sense that they are attained for some quadratic choices of the objective (see Remark~\ref{remark-dgd-quad-tightness} in Appendix~\ref{sec:quadratic}). 

For proving Theorem~\ref{thm:DGD:explicit}, we exploit the above-mentioned fact that running D-SG on the objective $F$ is equivalent to running (non-distributed SG) on the modified objective $F_{\calW,\alpha}$ and we build on the existing results for non-distributed stochastic gradient \cite[Prop. 4.3]{StrConvex}; the proof is given in the Appendix. 

\begin{theorem}\label{thm:DGD:explicit}
Consider running D-SG method with stepsize $\alpha \in (0,\frac{1+\lambda_{N}^{W}}{L})$. Then,
for every $k\geq 0$,
\begin{align}
&\mathbb{E}\left[\left\Vert x^{(k)}-x^{\infty}\right\Vert^{2}\right]
\leq\rho(\alpha)^{2k}\left\Vert x^{(0)}-x^{\infty}\right\Vert^{2}
+\frac{1-\rho(\alpha)^{2k}}{1-\rho(\alpha)^{2}}\sigma^{2}\alpha^{2}N,
\end{align}
where $\rho(\alpha)= \max\left\{\left|1-\alpha\mu\right|, \left|\lambda_N^W - \alpha L\right|\right\}
\in[0,1)$. As a result, the robustness of the D-SG method satisfies
\begin{equation*}
J_{\infty}(\alpha)\leq\frac{\alpha^{2}}{1-\rho(\alpha)^{2}}.
\end{equation*}	
\end{theorem}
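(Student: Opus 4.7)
The plan is to reduce the distributed analysis to a centralized one by exploiting the equivalence noted in \eqref{eq-centr-dsg}, namely that D-SG applied to $F$ coincides with (non-distributed) stochastic gradient applied to the penalized objective $F_{\calW,\alpha}$ in dimension $Nd$. Since $F_{\calW,\alpha}\in\mathcal{S}_{\mu,L_\alpha}(\mathbb{R}^{Nd})$ with $L_\alpha = (1-\lambda_N^W)/\alpha + L$ from \eqref{def-L-alpha}, and its unique minimizer is precisely the fixed point $x^\infty$ (so $\nabla F_{\calW,\alpha}(x^\infty)=0$), everything is set up to inherit a standard centralized SG contraction estimate.

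Writing $\xi_k = x^{(k)}-x^\infty$ and $\epsilon_k := \tilde\nabla F_{\calW,\alpha}(x^{(k)}) - \nabla F_{\calW,\alpha}(x^{(k)})$, the iteration becomes $\xi_{k+1} = \xi_k - \alpha\nabla F_{\calW,\alpha}(x^{(k)}) - \alpha \epsilon_k$. By Assumption~\ref{assump_1} (via \eqref{assump}), conditionally on $x^{(k)}$ the noise $\epsilon_k$ is mean-zero with $\mathbb{E}[\|\epsilon_k\|^2\mid x^{(k)}]\leq \sigma^2 N$, so the cross term vanishes and
\begin{equation*}
\mathbb{E}\bigl[\|\xi_{k+1}\|^2\mid x^{(k)}\bigr] = \bigl\|x^{(k)}-x^\infty - \alpha\nabla F_{\calW,\alpha}(x^{(k)})\bigr\|^2 + \alpha^2 \mathbb{E}\bigl[\|\epsilon_k\|^2\mid x^{(k)}\bigr].
\end{equation*}
Next, I would invoke the classical co-coercivity-based contraction for gradient descent on $\mathcal{S}_{\mu,L_\alpha}$ functions: $\|y-x^\infty-\alpha\nabla F_{\calW,\alpha}(y)\|\leq \bar\rho(\alpha)\|y-x^\infty\|$ with $\bar\rho(\alpha)=\max\{|1-\alpha\mu|,|1-\alpha L_\alpha|\}$. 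A direct computation gives $1-\alpha L_\alpha = \lambda_N^W-\alpha L$, so $\bar\rho(\alpha) = \rho(\alpha)$, which matches the theorem's rate; moreover, the stepsize restriction $\alpha < (1+\lambda_N^W)/L$ is exactly what guarantees $|\lambda_N^W-\alpha L|<1$, hence $\rho(\alpha)\in[0,1)$. This equivalently follows from Proposition~4.3 of \cite{StrConvex} applied to the centralized objective $F_{\calW,\alpha}$ with gradient noise variance budget $\sigma^2 N$.

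Putting these pieces together yields the one-step inequality $\mathbb{E}[\|\xi_{k+1}\|^2] \leq \rho(\alpha)^2 \mathbb{E}[\|\xi_k\|^2] + \alpha^2 \sigma^2 N$; unrolling this geometric recursion gives the announced bound $\rho(\alpha)^{2k}\|x^{(0)}-x^\infty\|^2 + \tfrac{1-\rho(\alpha)^{2k}}{1-\rho(\alpha)^2}\alpha^2\sigma^2 N$. For the robustness claim, since $\mathrm{Var}(x^{(k)}) = \mathbb{E}\|x^{(k)}-\mathbb{E}x^{(k)}\|^2 \leq \mathbb{E}\|x^{(k)}-x^\infty\|^2$, taking $\limsup$ as $k\to\infty$ and dividing by $\sigma^2 N$ gives $J_\infty(\alpha)\leq \alpha^2/(1-\rho(\alpha)^2)$.

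The main obstacle is essentially bookkeeping rather than conceptual: one has to verify carefully that the contraction rate inherited from the $(\mu,L_\alpha)$-regularity of $F_{\calW,\alpha}$ simplifies to the claimed $\rho(\alpha)=\max\{|1-\alpha\mu|,|\lambda_N^W-\alpha L|\}$, and that the stepsize range is the sharp one for $\rho(\alpha)<1$. Once the centralized reduction \eqref{eq-centr-dsg} is in hand, no further distributed-specific machinery (e.g., mixing-matrix perturbation arguments) is needed, which is the advantage of framing D-SG through the penalized objective $F_{\calW,\alpha}$.
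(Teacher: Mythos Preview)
Your proposal is correct and follows essentially the same route as the paper: reduce D-SG to centralized stochastic gradient on the penalized objective $F_{\calW,\alpha}\in\mathcal{S}_{\mu,L_\alpha}(\mathbb{R}^{Nd})$ via \eqref{eq-centr-dsg}, invoke the known one-step contraction with rate $\max\{|1-\alpha\mu|,|1-\alpha L_\alpha|\}=\rho(\alpha)$ (the paper packages this through the Lessard et al.\ LMI and Lemma~\ref{lemma_B1_MASG}, you state it directly and via \cite[Prop.~4.3]{StrConvex}), and then unroll the resulting recursion. The robustness bound is obtained identically by bounding the variance by the second moment about $x^\infty$.
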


We recall that the penalized objective
$F_{\calW,\alpha}$ depends on the network and the stepsize. The fixed point $x^\infty$ is the minimum of the penalized objective $F_{\calW,\alpha}$. In general, the difference  $\|x^{(\infty)} - x^*\|$ is not zero and it depends on the network structure and the stepsize $\alpha$. We call this term the ``network effect"; it can be controlled by the the inequality \eqref{eqn:asymp_suboptim}. The following corollary is obtained by a direct application of the inequality \eqref{eqn:asymp_suboptim} to Theorem~\ref{thm:DGD:explicit}.   

\begin{corollary}\label{DSG_cor}
Consider running D-SG method with stepsize $\alpha \in (0,\frac{1+\lambda_{N}^{W}}{\mu + L})$. Then, for every $k \geq 0$,
\begin{align}
&\mathbb{E}\left[\left\Vert x^{(k)}-x^{\infty}\right\Vert^{2}\right]
\leq (1-\alpha\mu)^{2k}\left\Vert x^{(0)}-x^{\infty}\right\Vert^{2}
+\alpha\sigma^{2}N \frac{1-(1-\alpha\mu)^{2k}}{\mu (2-\alpha\mu)},
\end{align}
which implies that the robustness of the D-SG method satisfies
\begin{equation*}
J_{\infty}(\alpha)\leq\frac{\alpha}{\mu(2-\alpha\mu)}.
\end{equation*}	
In addition, if $\alpha \leq \frac{1}{L+\mu}$, we have
\begin{align}
\mathbb{E}\left[\left\Vert x^{(k)}-x^{\ast}\right\Vert^{2}\right]
\leq 2(1-\alpha\mu)^{2k}\left\Vert x^{(0)}-x^{\infty}\right\Vert^{2}
+2\alpha\sigma^{2}N\frac{1-(1-\alpha\mu)^{2k}}{\mu (2-\alpha\mu)}
+\frac{2\alpha^{2}C_{1}^{2}N}{(1-\gamma)^{2}},
\label{ineq-perf-ub-dgd}
\end{align}
where {\color{black}$\gamma,C_{1}$ are given in \eqref{eqn:asymp_suboptim}-\eqref{defn:C:1}.}
\end{corollary}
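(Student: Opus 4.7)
The plan is to deduce the corollary as an essentially mechanical consequence of Theorem~\ref{thm:DGD:explicit} together with the network-effect inequality \eqref{eqn:asymp_suboptim}. The only non-trivial preliminary is to argue that under the stepsize restriction $\alpha \in (0,\tfrac{1+\lambda_{N}^{W}}{\mu+L})$ the rate $\rho(\alpha)=\max\{|1-\alpha\mu|,|\lambda_{N}^{W}-\alpha L|\}$ from Theorem~\ref{thm:DGD:explicit} simplifies to the cleaner quantity $1-\alpha\mu$.

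First I would verify this simplification. Since $\alpha\le\tfrac{1+\lambda_{N}^{W}}{\mu+L}\le\tfrac{2}{\mu+L}\le\tfrac{1}{\mu}$, we have $|1-\alpha\mu|=1-\alpha\mu$. For $|\lambda_{N}^{W}-\alpha L|$ I would split into the cases $\alpha\le\lambda_{N}^{W}/L$ (where non-negativity of $\lambda_{N}^{W}-\alpha L$ and $L\ge\mu$, $\lambda_{N}^{W}\le 1$ immediately give $1-\alpha\mu\ge\lambda_{N}^{W}-\alpha L$) and $\alpha>\lambda_{N}^{W}/L$ (where $|\lambda_{N}^{W}-\alpha L|=\alpha L-\lambda_{N}^{W}$ and the desired inequality $1-\alpha\mu\ge\alpha L-\lambda_{N}^{W}$ is exactly the hypothesis $\alpha(\mu+L)\le 1+\lambda_{N}^{W}$). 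Plugging $\rho(\alpha)=1-\alpha\mu$ into Theorem~\ref{thm:DGD:explicit} and using $1-\rho(\alpha)^{2}=\alpha\mu(2-\alpha\mu)$ produces the first displayed inequality. Taking $k\to\infty$ in this bound, noting that $\mathrm{Var}(x^{(k)})\le\mathbb{E}\|x^{(k)}-x^{\infty}\|^{2}$ (since subtracting any fixed vector can only decrease the variance), and dividing by $\sigma^{2}N$ yields the claimed robustness bound $J_{\infty}(\alpha)\le\alpha/[\mu(2-\alpha\mu)]$.

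For the final inequality under the stronger condition $\alpha\le\tfrac{1}{L+\mu}$, I would use the triangle inequality in the form $\|x^{(k)}-x^{\ast}\|^{2}\le 2\|x^{(k)}-x^{\infty}\|^{2}+2\|x^{\infty}-x^{\ast}\|^{2}$. By the block structure of $x^{\infty}$ and $x^{\ast}$ from \eqref{def-fixed-pt} and \eqref{x_*_def}, we have $\|x^{\infty}-x^{\ast}\|^{2}=\sum_{i=1}^{N}\|x_{i}^{\infty}-x_{\ast}\|^{2}$, and the network-effect bound \eqref{eqn:asymp_suboptim} (which is valid under $\alpha\le 1/(L+\mu)$ by the referenced Lemma~\ref{lem:C1:gamma}) gives $\|x_{i}^{\infty}-x_{\ast}\|\le C_{1}\alpha/(1-\gamma)$ for each $i$, hence $\|x^{\infty}-x^{\ast}\|^{2}\le NC_{1}^{2}\alpha^{2}/(1-\gamma)^{2}$. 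Taking expectation and inserting the first part of the corollary for the $\mathbb{E}\|x^{(k)}-x^{\infty}\|^{2}$ term yields \eqref{ineq-perf-ub-dgd} directly.

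There is essentially no analytic obstacle here — the one point that needs care is matching the stepsize regimes: the first inequality only requires $\alpha\le\tfrac{1+\lambda_{N}^{W}}{\mu+L}$ so that $\rho(\alpha)=1-\alpha\mu$, while the inclusion of the network-effect term in \eqref{ineq-perf-ub-dgd} additionally requires the hypothesis $\alpha\le 1/(L+\mu)$ under which \eqref{eqn:asymp_suboptim} (and the explicit constant $C_{1}$ in \eqref{defn:C:1}) holds. One should also note that $1/(L+\mu)\le(1+\lambda_{N}^{W})/(L+\mu)$ whenever $\lambda_{N}^{W}\ge 0$, but when $\lambda_{N}^{W}<0$ the two conditions are not nested, so the second part of the corollary is stated under the joint hypothesis coming from both requirements.
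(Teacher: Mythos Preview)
Your proposal is correct and follows essentially the same approach as the paper. The paper's proof is terser—it only writes out the triangle-inequality step $\|x^{(k)}-x^{\ast}\|^{2}\le 2\|x^{(k)}-x^{\infty}\|^{2}+2\|x^{\infty}-x^{\ast}\|^{2}$ and invokes \eqref{eqn:asymp_suboptim}, leaving the simplification $\rho(\alpha)=1-\alpha\mu$ and the first two claims implicit—whereas you have filled in those routine verifications explicitly.
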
 

{\color{black}Next, we provide the performance bound on the distance between the average of iterates $\bar{x}^{(k)}$
and the minimizer $x_{\ast}$. Here, we can show that the asymptotic variance of the averaged iterates $\bar{x}^{(k)}$ with constant stepsize is $\mathcal{O}(\sigma^2/N)$; this is because averaging the iterates also averages the noise over the nodes.
\begin{proposition}\label{prop:dsg-average-optimal-rate}
Assume $0<\alpha\leq\frac{2}{L+\mu}$,
$\alpha<\frac{1+\lambda_{N}^{W}}{L}$ and $\mu\alpha(1+\lambda_{N}^{W}-\alpha L)<1$.  
Then, for any $k$, we have
\begin{align*}
\mathbb{E}\left\Vert\bar{x}^{(k)}-x_{\ast}\right\Vert^{2}
&\leq
8\left(\frac{\alpha}{\mu(1-\frac{\alpha L}{2})}+\frac{(1+\alpha L)^{2}}{\mu^{2}(1-\frac{\alpha L}{2})^{2}}\right)
\left(\frac{L^{2}D^{2}\alpha^{2}}{N(1-\gamma)^{2}}
+\frac{L^{2}\sigma^{2}\alpha^{2}}{(1-\gamma^{2})}\right)
\\
&\qquad\qquad
+8\frac{\gamma^{2k}-
\left(1-\alpha\mu\left(1-\frac{\alpha L}{2}\right)\right)^{k}}
{\gamma^{2}-1+\alpha\mu\left(1-\frac{\alpha L}{2}\right)}
\frac{L^{2}\gamma^{2}}{N}\mathbb{E}\left\Vert x^{(0)}\right\Vert^{2}
\\
&\qquad
+2(1-\alpha\mu)^{2k}
\Vert x_{0}-x_{\ast}\Vert^{2}
+\frac{1-(1-\alpha\mu)^{2k}}{\mu(1-\frac{\alpha\mu}{2})}\frac{\alpha\sigma^{2}}{N},
\end{align*}
and for every $i=1,2,\ldots,N$ and any $k$,
\begin{align*}
\mathbb{E}\left\Vert x_{i}^{(k)}-x_{\ast}\right\Vert^{2}
&\leq
8\gamma^{2k}\mathbb{E}\left\Vert x^{(0)}\right\Vert^{2}
+\frac{8D^{2}\alpha^{2}}{(1-\gamma)^{2}}
+\frac{8\sigma^{2}N\alpha^{2}}{(1-\gamma^{2})}
\\
&\qquad
+16\left(\frac{\alpha}{\mu(1-\frac{\alpha L}{2})}+\frac{(1+\alpha L)^{2}}{\mu^{2}(1-\frac{\alpha L}{2})^{2}}\right)
\left(\frac{L^{2}D^{2}\alpha^{2}}{N(1-\gamma)^{2}}
+\frac{L^{2}\sigma^{2}\alpha^{2}}{(1-\gamma^{2})}\right)
\\
&\qquad\qquad
+16\frac{\gamma^{2k}-
\left(1-\alpha\mu\left(1-\frac{\alpha L}{2}\right)\right)^{k}}
{\gamma^{2}-1+\alpha\mu\left(1-\frac{\alpha L}{2}\right)}
\frac{L^{2}\gamma^{2}}{N}\mathbb{E}\left\Vert x^{(0)}\right\Vert^{2}
\\
&\qquad
+4(1-\alpha\mu)^{2k}
\Vert x_{0}-x_{\ast}\Vert^{2}
+2\frac{1-(1-\alpha\mu)^{2k}}{\mu(1-\frac{\alpha\mu}{2})}\frac{\alpha\sigma^{2}}{N},
\end{align*}
where
\begin{equation}\label{eqn:D1}
D^{2}:=4L^{2}\mathbb{E}\left\Vert x^{(0)}-x^{\ast}\right\Vert^{2}
+8L^{2}\frac{C_{1}^{2}\alpha^{2}N}{(1-\gamma)^{2}}
+\frac{2L^{2}\alpha\sigma^{2}N}{\mu(1+\lambda_{N}^{W}-\alpha L)}
+4\left\Vert \nabla F\left(x^{\ast}\right)\right\Vert^{2},
\end{equation} 
where $\gamma,C_{1}$ are given in \eqref{eqn:asymp_suboptim}-\eqref{defn:C:1}. 
\end{proposition}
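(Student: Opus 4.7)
The plan is to exploit the double stochasticity of $W$, which makes the average iterate $\bar{x}^{(k)}$ satisfy a centralized gradient descent recursion perturbed by a consensus bias and the averaged gradient noise. Summing the D-SG update \eqref{eqn:dsg_update} over $i$ and using $\sum_i W_{ij}=1$, one obtains
\begin{equation*}
\bar{x}^{(k+1)} = \bar{x}^{(k)} - \alpha\, \nabla f(\bar{x}^{(k)}) - \alpha\, e^{(k)} - \alpha\, \bar{g}^{(k)},
\end{equation*}
where $e^{(k)} := \tfrac{1}{N}\sum_i [\nabla f_i(x_i^{(k)}) - \nabla f_i(\bar{x}^{(k)})]$ is the consensus bias and $\bar{g}^{(k)} := \tfrac{1}{N}\sum_i [\tilde\nabla f_i(x_i^{(k)}) - \nabla f_i(x_i^{(k)})]$ is the averaged noise. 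Assumption~\ref{assump_1} and cross-node independence give $\mathbb{E}[\bar{g}^{(k)} \mid \mathcal{F}_k] = 0$ and $\mathbb{E}[\|\bar{g}^{(k)}\|^2 \mid \mathcal{F}_k] \leq \sigma^2/N$, while the $L$-smoothness of each $f_i$ yields $\|e^{(k)}\|^2 \leq \tfrac{L^2}{N}\|x^{(k)} - \ones \otimes \bar{x}^{(k)}\|^2$. Setting $a_k := \mathbb{E}\|\bar{x}^{(k)} - x_\ast\|^2$ and $U_k := \mathbb{E}\|x^{(k)} - \ones \otimes \bar{x}^{(k)}\|^2$, the proof reduces to deriving coupled one-step recursions for these two quantities and unrolling them.

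For the recursion on $a_k$, squaring the displayed update, conditioning on $\mathcal{F}_k$ so the cross term with $\bar{g}^{(k)}$ vanishes, and invoking the standard contraction $\|y - x_\ast - \alpha \nabla f(y)\|^2 \leq (1-\alpha\mu)^2 \|y - x_\ast\|^2$ (valid on $\mathcal{S}_{\mu,L}$ whenever $\alpha \leq 2/(L+\mu)$) handle the signal part, while a Young inequality with parameter of order $\alpha\mu$ is used to dispose of the $\alpha e^{(k)}$ perturbation. This produces a recursion of the form
\begin{equation*}
a_{k+1} \leq \bigl(1 - \alpha\mu(1-\tfrac{\alpha L}{2})\bigr)\, a_k \;+\; \frac{C\,\alpha L^{2}}{\mu N}\, U_k \;+\; \frac{\alpha^{2} \sigma^{2}}{N},
\end{equation*}
while a parallel, sharper tracking of the pure signal contraction retains an auxiliary $(1-\alpha\mu)^{2k}$ decay on the initialization and variance pieces. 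This is precisely how both the accelerated decay rate $(1-\alpha\mu)^{2k}$ (on $\|x_0 - x_\ast\|^2$ and on the $\alpha\sigma^2/(\mu N)$ variance contribution) and the slower rate $1-\alpha\mu(1-\alpha L/2)$ end up coexisting in the stated bound.

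For the recursion on $U_k$, let $P_\perp := (I_N - \tfrac{1}{N}\ones\ones^T) \otimes I_d$ denote the projection onto the orthogonal complement of the consensus subspace. Since $W = W^T$, $W\ones = \ones$, and the second-largest eigenvalue modulus of $W$ equals $\gamma$, the operator $P_\perp$ commutes with $\calW$ and $\|\calW P_\perp\|_2 = \gamma$. Projecting \eqref{eq-iter-dgd} yields $P_\perp x^{(k+1)} = \calW P_\perp x^{(k)} - \alpha P_\perp \tilde\nabla F(x^{(k)})$, and the combination of Young's inequality with the cross-node-independence bound $\mathbb{E}\|P_\perp(\tilde\nabla F - \nabla F)\|^2 \leq (N-1)\sigma^{2}$ gives
\begin{equation*}
U_{k+1} \leq \gamma^{2}(1+\tau)\, U_k \;+\; \bigl(1+\tfrac{1}{\tau}\bigr)\alpha^{2}\bigl(\mathbb{E}\|\nabla F(x^{(k)})\|^{2} + N\sigma^{2}\bigr).
\end{equation*}
To close this I would uniformly bound $\mathbb{E}\|\nabla F(x^{(k)})\|^{2} \leq D^{2}$ via $\|\nabla F(x)\|^{2} \leq 2L^{2}\|x - x^{\ast}\|^{2} + 2\|\nabla F(x^{\ast})\|^{2}$, combined with the time-uniform bound on $\mathbb{E}\|x^{(k)}-x^{\ast}\|^{2}$ obtained from Theorem~\ref{thm:DGD:explicit} together with the network-effect estimate \eqref{eqn:asymp_suboptim}; the stepsize condition $\mu\alpha(1+\lambda_N^W - \alpha L)<1$ is exactly what keeps this bound finite and consistent with the definition of $D^{2}$ in \eqref{eqn:D1}. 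Unrolling then produces $U_k \leq \gamma^{2k} U_0 + \mathcal{O}(\alpha^{2} D^{2}/(1-\gamma)^{2}) + \mathcal{O}(\alpha^{2} N\sigma^{2}/(1-\gamma^{2}))$, using the crude initial estimate $U_0 \leq \|x^{(0)}\|^{2}$.

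Substituting this $U_k$ bound into the $a_k$ recursion and unrolling produces the four-term structure of the first inequality: (i) the initialization term $(1-\alpha\mu)^{2k}\|x_0-x_\ast\|^{2}$; (ii) the stationary variance term proportional to $\alpha\sigma^{2}/(\mu N)$ with the $(1-(1-\alpha\mu)^{2k})$ prefactor; (iii) the cross term $(\gamma^{2k}-\rho^k)/(\gamma^{2}-\rho)$ with $\rho := 1-\alpha\mu(1-\alpha L/2)$, which arises in closed form from the geometric sum $\sum_{j<k}\rho^{k-1-j}\gamma^{2j} U_0$; and (iv) the fully stationary $L^{2}\alpha^{2} D^{2}/(N(1-\gamma)^{2})$ and $L^{2}\alpha^{2}\sigma^{2}/(1-\gamma^{2})$ contributions, each weighted by the prefactor coming from summing the $\rho^{k-j}$ weights. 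For the second inequality on $\mathbb{E}\|x_i^{(k)}-x_\ast\|^{2}$, the triangle inequality $\|x_i^{(k)}-x_\ast\|^{2}\leq 2\|x_i^{(k)}-\bar{x}^{(k)}\|^{2} + 2\|\bar{x}^{(k)}-x_\ast\|^{2}\leq 2U_k + 2a_k$ reduces matters to adding twice the $U_k$ bound to twice the $a_k$ bound, which recovers the stated structure (the first three terms in the second bound are $2U_k$, the remaining terms are $2a_k$). The main obstacle is the simultaneous tuning of the Young inequality parameters in both recursions so that neither contractivity is destroyed while keeping the constants consistent; a second subtle point is establishing the time-uniform gradient bound $\mathbb{E}\|\nabla F(x^{(k)})\|^{2}\leq D^{2}$ in a way that is consistent with \eqref{eqn:D1}, which is exactly where the seemingly ad-hoc stepsize requirement $\mu\alpha(1+\lambda_N^W - \alpha L)<1$ enters.
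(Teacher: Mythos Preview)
Your overall decomposition---split the averaged update into a centralized gradient step, a consensus bias $e^{(k)}$, and an averaged noise $\bar g^{(k)}$, and couple a recursion on $a_k=\mathbb{E}\|\bar x^{(k)}-x_\ast\|^2$ to one on $U_k=\mathbb{E}\|x^{(k)}-\ones\otimes\bar x^{(k)}\|^2$---is exactly the right skeleton, and your treatment of $U_k$ (projection $P_\perp$, contraction by $\gamma$, uniform bound $\mathbb{E}\|\nabla F(x^{(k)})\|^2\le D^2$ via Theorem~\ref{thm:DGD:explicit} and \eqref{eqn:asymp_suboptim}) matches what the paper does through the cited Lemmas from \cite{gurbuzbalaban2020decentralized}. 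You also correctly identify why the condition $\mu\alpha(1+\lambda_N^W-\alpha L)<1$ appears.

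The genuine gap is in the $a_k$ recursion. A single Young-inequality step on $\bar x^{(k)}$ can only deliver the contraction factor $1-\alpha\mu(1-\alpha L/2)$; from that recursion alone you \emph{cannot} extract the sharper $(1-\alpha\mu)^{2k}$ decay on the initialization together with the matching $\frac{1-(1-\alpha\mu)^{2k}}{\mu(1-\alpha\mu/2)}\frac{\alpha\sigma^2}{N}$ variance term that the statement requires. Your phrase ``a parallel, sharper tracking of the pure signal contraction'' is precisely the place where a concrete device is missing. The paper fills this gap by introducing an auxiliary \emph{centralized} sequence
\[
x_{k+1}=x_k-\alpha\nabla f(x_k)-\alpha\bar\xi^{(k+1)},\qquad x_0=\bar x^{(0)},
\]
that shares the \emph{same} averaged noise $\bar\xi^{(k+1)}$ as $\bar x^{(k)}$. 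Two things then happen. First, $x_k$ is exactly centralized SGD on $f$, so the standard bound gives $\mathbb{E}\|x_k-x_\ast\|^2\le(1-\alpha\mu)^{2k}\|x_0-x_\ast\|^2+\frac{1-(1-\alpha\mu)^{2k}}{\mu(2-\alpha\mu)}\frac{\alpha\sigma^2}{N}$. Second---and this is the point---the noise \emph{cancels} in the difference $\bar x^{(k)}-x_k$, which evolves as
\[
\bar x^{(k+1)}-x_{k+1}=\bar x^{(k)}-x_k-\alpha\bigl[\nabla f(\bar x^{(k)})-\nabla f(x_k)\bigr]+\alpha\mathcal{E}_{k+1},
\]
a noiseless perturbed gradient recursion driven only by $\mathcal{E}_{k+1}$. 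Applying your Young argument here (Lemma~\ref{lem:central:approx} in the paper) produces the $1-\alpha\mu(1-\alpha L/2)$ rate and the $(\gamma^{2k}-\rho^k)/(\gamma^2-\rho)$ cross term, after which the split $\mathbb{E}\|\bar x^{(k)}-x_\ast\|^2\le 2\mathbb{E}\|\bar x^{(k)}-x_k\|^2+2\mathbb{E}\|x_k-x_\ast\|^2$ makes the two rates coexist exactly as stated. Without this auxiliary-sequence trick your direct recursion on $a_k$ would yield a valid but strictly weaker bound that does not match the proposition's constants or rates.
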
}

\begin{remark}[Convergence rate of the averaged D-SG iterates] 
{\color{black}Note that given iteration budget $K>0$, if we take $\alpha = \frac{\log(K)}{\mu K}$ in the setting of Proposition \ref{prop:dsg-average-optimal-rate}, then we have $(1-\alpha \mu)^{2K} = \mathcal{O}(1/K^2)$ and we obtain $\mathbb{E}\left\Vert\bar{x}^{(K)}-x_{\ast}\right\Vert^{2} = \tilde{\mathcal{O}}(\frac{1}{NK} + \frac{1}{K^2})$ where $\tilde{\mathcal{O}}(\cdot)$ hides a logarithmic factor in $K$. }
\end{remark}

\subsubsection{Analysis of Distributed Accelerated Stochastic Gradient}
\label{subsec:dasg}


Throughout this section, we state the results under the following assumption.
\begin{assumption}\label{assump_3}
We assume all eigenvalues of $W$ are positive, i.e., we assume that $\lambda_N^W > 0$. 	
\end{assumption}

We note that Assumption~\ref{assump_3} is not restrictive in the sense that even if the weight matrix $W$ does not satisfy this assumption, we can still apply the results in our paper 
by considering the modified weight matrix $W_\tau := \frac{\tau}{\tau+1} I + \frac{1}{\tau+1} W$ for $\tau>1$ instead of $W$. Because, we have $\lambda_N^{W_\tau}> \frac{\tau -1}{\tau+1}>0$ for $\tau>1$ and therefore $W_\tau$ satisfies Assumption~\ref{assump_3}.
{\color{black}We will elaborate this point further after Corollary~\ref{cor:rate:dasg} in Remark~\ref{remark-spec-gap}.}

The following result extends  \cite{StrConvex} from non-distributed ASG to D-ASG.
\begin{theorem}\label{thm:general:DASG}
Assume there exist $\rho\in(0,1)$ and a positive semi-definite $2\times 2$
matrix $\tilde{P}$ such that
\begin{equation}\label{MI_general}
\rho^{2}\tilde{X}_{1}+(1-\rho^{2})\tilde{X}_{2}
\succeq 
\left[\begin{array}{cc}
\tilde{A}_{\text{dasg}}^{T}\tilde{P}\tilde{A}_{\text{dasg}}-\rho^{2}\tilde{P} & \tilde{A}_{\text{dasg}}^{T}\tilde{P}\tilde{B}_{\text{dasg}}
\\
\tilde{B}_{\text{dasg}}^{T}\tilde{P}\tilde{A}_{\text{dasg}} & \tilde{B}_{\text{dasg}}^{T}\tilde{P}\tilde{B}_{\text{dasg}}
\end{array}\right],
\end{equation}
where $\tilde{A}_{\text{dasg}}$, $\tilde{B}_{\text{dasg}}$ and 
$\tilde{C}_{\text{dasg}}$ are defined in \eqref{ABC:DASG} and
\begin{equation*}
\tilde{X}_{1}:=
\left[\begin{array}{ccc}
\frac{\beta^{2}\mu}{2} & \frac{-\beta^{2}\mu}{2} & \frac{-\beta}{2}
\\
\frac{-\beta^{2}\mu}{2} & \frac{\beta^{2}\mu}{2} & \frac{\beta}{2}
\\
\frac{-\beta}{2} & \frac{\beta}{2} & \frac{\alpha(1+\lambda_{N}^{W}-L\alpha)}{2}
\end{array}\right],
\quad
\tilde{X}_{2}:=
\left[\begin{array}{ccc}
\frac{(1+\beta)^{2}\mu}{2} & \frac{-\beta(1+\beta)\mu}{2} & \frac{-(1+\beta)}{2}
\\
\frac{-\beta(1+\beta)\mu}{2} & \frac{\beta^{2}\mu}{2} & \frac{\beta}{2}
\\
\frac{-(1+\beta)}{2} & \frac{\beta}{2} & \frac{\alpha(1+\lambda_{N}^{W}-L\alpha)}{2}
\end{array}\right].
\end{equation*}
Let $P=\tilde{P}\otimes I_{Nd}$. Then, for every $k\geq 0$,
\begin{align}
&\mathbb{E}\left[\left\Vert x^{(k)}-x^{\infty}\right\Vert^{2}\right]\leq
\rho^{2k}\frac{2V_{P, \alpha, 1}(\xi_{0})}{\mu}
+\frac{1}{1-\rho^{2}}\frac{2\alpha^{2}\sigma^{2}N}{\mu }\left(\tilde{P}_{11}+\frac{1-\lambda_{N}^{W}+\alpha L}{2\alpha}\right).\label{ineq-perf-dasg}
\end{align}
Therefore, the robustness of D-ASG iterations defined in \eqref{def-robust} satisfies
\begin{equation*}
J_{\infty}\leq
\frac{2\alpha^{2}}{\mu(1-\rho^{2})}\left(\tilde{P}_{11}+\frac{1-\lambda_{N}^{W}+\alpha L}{2\alpha}\right).
\end{equation*}
\end{theorem}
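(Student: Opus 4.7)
The approach is to apply the Lyapunov framework of Section~\ref{sec:reformulate} to the equivalent centralized formulation \eqref{eq-centr-asg}, namely ASG on $F_{\calW,\alpha}\in\mathcal{S}_{\mu,L_\alpha}(\mathbb{R}^{Nd})$ with $L_\alpha=(1-\lambda_N^W)/\alpha+L$ from \eqref{def-L-alpha}. I would work with the dynamical system \eqref{unified-dyn-sys}--\eqref{eqn:xi:DASG}--\eqref{ABC:DASG} and the Lyapunov function $V_{P,\alpha,1}$ from \eqref{def-lyap} with $T=[1\ 0]\otimes I_{Nd}$, and aim to establish the one-step dissipation inequality
\begin{equation*}
\mathbb{E}\bigl[V_{P,\alpha,1}(\xi_{k+1})\,\big|\,\xi_k\bigr]\le\rho^{2}V_{P,\alpha,1}(\xi_k)+\alpha^{2}\sigma^{2}N\Bigl(\tilde{P}_{11}+\tfrac{L_\alpha}{2}\Bigr).
\end{equation*}
Iterating this inequality yields a geometric series for the variance term, and then the bound $V_{P,\alpha,1}(\xi_k)\ge\frac{\mu}{2}\|x^{(k)}-x^\infty\|^{2}$, which follows from $P\succeq 0$ together with $\mu$-strong convexity of $F_{\calW,\alpha}$ around its minimizer $x^\infty$, converts the Lyapunov estimate into the claimed bound on $\mathbb{E}\|x^{(k)}-x^\infty\|^2$. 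Taking $\limsup$ and using $\mbox{Var}(x^{(k)})\le \mathbb{E}\|x^{(k)}-x^\infty\|^{2}+o(1)$ gives the $J_\infty$ estimate.

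The deterministic one-step contraction is obtained by combining two interpolation inequalities for $F_{\calW,\alpha}$ with the matrix inequality \eqref{MI_general}. Setting $u=x^{(k)}-x^\infty$, $v=x^{(k-1)}-x^\infty$, $g=\nabla F_{\calW,\alpha}(y^{(k)})$, descent lemma applied on $x^{(k+1)}=y^{(k)}-\alpha g$ together with strong convexity between $y^{(k)}$ and $x^{(k)}$ (resp.\ $x^\infty$) give
\begin{equation*}
[u;v;g]^{T}\tilde{X}_1[u;v;g]\le F_{\calW,\alpha}(x^{(k)})-F_{\calW,\alpha}(x^{(k+1)}),\qquad [u;v;g]^{T}\tilde{X}_2[u;v;g]\le F_{\calW,\alpha}(x^\infty)-F_{\calW,\alpha}(x^{(k+1)}).
\end{equation*}
Here one uses the algebraic identity $\alpha(1+\lambda_N^W-L\alpha)/2=\alpha(1-L_\alpha\alpha/2)$, so that the $(3,3)$ entries of $\tilde{X}_1,\tilde{X}_2$ exactly capture the descent gain in moving from $y^{(k)}$ to $x^{(k+1)}$. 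Taking the convex combination $\rho^{2}(\mathrm{I})+(1-\rho^{2})(\mathrm{II})$ and noting that in the noiseless step $\xi_{k+1}=A\xi_k+Bg$ so the right-hand side of \eqref{MI_general} equals $\xi_{k+1}^{T}P\xi_{k+1}-\rho^{2}\xi_k^{T}P\xi_k$, the assumed matrix inequality yields $V_{P,\alpha,1}(\xi_{k+1})\le\rho^{2}V_{P,\alpha,1}(\xi_k)$.

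To incorporate gradient noise, I would write $\xi_{k+1}=A\xi_k+Bg+Bw_k$ where $w_k=\tilde{\nabla}F_{\calW,\alpha}(y^{(k)})-\nabla F_{\calW,\alpha}(y^{(k)})$ has zero conditional mean and second moment bounded by $\sigma^{2}N$ (since the regularizer $\tfrac{1}{2\alpha}x^{T}(I-\calW)x$ in $F_{\calW,\alpha}$ is deterministic). Because $B=\tilde{B}_{\text{dasg}}\otimes I_{Nd}$ has $B^{T}PB=\alpha^{2}\tilde{P}_{11}I_{Nd}$, taking conditional expectation adds $\alpha^{2}\tilde{P}_{11}\sigma^{2}N$ to the quadratic part of the Lyapunov function. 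For the function part, $L_\alpha$-smoothness of $F_{\calW,\alpha}$ applied to $x^{(k+1)}=(y^{(k)}-\alpha g)-\alpha w_k$ together with $\mathbb{E}[w_k]=0$ contributes at most $\tfrac{L_\alpha}{2}\alpha^{2}\sigma^{2}N$. Summing the two contributions gives the stated noise coefficient $\tilde{P}_{11}+\tfrac{1-\lambda_N^W+\alpha L}{2\alpha}$. The main obstacle is the careful identification of the two interpolation inequalities encoded by $\tilde{X}_1,\tilde{X}_2$ (in particular verifying that their $(3,3)$ entry is precisely $\alpha-L_\alpha\alpha^{2}/2$ so that the descent lemma on $F_{\calW,\alpha}$ closes the loop); once this alignment between $\tilde{X}_1,\tilde{X}_2$, the Lyapunov function and the dynamical system is in place, the rest is an essentially mechanical iteration and application of strong convexity.
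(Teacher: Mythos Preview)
Your proposal is correct and follows the same route as the paper. The paper's proof is shorter only because it directly invokes Lemma~2.2 of \cite{aybat2019universally} (restated as Lemma~\ref{lem:aybat}) to obtain the one-step inequality $\mathbb{E}[V_{P,\alpha,1}(\xi_{k})]\le\rho^{2}\mathbb{E}[V_{P,\alpha,1}(\xi_{k-1})]+\alpha^{2}\sigma^{2}N(\tilde P_{11}+L_\alpha/2)$; your sketch of the two interpolation inequalities encoded by $\tilde X_1,\tilde X_2$ and the additive noise decomposition is exactly how that lemma is proved, so you have reconstructed the cited argument rather than quoted it.
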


{\color{black}With the additional assumption $\alpha \leq \frac{1}{L+\mu}$, we have the following corollary.}

\begin{corollary}\label{cor:general:DASG}
{\color{black}Under the assumptions in Theorem~\ref{thm:general:DASG}, if in addition, $\alpha \leq \frac{1}{L+\mu}$, 
then we have
\begin{align*}
\mathbb{E}\left[\left\Vert x^{(k)}-x^{\ast}\right\Vert^{2}\right]\leq
4\rho^{2k}\frac{V_{P, \alpha, 1}(\xi_{0})}{\mu}
+\frac{1}{1-\rho^{2}}\frac{4\alpha^{2}\sigma^{2}N}{\mu }\left(\tilde{P}_{11}+\frac{1-\lambda_{N}^{W}+\alpha L}{2\alpha}\right)
+\frac{2\alpha^{2}C_{1}^{2}N}{(1-\gamma)^{2}},
\end{align*}
where $\gamma,C_{1}$ are given in \eqref{eqn:asymp_suboptim}-\eqref{defn:C:1}.}
\end{corollary}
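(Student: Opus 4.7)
The plan is to derive the corollary as a short consequence of Theorem~\ref{thm:general:DASG} combined with the ``network effect'' bound \eqref{eqn:asymp_suboptim}. The key observation is that Theorem~\ref{thm:general:DASG} controls $\mathbb{E}\|x^{(k)}-x^\infty\|^2$, the distance to the fixed point $x^\infty$ of the noiseless DG iteration, whereas the corollary asks for the distance to the concatenated optimum $x^\ast$ defined in \eqref{x_*_def}. The standard device is the triangle inequality $\|a+b\|^2 \leq 2\|a\|^2 + 2\|b\|^2$, applied with $a = x^{(k)}-x^\infty$ and $b = x^\infty - x^\ast$, which gives
\begin{equation*}
\mathbb{E}\left\|x^{(k)}-x^\ast\right\|^2 \leq 2\,\mathbb{E}\left\|x^{(k)}-x^\infty\right\|^2 + 2\left\|x^\infty - x^\ast\right\|^2.
\end{equation*}

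For the first term I simply invoke the bound \eqref{ineq-perf-dasg} from Theorem~\ref{thm:general:DASG}, which after the factor of $2$ already produces the desired bias coefficient $4\rho^{2k}V_{P,\alpha,1}(\xi_0)/\mu$ and the variance coefficient $\tfrac{4\alpha^2\sigma^2 N}{\mu(1-\rho^2)}\bigl(\tilde{P}_{11}+\tfrac{1-\lambda_N^W+\alpha L}{2\alpha}\bigr)$. For the second term, by definition of $x^\infty$ in \eqref{def-fixed-pt} and $x^\ast$ in \eqref{x_*_def}, the vector $x^\infty - x^\ast$ is a stacking of the $N$ per-node errors $x_i^\infty - x_\ast$, so
\begin{equation*}
\left\|x^\infty - x^\ast\right\|^2 = \sum_{i=1}^{N} \left\|x_i^\infty - x_\ast\right\|^2 \leq \frac{N C_1^2 \alpha^2}{(1-\gamma)^2},
\end{equation*}
using \eqref{eqn:asymp_suboptim} node-by-node. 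Multiplying by $2$ yields the last term $\tfrac{2\alpha^2 C_1^2 N}{(1-\gamma)^2}$ of the claimed inequality, so the three pieces assemble exactly into the stated bound.

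There is no genuine obstacle here; the substantive work is already done in Theorem~\ref{thm:general:DASG} and in the DG fixed-point estimate \eqref{eqn:asymp_suboptim}. The only small point requiring attention is the extra hypothesis $\alpha \leq 1/(L+\mu)$: this restriction is precisely what lets us apply \eqref{eqn:asymp_suboptim} (it is the stepsize regime under which the DG fixed point $x^\infty$ is guaranteed to satisfy the $\mathcal{O}(\alpha/(1-\gamma))$ neighborhood bound with the explicit constant $C_1$ of \eqref{defn:C:1}, cf.\ Lemma~\ref{lem:C1:gamma} and the analogous use in Corollary~\ref{DSG_cor}). No further stepsize condition beyond those already in Theorem~\ref{thm:general:DASG} is needed, since the hypotheses of that theorem are assumed to hold throughout.
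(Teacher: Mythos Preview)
Your proof is correct and follows essentially the same approach as the paper: apply the elementary inequality $\|a+b\|^2\le 2\|a\|^2+2\|b\|^2$ with $a=x^{(k)}-x^\infty$ and $b=x^\infty-x^\ast$, invoke Theorem~\ref{thm:general:DASG} for the first term, and use the per-node bound \eqref{eqn:asymp_suboptim} (valid under $\alpha\le 1/(L+\mu)$ via Lemma~\ref{lem:C1:gamma}) to obtain $\|x^\infty-x^\ast\|^2\le NC_1^2\alpha^2/(1-\gamma)^2$ for the second.
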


The results in Theorem~\ref{thm:general:DASG} are stated in terms of a $2\times 2$ matrix $\tilde P$ which solves the $3 \times 3$ matrix inequality \eqref{MI_general}. For any fixed $\alpha$, $\beta$ and $\rho$; this is a linear matrix inequality (LMI). Therefore, we can compute $\tilde P$ numerically by 
varying $\alpha$, $\beta$ and $\rho$ on a grid and then solving the resulting LMIs with a software such as CVX \citep{grant2008cvx} (see also \cite{lessard2016analysis} for a similar approach). However, in the next result, we obtain some explicit performance bounds in the special case when $\beta=\frac{1-\sqrt{\alpha\mu}}{1+\sqrt{\alpha\mu}}$; this choice of $\beta$ is motivated by the fact that it is a common choice in the non-distributed and noiseless setting.\footnote{Furthermore it can be shown that it gives the fastest rate for quadratic objectives in the non-distributed case when there is no noise \citep{aybat2019universally}.} The proof is deferred to the Appendix; it is based on the fact that when $\beta=\frac{1-\sqrt{\alpha\mu}}{1+\sqrt{\alpha\mu}}$, $\rho=1-\sqrt{\alpha\mu}$ and $\alpha \in (0,\frac{\lambda_N^W}{L}]$; $\tilde{P}=\tilde S_\alpha$ is an explicit solution to the matrix inequality \eqref{MI_general} where

\begin{equation*} 
\tilde{S}_{\alpha} :=
    \begin{bmatrix} \frac{1}{2\alpha} &  -\frac{1-\sqrt{\alpha \mu}}{2\alpha} \\ 
     -\frac{1-\sqrt{\alpha \mu}}{2\alpha} & -\left(\frac{1-\sqrt{\alpha \mu}}{2\alpha}\right)^2
    \end{bmatrix}= vv^T \quad \mbox{where} \quad 
    v := \begin{bmatrix}
     \frac{1}{\sqrt{2\alpha}} \\
     \sqrt{\frac{\mu}{2}} - \sqrt{\frac{1}{2\alpha}}
    \end{bmatrix}.
\end{equation*}
Then, plugging in $\tilde P = \tilde S_\alpha$ in Theorem~\ref{thm:general:DASG} and in the bound \eqref{thm:general:DASG}, we obtain performance guarantees in terms of the Lyapunov function $V_{S_\alpha,\alpha,1}$. To simplify the notation in this case, with slight abuse of notation, we let 
\begin{equation}\label{eqn:V:S:alpha}
V_{S,\alpha}(\xi) := V_{S_\alpha,\alpha,1}(\xi) = \xi^T  S_{\alpha} \xi+ F_{\calW,\alpha}(T\xi + x^\infty) - F_{\calW,\alpha}(x^\infty).
\end{equation}
We have the following explicit performance bounds on the convergence and the robustness of D-ASG.
\begin{theorem}\label{thm-rate-dasg} 
Consider running D-ASG method with $\alpha \in (0,\frac{\lambda_N^W}{L}]$ and $\beta = \frac{ 1-\sqrt{\alpha\mu}}{1+\sqrt{\alpha\mu}}$. Then, 
for any $k\geq 0$, we have
\begin{align}
&\E \left[\left\|  x^{(k)} - x^\infty \right\|^2\right]
\leq
 2\left(1-\sqrt{\alpha \mu}\right)^{k} \frac{ V_{S,\alpha}\left(\xi_{0}\right)}{\mu}
+ \frac{\sigma^2 N \sqrt{\alpha}}{\mu\sqrt{\mu}}\left(2-\lambda_N^W +\alpha L \right). \label{ineq-to-prove-1}
\end{align}
Therefore, the robustness measure (defined in \eqref{def-robust}) satisfies
\begin{equation*}
J_{\infty}(\alpha)\leq
\frac{\sqrt{\alpha}}{\mu\sqrt{\mu}}\left(2-\lambda_N^W +\alpha L \right).
\end{equation*}
\end{theorem}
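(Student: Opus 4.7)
The plan is to invoke Theorem~\ref{thm:general:DASG} with the specific choice $\tilde{P} = \tilde{S}_{\alpha}$, $\rho = 1 - \sqrt{\alpha\mu}$, and $\beta = (1-\sqrt{\alpha\mu})/(1+\sqrt{\alpha\mu})$, and then simplify the resulting bounds. The first (and main) step is to verify that $(\tilde{S}_{\alpha}, \rho, \beta)$ indeed satisfies the $3\times 3$ matrix inequality~\eqref{MI_general} whenever $\alpha \in (0, \lambda_N^W/L]$. Since $\tilde{S}_\alpha = vv^T$ is rank one, the quadratic form on the left of~\eqref{MI_general} is rank one plus the explicit $\tilde{X}_1,\tilde{X}_2$ contributions, so the inequality reduces to showing that a scalar-valued quadratic expression in the entries of a test vector is nonnegative for the particular $\rho, \beta$. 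This reduction is the Schur-complement / rank-one trick already used in the non-distributed setting (see the centralized analysis built on \cite{fazlyab2017analysis} and \cite{StrConvex}), adapted here to the modified objective $F_{\calW,\alpha}$ which is $\mu$-strongly convex and $L_\alpha$-smooth. The condition $\alpha \leq \lambda_N^W/L$ is precisely what is needed to ensure that the ``smoothness slack'' coefficient $1+\lambda_N^W - L\alpha$ appearing in $\tilde{X}_1,\tilde{X}_2$ is at least $1$, so that the non-distributed argument goes through verbatim with $L$ replaced by $L_\alpha = (1-\lambda_N^W)/\alpha + L$.

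Once the LMI is verified, applying Theorem~\ref{thm:general:DASG} directly yields
\begin{equation*}
\E\left[\|x^{(k)}-x^\infty\|^2\right]
\leq \rho^{2k} \frac{2 V_{S,\alpha}(\xi_0)}{\mu}
+ \frac{1}{1-\rho^2}\cdot\frac{2\alpha^2 \sigma^2 N}{\mu}\left((\tilde{S}_\alpha)_{11} + \frac{1-\lambda_N^W + \alpha L}{2\alpha}\right),
\end{equation*}
with $V_{S,\alpha}$ as defined in~\eqref{eqn:V:S:alpha}. Next I would substitute $(\tilde{S}_\alpha)_{11} = 1/(2\alpha)$ so that the parenthesized factor collapses to $(2-\lambda_N^W + \alpha L)/(2\alpha)$, and replace the exponent $\rho^{2k} = (1-\sqrt{\alpha\mu})^{2k}$ by the larger quantity $(1-\sqrt{\alpha\mu})^{k}$ to obtain the advertised bias term.

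For the variance constant, I would use the identity $1 - \rho^2 = \sqrt{\alpha\mu}\,(2-\sqrt{\alpha\mu})$. Since $\alpha \leq \lambda_N^W/L \leq 1/L$ gives $\alpha\mu \leq 1/\kappa \leq 1$, we have $2-\sqrt{\alpha\mu} \geq 1$, so $1/(1-\rho^2) \leq 1/\sqrt{\alpha\mu}$. Multiplying through,
\begin{equation*}
\frac{2\alpha^2}{\mu(1-\rho^2)}\cdot\frac{2-\lambda_N^W + \alpha L}{2\alpha}
= \frac{\alpha(2-\lambda_N^W+\alpha L)}{\mu\sqrt{\alpha\mu}(2-\sqrt{\alpha\mu})}
\leq \frac{\sqrt{\alpha}(2-\lambda_N^W+\alpha L)}{\mu\sqrt{\mu}},
\end{equation*}
which is exactly the stated variance term in~\eqref{ineq-to-prove-1}. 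The robustness bound on $J_\infty(\alpha)$ then follows from the definition~\eqref{def-robust} by dividing the $\limsup$ of the right-hand side by $\sigma^2 N$, since the bias term vanishes as $k\to\infty$.

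The hard part will be the algebraic verification of~\eqref{MI_general} with $\tilde{P}=\tilde{S}_\alpha$: because $\tilde{S}_\alpha$ is only positive semidefinite (not definite), the slack in the matrix inequality is tight, and the margin $\alpha\leq\lambda_N^W/L$ must be exploited carefully in the rank-one quadratic form. Everything else (the reduction of $1-\rho^2$, the evaluation of $\tilde{P}_{11}$, and the bound on $J_\infty$) is routine bookkeeping once the LMI certificate is in place.
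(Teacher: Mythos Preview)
Your approach is correct and essentially the same as the paper's. The paper's proof is slightly more streamlined: rather than re-verifying the LMI~\eqref{MI_general} with $\tilde{P}=\tilde{S}_\alpha$, it observes that D-ASG on $F$ is exactly centralized ASG on $F_{\calW,\alpha}\in\mathcal{S}_{\mu,L_\alpha}$ and directly invokes \cite[Proposition~4.6, Corollary~4.9]{StrConvex} to obtain the one-step Lyapunov contraction $\E[V_{S,\alpha}(\xi_{k+1})]\leq(1-\sqrt{\alpha\mu})\,\E[V_{S,\alpha}(\xi_k)]+\tfrac{\sigma^2 N\alpha}{2}(1+\alpha L_\alpha)$, valid precisely when $\alpha\leq 1/L_\alpha$, i.e.\ $\alpha\leq\lambda_N^W/L$. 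Summing and plugging $1+\alpha L_\alpha=2-\lambda_N^W+\alpha L$ gives the stated bound directly, without the intermediate weakening $\rho^{2k}\leq\rho^k$ or $1/(1-\rho^2)\leq 1/\sqrt{\alpha\mu}$ that your route needs; but the underlying certificate is identical.
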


{\color{black}With the additional assumption $\alpha \leq \frac{1}{L+\mu}$, we have the following corollary.}

\begin{corollary}\label{cor:rate:dasg}
{\color{black}Under the assumptions in Theorem~\ref{thm-rate-dasg}, if in addition, $\alpha \leq \frac{1}{L+\mu}$, 
then we have
\begin{align}
\E\left[ \left\| x^{(k)} -x^* \right\|^2\right]
\leq
 4\left(1-\sqrt{\alpha \mu}\right)^{k} \frac{ V_{S,\alpha}\left(\xi_{0}\right)}{\mu}
+ \frac{2\sigma^2 N \sqrt{\alpha}}{\mu\sqrt{\mu}}\left(2-\lambda_N^W +\alpha L \right) +  \frac{2C_{1}^{2}N\alpha^2}{(1-\gamma)^2}, \label{ineq-to-prove-2}
\end{align}
where $\gamma,C_{1}$ are given in \eqref{eqn:asymp_suboptim}-\eqref{defn:C:1}.}
\end{corollary}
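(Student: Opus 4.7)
The plan is to obtain the bound on $\mathbb{E}[\|x^{(k)} - x^*\|^2]$ by combining the convergence guarantee to the fixed point $x^\infty$ provided by Theorem~\ref{thm-rate-dasg} with the known ``network effect'' bound on $\|x^\infty - x^*\|$ from \eqref{eqn:asymp_suboptim}. Since the bound on $\|x^{(k)} - x^\infty\|$ has already been established, the corollary reduces to bookkeeping via a triangle-inequality style split.

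First I would apply the inequality $\|a+b\|^2 \leq 2\|a\|^2 + 2\|b\|^2$ with $a = x^{(k)} - x^\infty$ and $b = x^\infty - x^*$ to write
\begin{equation*}
\mathbb{E}\left[\|x^{(k)} - x^*\|^2\right] \leq 2\,\mathbb{E}\left[\|x^{(k)} - x^\infty\|^2\right] + 2\,\|x^\infty - x^*\|^2.
\end{equation*}
The first term is directly bounded by Theorem~\ref{thm-rate-dasg}, which yields
\begin{equation*}
2\,\mathbb{E}\left[\|x^{(k)} - x^\infty\|^2\right] \leq 4\left(1-\sqrt{\alpha\mu}\right)^{k}\frac{V_{S,\alpha}(\xi_0)}{\mu} + \frac{2\sigma^2 N \sqrt{\alpha}}{\mu\sqrt{\mu}}\left(2 - \lambda_N^W + \alpha L\right),
\end{equation*}
which already accounts for the bias and variance contributions in the claimed bound.

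For the second term, the assumption $\alpha \leq \frac{1}{L+\mu}$ is precisely what is needed to invoke the DG fixed-point characterization \eqref{eqn:asymp_suboptim} (cf.\ Lemma~\ref{lem:C1:gamma} in the Appendix and the analogous step in Corollary~\ref{DSG_cor}). This gives $\|x_i^\infty - x_*\| \leq C_1 \alpha/(1-\gamma)$ for each node $i$. Because $x^\infty$ and $x^*$ are the concatenations over $N$ nodes (see \eqref{x_*_def}, \eqref{def-fixed-pt}), summing the squared norms yields
\begin{equation*}
\|x^\infty - x^*\|^2 = \sum_{i=1}^N \|x_i^\infty - x_*\|^2 \leq \frac{C_1^2 N \alpha^2}{(1-\gamma)^2},
\end{equation*}
so $2\|x^\infty - x^*\|^2 \leq \frac{2 C_1^2 N \alpha^2}{(1-\gamma)^2}$, which is exactly the network-effect term in \eqref{ineq-to-prove-2}.

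Adding these two contributions gives the stated inequality. No hard step arises: the entire argument is a decomposition into a ``convergence to fixed point'' piece, already handled by Theorem~\ref{thm-rate-dasg}, and a deterministic ``fixed-point-to-optimum'' piece supplied by \eqref{eqn:asymp_suboptim}. The only point requiring care is verifying that $\alpha \leq \frac{1}{L+\mu}$ (together with the standing hypothesis $\alpha \leq \lambda_N^W/L$ from Theorem~\ref{thm-rate-dasg}) falls within the stepsize regime under which \eqref{eqn:asymp_suboptim} is valid with the constant $C_1$ defined in \eqref{defn:C:1}; this is a routine check that mirrors the corresponding step in Corollary~\ref{DSG_cor}.
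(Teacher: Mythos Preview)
Your proposal is correct and follows essentially the same approach as the paper: split $\|x^{(k)}-x^*\|^2 \le 2\|x^{(k)}-x^\infty\|^2 + 2\|x^\infty-x^*\|^2$, bound the first term via Theorem~\ref{thm-rate-dasg}, and bound the second via \eqref{eqn:asymp_suboptim} using $\alpha \le \tfrac{1}{L+\mu}$ to get $\|x^\infty-x^*\|\le \tfrac{\alpha C_1\sqrt{N}}{1-\gamma}$.
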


{\color{black}
\begin{remark}[Dependency to the spectral gap]\label{remark-spec-gap} We observe from Corollary~\ref{cor:rate:dasg} that among the three error terms in our performance bounds for D-ASG, only the last error term is about the network effect which depends on the spectral gap and this last error term is linear in $1/(1-\gamma)^{2}$, where $1-\gamma$ is the spectral
gap of the matrix $W$. 
We discussed earlier that Assumption~\ref{assump_3} is not restrictive
because even if the matrix $W$ does not satisfy Assumption~\ref{assump_3}, 
one can consider the modified weight matrix $W_{1}:=\frac{1}{2}I+\frac{1}{2}W$ which will 
satisfy Assumption~\ref{assump_3}. When we use the modified weight matrix $W_1$, the spectral gap may get smaller (i.e. spectral gap of $W_1$ can be smaller than that of $W$) and consequently the error term $1/(1-\gamma)^{2}$ due to network effects
in Corollary~\ref{cor:rate:dasg} may get (worse) larger. However, the network error term
can only get larger by a constant factor of 4. 
To explain this point further, assume that $W$ does not satisfy 
Assumption~\ref{assump_3}. In this case the smallest eigenvalue $\lambda_N^W$ of the mixing matrix $W$ can be negative. 
We have two cases: (I) $|\lambda_N^W| > |\lambda_2^W|$; (ii) $|\lambda_N^W| \leq |\lambda_2^W|$.
In case (I), i.e. when $|\lambda_N^W| > |\lambda_2^W|$, the spectral gap is determined by $\lambda_N^W$ in the sense that we have the spectral gap $\Delta(W):=1 - |\lambda_N^W|$ and the spectral gap of the shifted matrix $\Delta(W_{1})=\Delta(\frac{I+W}{2}) =\frac{1 - |\lambda_2(W)| }{2}$ can be larger; for instance when $\lambda_N^W$ is close enough to $-1$. If that is the case, then shifting the $W$ matrix will result in an improved spectral gap and improved convergence guarantees. If on the other hand, $\lambda_N^W$ is sufficiently far away from $-1$, then the spectral gap of the shifted matrix can be smaller, but by a factor of at most $2$; in other words we would have
	 $\Delta(W) = 1 - |\lambda_N^W| \leq 2\Delta(W_{1})=2\Delta\left(\frac{I+W}{2}\right) = 1- |\lambda_2^W|$.
In case (II), i.e. when $|\lambda_N^W| \leq |\lambda_2^W|$, we have the spectral gap $\Delta(W)=1 - |\lambda_2^W|$ whereas the spectral gap of the shifted matrix satisfies $\Delta(W_{1})=\Delta(\frac{I+W}{2}) =\frac{1 - |\lambda_2(W)|}{2} = \frac{\Delta(W)}{2}$. In this case, the spectral gap becomes worse, but only by a factor of $2$. 
To summarize, shifting the $W$ matrix to $W_{2}=\frac{1}{2}I+\frac{1}{2}$ so that Assumption~\ref{assump_3} can be satisfied
might lead to improved convergence results in some cases, and in some cases it can make the convergence bounds looser; but this looseness in the spectral gap is at most by a constant factor of $2$, and the last error term for D-ASG in Corollary~\ref{cor:rate:dasg} has the order of $O(\frac{\alpha^2}{1-\gamma^2}) =\mathcal{O}( \frac{\alpha^2}{\Delta^2(W)})$. Therefore, this term can become worse only by a constant factor of $4$. This shows that Assumption~\ref{assump_3} is not very restrictive in terms of iteration complexity results as it can hold for any graph topology and for a wide class of choices of $W$.
\end{remark}
}
{\color{black}With slight abuse of notation, we let
\begin{equation}\label{eqn:V:S:alpha:bar}
V_{\bar{S},\alpha}(\bar{\xi}) := \bar{\xi}^T  \bar{S}_{\alpha} \bar{\xi}+ f(\bar{T}\bar{\xi} + x_{\ast}) - f(x_{\ast}),
\end{equation}
where $\bar{S}_{\alpha}=\tilde{S}_{\alpha}\otimes I_{d}$ and $\bar{T}=[1\,\, 0]\otimes I_{d}$. Using this Lyapunov function, the next result establishes a performance bound for the node averages $\bar{x}^{(k)}$. We see that the variance term of our bound is proportional to $\frac{\sigma^2}{N}$ due to the averaging effect and is decreasing with $N$.}
\begin{proposition}\label{prop:dsg-average-optimal-rate-DASG}
{\color{black} Consider the node averages $\bar{x}^{(k)}$ for the D-ASG algorithm
with $0<\alpha\leq\min\left\{\frac{1}{L+\mu},\frac{\lambda_N^W}{L}\right\}$ and $\beta = \frac{ 1-\sqrt{\alpha\mu}}{1+\sqrt{\alpha\mu}}$ and the initialization $x^{(0)}=x^{(-1)}=0$.
For any $k$, we have
\begin{align*}
&\mathbb{E}\left\Vert\bar{x}^{(k)}-x_{\ast}\right\Vert^{2}
\\
&\leq
\left(1-\frac{\sqrt{\alpha\mu}}{2}\right)^{k}\frac{2V_{\bar{S},\alpha}\left(\bar{\xi}_{0}\right)}{\mu}
+\frac{8}{\gamma^{2}\mu\sqrt{\mu}}\sqrt{\alpha}H_{1}H_{3}\frac{\gamma^{2k}-(1-\sqrt{\alpha\mu}/2)^{k}}{\gamma^{2}-(1-\sqrt{\alpha\mu}/2)}
+\frac{2}{\mu^{2}}\alpha H_{1}H_{2}
\\
&\qquad\qquad\qquad
+\frac{2\sigma^2\sqrt{\alpha}}{\mu\sqrt{\mu}N}\left(1+\frac{\sqrt{\alpha\mu}}{2}\right)\left(1+\alpha L \right),
\end{align*}
and for every $i=1,2,\ldots,N$ and any $k$,
\begin{align*}
&\mathbb{E}\left\Vert x_{i}^{(k)}-x_{\ast}\right\Vert^{2}
\\
&\leq
\left(1-\frac{\sqrt{\alpha\mu}}{2}\right)^{k}\frac{4V_{\bar{S},\alpha}\left(\bar{\xi}_{0}\right)}{\mu}
+\frac{16}{\gamma^{2}\mu\sqrt{\mu}}\sqrt{\alpha}H_{1}H_{3}\frac{\gamma^{2k}-(1-\sqrt{\alpha\mu}/2)^{k}}{\gamma^{2}-(1-\sqrt{\alpha\mu}/2)}+\frac{4}{\mu^{2}}\alpha H_{1}H_{2}
\\
&\qquad
+16\gamma^{2k}\left(4\frac{ V_{S,\alpha}\left(\xi_{0}\right)}{\mu}
+ \frac{2\sigma^2 N \sqrt{\alpha}}{\mu\sqrt{\mu}}\left(2-\lambda_N^W +\alpha L \right) +  \frac{2C_{1}^{2}N\alpha^2}{(1-\gamma)^2}
+\Vert x^{\ast}\Vert^{2}\right)
\\
&\qquad\qquad
+\frac{8D_{y}^{2}\alpha^{2}}{(1-\gamma)^{2}}
+\frac{8\sigma^{2}N\alpha^{2}}{(1-\gamma)^{2}}
+\frac{16C_{0}\alpha}{(1-\gamma)^{2}}
+\frac{4\sigma^2\sqrt{\alpha}}{\mu\sqrt{\mu}N}\left(1+\frac{\sqrt{\alpha\mu}}{2}\right)\left(1+\alpha L \right),
\end{align*}
where $C_{0}$ is a positive constant\footnote{\color{black}An exact expression for the constant $C_0$ can be obtained from our proof technique. However, for the simplicity of the presentation, we did not specify the constant $C_0$ explicitly.} and
\begin{align}\label{D:y:eqn}
D_{y}^{2}
&:=4L^{2}\left((1+\beta)^{2}+\beta^{2}\right)
\left(4\frac{ V_{S,\alpha}\left(\xi_{0}\right)}{\mu}
+ \frac{2\sigma^2 N \sqrt{\alpha}}{\mu\sqrt{\mu}}\left(2-\lambda_N^W +\alpha L \right) +  \frac{2C_{1}^{2}N\alpha^2}{(1-\gamma)^2}\right)
\nonumber
\\
&\qquad\qquad
+2\left\Vert\nabla F\left(x^{\ast}\right)\right\Vert^{2},
\end{align}
and
\begin{align*}
&H_{1}:=8\left(1 + \frac{\mu\alpha}{2}-\sqrt{\alpha\mu}\right)+\frac{2L^{2}\alpha}{\mu} + \left(L\alpha+2 + \mu\alpha\right)\sqrt{\alpha\mu}-2\mu\alpha,
\\
&H_{2}:=\frac{2}{N}L^{2}
\left((1+\beta)^{2}+\beta^{2}\right)\left(\frac{4D_{y}^{2}\alpha}{(1-\gamma)^{2}}
+\frac{4\sigma^{2}N\alpha}{(1-\gamma)^{2}}+\frac{8C_{0}}{(1-\gamma)^{2}}\right),
\\
&H_{3}:=\frac{2}{N}L^{2}
\left((1+\beta)^{2}+\beta^{2}\right)
\left(4\frac{ V_{S,\alpha}\left(\xi_{0}\right)}{\mu}
+ \frac{2\sigma^2 N \sqrt{\alpha}}{\mu\sqrt{\mu}}\left(2-\lambda_N^W +\alpha L \right) +  \frac{2C_{1}^{2}N\alpha^2}{(1-\gamma)^2}
+\Vert x^{\ast}\Vert^{2}\right).
\end{align*}
}
\end{proposition}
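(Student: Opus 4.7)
The plan is to decouple the analysis into two intertwined components: a Lyapunov analysis for the averaged iterate $\bar{x}^{(k)}$ treated as an inexact ASG trajectory, and a consensus-error analysis for $\|x^{(k)} - \mathbf{1}\otimes\bar{x}^{(k)}\|^2$ driven by the contraction rate $\gamma$ of the mixing matrix $W$. First, averaging the D-ASG update \eqref{def-dasg-iters-0} over $i$ and using $W\mathbf{1}=\mathbf{1}$ gives
\begin{equation*}
\bar{x}^{(k+1)} = \bar{y}^{(k)} - \frac{\alpha}{N}\sum_{i=1}^N \tilde{\nabla}f_i\!\left(y_i^{(k)}\right),\qquad \bar{y}^{(k)} = (1+\beta)\bar{x}^{(k)} - \beta \bar{x}^{(k-1)},
\end{equation*}
which is ASG applied to $f$ at $\bar{y}^{(k)}$ with two perturbations added to the gradient: (i) the averaged stochastic noise, whose conditional variance is $\sigma^2/N$; and (ii) the \emph{disagreement error} $\frac{1}{N}\sum_i[\nabla f_i(y_i^{(k)}) - \nabla f_i(\bar{y}^{(k)})]$, whose squared norm is at most $\frac{L^2}{N}\|y^{(k)} - \mathbf{1}\otimes \bar{y}^{(k)}\|^2$ by $L$-smoothness.

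Second, I would apply the Lyapunov analysis underlying Theorem~\ref{thm-rate-dasg} with the Lyapunov function $V_{\bar{S},\alpha}$ of \eqref{eqn:V:S:alpha:bar} to the averaged dynamics. By exactly the argument that gave the one-step contraction $(1-\sqrt{\alpha\mu})$ in the proof of Theorem~\ref{thm-rate-dasg} (applied in dimension $d$ rather than $Nd$, noting $\alpha\le \lambda_N^W/L\le 1/L$), together with a Young-type splitting of the cross term produced by the disagreement perturbation, I obtain a recursion of the form
\begin{equation*}
\mathbb{E} V_{\bar{S},\alpha}\!\left(\bar{\xi}_{k+1}\right) \;\le\; \bigl(1-\tfrac{\sqrt{\alpha\mu}}{2}\bigr)\,\mathbb{E} V_{\bar{S},\alpha}\!\left(\bar{\xi}_{k}\right) + c_1\,\alpha^2\,\tfrac{\sigma^2}{N}\bigl(1+\alpha L\bigr) + c_2\,\alpha\,\tfrac{L^2}{N}\,\mathbb{E}\!\left\|y^{(k)} - \mathbf{1}\otimes\bar{y}^{(k)}\right\|^2,
\end{equation*}
where the reduction of the rate from $(1-\sqrt{\alpha\mu})$ to $(1-\sqrt{\alpha\mu}/2)$ is precisely what absorbs the last disagreement term into the descent budget. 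The constants $c_1,c_2$ come from the explicit quadratic form $\tilde S_\alpha$ and track back to the combined coefficient $H_1$ in the statement.

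Third, I would bound the consensus error. Using \eqref{def-dasg-iters}, $(I-\frac{1}{N}\mathbf{1}\mathbf{1}^T)\mathcal{W}$ has spectral radius at most $\gamma$ on the consensus-orthogonal subspace, which yields
\begin{equation*}
\mathbb{E}\!\left\|y^{(k)} - \mathbf{1}\otimes\bar{y}^{(k)}\right\|^2 \;\le\; c_3\,\gamma^{2k} + \frac{c_4}{(1-\gamma)^2}\Bigl(D_y^2\alpha^2 + \sigma^2 N\alpha^2 + C_0\alpha\Bigr),
\end{equation*}
where $D_y^2$ is a uniform-in-$k$ bound on $\mathbb{E}\|\nabla F(y^{(k)})\|^2$ that I would derive from $\mathbb{E}\|y^{(k)} - x^*\|^2$ via smoothness; the latter is controlled through the D-ASG bound of Corollary~\ref{cor:rate:dasg} using the identity $y^{(k)} = (1+\beta)x^{(k)} - \beta x^{(k-1)}$. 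This is exactly how $D_y^2$ in \eqref{D:y:eqn} arises, with the factor $(1+\beta)^2 + \beta^2$. Substituting this bound into the Lyapunov recursion and unrolling gives
\begin{equation*}
\sum_{j=0}^{k-1}\bigl(1-\tfrac{\sqrt{\alpha\mu}}{2}\bigr)^{k-1-j}\gamma^{2j} = \frac{\gamma^{2k} - (1-\sqrt{\alpha\mu}/2)^k}{\gamma^{2} - (1-\sqrt{\alpha\mu}/2)},
\end{equation*}
which produces the mixed decay term in the statement. The noise term accumulates geometrically, giving a $\sigma^2\sqrt{\alpha}/(\mu\sqrt{\mu}N)$ contribution; combined with strong convexity of $f$ lower-bounding $V_{\bar{S},\alpha}(\bar{\xi})$ by $\tfrac{\mu}{2}\|\bar{x}-x_*\|^2$, this proves the bound on $\mathbb{E}\|\bar{x}^{(k)} - x_*\|^2$. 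For the individual bound, I would use $\|x_i^{(k)} - x_*\|^2 \le 2\|x_i^{(k)} - \bar{x}^{(k)}\|^2 + 2\|\bar{x}^{(k)}-x_*\|^2$, bounding the first term by $\|x^{(k)}-\mathbf{1}\otimes\bar{x}^{(k)}\|^2$ using an $x$-analogue of the consensus-error bound above (initialized from $x^{(0)}=0$, the $\gamma^{2k}$-prefactor gets multiplied by the bound from Corollary~\ref{cor:rate:dasg} plus $\|x^*\|^2$).

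The main obstacle will be the careful bookkeeping in Step~3: the consensus bound itself requires an a-priori upper bound on $\mathbb{E}\|y^{(k)}\|^2$, which in turn depends on the convergence of $x^{(k)}$; I would resolve this circularity by invoking the already-proved Corollary~\ref{cor:rate:dasg} for $\mathbb{E}\|x^{(k)}-x^*\|^2$ as the independent input, and only then feeding it back into the averaged Lyapunov recursion. A second, more technical difficulty is the rate-shaving step that converts the native D-ASG rate $(1-\sqrt{\alpha\mu})$ into the worked rate $(1-\sqrt{\alpha\mu}/2)$; this is what makes the coefficient $H_1$ take its particular form and requires tracking both the linear and quadratic contributions of the disagreement term in the one-step Lyapunov descent.
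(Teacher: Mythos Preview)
Your proposal is correct and follows essentially the same route as the paper: view the averaged iterate as an inexact ASG trajectory, apply the Lyapunov function $V_{\bar S,\alpha}$ with a rate-shaving from $(1-\sqrt{\alpha\mu})$ to $(1-\sqrt{\alpha\mu}/2)$ via a smoothness/Young argument (this is exactly the paper's Lemma~\ref{lem:C:epsilon} with $\epsilon=\tfrac12\sqrt{\alpha\mu}$, which produces $H_1$), feed in the consensus bound built from $D_y^2$ via Corollary~\ref{cor:rate:dasg}, unroll the mixed geometric sum, and finish the individual bound by the $2$--$2$ split plus the consensus estimate. The one ingredient you cite without justification is the $C_0\alpha$ term: in the paper this comes from a separate, nontrivial lemma (Lemma~\ref{lem:difference}) showing $\sup_k\mathbb{E}\|x^{(k)}-x^{(k-1)}\|^2\le \tfrac{2C_0}{\beta^2}\alpha$, which is needed because the D-ASG consensus unrolling contains an extra momentum sum $\sum_s W^{k-s}\beta(x^{(s)}-x^{(s-1)})$ absent in D-SG; this step is not automatic from the $\gamma$-contraction argument and should be flagged as a third technical obstacle.
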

\begin{remark}[Convergence rate of the averaged D-ASG iterates]
{\color{black}For a {given~iteration} budget $K>0$, if we take $\alpha =\frac{1}{\mu}\left( \frac{4\log(K)}{K}\right)^2$ in the setting of Proposition \ref{prop:dsg-average-optimal-rate-DASG}, then we have $(1-\frac{\sqrt{\alpha \mu}}{2})^{K} = \mathcal{O}(1/K^2)$ as well as  $\frac{\gamma^{2k}-(1-\sqrt{\alpha\mu}/2)^{K}}{\gamma^{2}-(1-\sqrt{\alpha\mu}/2)} = \mathcal{O}(1/K^2)$. Consequently, we obtain $\mathbb{E}\left\Vert\bar{x}^{(K)}-x_{\ast}\right\Vert^{2} = \tilde{\mathcal{O}}(\frac{1}{NK}  + \frac{1}{K^2}  )$ where $\tilde{\mathcal{O}}(\cdot)$ hides a logarithmic factor in $K$. }
\end{remark}


\textbf{Constants in Theorem~\ref{thm-rate-dasg}.} $\lambda_N^W$ and $\gamma$ can typically be estimated with a distributed algorithm; for instance when $W = I-L$ (see e.g. \cite{tran2014distributed}).  
For regularized problems of the form $f_i(x)=\tilde f_i(x) + \frac{\lambda}{2}\|x\|^2$ with $\tilde f_i$ convex, the parameter $\mu$ of strong convexity can be taken as the regularization parameter $\lambda$ and therefore is known. The Lipschitz constant $L$ can be estimated with a line search similar to \cite{beck2009fast,schmidt2015non}. The constant $C_1$ depends on $L,\mu$ and $\sigma$ explicitly. 

\rev{We note that if we possess a lower bound \underbar{$\mu$} on the strong convexity parameter $\mu$ and an upper bound $\bar{L}$ on the strong convexity constant, our results in Theorem~\ref{thm:DGD:explicit} and Theorem~\ref{thm-rate-dasg} will hold if replace \underbar{$\mu$} with $\mu$ and $\bar{L}$ with $L$. If a lower bound on the strong convexity constant cannot be estimated and if the strong convexity constant is instead over-estimated, it is known that this can lead to slower convergence, even for (centralized) SG and ASG. For example, if the strong convexity constant is overestimated by a factor of $c>1$, i.e. the estimated constant is $\bar{\mu} = c\mu$ where $\mu$ is the actual strong convexity constant; convergence rate of (centralized) SG on some quadratic examples can be as slow as $\mathcal{O}(\frac{1}{k^{1/c}})$ (compared to the $\mathcal{O}(\frac{1}{k})$ rate that can be achieved if the strong convexity constant can be accurately estimated) (see e.g. \cite{nemirovski2009robust}). Our bounds reflect a similar behavior. For example, for D-SG, with perfect knowledge of the strong convexity constant, for a given iteration budget $K$, we can choose the stepsize $\alpha = \frac{\log(K)}{2\mu K}$ and our Corollary~\ref{DSG_cor} will lead to the bound $\mathbb{E}\left[\left\Vert x^{(k)}-x^{\infty}\right\Vert^{2}\right] = \tilde{\mathcal{O}}(1/K)$ where $\tilde{\mathcal{O}}(\cdot)$ hides some logarithmic factors in $K$. If we were to overestimate the strong convexity constant by a factor of $c$, the same stepsize choice will lead to a slower convergence rate of
$\mathbb{E}\left[\left\Vert x^{(k)}-x^{\infty}\right\Vert^{2}\right]  = {\mathcal{O}}(1/K^{1/c})$. Similar observations also hold for D-ASG. That being said, it is worth noting that, even for the deterministic and centralized case,
\cite{arjevani2016iteration} 
have shown that for a wide class of algorithms including accelerated gradient methods, it is not possible to obtain accelerated rates, i.e. bounds of the form $L \|x_0 - x_*\|^2 \exp(\mathcal{O}(1)\frac{k}{\sqrt{\kappa}})$ after $k$ iterations where $\kappa=L/\mu$ is the condition number, without having a good estimate (lower bound) of the strong convexity parameter. Therefore, it is somehow expected that to get the accelerated convergence rates, one needs to have some information about the problem constants such as $\mu$ and $L$. }

\rev{We also note that in practice, for regularized problems such as $L_2$ regularized logistic regression or ridge regression, the regularizer $\frac{\lambda}{2}\|x\|^2$ provides a lower bound on $\mu$ directly (where we can simply take $\mu=\lambda$). If $L$ and $\mu$ are known approximately, the stepsize can be set to $\alpha \in (0, (1+\lambda_N^W)/L)$ for D-SG (Theorem~\ref{thm:DGD:explicit}) 
and the stepsize can be set to $\alpha\in(0,\lambda_N^W/L]$ and the momentum parameter can be set to $\beta = \frac{1-\sqrt{\alpha\mu}}{1+\sqrt{\alpha\mu}}$ for D-ASG (Theorem~\ref{thm-rate-dasg}) as an initial guess and can be further tuned to the dataset.}


\textbf{Robustness of D-SG vs D-ASG.} 
We derived in Theorem~\ref{thm:DGD:explicit}
that for D-SG, for small stepsize $\alpha$, the rate of convergence is $1-\alpha\mu$ while $J_{\infty}(\alpha)\leq\frac{\alpha}{\mu(2-\alpha\mu)}$, and 
in Theorem~\ref{thm-rate-dasg} that
for D-ASG, for small stepsize $\alpha$ and $\beta=\frac{1-\sqrt{\alpha\mu}}{1+\sqrt{\alpha\mu}}$,
the rate of convergence is $1-\sqrt{\alpha\mu}$, 
while $J_{\infty}(\alpha)\leq\frac{\sqrt{\alpha}}{\mu\sqrt{\mu}}(2-\lambda_{N}^{W}+\alpha L) = \bigO\big(\frac{\sqrt{\alpha}}{\mu\sqrt{\mu}}\big)$. Hence, for a fixed $\alpha$, D-ASG converges faster than D-SG,
but is less robust and more sensitive to noise for the same stepsize that is small enough,
and this suggests that there is a trade-off between convergence rate
and robustness. Next, we discuss how one can trade between convergence rate and robustness in a more systematic manner.

\textbf{Trading off convergence rate with the robustness and the network term.} Equation \eqref{ineq-to-prove-2} shows that large stepsize leads to faster rate $1-\sqrt{\alpha\mu}$, but the variance term (that is proportional to robustness $J_\infty$) and the network term in our bounds get larger. Consider minimizing the sum of variance and network terms there, subject to a constraint on the rate:
\begin{align}\label{eqn:trade}
&\min J_{tot}(\alpha):=
 \frac{2\sigma^2 N \alpha}{\mu\sqrt{\alpha\mu}}\left(2-\lambda_N^W +\alpha L \right)+\frac{2C_{1}^{2}N\alpha^2}{(1-\gamma)^2},
\\
&\mbox{subject to} \quad 0\leq \alpha \leq \bar\alpha, \quad 1-\sqrt{\alpha\mu}\leq \rho_{*}(1+\delta),
 \nonumber
\end{align}
where $\bar{\alpha}:=\min\big(\frac{\lambda_N^W}{L},\frac{1}{L+\mu}\big)$ and $\rho_* := 1 - \sqrt{\bar\alpha \mu}$ is the best rate we can certify with \eqref{ineq-to-prove-2} and $\delta \in [0, \frac{1}{\rho_*}-1]$ is the percentage of the best achievable rate we would like to trade with robustness and network effects. The constraints specify an interval for the stepsize to lie in, and the objective $J_{tot}$ can be optimized in this interval explicitly by calculating the first-order conditions. 
By letting $z:=\sqrt{\alpha}$, it can be checked that the optimization problem \eqref{eqn:trade} is equivalent to 
\begin{align*}
&\min_{z\geq 0}
G(z):=\frac{2\sigma^{2}N}{\mu\sqrt{\mu}}z\left(2-\lambda_{N}^{W}+z^{2}L\right)
+\frac{2C_{1}^{2}}{(1-\gamma)^{2}}z^{4},
\\
&\text{subject to}
\quad\sqrt{\bar\alpha} \geq z\geq\frac{1-\rho_{\ast}(1+\delta)}{\sqrt{\mu}}.
\end{align*}
We also have
\begin{align*}
G'(z)&=\frac{2\sigma^{2}N}{\mu\sqrt{\mu}}\left(2-\lambda_{N}^{W}\right)
+\frac{6\sigma^{2}N}{\mu\sqrt{\mu}}L z^{2}
+\frac{8C_{1}^{2}}{(1-\gamma)^{2}}z^{3} >0,
\end{align*}
for any $z> 0$ and hence $G(z)$ is strictly increasing. Therefore, the solution of the minimization problem is
$z^{\ast}=\frac{1-\rho_{\ast}(1+\delta)}{\sqrt{\mu}}$,
and the optimal stepsize is $\alpha^{\ast}=\frac{(1-\rho_{\ast}(1+\delta))^{2}}{\mu}$.
This choice of stepsize will lead to the tightest performance bounds in our analysis for the same rate and provides some guidance about how the stepsize can be chosen.

\subsection{Quadratic Objectives}
Our study so far has been focused on strongly convex objectives.
In the Appendix, we analyze the special case of strongly convex \textit{quadratic} objectives 
when $f_i$ is quadratic at every node $i$. 
Note that, in this case $F(x)$ is also quadratic. 
We obtain tight results in terms
of rate and robustness that improve 
upon current results. 
In particular, we obtain the same convergence rate $\rho{(\alpha)}^{2}=(1-\alpha\mu)^{2}$ for D-SG method
but better convergence rate $\rho_{\text{dasg}}^{2}=(1-\sqrt{\alpha\mu})^{2}$ 
for D-ASG method (instead of $\rho_{\text{dasg}}^{2}=1-\sqrt{\alpha\mu}$
for the strongly-convex setting). 
We also obtain explicit formulas for the robustness
measure $J_{\infty}$ for quadratic objectives
for both D-SG and D-ASG (instead of upper bounds
for the strongly-convex setting) under an additional assumption on the structure of the noise \rev{as well as explicit bounds on the asymptotic variance of the components of the node average vector $\bar{x}^{(k)}$.}


\section{An Exact Multistage Distributed Method}\label{sec:multi}

In the previous sections, we mainly focused on the D-SG and D-ASG methods with \textit{constant} step size and momentum parameters. For these algorithms, we studied the problem of tuning their parameters so that the iterates converge to a neighborhood of $x_*$ that depends on the stepsize $\alpha$. In this section, however, our focus is to design a distributed \emph{exact algorithm} that uses time-varying stepsize and momentum parameters and converges to the optimum $x_*$ when the number of iterations grows.
\begin{figure}[t]
\centering
\includegraphics[width=0.92\textwidth]{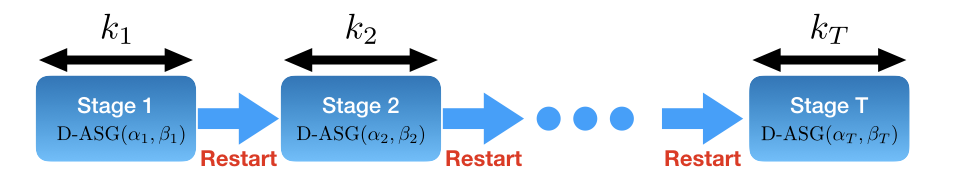}
\caption{\label{Fig_DMASG} The scheme of the Distributed Multistage ASG (D-MASG) method}
\end{figure}

We propose the Distributed Multistage ASG (D-MASG) method which is a distributed version of M-ASG proposed in \cite{aybat2019universally}. As illustrated in Figure~\ref{Fig_DMASG}, D-MASG consists of $T$ stages where at each stage $t\in \{1,2,\dots,T\},$ we run D-ASG with parameters $\alpha_t$ and $\beta_t = \frac{1-\sqrt{\mu \alpha_t}}{1+\sqrt{\mu \alpha_t}}$ for $n_t$ iterations where $\alpha_t$ and $n_t$ will be chosen in a particular way. 
These stages are stitched together using a \textit{momentum restart} technique which means that the first two iterates of every stage are equal to the last iterate of the previous stage. The details of D-MASG are provided in Algorithm~\ref{Algorithm1} where the iterate $x^{t,m}$ denotes the $m$-th iterate of the $t$-th stage.

For any $t \leq T$, let $L_t$ denote the total number of iterations up to the end of stage $t$, i.e,
\begin{align}
L_t := \sum_{i=1}^t k_i,
\end{align}
\noindent with the convention that $L_{0}:=0$. Let $x^{(k)}$ be the sequence that records all the inner and outer iterations of the D-MASG algorithm, obtained by concatenating the sequences $\{x^{(t,m)}\}_{m=1}^{k_t}$ for all stages $t$ and inner iterates indexed by $m$. In other words, $k$ is the counter for the total number of stochastic gradient evaluations and for $L_{t-1} < k \leq L_t$, we have
\begin{equation}
x^{(k)} = x^{(t,k-L_{t-1})}.
\end{equation}
\begin{algorithm}
    \SetKwInOut{Input}{Input}
    \SetKwInOut{Output}{Output}
    \Input{Initial iterate $x^{(0)}$, The sequence $\{\alpha_i\}_{i=1}^T$ of stepsizes, The sequence $\{k_i\}_{i=1}^T$ of length of stages.}
    Set $x^{(0,k_0)} = x^{(0)}$;\\
    \For{$t = 1;\ t \leq T;\ t = t + 1$}{
    Set $x^{(t,-1)} = x^{(t,0)} = x^{(t-1,k_{t-1})}$;\\
    \For{$m = 0;\ m \leq k_t-1;\ m = m + 1$}{
    Set $\beta_t = \frac{1- \sqrt{\mu \alpha_t}}{1+ \sqrt{\mu \alpha_t}}$;\\
    Set $y^{(t,m)} = (1+\beta_t)x^{(t,m)} - \beta_t x^{(t,m-1)}$;\\
    Set $x^{(t,m+1)} = \mathcal{W} y^{(t,m)} - \alpha_t \tilde{\nabla} f\left(y^{(t,m)}\right)$
    } 
    }
    \caption{\label{Algorithm1} Distributed Multistage Accelerated Stochastic Gradient Algorithm (D-MASG)}
\end{algorithm}
To characterize the convergence rate of D-MASG, we first analyze the evolution of iterates over one single stage. To simplify our presentation, we define the \emph{scaled condition number} as 
\begin{equation}\label{tilde_kappa}
\tilde{\kappa}:= \frac{L+\mu}{\mu \lambda_N^W} = \frac{\kappa + 1}{\lambda_N^W},
\end{equation}
where we assume for the rest of this section that Assumption~\ref{assump_3} holds, i.e. $\lambda_N^W>0$. 

\begin{proposition}\label{thm_one_stage}
Consider running D-ASG with initialization $x^{(-1)}=x^{(0)}{\color{black}=0}$ and parameters $\alpha \in (0,\bar\alpha]$ where $\bar\alpha =\min\left\{\frac{\lambda_N^W}{L}, \frac{1}{L+\mu}\right\}$ and $\beta = \frac{ 1-\sqrt{\alpha\mu}}{1+\sqrt{\alpha\mu}}$. Then, for any $k\geq 0$,
\begin{align*}
\E\left[ \left\| x^{(k)} -x^* \right\|^2\right]
\leq
 4 \exp(-k \sqrt{\alpha \mu}) \left\|x^{(0)} -x^*\right\|^2
+ 6N \left ( \frac{\sqrt{\alpha}}{\mu\sqrt{\mu}} \sigma^2 +  \frac{C_{1}^{2}\alpha^2}{(1-\gamma)^2} \right),
\end{align*}
where {\color{black}$\gamma,C_{1}$ are given in \eqref{eqn:asymp_suboptim}-\eqref{defn:C:1}.}
\end{proposition}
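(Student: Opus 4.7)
My strategy is to apply Corollary~\ref{cor:rate:dasg} and then bound the initial Lyapunov value $V_{S,\alpha}(\xi_{0})$ using the zero initialization. Since $\alpha\leq \min\{\lambda_{N}^{W}/L,\,1/(L+\mu)\}$, Corollary~\ref{cor:rate:dasg} gives
\begin{equation*}
\E\|x^{(k)}-x^{*}\|^{2} \leq \frac{4(1-\sqrt{\alpha\mu})^{k}V_{S,\alpha}(\xi_{0})}{\mu} + \frac{2N\sqrt{\alpha}\,\sigma^{2}\,(2-\lambda_{N}^{W}+\alpha L)}{\mu\sqrt{\mu}} + \frac{2NC_{1}^{2}\alpha^{2}}{(1-\gamma)^{2}}.
\end{equation*}
From $\alpha L\leq \lambda_{N}^{W}\leq 1$, we have $2-\lambda_{N}^{W}+\alpha L\leq 3$, so the variance term is bounded by $6N\sqrt{\alpha}\sigma^{2}/(\mu\sqrt{\mu})$. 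The network term is trivially $\leq 6NC_{1}^{2}\alpha^{2}/(1-\gamma)^{2}$, which also leaves slack for absorbing additional network-effect contributions. Combined with $(1-\sqrt{\alpha\mu})^{k}\leq e^{-k\sqrt{\alpha\mu}}$, the task reduces to showing $V_{S,\alpha}(\xi_{0})/\mu\leq \|x^{(0)}-x^{*}\|^{2}$ up to an absorbable network term.

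For the initial Lyapunov value, $x^{(-1)}=x^{(0)}=0$ gives $\xi_{0}=[-x^{\infty};\,-x^{\infty}]$, and the rank-one factorization $\tilde{S}_{\alpha}=vv^{T}$ combined with the identity $v_{1}+v_{2}=\sqrt{\mu/2}$ yields the clean expression $\xi_{0}^{T}S_{\alpha}\xi_{0} = \tfrac{\mu}{2}\|x^{\infty}\|^{2}$. The delicate piece is $F_{\calW,\alpha}(0)-F_{\calW,\alpha}(x^{\infty})$: a naive $L_{\alpha}$-smoothness bound yields $\tfrac{L_{\alpha}}{2}\|x^{\infty}\|^{2}$, introducing a condition-number factor $L_{\alpha}/\mu$ that would destroy the accelerated rate. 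The remedy is to exploit that $x^{(-1)}=x^{(0)}$ forces $y^{(0)}=x^{(0)}$, so the first inner iteration of D-ASG is a plain gradient step on $F_{\calW,\alpha}$. Applying the single-stage ``zero-momentum'' ASG Lyapunov estimate of \cite{aybat2019universally} to the non-distributed reformulation $F_{\calW,\alpha}$ then yields a bound of the form $V_{S,\alpha}(\xi_{0})\leq C\mu\|x^{(0)}-x^{\infty}\|^{2}$ with an absolute constant $C$. A triangle inequality $\|x^{(0)}-x^{\infty}\|^{2}\leq 2\|x^{(0)}-x^{*}\|^{2}+2\|x^{\infty}-x^{*}\|^{2}$ together with the fixed-point estimate $\|x^{\infty}-x^{*}\|^{2}\leq NC_{1}^{2}\alpha^{2}/(1-\gamma)^{2}$ from \eqref{eqn:asymp_suboptim} then replaces $\|x^{\infty}\|^{2}$ by $\|x^{(0)}-x^{*}\|^{2}$ plus a network term that can be absorbed into the $6NC_{1}^{2}\alpha^{2}/(1-\gamma)^{2}$ slack in the last summand.

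The main obstacle is the absolute-constant Lyapunov bound for the zero-initialization step: avoiding the naive $L_{\alpha}/\mu$ blow-up is the key technical ingredient, and is exactly what preserves the accelerated $\exp(-k\sqrt{\alpha\mu})$ rate on the bias. This structural observation is also what enables the stage-by-stage stitching in the subsequent D-MASG analysis (Section~\ref{sec:multi}), since each stage is effectively restarted with $x^{(t,-1)}=x^{(t,0)}$ and thus inherits the same absolute-constant bias bound.
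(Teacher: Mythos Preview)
Your high-level plan—invoke Corollary~\ref{cor:rate:dasg}, use $2-\lambda_N^W+\alpha L\le 3$ on the variance term, then control $V_{S,\alpha}(\xi_0)$ and finish with the fixed-point bound~\eqref{eqn:asymp_suboptim}—is exactly the paper's route. The paper, however, handles the initial Lyapunov value far more simply than you propose: it writes
\[
V_{S,\alpha}(\xi_0)=\xi_0^{\top}S_{\alpha}\xi_0=\tfrac{\mu}{2}\|x^{(0)}-x^{\infty}\|^{2}
\le \mu\Bigl(\|x^{(0)}-x^{*}\|^{2}+\tfrac{C_{1}^{2}N\alpha^{2}}{(1-\gamma)^{2}}\Bigr),
\]
substitutes this into~\eqref{ineq-to-prove-2}, and combines the resulting network contribution with the existing one to reach the constant~$6$. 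In particular the paper does \emph{not} carry the function-value piece $F_{\calW,\alpha}(x^{(0)})-F_{\calW,\alpha}(x^{\infty})$ in its evaluation of $V_{S,\alpha}(\xi_{0})$; it uses only the quadratic part $\xi_0^{\top}S_\alpha\xi_0$.

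Your proposed workaround for that function-value piece does not go through as stated. The claim that some ``zero-momentum Lyapunov estimate'' yields $V_{S,\alpha}(\xi_0)\le C\mu\|x^{(0)}-x^{\infty}\|^{2}$ with an absolute constant $C$ is false in general: by definition~\eqref{eqn:V:S:alpha} one has $V_{S,\alpha}(\xi_0)\ge F_{\calW,\alpha}(x^{(0)})-F_{\calW,\alpha}(x^{\infty})$, and for a quadratic $F_{\calW,\alpha}$ with $x^{(0)}-x^{\infty}$ aligned with an $L_\alpha$-eigenvector this is $\tfrac{L_\alpha}{2}\|x^{(0)}-x^{\infty}\|^{2}$, so the ratio to $\mu\|x^{(0)}-x^{\infty}\|^{2}$ is at least $L_\alpha/(2\mu)$, which diverges as $\alpha\to 0$. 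Moreover, the fact that $y^{(0)}=x^{(0)}$ makes the \emph{first iterate} $\xi_1$ the output of a plain gradient step; it says nothing about $\xi_0$ and cannot retroactively bound $V_{S,\alpha}(\xi_0)$. So while you are right that the function-value term is present in~\eqref{eqn:V:S:alpha} and is in general of order $L_\alpha$, your specific remedy does not resolve it; the paper's argument simply proceeds with the quadratic part alone.
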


D-MASG with one stage is equivalent to running the D-ASG algorithm. Based on the previous result, we immediately obtain the following corollary which provides performance bounds for one-stage D-MASG.  
\begin{corollary}\label{cor_one_stage}
Given $k$, consider running D-MASG for one stage with $k_1=k$ and $\alpha_1 = \frac{\lambda_N^W}{L+\mu} \left ({p \sqrt{\tilde{\kappa}} \log k}/{k}\right )^2$ for some $p \geq 1$, 
where $\tilde{\kappa}$ is given in \eqref{tilde_kappa}.
Then, for any 
$$k \geq p \sqrt{\tilde{{\kappa}}} \max\left\{2 \log\left(p \sqrt{\tilde{{\kappa}}}\right), e \right\} ,$$ 
we have
\begin{align*}
\E\left[ \left\| x^{(k)} -x^* \right\|^2\right]
\leq
 \frac{4}{k^p} \left\|x^{(0)} -x^*\right\|^2
+  \frac{6Np \log k}{\mu^2 k} \left (\sigma^2 +  \frac{C_{1}^{2}(p \log k)^3}{(1-\gamma)^2 k^3} \right),
\end{align*}
where {\color{black}$\gamma,C_{1}$ are given in \eqref{eqn:asymp_suboptim}-\eqref{defn:C:1}}.
\end{corollary}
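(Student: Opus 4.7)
The plan is to apply Proposition~\ref{thm_one_stage} directly with $\alpha=\alpha_1$, and then verify that the algebra produces exactly the three stated terms, while also checking that the lower bound on $k$ guarantees $\alpha_1\in(0,\bar\alpha]$ so that the proposition is applicable.

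First I would compute $\sqrt{\alpha_1\mu}$ explicitly. Using $\tilde\kappa=(L+\mu)/(\mu\lambda_N^W)$, we get
$\alpha_1\mu = \frac{\mu\lambda_N^W}{L+\mu}\cdot\frac{p^2\tilde\kappa (\log k)^2}{k^2} = \frac{p^2(\log k)^2}{k^2}$,
so $\sqrt{\alpha_1\mu}=p\log k/k$ and hence $\exp(-k\sqrt{\alpha_1\mu}) = e^{-p\log k} = k^{-p}$, which immediately produces the bias term $(4/k^p)\|x^{(0)}-x^*\|^2$. Next I would use the same identity $\lambda_N^W\tilde\kappa = (L+\mu)/\mu$ to simplify the noise factor:
$\frac{\sqrt{\alpha_1}}{\mu\sqrt{\mu}} = \frac{1}{\mu\sqrt{\mu}}\cdot\sqrt{\frac{\lambda_N^W}{L+\mu}}\cdot\frac{p\sqrt{\tilde\kappa}\log k}{k} = \frac{p\log k}{\mu^2 k}$,
giving the stated $\frac{6Np\log k}{\mu^2 k}\sigma^2$ term. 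The network-effect factor simplifies analogously:
$\alpha_1^2 = \Bigl(\frac{\lambda_N^W}{L+\mu}\Bigr)^2 \Bigl(\frac{p\sqrt{\tilde\kappa}\log k}{k}\Bigr)^4$,
and since $(\lambda_N^W/(L+\mu))^2\tilde\kappa^2 = 1/\mu^2$, this yields $\alpha_1^2 = \frac{p^4(\log k)^4}{\mu^2 k^4}$, producing the remaining $\frac{6N C_1^2 p^4(\log k)^4}{\mu^2(1-\gamma)^2 k^4}$ term after factoring out $\frac{6Np\log k}{\mu^2 k}$.

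The only nontrivial step, and the one I expect to require a small calculation, is verifying the admissibility condition $\alpha_1\le\bar\alpha=\min\{\lambda_N^W/L,\, 1/(L+\mu)\}$ under the hypothesis $k\ge p\sqrt{\tilde\kappa}\max\{2\log(p\sqrt{\tilde\kappa}),\,e\}$. Computing, $\alpha_1(L+\mu) = \lambda_N^W p^2\tilde\kappa(\log k)^2/k^2$, which by $\lambda_N^W\tilde\kappa = (\kappa+1)$ and $\kappa+1\le\tilde\kappa$ is bounded by $p^2\tilde\kappa(\log k)^2/k^2$; so the binding requirement becomes $k\ge p\sqrt{\tilde\kappa}\log k$. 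The hard part is showing the assumed lower bound on $k$ implies this transcendental inequality. Setting $a:=p\sqrt{\tilde\kappa}$ and using monotonicity of $k\mapsto k/\log k$ on $[e,\infty)$, one can verify that $k=2a\log a$ already satisfies $k/\log k\ge a$ provided $a\ge e$ (i.e., $\log(2a\log a)\le 2\log a$, which holds for $a\ge e$), while for smaller $a$ the $\max$ with $e$ in the hypothesis takes over and directly forces $k\ge ae \ge a\log k$ once $\log k \le e$. In both sub-cases the condition $\alpha_1 \le 1/(L+\mu)$ is secured, and $\alpha_1\le \lambda_N^W/L$ follows because $1/(L+\mu)\le \lambda_N^W/L$ is not always true but $\alpha_1\le \lambda_N^W/(L+\mu)\le \lambda_N^W/L$ holds trivially from the definition of $\alpha_1$ once $(p\sqrt{\tilde\kappa}\log k/k)^2\le 1$, which is weaker than what we already established.

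Once $\alpha_1\le\bar\alpha$ is in place, the rest is simply a matter of substituting the computed values of $\exp(-k\sqrt{\alpha_1\mu})$, $\sqrt{\alpha_1}/(\mu\sqrt{\mu})$, and $\alpha_1^2$ into the bound of Proposition~\ref{thm_one_stage} and collecting terms to match the form stated in the corollary. No additional probabilistic or dynamical-systems arguments are needed beyond those already contained in the proposition.
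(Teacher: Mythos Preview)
Your proposal is correct and follows essentially the same approach as the paper: verify $\alpha_1\le\bar\alpha$ from the lower bound on $k$, rewrite $\alpha_1=\frac{1}{\mu}(p\log k/k)^2$, and substitute into Proposition~\ref{thm_one_stage}. The paper's admissibility check is slightly cleaner than your case split---it uses monotonicity of $x\mapsto\log x/x$ on $[e,\infty)$ to bound $\frac{p\sqrt{\tilde\kappa}\log k}{k}\le \frac{\log(p\sqrt{\tilde\kappa})+\log\hat k}{\hat k}\le \tfrac12+\tfrac{\log\hat k}{\hat k}\le 1$ directly, avoiding your separate treatment of small and large $a$ (in particular your phrase ``once $\log k\le e$'' is not the right justification; the correct reason is again monotonicity of $k/\log k$).
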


In the next {\color{black}proposition}, we propose a particular way to choose the stepsize $\alpha_t$ and the stage length $k_t$ for every stage $t\in [1,T]$ and obtain performance guarantees for the distance to the optimum after $T$ stages. In our proposed approach, the length of stages is geometrically increasing whereas the stepsize of each stage is chosen in a geometrically decaying manner. The length of the first stage $k_1$ can be an arbitrary positive integer and our performance bounds depends on how it is chosen.

\begin{proposition}\label{main_DMASG_result}
Consider running D-MASG with the following parameters:
\begin{align*}
k_1 \geq 1, \quad \alpha_1 = \frac{\lambda_N^W}{L+\mu}, \quad
k_t = 2^{t} \left\lceil p\sqrt{\tilde{\kappa}}\log(2)\right\rceil, \quad \alpha_t = \frac{\lambda_N^W}{2^{2t}(L+\mu)},
\end{align*}
with $p \geq 7 $. 
Then, for any $t\geq 0$:
\begin{align*}
\E\left[ \left\| x^{(L_{t+1})} -x^* \right\|^2\right]
\leq \frac{4}{2^{(p-2)t}} \exp\left(-\frac{k_1}{\sqrt{\tilde{\kappa}}}\right) \left\| x^{(0)} - x^* \right\|^2 + \frac{12N  \sigma^2}{2^t \mu^2 \sqrt{\tilde{\kappa}}} + \frac{12N}{2^{4t}} \left (\frac{C_1 \lambda_N^W}{L(1-\gamma)} \right )^{2}, 
\end{align*}
where {\color{black}$\gamma,C_{1}$ are given in \eqref{eqn:asymp_suboptim}-\eqref{defn:C:1}} and $\tilde{\kappa}$ is given in \eqref{tilde_kappa}.
\end{proposition}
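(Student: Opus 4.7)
\textbf{Proof proposal for Proposition~\ref{main_DMASG_result}.} The plan is to proceed by induction on $t$, applying the single-stage bound of Proposition~\ref{thm_one_stage} at each stage and carefully tracking how the bias, variance, and network-effect terms propagate under the chosen geometric stepsize and stage-length schedule. The momentum restart $x^{(t,-1)} = x^{(t,0)} = x^{(t-1,k_{t-1})}$ at the beginning of every stage is precisely what allows Proposition~\ref{thm_one_stage} to be invoked afresh at each stage, with the ``initial point'' being the last iterate of the previous stage. One first checks that each $\alpha_t = \lambda_N^W/(2^{2t}(L+\mu))$ satisfies $\alpha_t \leq \bar\alpha := \min\{\lambda_N^W/L, 1/(L+\mu)\}$ since $\lambda_N^W \leq 1$, so Proposition~\ref{thm_one_stage} applies in every stage.

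For the base case $t=0$, applying Proposition~\ref{thm_one_stage} to stage $1$ with $k_1$ iterations and $\alpha_1 = \lambda_N^W/(L+\mu)$ gives $\sqrt{\alpha_1 \mu} = 1/\sqrt{\tilde\kappa}$ and $\sqrt{\alpha_1}/(\mu\sqrt{\mu}) = 1/(\mu^2 \sqrt{\tilde\kappa})$, which handles the bias and variance terms directly. For the network term, using $(L+\mu)^2 \geq L^2$ yields $C_1^2 \alpha_1^2/(1-\gamma)^2 \leq (C_1 \lambda_N^W/(L(1-\gamma)))^2$, and the factor of $2$ between $6$ and $12$ gives room to absorb this loss.

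For the inductive step, assume the claim holds for stage $t$ and apply Proposition~\ref{thm_one_stage} to stage $t+2$ with stepsize $\alpha_{t+2}$ and length $k_{t+2}$. The key identities are
\begin{equation*}
\sqrt{\alpha_{t+2}\mu} = \frac{1}{2^{t+2}\sqrt{\tilde\kappa}}, \qquad k_{t+2}\sqrt{\alpha_{t+2}\mu} \geq p\log 2, \qquad \frac{\sqrt{\alpha_{t+2}}}{\mu\sqrt{\mu}} = \frac{1}{2^{t+2}\mu^2\sqrt{\tilde\kappa}},
\end{equation*}
so the contraction factor in front of the inherited bound is at most $4 \cdot 2^{-p}$. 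Substituting the inductive hypothesis into this contraction and adding the new single-stage noise contribution, one finds that each of the three terms (bias, variance, network) must satisfy an inequality of the form $4 \cdot 2^{-p} \cdot A_t + B_{t+2} \leq A_{t+1}$, where $A_t$ is the current bound and $B_{t+2}$ is the fresh noise from stage $t+2$. For the bias term, this reduces to an equality $16/2^p = 4/2^{p-2}$. For the variance term, it reduces to $48/2^p + 3/2 \leq 6$, i.e. $p \geq \log_2(96/9) \approx 3.4$. For the network term, using $\alpha_{t+2}^2 \leq (\lambda_N^W)^2/(2^{4(t+2)}L^2)$, it reduces to $48/2^p + 6/2^8 \leq 12/2^4$, i.e. $p \geq \log_2(66) \approx 6.04$.

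The main obstacle is precisely the bookkeeping in this last step: one has to verify that the choice $p \geq 7$ is large enough that all three inequalities hold simultaneously, so that the single ``universal constant'' $12N$ on the right-hand side of the claim absorbs both the newly generated noise and the inductively propagated noise at every stage. The network term is the binding constraint (since the stepsize shrinks faster, $\alpha_t^2$ has a stronger $2^{-4t}$ factor, but the fresh contribution enters with coefficient $6$ rather than $12$, so less margin remains). Once one confirms $p \geq 7$ suffices for all three inequalities, the induction closes and the bound follows.
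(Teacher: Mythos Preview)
Your proposal is correct and follows essentially the same inductive approach as the paper's own proof: apply Proposition~\ref{thm_one_stage} stage by stage, track how the bias, variance, and network terms propagate under the contraction factor $4\cdot 2^{-p}$, and verify that $p\geq 7$ closes the induction (with the network term being the binding constraint). The only cosmetic difference is that the paper writes the fresh-stage parameters as $\alpha_{t+1},k_{t+1}$ rather than your $\alpha_{t+2},k_{t+2}$ when going from $x^{(L_{t+1})}$ to $x^{(L_{t+2})}$; your indexing is the natural one, and both versions yield constraints satisfied by $p\geq 7$.
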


The previous result gives performance bounds for last iterate of every stage.
Using this result, we can also derive upper bounds for the error after $k$ iterations as follows.
\begin{proposition}\label{D-MASG_any_n}
Consider running D-MASG with the parameters given in Proposition~\ref{main_DMASG_result}.
Then, for any $k > k_1$:
\begin{align*}
&\E\left[ \left\| x^{(k)} -x^* \right\|^2\right] 
\\
&\leq \bigO(1) \left ( \left ( \frac{6p \sqrt{\tilde{\kappa}}}{k-k_1} \right )^{p-2} \exp\left(-\frac{k_1}{\sqrt{\tilde{\kappa}}}\right) \left\| x^{(0)} - x^* \right\|^2  + \frac{N p \sigma^2}{\mu^2(k-k_1)}  + \frac{N p^4 C_1^{2}(1-\gamma)^{-2}}{\mu^2 (k-k_1)^4} \right ),
\end{align*}
where {\color{black}$\gamma,C_{1}$ are given in \eqref{eqn:asymp_suboptim}-\eqref{defn:C:1}} and $\tilde{\kappa}$ is given in \eqref{tilde_kappa}.
\end{proposition}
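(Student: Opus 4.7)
The plan is to reduce the bound at an arbitrary iteration $k$ to the bound at the end of the previous stage (where Proposition~\ref{main_DMASG_result} applies) together with a single-stage D-ASG bound for the partial run inside the current stage (where Proposition~\ref{thm_one_stage} applies), and then convert everything from the stage index $t$ into an estimate in $k-k_1$ using the geometric growth of the stage lengths.

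First, given $k>k_1$, let $t\geq 1$ be the unique index with $L_t<k\leq L_{t+1}$, and set $m:=k-L_t\in[1,k_{t+1}]$, so that $x^{(k)}=x^{(t+1,m)}$ lies inside stage $t+1$. The momentum-restart line in Algorithm~\ref{Algorithm1} guarantees that $x^{(t+1,-1)}=x^{(t+1,0)}=x^{(L_t)}$, hence stage $t+1$ is exactly a fresh run of D-ASG with parameters $(\alpha_{t+1},\beta_{t+1})$ started from $x^{(L_t)}$; in particular the hypotheses of Proposition~\ref{thm_one_stage} are met (shifting the ``initial point'' from $0$ to $x^{(L_t)}$ only changes which point plays the role of $x^{(0)}$ in the bound). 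Applying that proposition to this partial run yields
\begin{equation*}
\E\|x^{(k)}-x^*\|^2\leq 4\exp(-m\sqrt{\alpha_{t+1}\mu})\,\E\|x^{(L_t)}-x^*\|^2+6N\!\left(\frac{\sqrt{\alpha_{t+1}}\,\sigma^2}{\mu\sqrt{\mu}}+\frac{C_1^{2}\alpha_{t+1}^{2}}{(1-\gamma)^2}\right).
\end{equation*}
Next, I control the starting iterate by invoking Proposition~\ref{main_DMASG_result} at index $t-1$ (so that its left-hand side becomes $\E\|x^{(L_t)}-x^*\|^2$), which gives a bias term proportional to $2^{-(p-2)(t-1)}\exp(-k_1/\sqrt{\tilde\kappa})\|x^{(0)}-x^*\|^2$ and variance and network terms of order $2^{-(t-1)}$ and $2^{-4(t-1)}$ respectively.

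The third step is the translation $t\mapsto k-k_1$. Since $k_i=2^i\lceil p\sqrt{\tilde\kappa}\log 2\rceil$ for $i\geq 2$, one has $L_t=k_1+(2^{t+1}-4)\lceil p\sqrt{\tilde\kappa}\log 2\rceil$, so $L_t<k\leq L_{t+1}$ pins down $2^t$ up to a constant factor:
\begin{equation*}
2^t=\Theta\!\left(\frac{k-k_1}{p\sqrt{\tilde\kappa}}\right).
\end{equation*}
Plugging $\alpha_{t+1}=\lambda_N^W/(2^{2(t+1)}(L+\mu))$ (so $\sqrt{\alpha_{t+1}\mu}=1/(2^{t+1}\sqrt{\tilde\kappa})$) and bounding $\exp(-m\sqrt{\alpha_{t+1}\mu})\leq 1$ turns the composite inequality into a bound in $k-k_1$: the bias term becomes $\bigO(1)\,(p\sqrt{\tilde\kappa}/(k-k_1))^{p-2}\exp(-k_1/\sqrt{\tilde\kappa})\|x^{(0)}-x^*\|^2$, the two variance contributions (one from $\E\|x^{(L_t)}-x^*\|^2$, one from the partial-stage bound) both have order $\bigO(Np\sigma^2/(\mu^2(k-k_1)))$, and the two network contributions have order $\bigO(Np^4 C_1^{2}(1-\gamma)^{-2}/(\mu^2(k-k_1)^4))$ after using the identities $\tilde\kappa\lambda_N^W=(L+\mu)/\mu$ and $\tilde\kappa^2(\lambda_N^W)^2/L^2\leq 4/\mu^2$ to eliminate the $\lambda_N^W$ and $L$ factors. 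Summing these three contributions gives exactly the stated bound.

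The bookkeeping is routine but one step genuinely needs care: the exponent $p-2$ on the bias term must come out right, which forces me to use the \emph{lower} bound $2^t=\Omega((k-k_1)/(p\sqrt{\tilde\kappa}))$ when raising $2^{-(p-2)(t-1)}$ to an expression in $k-k_1$, and then to absorb the resulting factor $2^{p-2}\lceil p\sqrt{\tilde\kappa}\log 2\rceil^{p-2}/(p\sqrt{\tilde\kappa})^{p-2}=\bigO(1)$ (thanks to $p\sqrt{\tilde\kappa}\log 2\geq 1$) into the hidden constant. The remaining obstacle is merely verifying that the various dimensionless factors $\tilde\kappa^a(\lambda_N^W)^b/(L+\mu)^c$ arising from $\alpha_{t+1}$ do collapse into the claimed $1/\mu^2$ structure; the identity $\tilde\kappa\lambda_N^W=(L+\mu)/\mu$ takes care of this uniformly.
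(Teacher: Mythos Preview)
Your proposal is correct and follows essentially the same route as the paper's proof: both locate the stage $t$ (the paper calls it $T$) containing $k$, invoke Proposition~\ref{thm_one_stage} for the partial run inside stage $t+1$ (using the momentum restart to ensure $x^{(t+1,-1)}=x^{(t+1,0)}$), invoke Proposition~\ref{main_DMASG_result} at index $t-1$ to control $\E\|x^{(L_t)}-x^*\|^2$, and then use the geometric stage lengths to replace $2^{-t}$ by $\bigO\bigl(p\sqrt{\tilde\kappa}/(k-k_1)\bigr)$. The paper's version of the conversion step is simply the chain $k-k_1\leq L_{T+1}-k_1\leq p\,2^{T+3}\log(2)\sqrt{\tilde\kappa}$, which is exactly your $2^t=\Theta\bigl((k-k_1)/(p\sqrt{\tilde\kappa})\bigr)$; your explicit formula $L_t=k_1+(2^{t+1}-4)\lceil p\sqrt{\tilde\kappa}\log 2\rceil$ is a cleaner justification of the same fact.
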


Note that Proposition~\ref{D-MASG_any_n} provides us with a degree of freedom in choosing $k_1$. In the following corollary we characterize two special cases. We omit the proof as it is a straightforward consequence of Proposition~\ref{D-MASG_any_n}.
\begin{corollary}\label{cor_D-MASG}
Consider running D-MASG with the parameters given in Proposition~\ref{main_DMASG_result}. In particular, by choosing $k_1 = \ceil{(p-2) \log(6 p \tilde{\kappa})\sqrt{\tilde{\kappa}}}$, we have
\begin{align*}
\E \left[ \left\| x^{(k)} -x^* \right\|^2\right] \leq \bigO(1) \left ( \frac{1}{k^{p-2}} \left\| x^{(0)} - x^* \right\|^2 + \frac{N p \sigma^2}{\mu^2 k}  + \frac{N p^4 C_1^2( 1-\gamma)^{-2}}{\mu^2 k^4} \right ),
\end{align*}
for any $k \geq 2 k_1$.
Also, for a given number of iterations, $k$, by choosing $p=7$
and $k_1 = \ceil{\frac{k}{C}}$ for some constant $C \geq 2$, we have
\begin{align*}
\E \left[ \left\| x^{(k)} -x^* \right\|^2\right] \leq \bigO(1) \left ( \exp\left(-\frac{k}{C\sqrt{\tilde{\kappa}}}\right)\left\| x^{(0)} - x^* \right\|^2 + \frac{N  \sigma^2}{\mu^2 k}  + \frac{N C_1^2( 1-\gamma)^{-2}}{\mu^2 k^4} \right ),
\end{align*}
for any $k \geq 2 \sqrt{\tilde{\kappa}}$, {\color{black}where $\gamma,C_{1}$ are given in \eqref{eqn:asymp_suboptim}-\eqref{defn:C:1} and $\tilde{\kappa}$ is given in \eqref{tilde_kappa}}.
\end{corollary}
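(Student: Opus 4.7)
The proof will simply instantiate Proposition~\ref{D-MASG_any_n} at each of the two prescribed choices of $k_1$. The bound there decomposes into a bias term proportional to $\bigl(6p\sqrt{\tilde{\kappa}}/(k-k_1)\bigr)^{p-2}\exp(-k_1/\sqrt{\tilde{\kappa}})\|x^{(0)}-x^*\|^{2}$, a variance term of order $Np\sigma^{2}/(\mu^{2}(k-k_1))$, and a network term of order $Np^{4}C_{1}^{2}(1-\gamma)^{-2}/(\mu^{2}(k-k_1)^{4})$. The main (and only) step in both parts is to estimate the bias-term prefactor; the other two terms reduce to the claimed form as soon as one verifies $k-k_{1} = \Theta(k)$ under the stated hypotheses, so that $1/(k-k_{1})^{q}$ can be replaced by $1/k^{q}$ up to constants absorbed into $\bigO(1)$.

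For the first claim with $k_{1}=\lceil (p-2)\log(6p\tilde{\kappa})\sqrt{\tilde{\kappa}}\rceil$, the hypothesis $k\geq 2k_{1}$ directly gives $k-k_{1}\geq k/2$. Dropping the ceiling (at the cost of a multiplicative factor $\exp(1/\sqrt{\tilde{\kappa}})\leq e$ absorbed into $\bigO(1)$), I would bound $\exp(-k_{1}/\sqrt{\tilde{\kappa}})\leq(6p\tilde{\kappa})^{-(p-2)}$. Combining with $(k-k_{1})^{-1}\leq 2/k$ yields
\begin{equation*}
\left(\frac{6p\sqrt{\tilde{\kappa}}}{k-k_{1}}\right)^{\!p-2}\exp\!\left(-\frac{k_{1}}{\sqrt{\tilde{\kappa}}}\right)
\;\leq\;\frac{2^{p-2}\,(6p\sqrt{\tilde{\kappa}})^{p-2}}{k^{p-2}\,(6p\tilde{\kappa})^{p-2}}
\;=\;\frac{2^{p-2}}{k^{p-2}\,\tilde{\kappa}^{(p-2)/2}}
\;\leq\;\frac{2^{p-2}}{k^{p-2}},
\end{equation*}
which collapses the bias term to the claimed $\bigO(1/k^{p-2})$ decay. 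The variance and network terms are immediate from $k-k_{1}\geq k/2$.

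For the second claim with $p=7$ and $k_{1}=\lceil k/C\rceil$ for $C\geq 2$, I would first note that $k_{1}\leq k/C+1$ gives $k-k_{1}\geq k(C-1)/C-1=\Theta(k)$ for $k\geq 2\sqrt{\tilde{\kappa}}$. Since the same lower bound $k\geq 2\sqrt{\tilde{\kappa}}$ forces $\sqrt{\tilde{\kappa}}/(k-k_{1})=\bigO(1)$, the polynomial factor $\bigl(6p\sqrt{\tilde{\kappa}}/(k-k_{1})\bigr)^{p-2}=\bigl(42\sqrt{\tilde{\kappa}}/(k-k_{1})\bigr)^{5}$ is bounded by an absolute constant and can be absorbed into $\bigO(1)$. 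The exponential factor becomes $\exp(-k_{1}/\sqrt{\tilde{\kappa}})\leq\exp(-k/(C\sqrt{\tilde{\kappa}}))$, producing the stated accelerated linear decay; the variance and network terms again follow from $k-k_{1}=\Theta(k)$, with the now-constant factors $p$ and $p^{4}$ absorbed into $\bigO(1)$.

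I do not anticipate any real obstacle: the whole argument is substitution and elementary estimates, which is precisely why the authors describe the corollary as a straightforward consequence of Proposition~\ref{D-MASG_any_n}. The only subtlety worth watching is the ceiling in $k_{1}$, but this only inflates the exponential and polynomial factors by multiplicative constants that are safely swallowed by the $\bigO(1)$ notation.
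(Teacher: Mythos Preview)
Your proposal is correct and follows exactly the route the paper indicates: the authors explicitly say the corollary ``is a straightforward consequence of Proposition~\ref{D-MASG_any_n}'' and omit the proof, and your argument is precisely the substitution-and-simplification they have in mind. One small remark: when you handle the ceiling in the first part you do not actually incur any cost, since $k_1=\lceil(p-2)\log(6p\tilde{\kappa})\sqrt{\tilde{\kappa}}\rceil\geq (p-2)\log(6p\tilde{\kappa})\sqrt{\tilde{\kappa}}$ already gives $\exp(-k_1/\sqrt{\tilde{\kappa}})\leq(6p\tilde{\kappa})^{-(p-2)}$ directly; your extra factor of $e$ is harmless but unnecessary. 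Also note that your bias estimate produces a $2^{p-2}$ prefactor, so strictly speaking the $\bigO(1)$ in the first display depends on $p$; this is an artifact of the paper's statement rather than a flaw in your argument.
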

{\color{black}Note that our results also provide bounds on the number of iterations required to find
an $\epsilon$-solution, i.e. a point 
$x^\epsilon$ that satisfies
$\E \left[ \left\| x^\epsilon -x^* \right\|^2\right] \leq \epsilon$ 
for a given $\epsilon > 0$.} This is obtained in the next corollary. 
We omit the proof as it follows directly from the previous corollary; by bounding bias, variance, and network effect terms, each by $\epsilon/3$.
\begin{corollary}\label{cor:final}
{\color{black}Let $\epsilon >0$ be an arbitrary positive number. Consider running D-MASG with the parameters given in Proposition~\ref{main_DMASG_result}. Assume choosing $p=7$ and $k_1 = \ceil{\sqrt{\tkappa}\log \left ( \frac{\Delta}{ \epsilon}\right)}$ where $\Delta$ is the optimality gap, an upper bound on the initial error, i.e., $\Delta \geq \left\| x^{(0)} - x^* \right\|^2$. 
Then, D-MASG leads to an $\epsilon$-close solution $x^\epsilon$ after at most
\begin{equation}
\bigO(1) \left ( \sqrt{\tkappa} \log \left (\frac{\Delta}{ \epsilon} \right ) + \frac{N\sigma^2}{\mu^2 \epsilon} + \frac{N^{1/4}\sqrt{C_1 (1-\gamma)^{-1}}}{\sqrt{\mu} \sqrt[4]{\epsilon}}\right)    
\end{equation}
iterations, where $\gamma,C_{1}$ are given in \eqref{eqn:asymp_suboptim}-\eqref{defn:C:1} and $\tilde{\kappa}$ is given in \eqref{tilde_kappa}}.
\end{corollary}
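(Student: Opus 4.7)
The plan is to apply Proposition~\ref{D-MASG_any_n} with $p=7$, substitute the prescribed $k_1 = \lceil\sqrt{\tilde\kappa}\log(\Delta/\epsilon)\rceil$, and then force each of the three terms in the resulting upper bound---bias, variance, and network effect---to be individually at most $\epsilon/3$, which will guarantee $\mathbb{E}\|x^{(k)}-x^*\|^2 \leq \epsilon$. The virtue of this $k_1$ is that it is tuned exactly to the bias term: we have $\exp(-k_1/\sqrt{\tilde\kappa}) \leq \epsilon/\Delta$, and combined with the hypothesis $\Delta \geq \|x^{(0)}-x^*\|^2$, this turns the bias summand into $\mathcal{O}\bigl((6p\sqrt{\tilde\kappa}/(k-k_1))^{p-2}\bigr)\cdot \epsilon$.

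First I would handle the bias: requiring the transformed bias term above to be at most $\epsilon/3$ reduces, since $p=7$ is constant, to a lower bound $k-k_1 \geq c_b\sqrt{\tilde\kappa}$ for an absolute constant $c_b$. Next, requiring the variance term $\mathcal{O}(Np\sigma^2/(\mu^2(k-k_1))) \leq \epsilon/3$ yields $k - k_1 \geq c_v N\sigma^2/(\mu^2\epsilon)$. Finally, requiring the network-effect term $\mathcal{O}(Np^4 C_1^2/((1-\gamma)^2\mu^2(k-k_1)^4)) \leq \epsilon/3$ yields, after taking fourth roots, $k - k_1 \geq c_n N^{1/4}\sqrt{C_1/(1-\gamma)}/(\sqrt{\mu}\epsilon^{1/4})$.

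Choosing $k$ to be $k_1$ plus the sum of these three lower bounds (which majorises their maximum up to a factor of $3$) simultaneously satisfies all three conditions and produces
\begin{equation*}
k = \mathcal{O}\Bigl(\sqrt{\tilde\kappa}\log(\Delta/\epsilon) + \sqrt{\tilde\kappa} + \tfrac{N\sigma^2}{\mu^2\epsilon} + \tfrac{N^{1/4}\sqrt{C_1/(1-\gamma)}}{\sqrt{\mu}\,\epsilon^{1/4}}\Bigr).
\end{equation*}
The bare $\sqrt{\tilde\kappa}$ term is absorbed into $\sqrt{\tilde\kappa}\log(\Delta/\epsilon)$ (when $\epsilon < \Delta/e$; otherwise the bound is vacuous), giving precisely the iteration count displayed in Corollary~\ref{cor:final}.

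I do not expect any serious obstacle here: the argument is essentially solving three elementary inequalities for $k-k_1$ and summing. The only items requiring care are the routine bookkeeping of the $\mathcal{O}(1)$ constants, verifying that the hypothesis $k > k_1$ of Proposition~\ref{D-MASG_any_n} is automatic (each lower bound on $k-k_1$ is strictly positive), and confirming that $p = 7$ meets the admissibility condition $p \geq 7$ in Proposition~\ref{main_DMASG_result} so that the stepsize schedule $\alpha_t = \lambda_N^W/(2^{2t}(L+\mu))$ remains valid and the inherited bounds in Proposition~\ref{D-MASG_any_n} continue to apply.
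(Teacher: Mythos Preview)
Your proposal is correct and matches the paper's own argument essentially verbatim: the paper omits the proof, stating only that it follows by bounding the bias, variance, and network-effect terms each by $\epsilon/3$, which is precisely what you carry out. The only cosmetic difference is that you invoke Proposition~\ref{D-MASG_any_n} directly, whereas the paper gestures at the preceding Corollary~\ref{cor_D-MASG}; the underlying computation is identical.
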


{\color{black}
Previously, we obtained optimal convergence results
for the average iterates and individual iterates 
(Proposition~\ref{prop:dsg-average-optimal-rate-DASG}).
Similar to Corollary~\ref{cor_D-MASG}, we have the following result.
We omit the proof as it follows directly from the previous corollary; by bounding bias, variance, and network effect terms, each by $\epsilon/3$.

\begin{corollary}\label{cor_D-MASG:bar}
Consider running D-MASG with the parameters given in Proposition~\ref{main_DMASG_result}. In particular, by choosing $k_1 = \ceil{(p-2) \log(6 p \tilde{\kappa})\sqrt{\tilde{\kappa}}}$, we have
\begin{align*}
&\E \left[ \left\| \bar{x}^{(k)} -x_* \right\|^2\right] \leq \bigO(1) \left ( \frac{1}{k^{p-2}} \frac{\left\| x^{(0)} - x^* \right\|^2}{N} + \frac{p \sigma^2}{N\mu\sqrt{\mu} k}  + \frac{p^4 C_{0}L^{2}( 1-\gamma)^{-2}}{N\mu^2 k^4} \right ),
\\
&\E \left[ \left\| x_{i}^{(k)} -x_* \right\|^2\right] 
\\
&\leq \bigO(1) \left ( \frac{1}{k^{p-2}} \frac{\left\| x^{(0)} - x^* \right\|^2}{N} + \frac{p \sigma^2}{N\mu\sqrt{\mu} k}  + \left(\frac{L^{2}}{N\mu^{2}}+1\right)\frac{p^4 C_{0}( 1-\gamma)^{-2}}{k^4} \right ),
\end{align*}
for any $k \geq 2 k_1$ and $i=1,2,\ldots,N$.
Also, for a given number of iterations, $k$, by choosing $p=7$
and $k_1 = \ceil{\frac{k}{C}}$ for some constant $C \geq 2$, we have
\begin{align*}
&\E \left[ \left\| \bar{x}^{(k)} -x_* \right\|^2\right] \leq \bigO(1) \left ( \exp\left(-\frac{k}{C\sqrt{\tilde{\kappa}}}\right)\frac{\left\| x^{(0)} - x^* \right\|^2}{N} + \frac{  \sigma^2}{N\mu\sqrt{\mu}k}  + \frac{C_{0}L^{2}( 1-\gamma)^{-2}}{N\mu^2 k^4} \right ),
\\
&\E \left[ \left\| x_{i}^{(k)} -x_* \right\|^2\right] 
\\
&\leq \bigO(1) \left ( \exp\left(-\frac{k}{C\sqrt{\tilde{\kappa}}}\right)\frac{\left\| x^{(0)} - x^* \right\|^2}{N} + \frac{  \sigma^2}{N\mu\sqrt{\mu}k}  + \left(\frac{L^{2}}{N\mu^{2}}+1\right)\frac{C_{0}( 1-\gamma)^{-2}}{k^4} \right ),
\end{align*}
for any $k \geq 2 \sqrt{\tilde{\kappa}}$ and $i=1,2,\ldots,N$, {\color{black}where $\gamma,C_{1}$ are given in \eqref{eqn:asymp_suboptim}-\eqref{defn:C:1} and $\tilde{\kappa}$ is given in \eqref{tilde_kappa}}.
\end{corollary}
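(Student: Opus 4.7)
The plan is to mirror the recipe that produces Corollary~\ref{cor_D-MASG}, but replace the per-stage bound Proposition~\ref{thm_one_stage} (which controls $\mathbb{E}\|x^{(k)}-x^*\|^2$) with the stronger per-stage averaged and individual bounds of Proposition~\ref{prop:dsg-average-optimal-rate-DASG}. Those bounds already contain the averaging gain: the dominant variance term is $\mathcal{O}(\sigma^2\sqrt{\alpha}/(\mu\sqrt{\mu}N))$ instead of the $\mathcal{O}(\sigma^2 N\sqrt{\alpha}/(\mu\sqrt{\mu}))$ appearing in the concatenated-iterate analysis, and that single factor is what propagates through the stage-by-stage recursion to give the $1/N$ speedup in the final rate.

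First I would fix a stage $t$ and invoke Proposition~\ref{prop:dsg-average-optimal-rate-DASG} with stepsize $\alpha_t$ and stage length $k_t$ prescribed by Proposition~\ref{main_DMASG_result}, so $\beta_t=(1-\sqrt{\mu\alpha_t})/(1+\sqrt{\mu\alpha_t})$ and the momentum restart $x^{(t,-1)}=x^{(t,0)}$ makes $V_{\bar{S},\alpha_t}(\bar{\xi}_0)$ reduce to a multiple of $f(\bar{x}^{(L_{t-1})})-f(x_*)$, which by $L$-smoothness is $\mathcal{O}(L\,\mathbb{E}\|\bar{x}^{(L_{t-1})}-x_*\|^2)$. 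This yields a one-stage recursion
\begin{equation*}
\mathbb{E}\bigl\|\bar{x}^{(L_t)}-x_*\bigr\|^2 \;\lesssim\; q_t\,\mathbb{E}\bigl\|\bar{x}^{(L_{t-1})}-x_*\bigr\|^2 \;+\; \frac{C\sigma^2\sqrt{\alpha_t}}{\mu\sqrt{\mu}N} \;+\; \frac{C'\,C_0 L^2 \alpha_t}{N\mu^2(1-\gamma)^2},
\end{equation*}
with contraction factor $q_t\le (1-\tfrac12\sqrt{\mu\alpha_t})^{k_t}$ plus a lower-order cross term in $\gamma^{2k_t}$ that, by the choice $k_t=2^t\lceil p\sqrt{\tilde\kappa}\log 2\rceil$, is dominated by $q_t$.

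Next I would unroll this recursion over the $T$ stages exactly as in Proposition~\ref{main_DMASG_result} and Proposition~\ref{D-MASG_any_n}: since $\alpha_t$ decays as $4^{-t}$ and $k_t$ grows as $2^t$, the variance contributions form a geometric series dominated by its last term, yielding $\mathcal{O}(\sigma^2/(N\mu\sqrt{\mu}\,k))$, and the network-effect contributions scale as $\alpha_t\sim 4^{-t}$, summing to $\mathcal{O}(C_0 L^2/(N\mu^2(1-\gamma)^2 k^4))$. Choosing $k_1$ as in the two cases of the statement calibrates the bias exponent, producing either $1/k^{p-2}$ or $\exp(-k/(C\sqrt{\tilde\kappa}))$ decay of the initialization error. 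This gives the averaged bound.

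For the individual-node bound I would use the second inequality of Proposition~\ref{prop:dsg-average-optimal-rate-DASG}, which adds a consensus-error term of the form $\gamma^{2k}(\cdot) + \alpha^2 D_y^2/(1-\gamma)^2 + \sigma^2 N\alpha^2/(1-\gamma)^2 + C_0\alpha/(1-\gamma)^2$. The first piece is absorbed into the bias by the geometric growth of $k_t$; the second and third, once summed across stages under $\alpha_t\sim 4^{-t}$, yield an extra network-term of order $C_0/((1-\gamma)^2 k^4)$ \emph{without} a $1/N$ factor. Combined with the averaged bound this produces the prefactor $(L^2/(N\mu^2)+1)$, where the first summand is the averaged contribution and the $+1$ is the consensus-spread contribution. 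The main obstacle I anticipate is accounting bookkeeping across stage boundaries: verifying that the Lyapunov values $V_{\bar{S},\alpha_t}$ and $V_{S,\alpha_t}$, after momentum restart, can be uniformly controlled by $\mathbb{E}\|\bar{x}^{(L_{t-1})}-x_*\|^2$ and $\mathbb{E}\|x^{(L_{t-1})}-x^*\|^2$ with constants independent of $t$, and checking that $C_0$ (which absorbs several quadratic quantities per the footnote) remains uniform across stages because $\alpha_t\downarrow 0$ keeps its constituent terms bounded.
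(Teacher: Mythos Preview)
Your proposal is correct and follows essentially the same route as the paper. The paper's proof likewise starts from Proposition~\ref{prop:dsg-average-optimal-rate-DASG}, simplifies the right-hand side for small $\alpha$ (checking $H_1=\mathcal{O}(1)$, $H_2=\mathcal{O}(1)\tfrac{L^2}{N}\tfrac{C_0}{(1-\gamma)^2}$, $H_3=\mathcal{O}(1)\tfrac{L^2}{N}$, and absorbing the $\gamma^{2k}$ cross term into the $(1-\tfrac12\sqrt{\alpha\mu})^k$ term), obtains exactly the one-stage inequality you wrote, and then defers to the multistage machinery of Corollary~\ref{cor_D-MASG} (i.e.\ the recursion of Propositions~\ref{main_DMASG_result}--\ref{D-MASG_any_n}) to finish; the individual-node bound is handled just as you describe, by adding the consensus-spread terms from the second inequality of Proposition~\ref{prop:dsg-average-optimal-rate-DASG}. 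One minor remark: the paper bounds $V_{\bar{S},\alpha_t}(\bar{\xi}_0)$ after restart directly as $\frac{\mu}{N}\|x^{(0)}-x^*\|^2+\mu\tfrac{C_1^2\alpha^2}{(1-\gamma)^2}$ (using the exact evaluation of the quadratic part, as in the proof of Proposition~\ref{thm_one_stage}) rather than via $L$-smoothness, which keeps the bias prefactor independent of $\kappa$.
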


Similar to Corollary~\ref{cor:final}, we can also provide bounds on the number of iterations required to find
an $\epsilon$-solution for the average iterates
and an individual iterate, i.e. a point 
$\bar{x}^\epsilon :=\frac{1}{N}\sum_{i=1}^{N}x_{i}^{\epsilon}$ that satisfies
$\E \left[ \left\| \bar{x}^\epsilon -x_* \right\|^2\right] \leq \epsilon$ 
and $\E \left[ \left\| x_i^\epsilon -x_* \right\|^2\right] \leq \epsilon$ 
for a given $\epsilon > 0$. 
We have the following result.}

\begin{corollary}\label{cor:final:2}
{\color{black}Let $\epsilon >0$ be an arbitrary positive number. Consider running D-MASG with the parameters given in Proposition~\ref{main_DMASG_result}. Assume choosing $p=7$ and $k_1 = \ceil{\sqrt{\tkappa}\log \left ( \frac{\Delta}{N \epsilon}\right)}$ where $\Delta$ is the optimality gap, an upper bound on the initial error, i.e., $\Delta \geq \left\| x^{(0)} - x^* \right\|^2$. 
Then, D-MASG leads to an $\epsilon$-close solution $\bar x^\epsilon$ after at most
\begin{equation}
\bigO(1) \left ( \sqrt{\tkappa} \log \left (\frac{\Delta}{N \epsilon} \right ) + \frac{\sigma^2}{\mu\sqrt{\mu}N \epsilon} + \frac{C_{0}^{1/4}\sqrt{L (1-\gamma)^{-1}}}{N^{1/4}\sqrt{\mu} \sqrt[4]{\epsilon}}\right)    
\end{equation}
iterations, and for any $1\leq i\leq k$, D-MASG leads to an $\epsilon$-close solution $x_{i}^\epsilon$ after at most
\begin{equation}
\bigO(1) \left ( \sqrt{\tkappa} \log \left (\frac{\Delta}{N \epsilon} \right ) + \frac{\sigma^2}{\mu\sqrt{\mu}N \epsilon} + \left(\frac{\sqrt{L}}{N^{1/4}\sqrt{\mu}}+1\right)\frac{C_{0}^{1/4}\sqrt{(1-\gamma)^{-1}}}{\sqrt[4]{\epsilon}}\right)    
\end{equation}
iterations, where $C_{0}$ is an explicitly computable constant
such that $C_{0}=\mathcal{O}(1)$ as $\alpha\rightarrow 0$
and $\gamma$ is given in \eqref{eqn:asymp_suboptim} and $\tilde{\kappa}$ is given in \eqref{tilde_kappa}.}
\end{corollary}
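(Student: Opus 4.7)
The plan is to derive Corollary~\ref{cor:final:2} as a direct consequence of Corollary~\ref{cor_D-MASG:bar} by balancing the three error terms (bias, variance, and network effect) in the upper bound so that each is of order at most $\epsilon$, and then reporting the resulting sum as the iteration complexity.

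First, I would apply Corollary~\ref{cor_D-MASG:bar} with $p=7$ in its first form, so that with $k_1=\lceil(p-2)\log(6p\tilde\kappa)\sqrt{\tilde\kappa}\rceil$ we obtain, for any $k\geq 2k_1$,
\begin{equation*}
\mathbb{E}\left[\left\|\bar{x}^{(k)}-x_{\ast}\right\|^{2}\right]
\leq \mathcal{O}(1)\left(\frac{1}{k^{5}}\frac{\left\|x^{(0)}-x^{\ast}\right\|^{2}}{N}+\frac{\sigma^{2}}{N\mu\sqrt{\mu}\,k}+\frac{C_{0}L^{2}(1-\gamma)^{-2}}{N\mu^{2}k^{4}}\right).
\end{equation*}
The key step is to observe that, in order to kill the bias term, it is more efficient to lengthen the very first stage: replacing the generic choice of $k_1$ above by $k_1=\lceil\sqrt{\tilde\kappa}\log(\Delta/(N\epsilon))\rceil$ drives the exponential factor $\exp(-k_1/(C\sqrt{\tilde\kappa}))$ appearing in the second form of Corollary~\ref{cor_D-MASG:bar} down to $N\epsilon/\Delta$, so that after multiplication by $\Delta/N$ the bias contribution is $\mathcal{O}(\epsilon)$. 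This uses exactly the same argument that produced Corollary~\ref{cor:final} from Corollary~\ref{cor_D-MASG}; the only change is the extra $1/N$ factor that now appears in the bias, which is what transforms $\log(\Delta/\epsilon)$ into $\log(\Delta/(N\epsilon))$.

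Second, I would force each of the remaining two terms to be at most $\epsilon$. Setting $\sigma^{2}/(N\mu\sqrt{\mu}\,k)\leq \epsilon$ requires $k\geq \sigma^{2}/(N\mu\sqrt{\mu}\epsilon)$, and setting $C_{0}L^{2}/((1-\gamma)^{2}N\mu^{2}k^{4})\leq \epsilon$ requires $k\geq C_{0}^{1/4}\sqrt{L}/(N^{1/4}\sqrt{\mu}\sqrt{1-\gamma}\,\epsilon^{1/4})$. Summing the three sufficient lower bounds on $k$ (which is valid up to constants, since the maximum and the sum agree up to a factor of $3$) yields the claimed bound for $\bar{x}^{\epsilon}$. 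For the individual-iterate statement, I would repeat exactly the same calculation, but now using the second displayed inequality in Corollary~\ref{cor_D-MASG:bar}, whose network-effect term carries the additional prefactor $L^{2}/(N\mu^{2})+1$; solving $(L^{2}/(N\mu^{2})+1)\,C_{0}/((1-\gamma)^{2}k^{4})\leq \epsilon$ for $k$ produces the extra $(\sqrt{L}/(N^{1/4}\sqrt{\mu})+1)$ factor in the last summand of the iteration count.

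No serious obstacle is anticipated: the proof is essentially algebraic once Corollary~\ref{cor_D-MASG:bar} is in hand, and mirrors the derivation of Corollary~\ref{cor:final} from Corollary~\ref{cor_D-MASG}. The only mild care required is in verifying that the choice $k_1=\lceil\sqrt{\tilde\kappa}\log(\Delta/(N\epsilon))\rceil$ is compatible with the admissibility condition $k\geq 2\sqrt{\tilde\kappa}$ of Corollary~\ref{cor_D-MASG:bar}, which is automatic once the target accuracy $\epsilon$ is smaller than $\Delta/N$ (otherwise the bound is trivially achieved in a constant number of iterations and can be absorbed into the $\mathcal{O}(1)$ prefactor), and in tracking the dependence on $C_{0}$ and $\gamma$ through the constants, which the statement already hides inside the $\mathcal{O}(1)$ term.
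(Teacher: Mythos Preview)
Your proposal is correct and takes essentially the same approach as the paper: the paper states that Corollary~\ref{cor:final:2} follows from Corollary~\ref{cor_D-MASG:bar} by bounding the bias, variance, and network-effect terms each by $\epsilon/3$, exactly as in the derivation of Corollary~\ref{cor:final} from Corollary~\ref{cor_D-MASG}. Your write-up does precisely this (up to the harmless constant $3$ absorbed in $\mathcal{O}(1)$), and your algebraic manipulation of the network-effect term---including the step $(L^{2}/(N\mu^{2})+1)^{1/4}\leq \sqrt{L}/(N^{1/4}\sqrt{\mu})+1$ for the individual-iterate bound---matches what is needed.
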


\section{Numerical Results}

In this section, we conduct several experiments to validate our theory and assess the performance of D-SG and D-ASG. We consider a (regularized) logistic regression problem which is a common formulation to solve binary classification tasks: 
\begin{align}
\min_{x \in \mathbb{R}^d}\left( \frac1{n} \sum_{i=1}^n \log\left(1 + \exp \left(-y_i X_i^\top x\right)\right) + \lambda \| x\|_2^2\right), \label{eqn:logreg}
\end{align}
where $(X_i,y_i)$ denotes a data pair: $X_i \in \mathbb{R}^d$ is the feature vector and $y_i \in \{-1,1\}$ denotes the label, and $n$ denotes the number of data pairs. 

In all our experiments, we assume that each computation node has access to a subset of all data points, and the noisy gradient in \eqref{eqn:dsg_update} and \eqref{def-dasg-iters} basically becomes the \emph{stochastic gradient} that is computed on a random sub-sample of the data. More precisely, we will assume that at each iteration, each computation node will draw a random sub-sample from the data points that it has access to, and compute the stochastic gradient by using this subsample. The size of the subsample will be determined by a single parameter $b\in(0,1]$, which determines the ratio of the number of elements contained in the subsample to the total number of data points that are accessible to that node. For instance, if all the data points are evenly distributed to the nodes, i.e.\ each node has access to $n/N$ number of distinct data points, the size of the data sub-sample that will be used for computing the stochastic gradients is determined as $(bn)/N$. If $b=1$, the node will use all of its data points to compute the gradient, hence the variance of the gradient noise $\sigma^2$ will vanish. Similarly, a small $b \ll 1$ will result in a large $\sigma^2$. 


In the sequel, we first conduct experiments on a synthetic problem, which provides us a more sterilized environment where we have a direct control on the problem. Then, we conduct experiments on two binary classification datasets, where we implement the proposed algorithms and the competitors in C++ and run them on a real distributed environment. 
{
\color{black}
We will consider five different network architectures: (i) Connected: the network nodes can communicate with all the other nodes in the network, (ii) Star: the network nodes are only allowed to communicate with a central node (iii) Circular: the network notes are only allowed to communicate with their `right' and `left' neighbors, (iv) Grid: the network nodes are allowed to communicate with their upper, lower, left, and right neighbors, and finally (v) Disconnected: the network nodes are not allowed to communicate. These architectures are visualized in Figure~\ref{fig:net_archs}. 
}
We note that we replicate each experiment $5$ times and we report the average results. 
{
  \color{black} Finally, in our last experiments, we monitor the robustness of the proposed algorithms to potential inaccuracies in estimating the problem constants $L$ and $\mu$.
}

\newcommand{\figsize}{0.17}

\begin{figure}[t]
    \centering
    \subfigure[Fully-connected]{
    \includegraphics[width=\figsize\columnwidth]{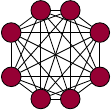} 
    }
    \hfill
    \subfigure[{\color{black}Star}]{
    \includegraphics[width=\figsize\columnwidth]{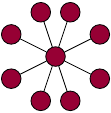} 
    }
    \hfill
    \subfigure[Circular]{
    \includegraphics[width=\figsize\columnwidth]{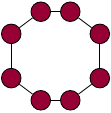} 
    }
    \hfill
    \subfigure[{\color{black}Grid}]{
    \includegraphics[width=\figsize\columnwidth]{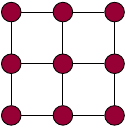} 
    }
    \hfill
    \subfigure[Disconnected]{
    \includegraphics[width=\figsize\columnwidth]{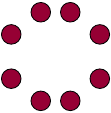} 
    }
    \caption{Illustration of the network architectures.}
    \label{fig:net_archs}
\end{figure} 



\subsection{Synthetic data experiments}

In this section, we present our experiments on a synthetic logistic regression problem, where our main goal is to validate Theorems~\ref{thm:DGD:explicit} and~\ref{thm:general:DASG} on the logistic regression task. In this set of experiments, we first generate synthetic data by simulating the following probabilistic model:
\begin{equation*}
x_0 \sim \mathcal{N}(0, I), 
\qquad
X_i \sim \mathcal{N}\left(0, \sigma_X^2 I\right), 
\qquad
y_i | X_i, x_0 \sim \delta\left( y_i - \mathrm{sign}\left(X_i^\top x_0\right) \right),
\end{equation*}
where $x_0$ denotes the data generating parameter and $\delta$ denotes the Dirac delta function to represent deterministic relations as a degenerate probability model. Once the set of pairs $(X_i, y_i)_{i=1}^n$ are generated, our goal becomes solving an $\ell_2$-regularized logistic regression problem defined in \eqref{eqn:logreg}.
In this set of experiments, we simulate the distributed environment in MATLAB and we provide our implementation in the supplementary material. Unless stated otherwise, we first generate $n=1000$ data points, set the dimension $d=100$, data variance $\sigma_X^2 = 5$, $\lambda = 0.05$, the number of nodes $N=10$, the batch proportion $b=0.1$, and we consider the circular network architecture. 

\begin{figure}[t]
    \centering
    \subfigure[Step-size]{
    \includegraphics[width=0.31\columnwidth]{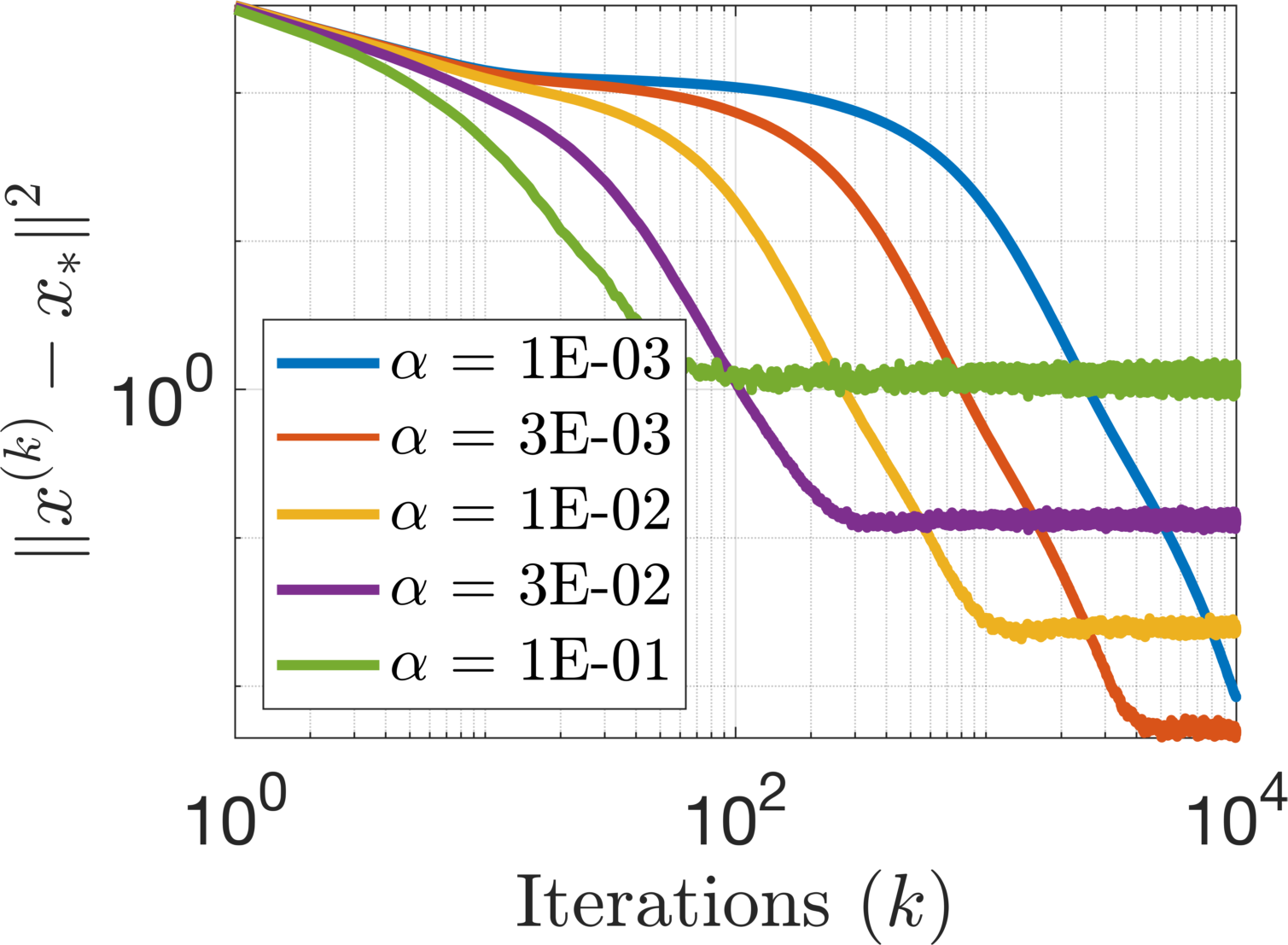} \label{fig:dsg_synth_stepsize}
    }
    \subfigure[{\color{black}Network architecture}]{\includegraphics[width=0.31\columnwidth]{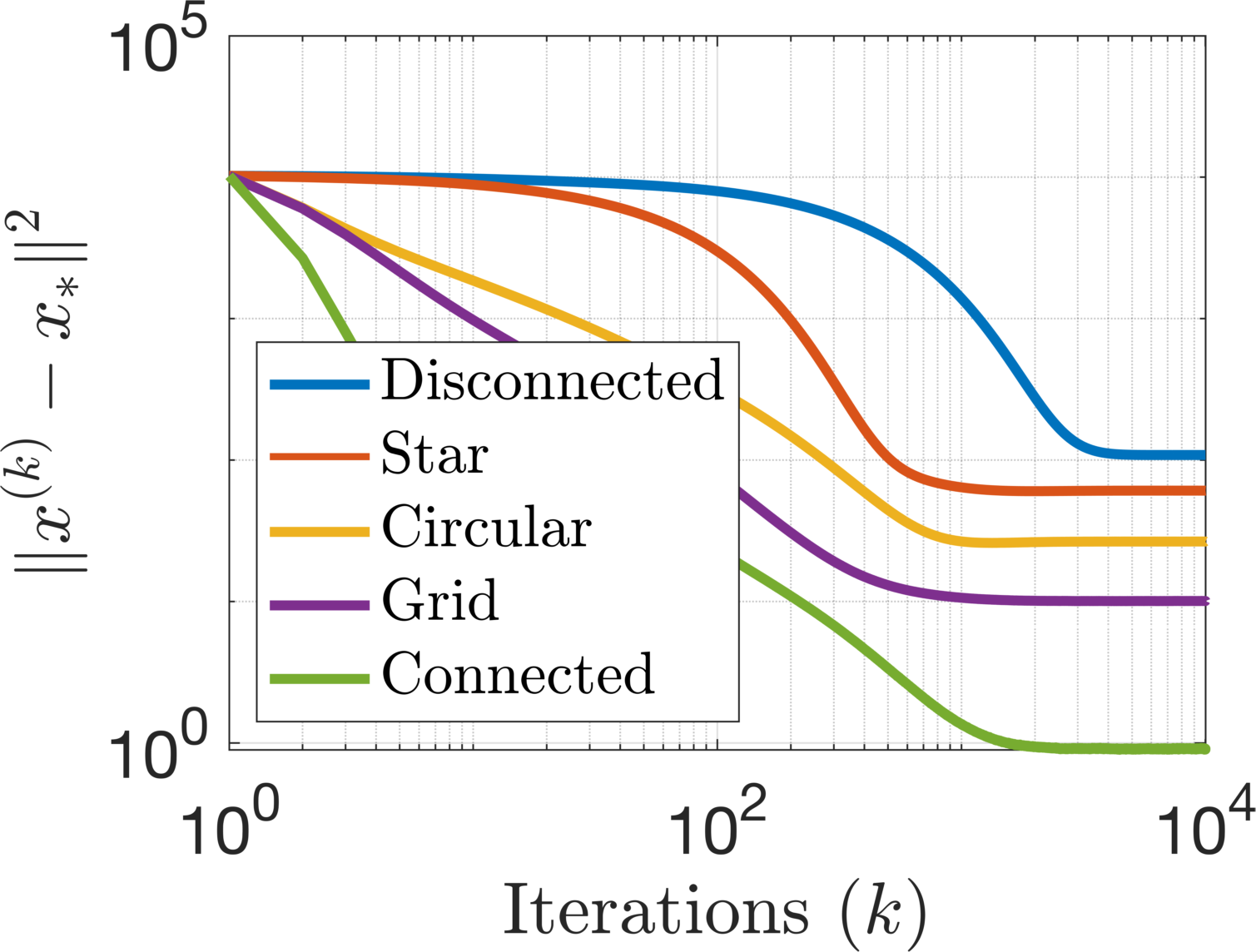} \label{fig:dsg_synth_network}}
    \subfigure[Minibatch size]{\includegraphics[width=0.31\columnwidth]{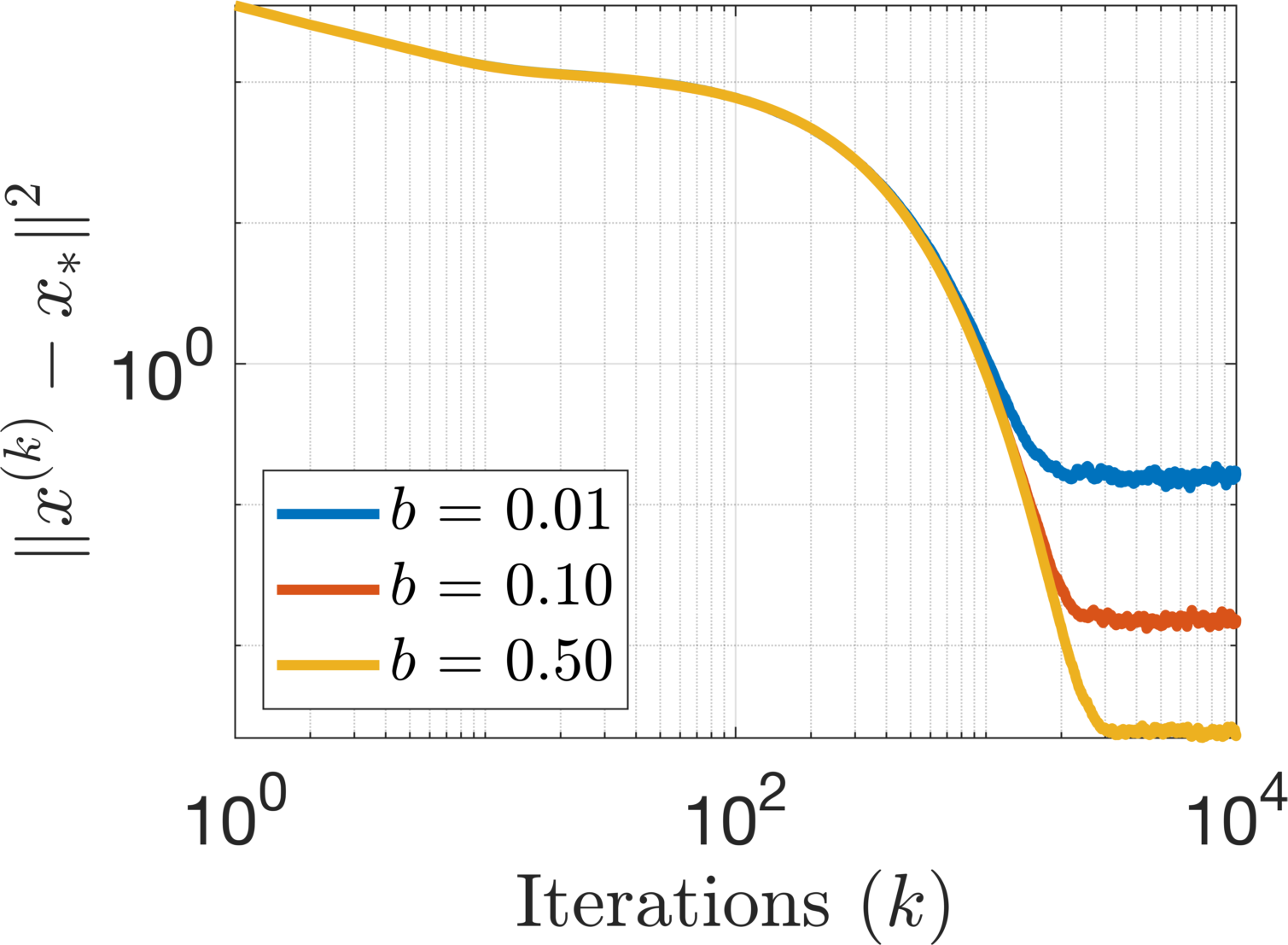} \label{fig:dsg_synth_batchsize}}
    \caption{Synthetic data experiments on D-SG.}
    \label{fig:exp_log_reg_synth_DSG}
\end{figure} 

\begin{figure}[t]
    \centering
    \subfigure[Step-size]{\includegraphics[width=0.31\columnwidth]{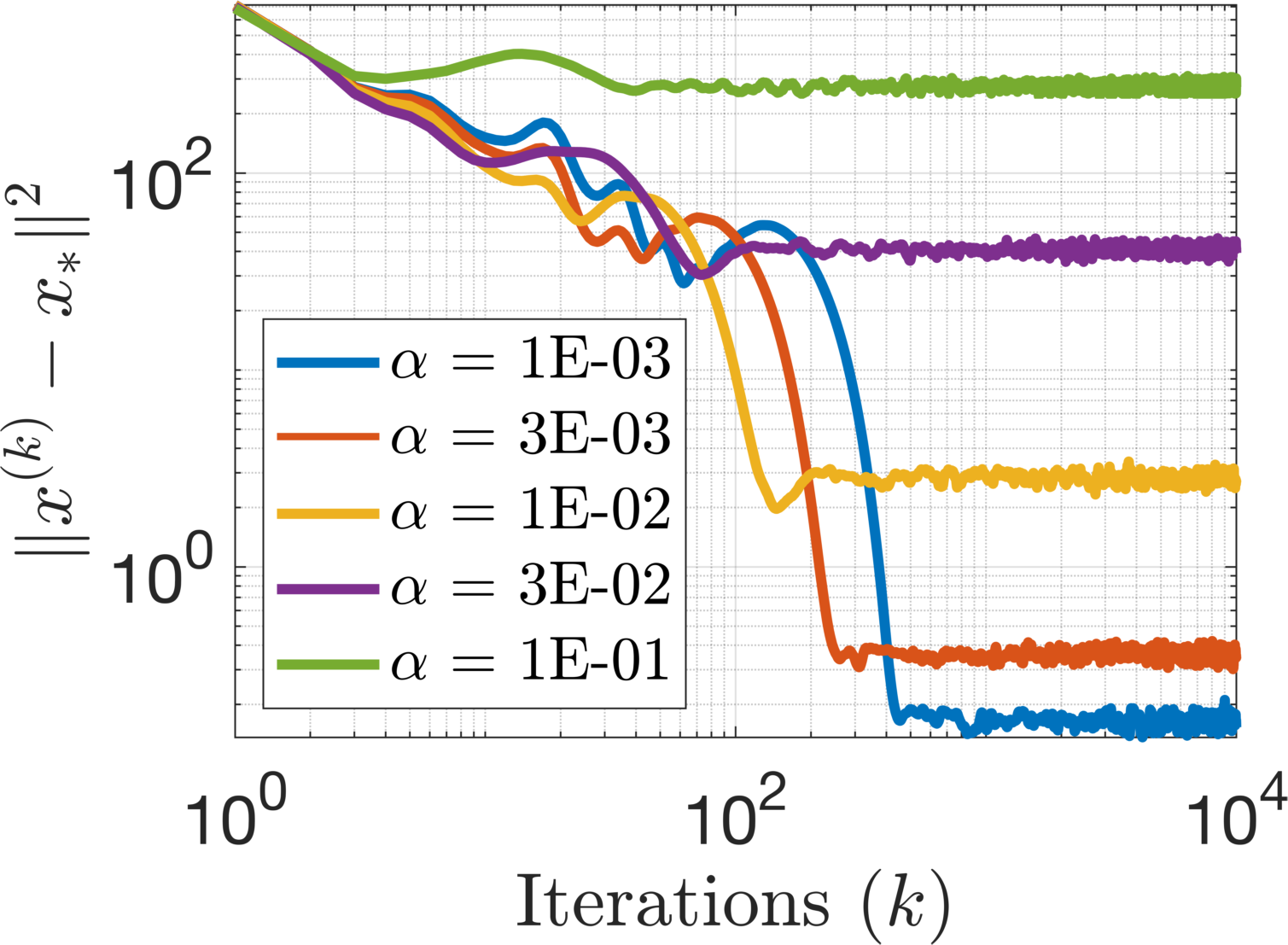} \label{fig:dasg_synth_stepsize}}
    \subfigure[{\color{black}Network architecture}]{\includegraphics[width=0.31\columnwidth]{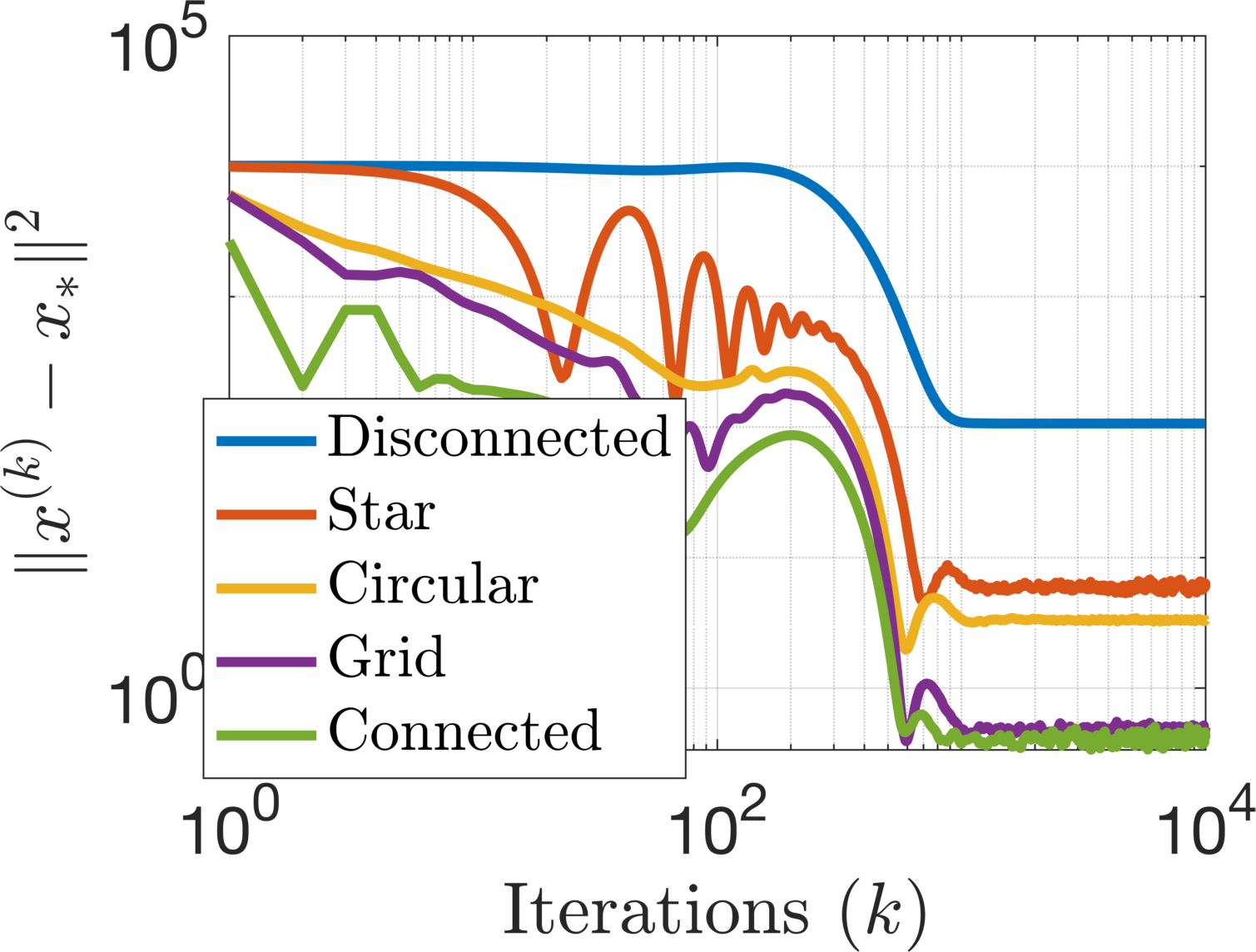} \label{fig:dasg_synth_network}}
    \subfigure[Minibatch size]{\includegraphics[width=0.31\columnwidth]{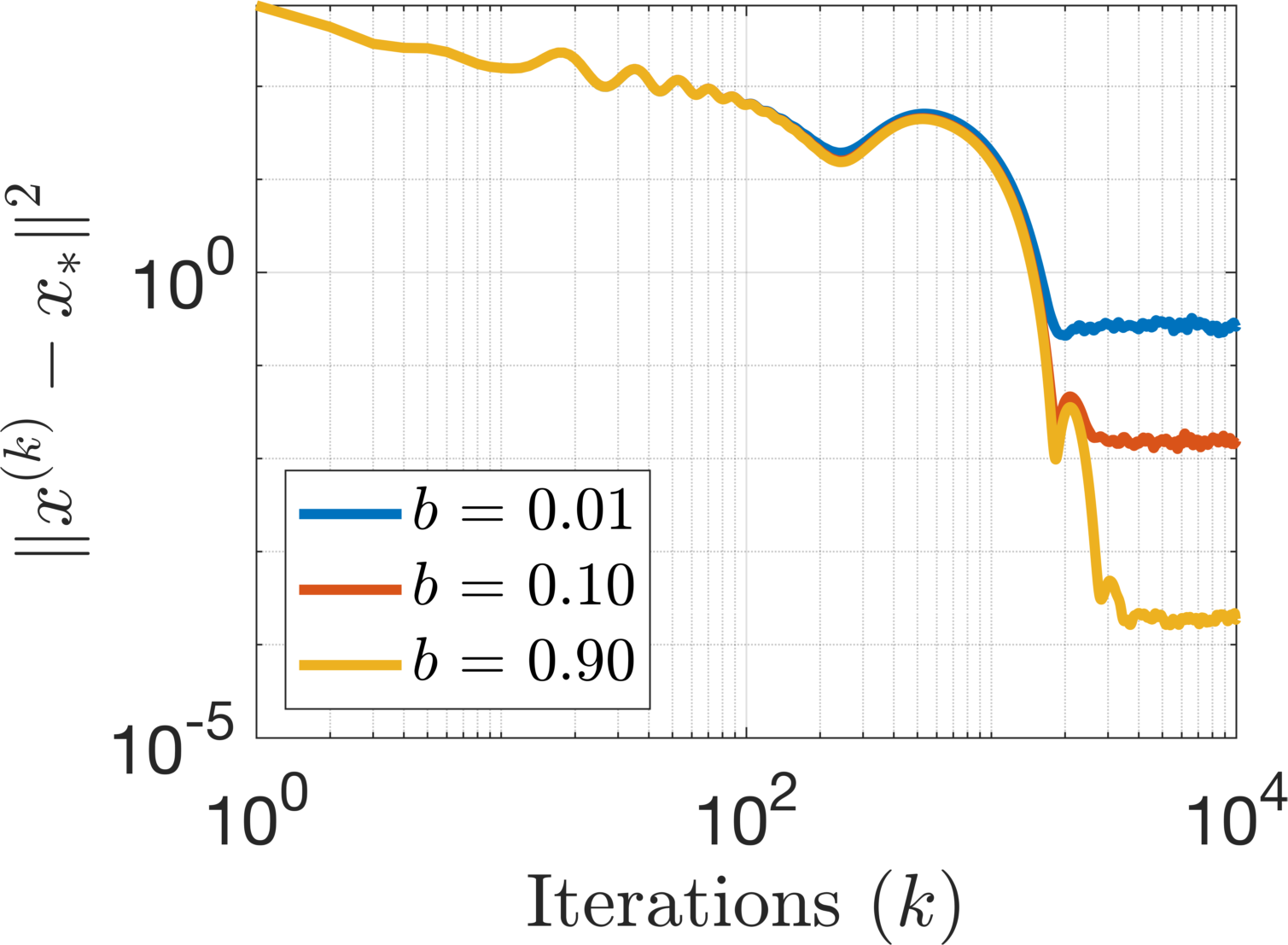} \label{fig:dasg_synth_batchsize}}
    \caption{Synthetic data experiments on D-ASG.  }
    \label{fig:exp_log_reg_synth_DASG}
\end{figure} 


Figure~\ref{fig:exp_log_reg_synth_DSG} illustrates the results for D-SG. In Figure~\ref{fig:dsg_synth_stepsize}, we investigate the convergence behavior of D-SG for varying step-size $\alpha$. The results clearly demonstrate the trade off between the convergence rate and the asymptotic variance: for larger $\alpha$ the algorithm attains a faster convergence rate but the resulting asymptotic variance becomes larger, as indicated by Theorem~\ref{thm:DGD:explicit}.

{
\color{black}
In the next experiment, we investigate the performance of D-SG for varying network architectures. In this setting we set $N=1000$ in order to illustrate the differences more clearly. 
As illustrated in Figure~\ref{fig:dsg_synth_network}, the results are intuitive: we observe that the disconnected graph non-surprisingly has the largest asymptotic variance. Furthermore, the performance improves as the graph becomes more connected: the performance of the (fully-connected) connected network is the best and degrades gradually as we go from the grid topology to the star topology. 
}

In our third experiment, we investigate the effect of the noise variance $\sigma^2$ by altering the batch proportion $b$. As shown in Figure~\ref{fig:dsg_synth_batchsize}, decreasing the batch size results in an increased asymptotic variance. This behavior is also correctly captured by Theorem~\ref{thm:DGD:explicit}: decreasing $b$ increases the noise variance $\sigma^2$ and hence the second term in \eqref{ineq-perf-ub-dgd} dominates for large number of iterations.  

In our next set of experiments, we replicate the previous three experiments by replacing D-SG with D-ASG. Figure~\ref{fig:exp_log_reg_synth_DASG} illustrates the results. We observe a similar outcome to the ones of the previous set of experiments. Figure~\ref{fig:dasg_synth_stepsize} verifies that the step-size determines the trade off between the convergence rate and the asymptotic variance as suggested by Theorem~\ref{thm:general:DASG}. 
{\color{black}
Figure~\ref{fig:dasg_synth_network} illustrates the behavior of the algorithm under different network settings with $N=1000$. We again observe that the disconnected network is performing worse than the other network architectures as expected; however, as opposed to Figure~\ref{fig:dsg_synth_network}, there is no significant difference between the grid and the connected networks. This result suggests that the usage of the momentum in D-ASG compensates the additional difficulty introduced by the sparsely connected network architecture. 
}
In our last experiment, we investigate the behavior of D-ASG for varying gradient noise variance. As illustrated in Figure~\ref{fig:dasg_synth_batchsize}, the asymptotic error increases with the decreasing batch proportion $b$. More importantly, compared to D-SG, the increase in the asymptotic variance turns out to be significantly larger for D-ASG, which illustrates that D-ASG is less robust to the gradient noise. This observation also supports our theory (cf.\ the remark about robustness in Section~\ref{subsec:dasg}).

\begin{figure}[t] 
    \centering
    \subfigure[D-SG]{\includegraphics[width=0.24\columnwidth]{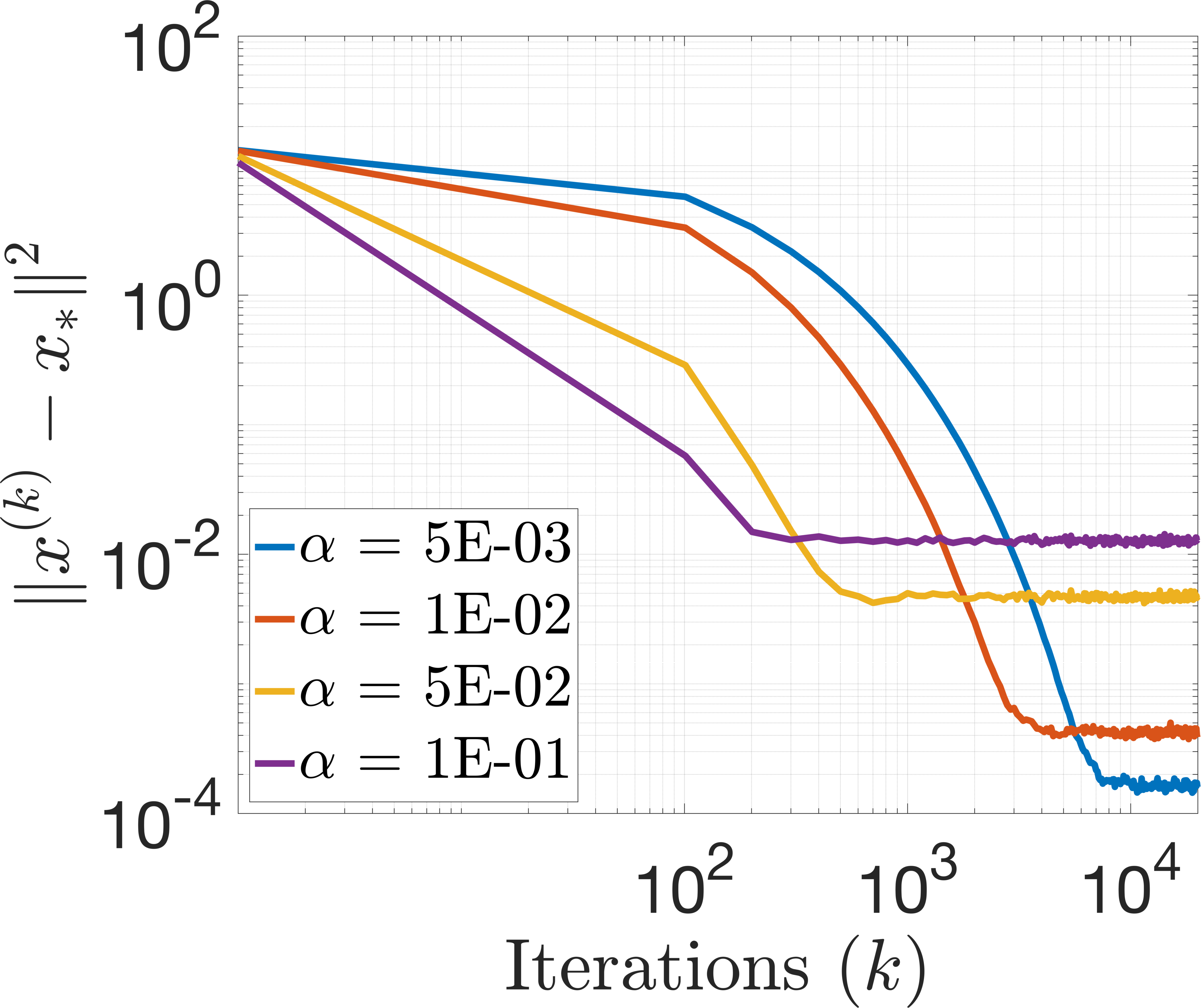}
    			 \includegraphics[width=0.24\columnwidth]{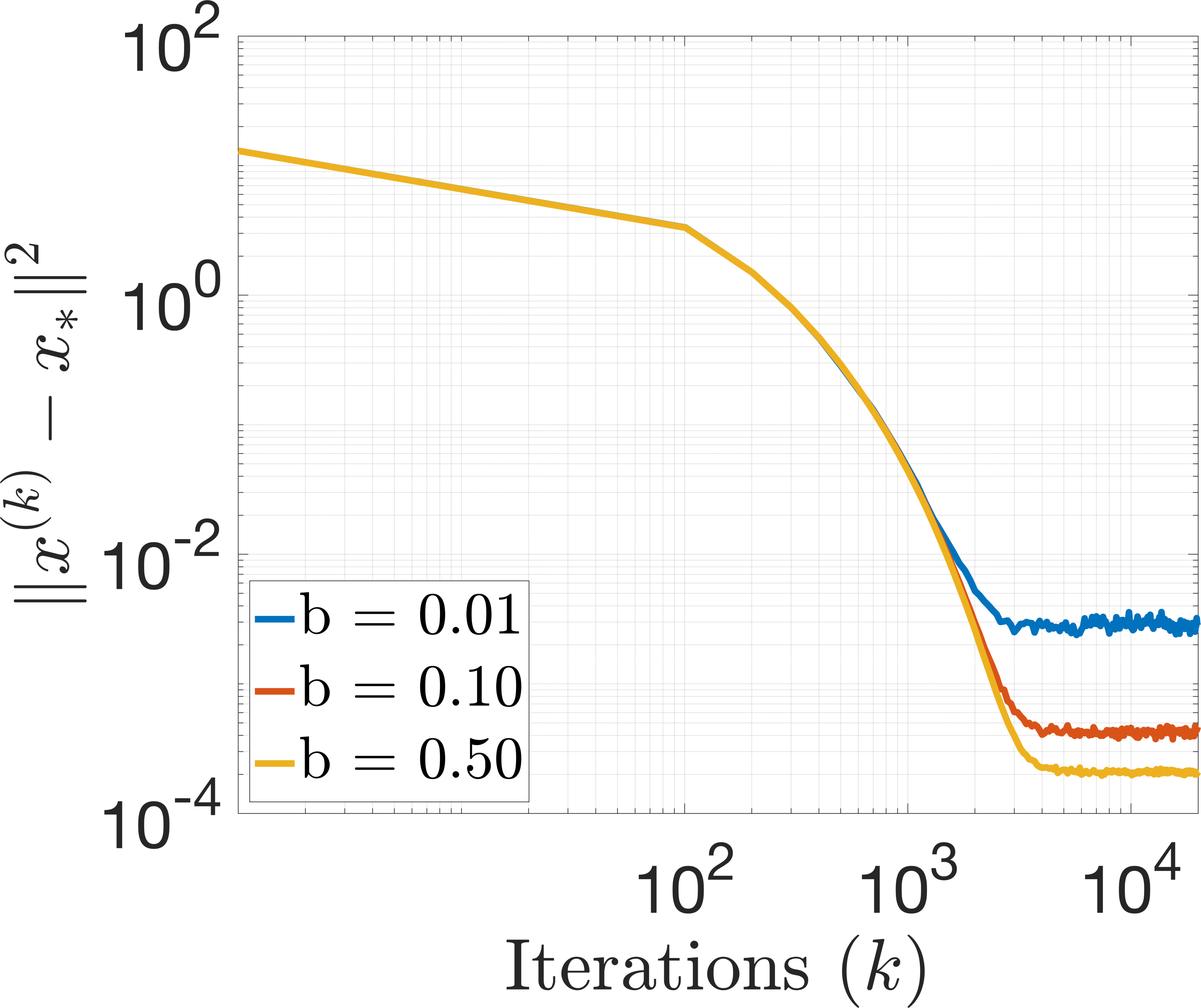}}
	\subfigure[D-ASG]{\includegraphics[width=0.24\columnwidth]{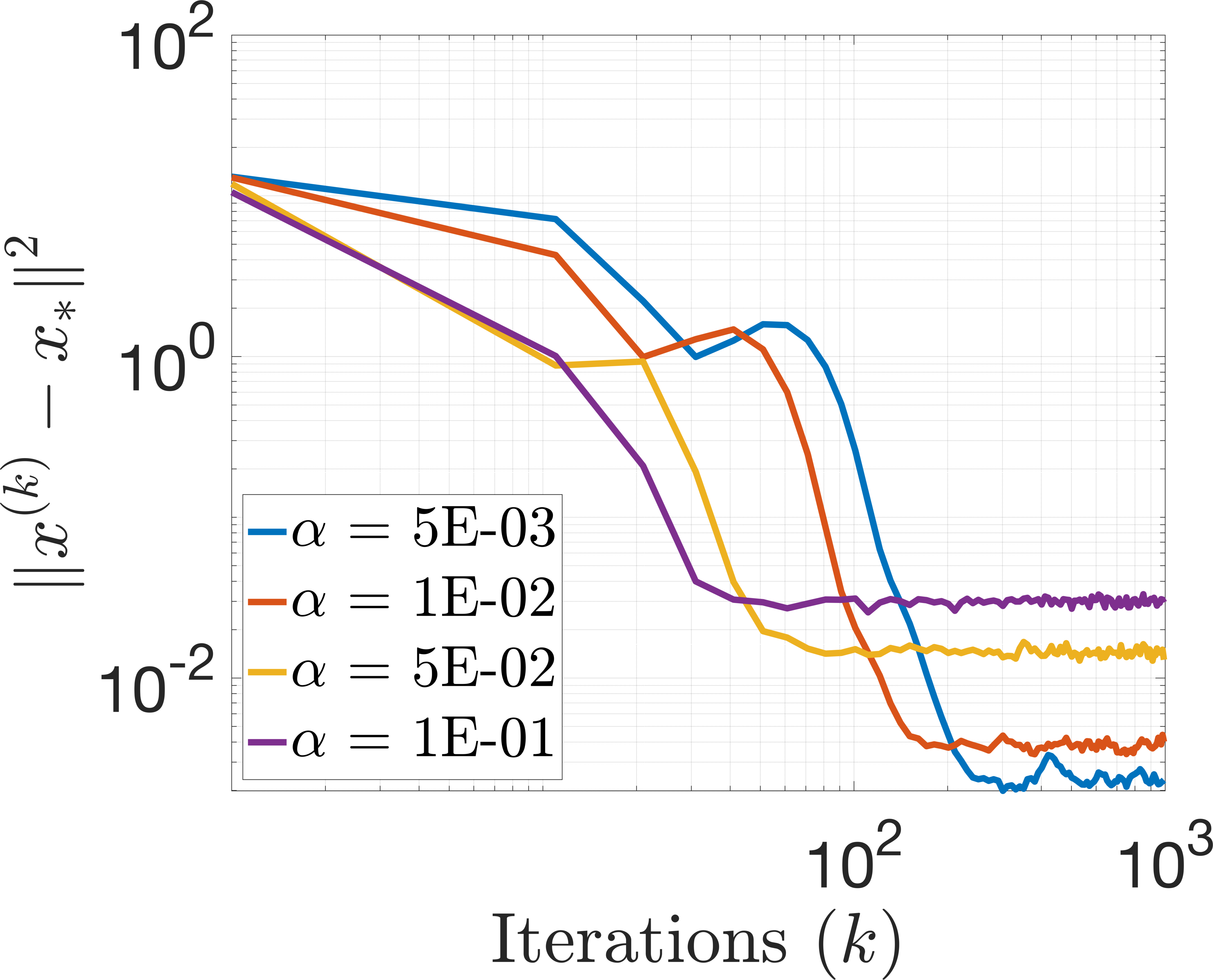}
    			 \includegraphics[width=0.24\columnwidth]{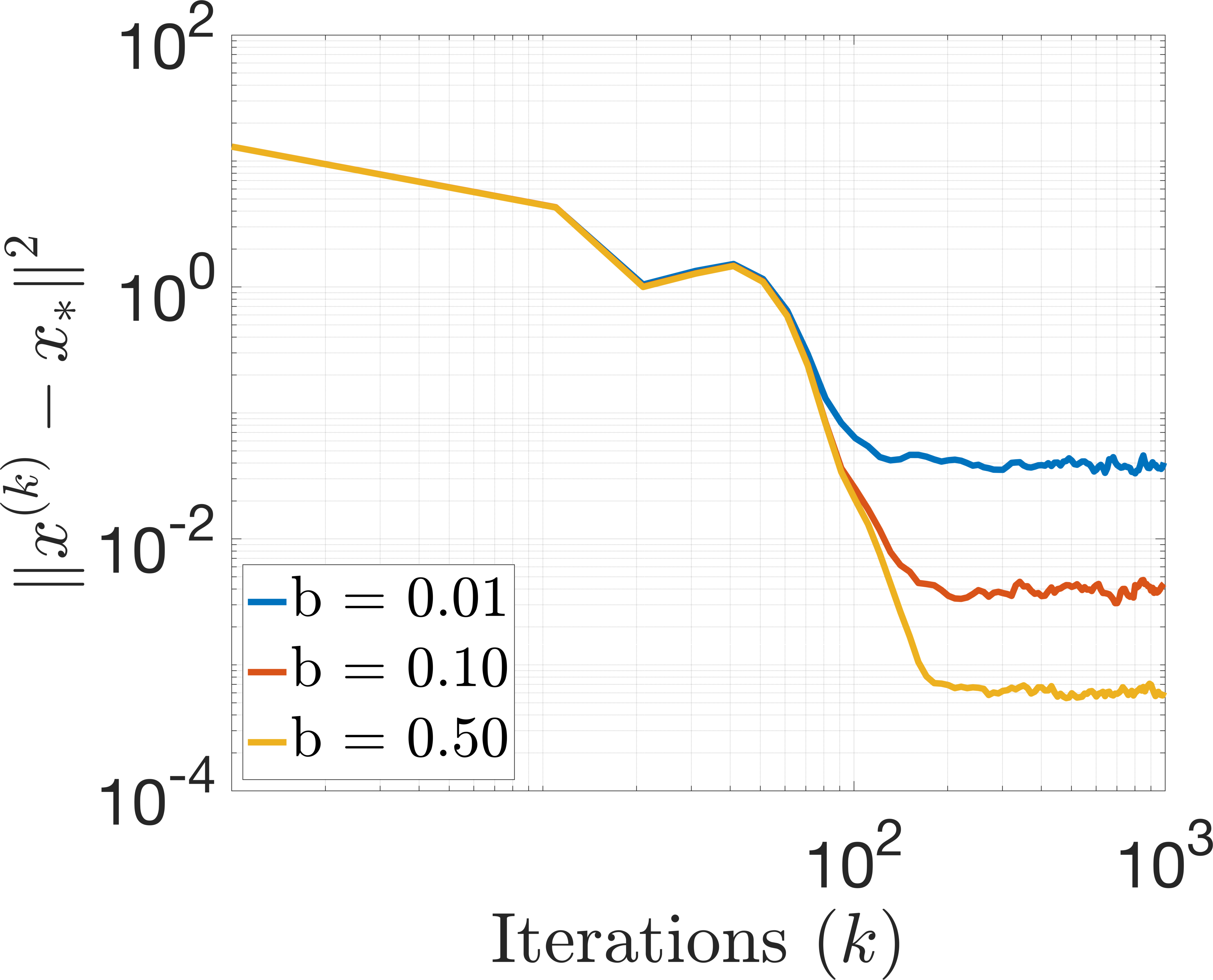}}

    \caption{Evaluation of D-SG and D-ASG on MNIST and a real distributed environment with $N=10$ interconnected computers. }
    \label{fig:exp_log_reg_mnist_DSG_DASG}
\end{figure}

\subsection{Real data experiments} 

In this section, we consider a real-data setting, where we evaluate the algorithms on a real distributed environment. We consider the same logistic regression problem on two binary classification datasets and compare the performance of D-SG and D-ASG with their natural competitors, namely distributed dual averaging (D-DA) \citep{duchi2012dual}, distributed stochastic gradient tracking (D-SGT) \citep{pu-nedich-journal}, and distributed communication sliding (D-CS) \citep{lan2017communication}. Among these algorithms D-CS is an exact algorithm, similar to D-MASG. As datasets, we use the MNIST, and the Epsilon datasets. The MNIST dataset contains $70$K binary images (of size $d = 20 \times 20$) corresponding to $10$ different digits\footnote{\url{http://yann.lecun.com/exdb/mnist}}. To obtain a binary classification problem, we extract the images corresponding to the digits $0$ and $8$, where we end up with $n=11774$ images in total. On the other hand, the Epsilon dataset is one of the standard binary classification datasets\footnote{\url{https://www.csie.ntu.edu.tw/~cjlin/libsvmtools/datasets/binary.html}} and contains $n=400$K samples with $d=2000$.

We have implemented all the algorithms in C++ by using a low-level message passing protocol for parallel processing, namely the OpenMPI library\footnote{\url{https://www.open-mpi.org}}. In order to have a realistic experimental environment, we have conducted these experiments on a cluster interconnected computers, each of which is equipped with different quality CPUs and memories. We set $b=0.1$ unless stated otherwise. 

In the first experiment, similar to the previous section, we monitor the behavior of D-SG and D-ASG with varying step-sizes and batch proportions in order to affirm that our theoretical results also hold in the real problem setting. Figure~\ref{fig:exp_log_reg_mnist_DSG_DASG} illustrates the results. We observe that, even under the real data/distributed environment setting the algorithms exhibit the same behavior. The trade off between the convergence rate and the asymptotic variance is still present and D-ASG is significantly less robust to the stochastic gradient noise.  

\begin{figure}[t] 
    \centering
    \subfigure[MNIST]{\includegraphics[width=0.24\columnwidth]{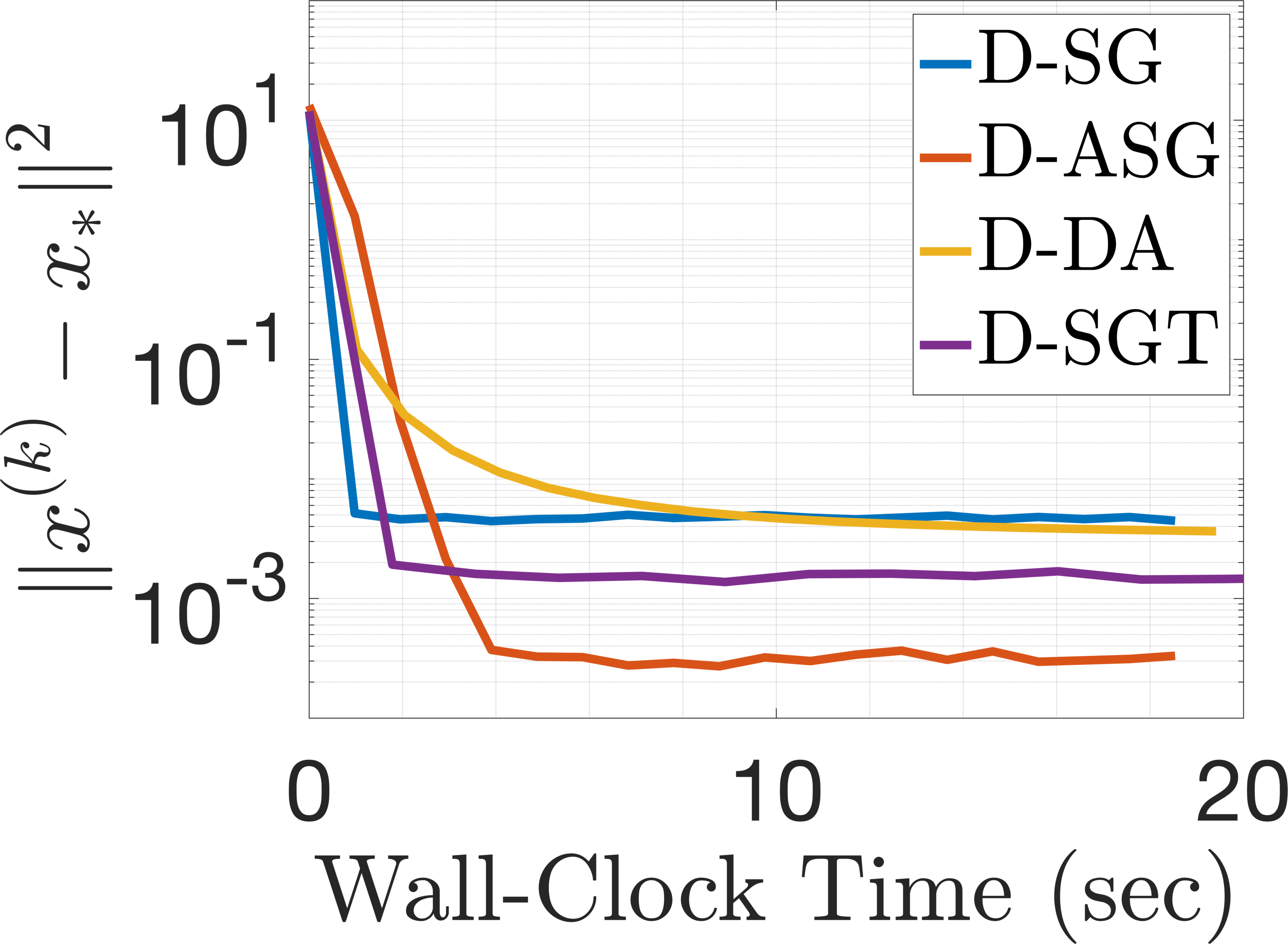}}
	\subfigure[Epsilon]{\includegraphics[width=0.24\columnwidth]{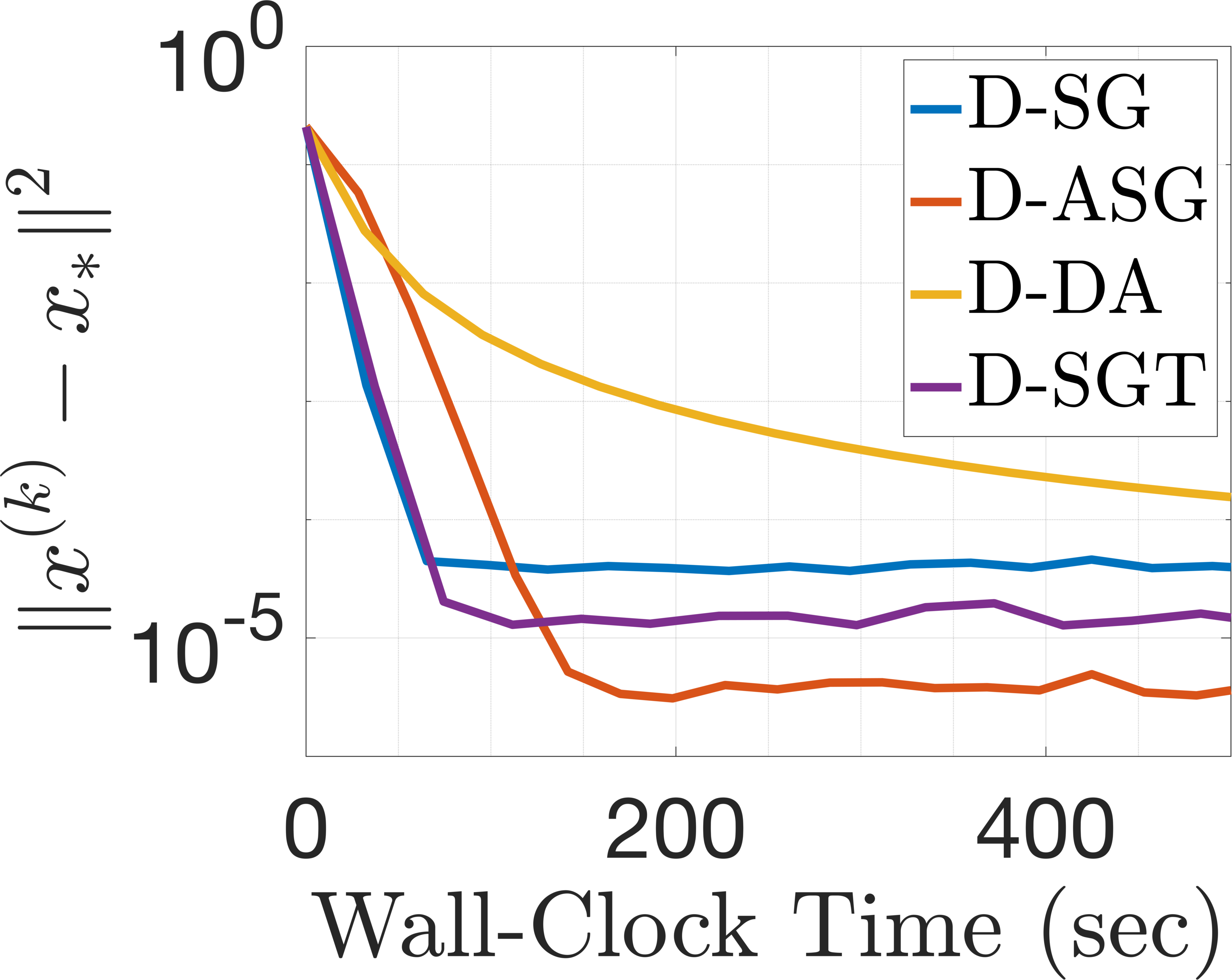}}
      \subfigure[MNIST]{\includegraphics[width=0.24\columnwidth]{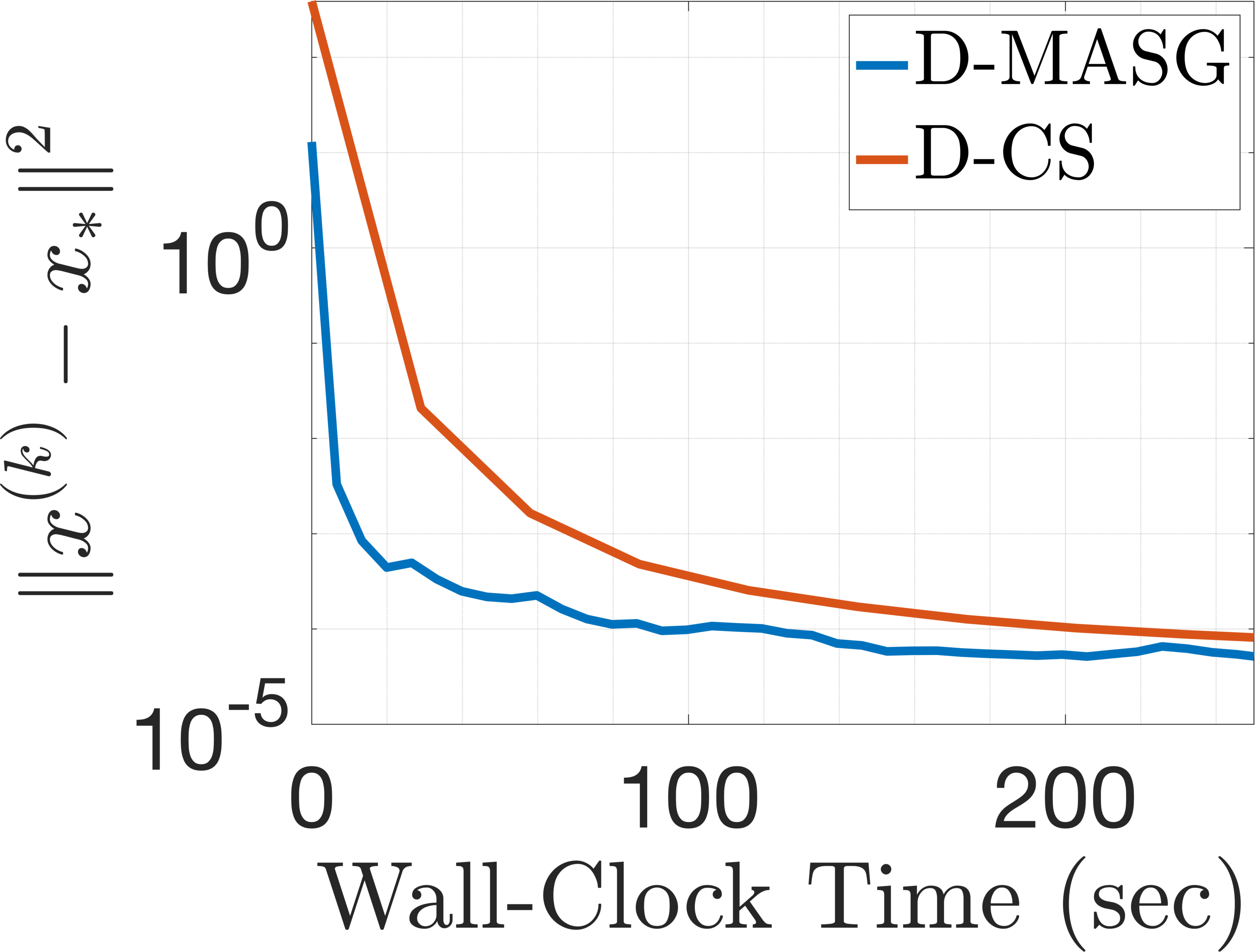}}
      \subfigure[Epsilon]{\includegraphics[width=0.24\columnwidth]{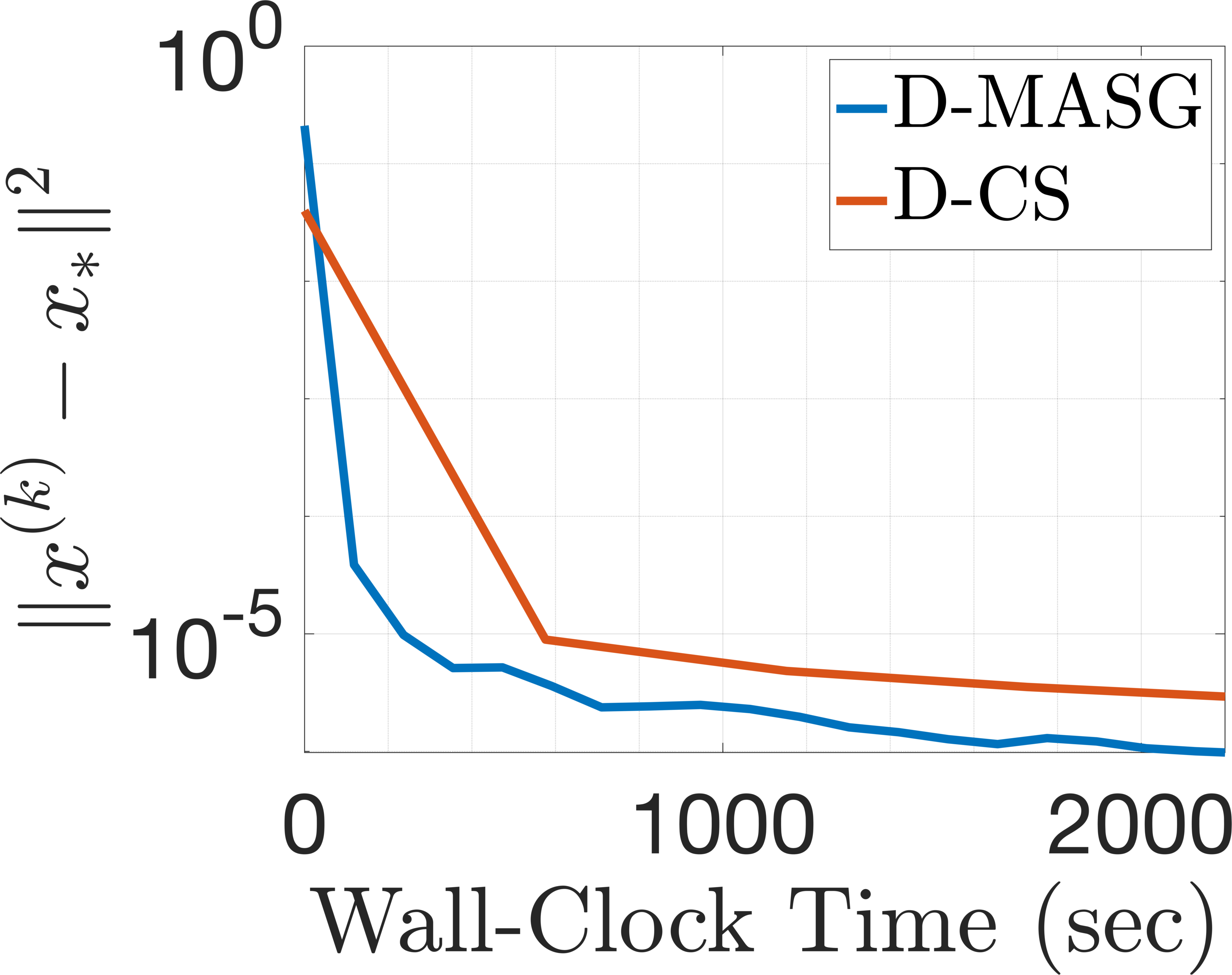}}
    \caption{(a)-(b) Comparison of D-SG and D-ASG with D-DA and D-SGT on the two datasets. (c)-(d) Comparison of D-MASG and D-CS on the two datasets.}
    \label{fig:exp_log_reg_real_sota} 
\end{figure}

In our next experiment, we compare the performances of the inexact algorithms, namely D-SG and D-ASG with D-DA and D-SGT on the two datasets. The results are illustrated in Figure~\ref{fig:exp_log_reg_real_sota}(a)-(b). In all settings, we observe that the performance of D-SG and D-DA are very similar, whereas the variance reduction step improves the performance of D-SGT over these two algorithms. The results show that D-ASG outperforms all these three algorithms and illustrate the acceleration brought by the use of momentum. 

We then proceed to comparing the exact algorithms D-MASG and D-CS. We note that the D-CS algorithm has two levels of nested iterations: an outer iteration and an inner iteration. At each outer iteration the algorithm makes the nodes communicate two times, whereas the actual optimization is done in the inner iteration and the number of inner iterations can be varied depending on the communication cost: if the communication cost is high, the number of inner iterations should be high as well in order to make the communications less often. In order to make the wall-clock-time comparison between D-CS and D-MASG fairer, we set the number of inner iterations to $2$, since D-MASG has only one round of communications at every iteration. We also note that the computational requirements of each inner iteration of D-CS are significantly higher than the one of D-MASG.

We first investigated the performance of D-CS and D-MASG under the circular network setting. As opposed to the previous experiments, we did not observe a significant performance improvement over D-CS. We suspect that the Polyak-Ruppert-type averaging of D-CS is providing some acceleration to D-CS. However, when we evaluate the two algorithms under the connected network setting, we obtain improved results, which are visualized in Figure~\ref{fig:exp_log_reg_real_sota} (c)-(d). The results show that, on the MNIST dataset D-MASG provides a slight improvement over D-CS, whereas on the Epsilon dataset the difference between the computational costs of D-CS and D-MASG become more prominent, which yields a significant improvement over D-CS.  



\begin{figure}[t] 
    \centering
    \subfigure[MNIST b=0.5]{\includegraphics[width=0.23\columnwidth]{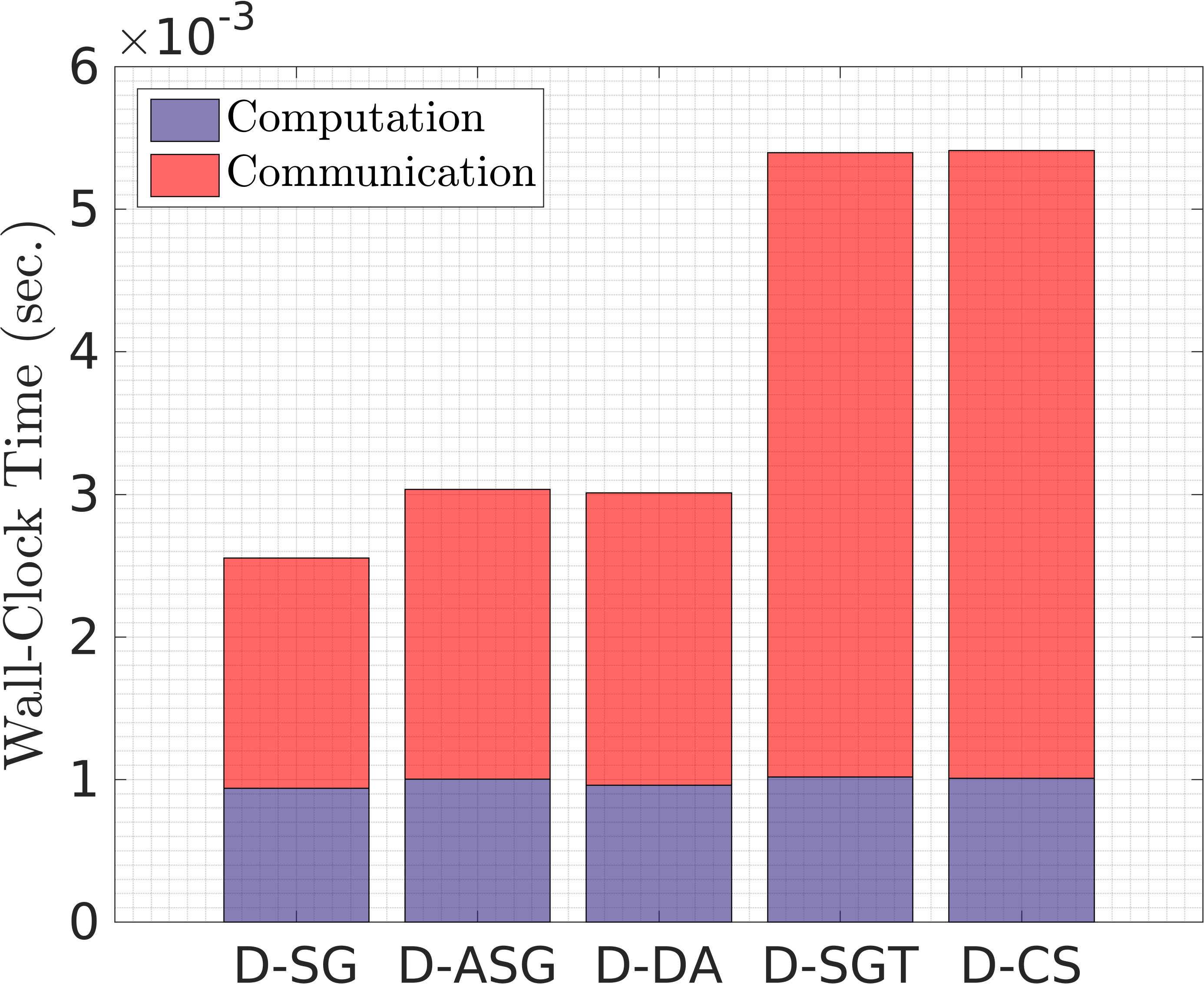} \label{fig:mnist_comp}}
	\subfigure[Epsilon b=0.1]{\includegraphics[width=0.23\columnwidth]{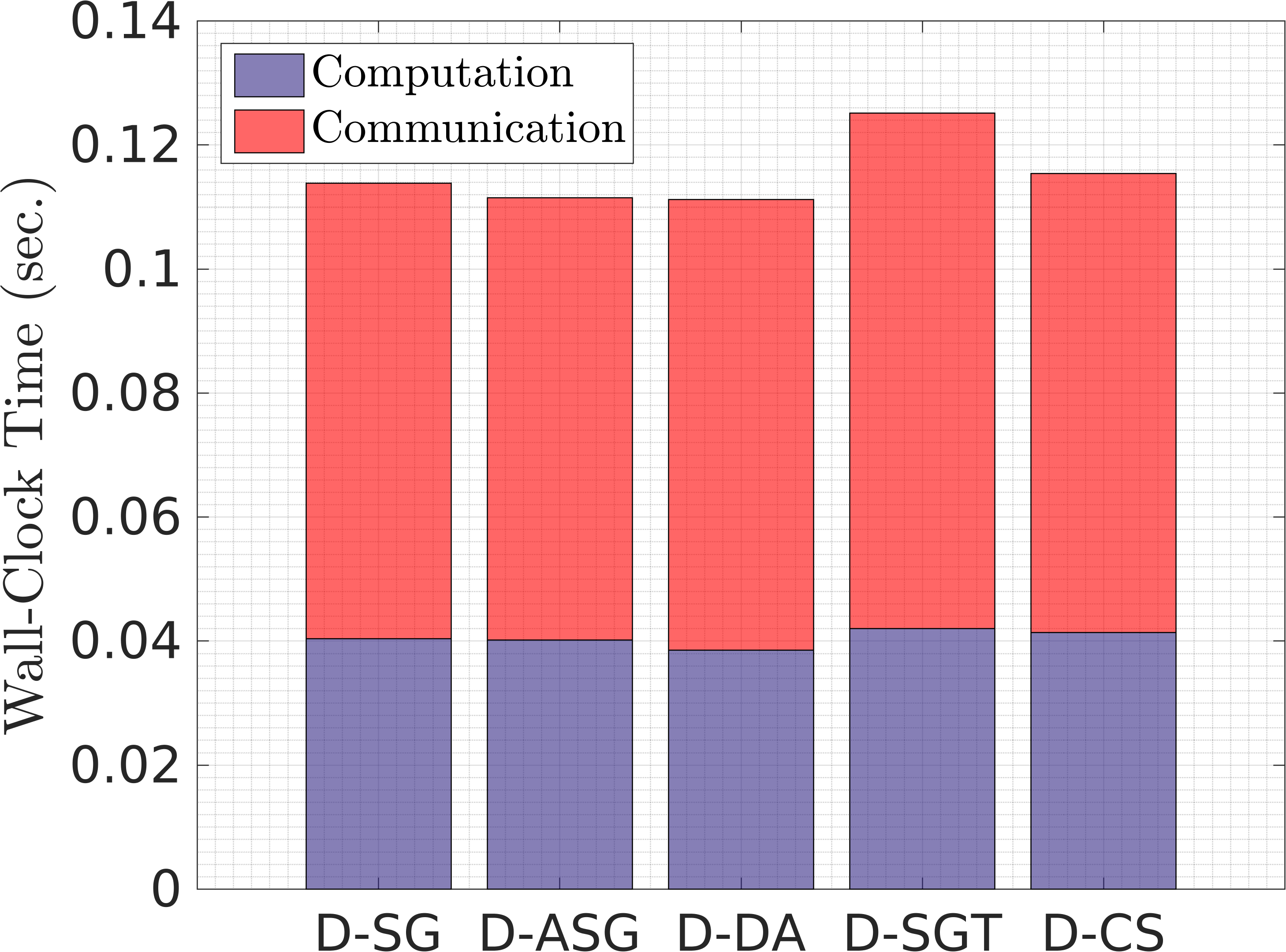} \label{fig:eps_comp}}
    \subfigure[MNIST D-SG ]
    {\includegraphics[width=0.23\columnwidth]{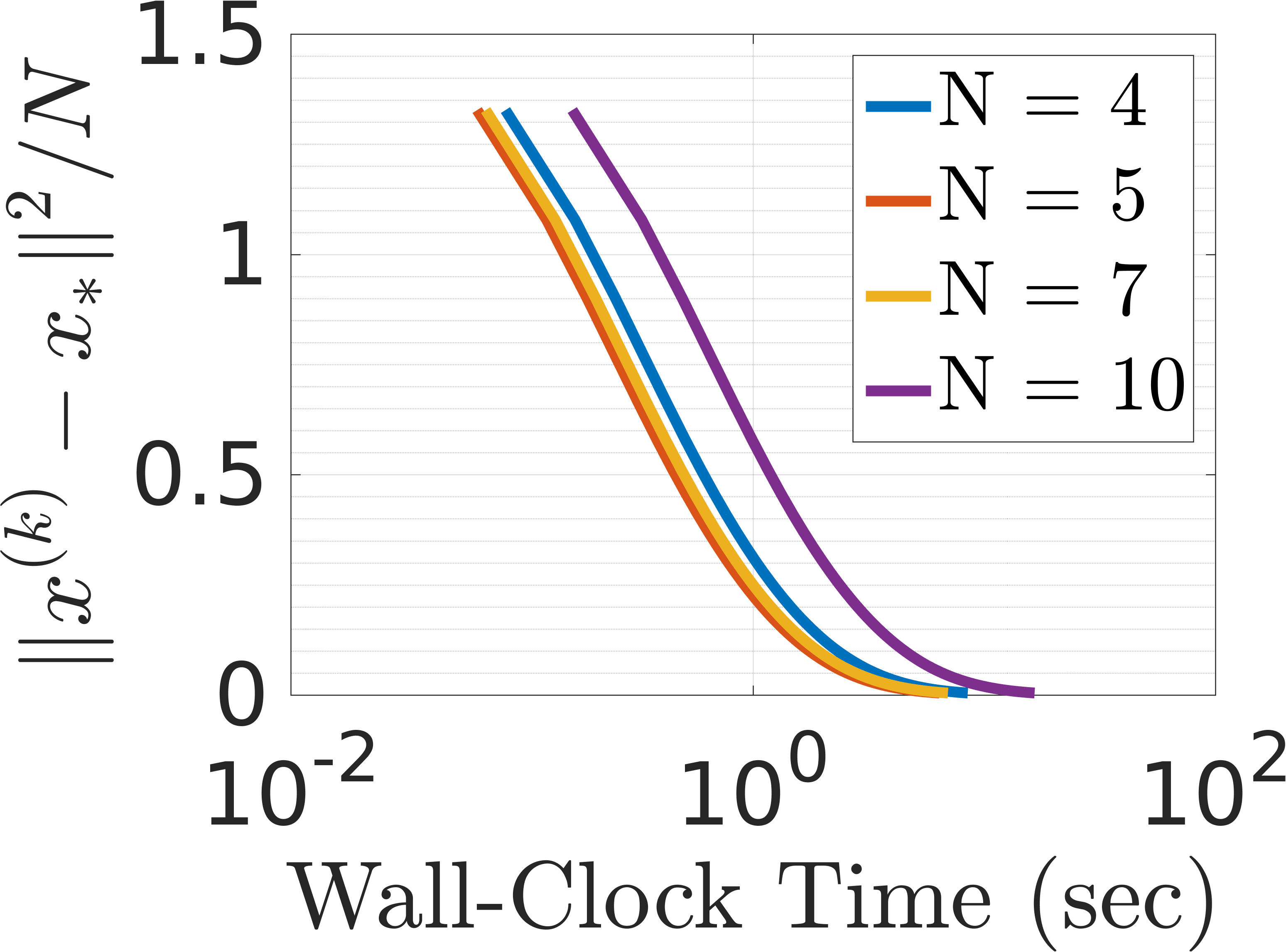} \label{fig:mnist_dsg_process} }
	\subfigure[MNIST D-ASG ]
    {\includegraphics[width=0.23\columnwidth]{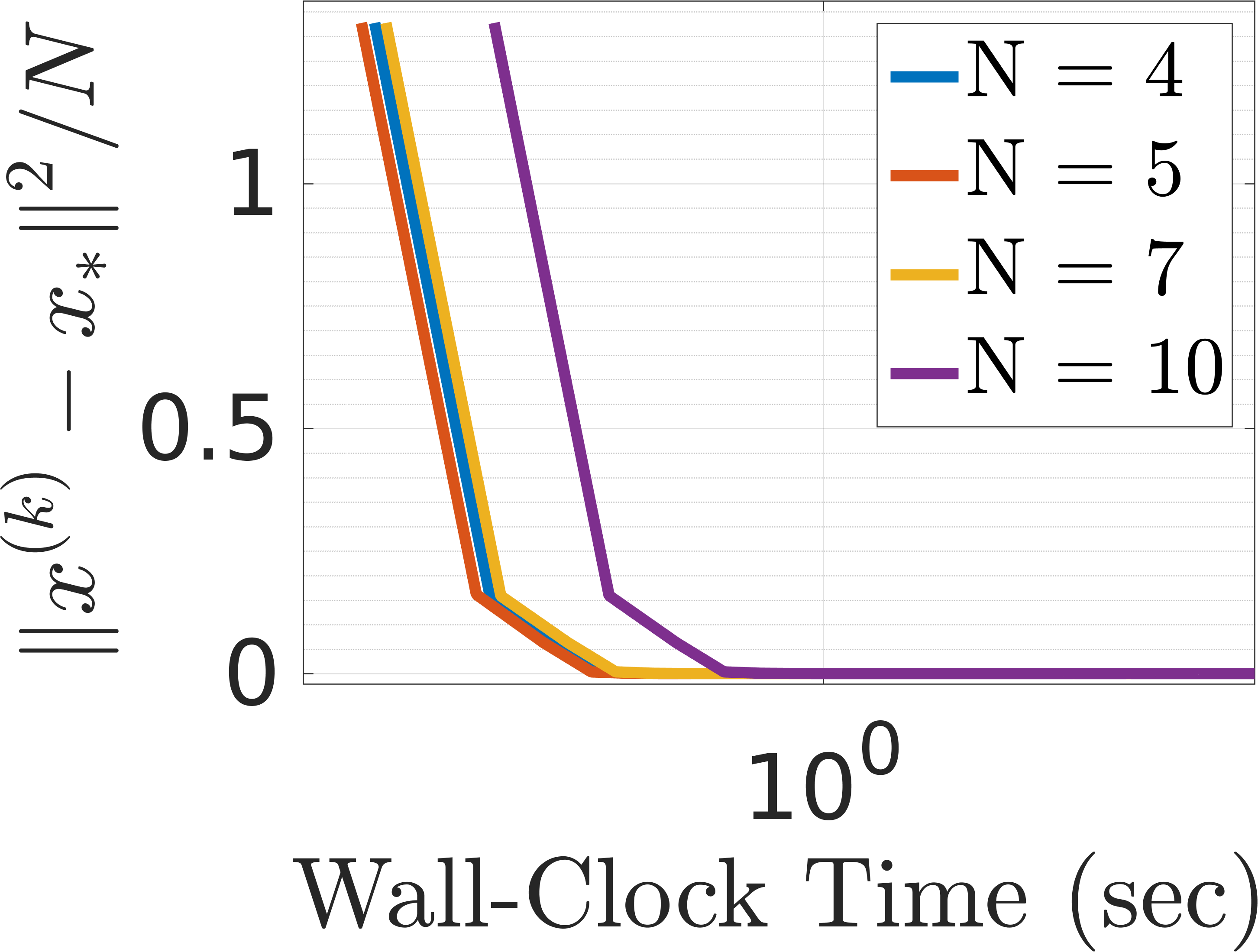} \label{fig:mnist_dasg_process} }
    \caption{Investigation of the computational requirements. }
    \label{fig:exp_log_reg_numproc}
\end{figure} 

Next, we investigate the computational aspects of the aforementioned algorithms. In Figures~\ref{fig:mnist_comp} and~\ref{fig:eps_comp} we measure the average times that the algorithms spend in terms of computation and communication per iteration. We observe that in both cases, the computation times of the algorithms is similar to each other. On the other hand, when the dimension of the problem is smaller (in the case of MNIST), the communication cost of D-SGT and D-CS dominates the overall complexity\footnote{In this experiment, the number of inner iterations of D-CS is set to $1$.}. However, when the dimension of the problem increases (in the case of Epsilon), the computation time increases superlinearly with the increasing dimension, which results in a similar proportion of computation/communication for all the algorithms. Combined with the performance comparison results (e.g.\ Figure~\ref{fig:exp_log_reg_real_sota}), this experiment suggests that D-ASG achieves a good balance between computational complexity and accuracy: while having similar computational complexity to D-SG and D-DA, it is able to provide better performance than D-SGT and D-CS, which have larger computational costs. 

In our final experiment, we investigate the behavior of D-SG and D-ASG on the increasing number computation nodes $N$ (while keeping all the other parameters unchanged). Figures~\ref{fig:mnist_dsg_process} and~\ref{fig:mnist_dasg_process} show the results. We observe that, the convergence behavior improves when we increase $N$ from $4$ to $5$; however, further increasing $N$ results in a degraded performance, since the overall computation time is dominated by the communication cost, a typical situation observed in synchronized distributed optimization \citep{kaya2019framework,csimcsekli2018asynchronous}.

{\color{black}

\begin{figure}[t] 

\color{black}
    \centering
    \subfigure[Varying $L$ for D-SG \label{fig:exp_muL_sg}]{\includegraphics[width=0.24\columnwidth]{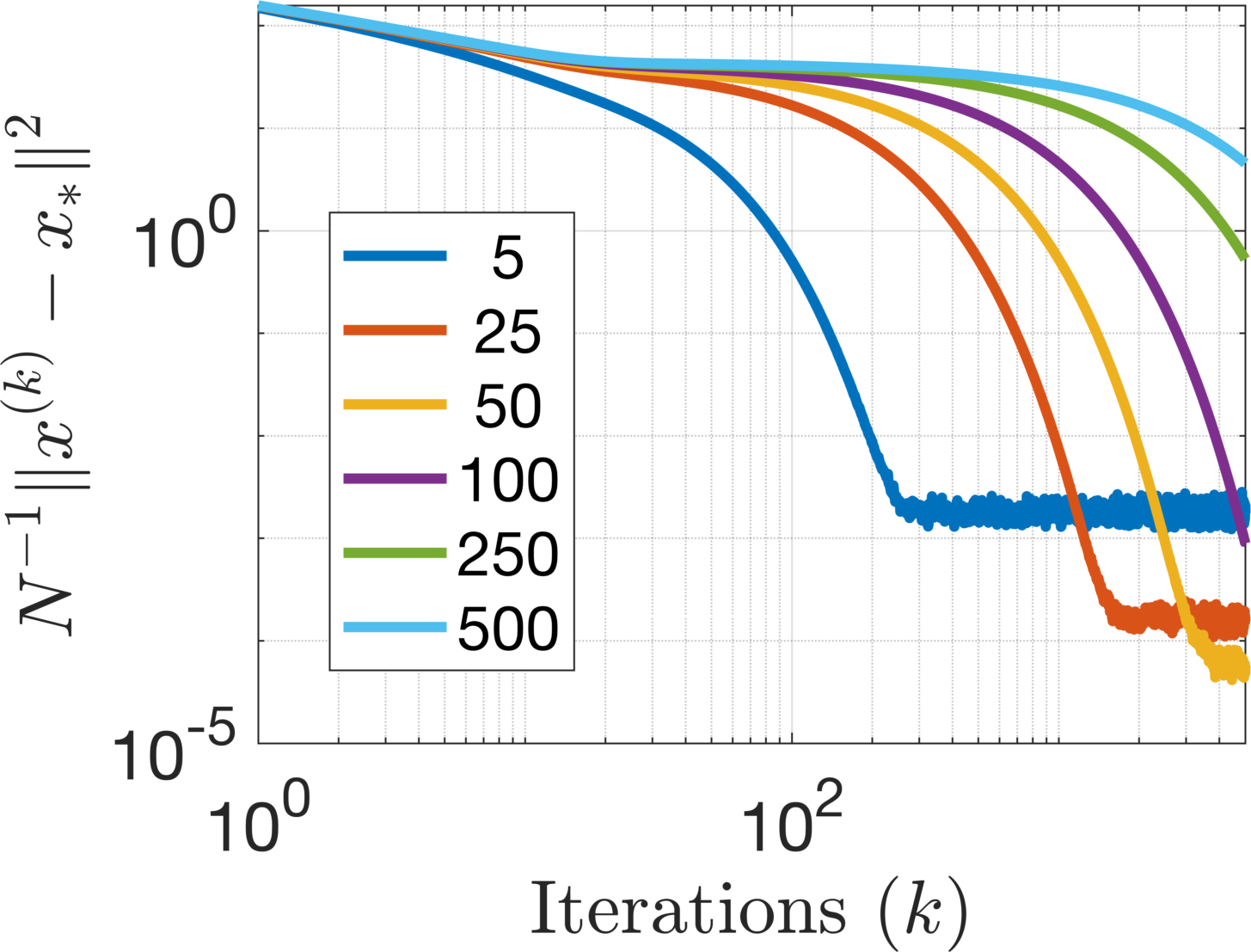}}
    \subfigure[Varying $L$ (left) and $\mu$ (right) for D-ASG \label{fig:exp_muL_asg}]{\includegraphics[width=0.24\columnwidth]{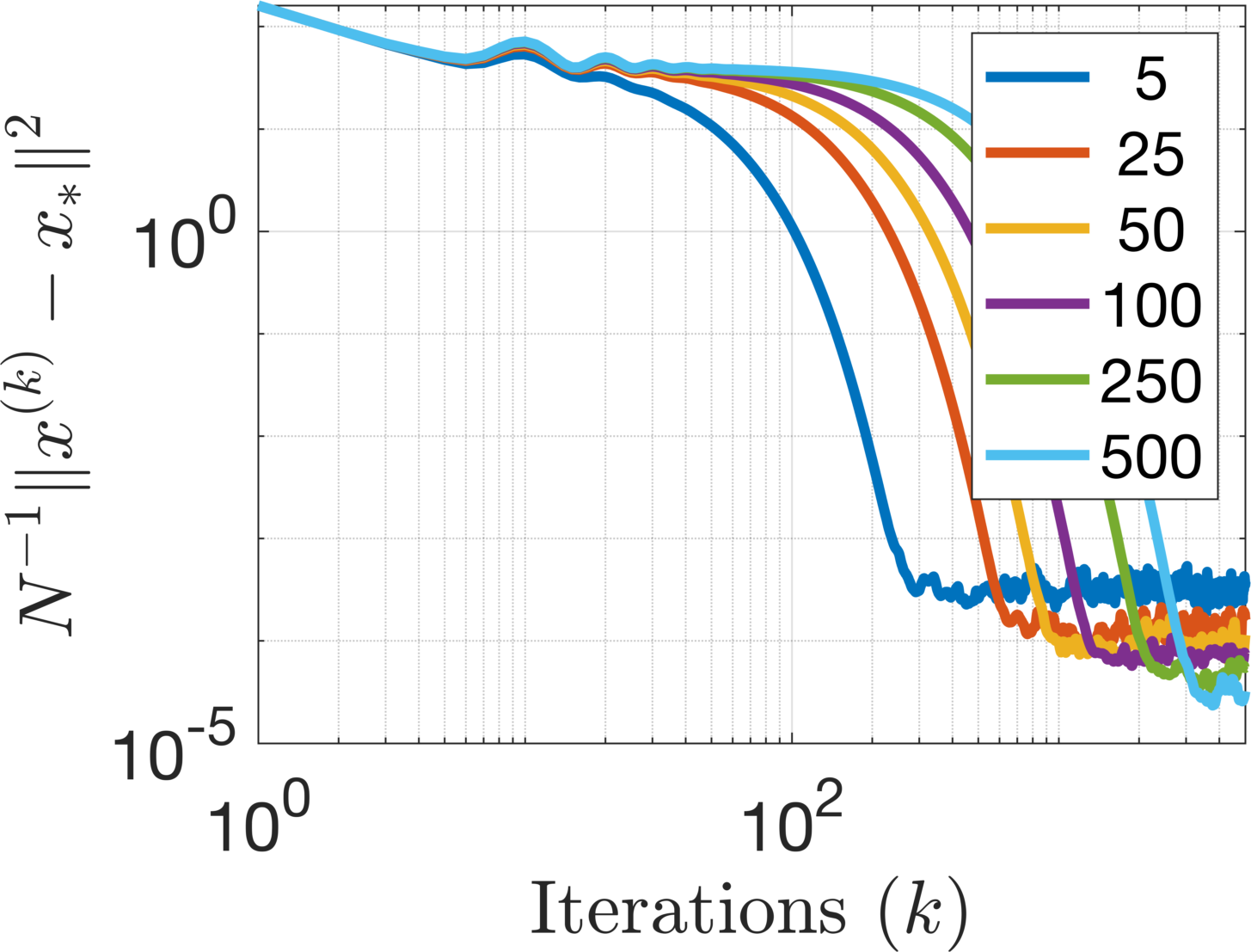}
    \includegraphics[width=0.24\columnwidth]{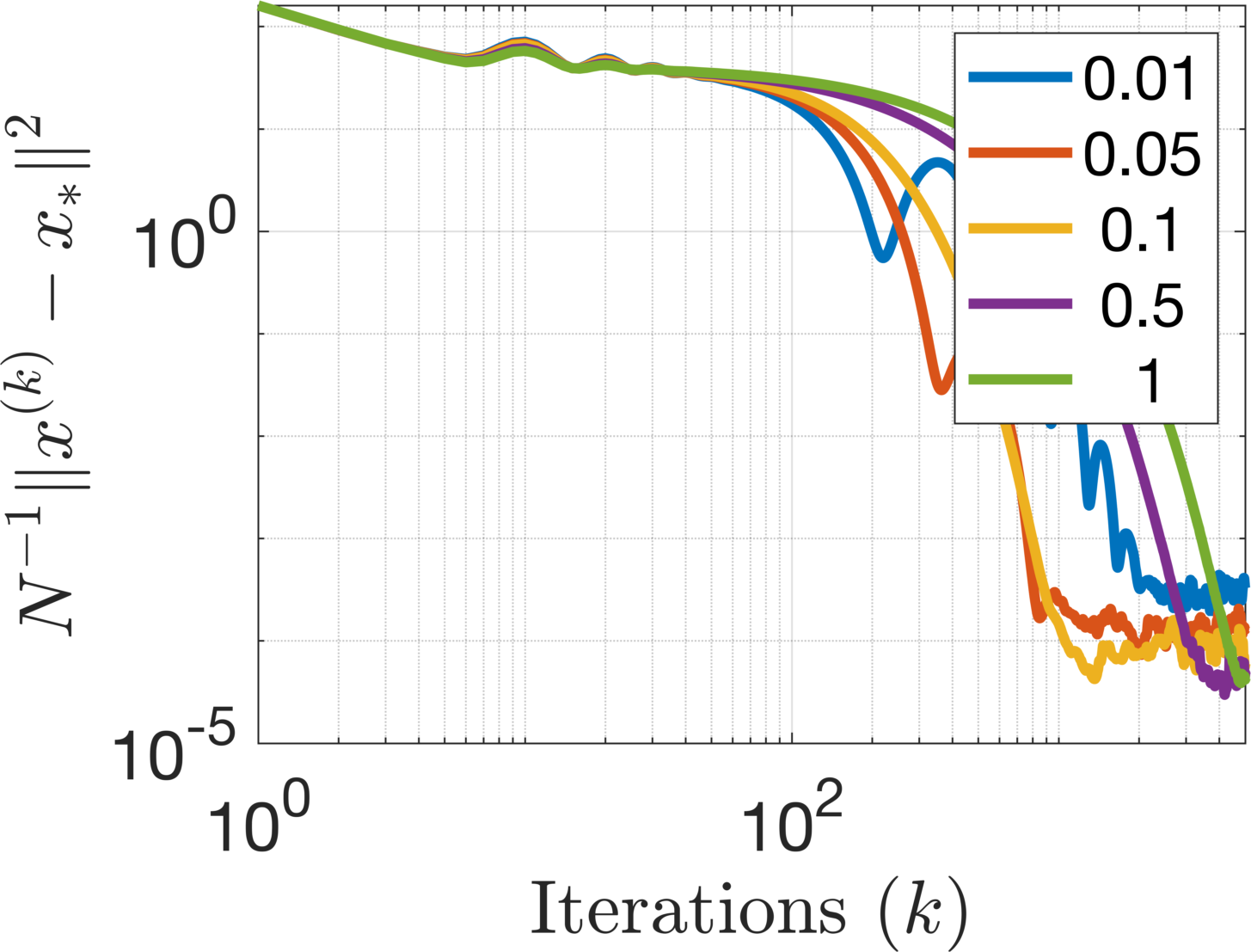}}
    \subfigure[Varying $L$ (left) and $\mu$ (right) for D-MASG \label{fig:exp_muL_masg}]{\includegraphics[width=0.24\columnwidth]{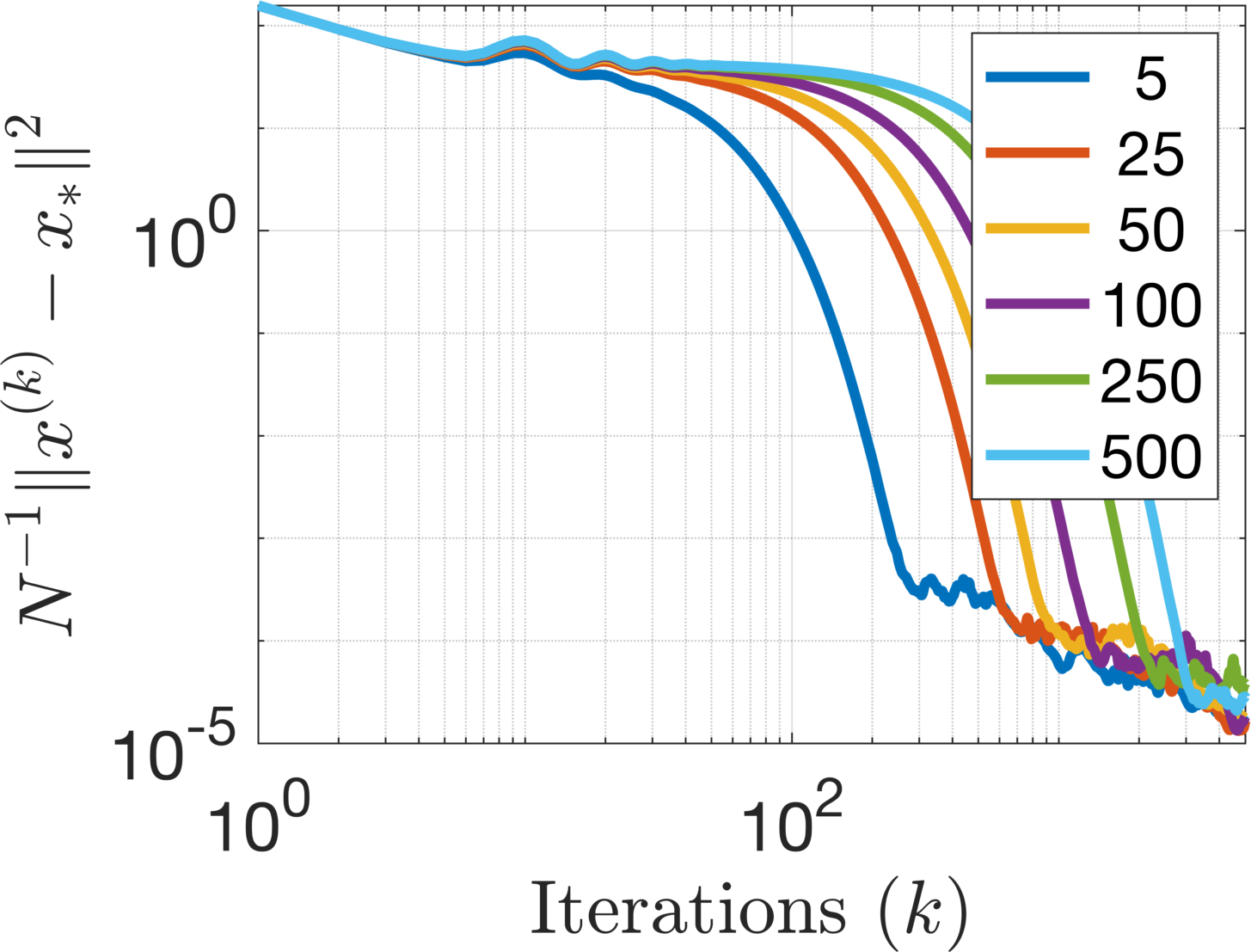}
    \includegraphics[width=0.24\columnwidth]{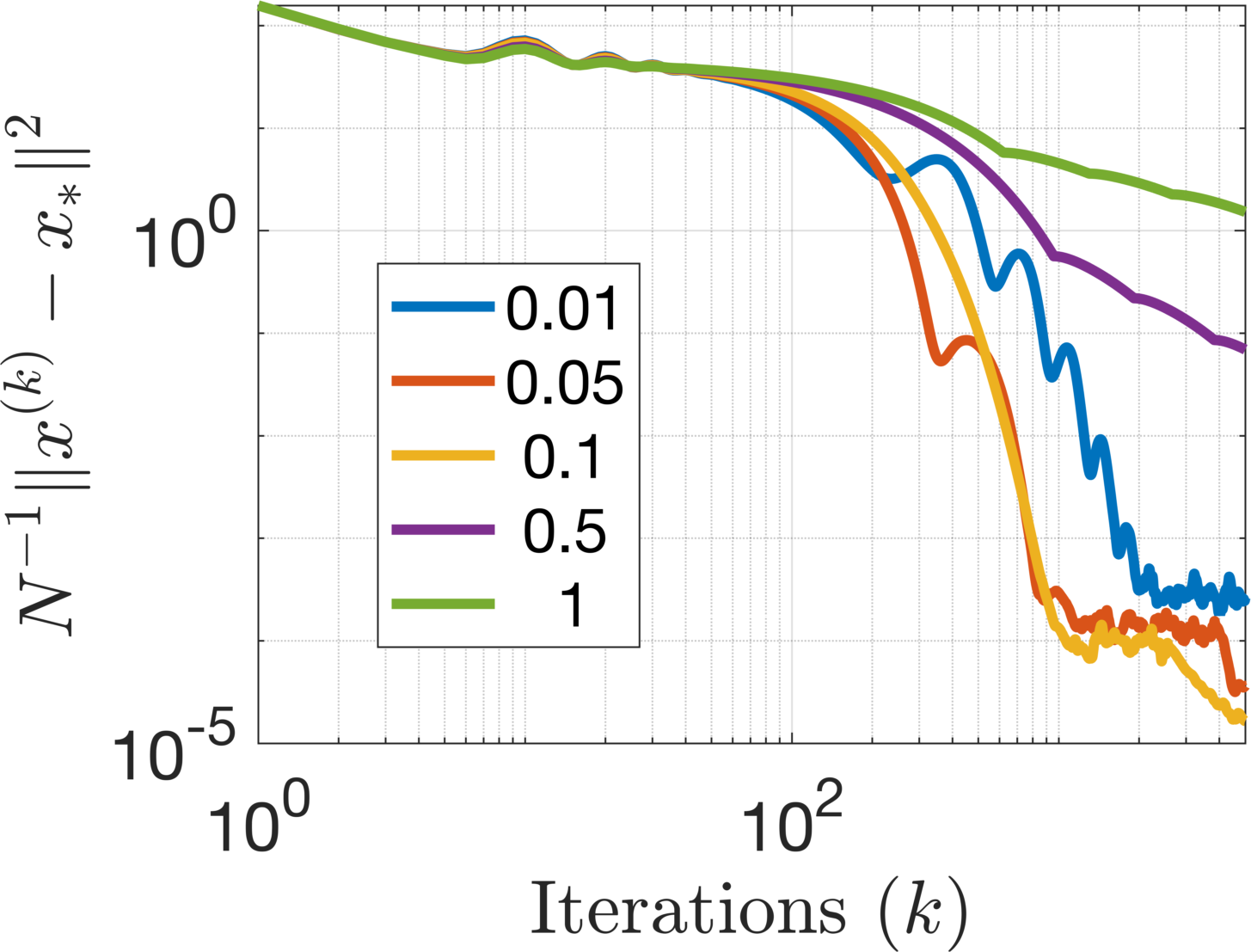}}
    \caption{Change in performance with respect to inaccurate estimates for $L$ and $\mu$.}
    \label{fig:exp_muL} 
\end{figure}

\subsection{Robustness to hyperparameters}

In our last set of experiments, we aim at investigating the performance of our algorithms in the case where the problem constants $L$ and $\mu$ cannot be estimated accurately. Here, we re-consider the MNIST dataset in the simulated distributed environment and run the three proposed algorithms for different estimates for $L$ and $\mu$. For D-SG we set the step-size $\alpha = (1+\lambda_{N}^{W}) / L$, whereas for D-ASG we set $\alpha = \lambda_{N}^{W} / L$ and $\beta=\frac{1-\sqrt{\alpha \mu}}{1+\sqrt{\alpha \mu}}$. Finally, for D-MASG we set $\tilde{\kappa}= \frac{L+\mu}{\mu \lambda_N^W} $ as in \eqref{tilde_kappa} and use the setting reported in Proposition~\ref{main_DMASG_result}.

In this problem, we first compute an estimate for $L$ and $\mu$ from the data, where we obtain $L \approx 50$ and $\mu \approx 0.1$. Then, we vary $L$ from $5$ to $500$ by fixing $\mu =0.1$, and we vary $\mu$ from $0.01$ to $1$ by fixing $L=50$. Accordingly, we run the algorithms with hyperparameters that are computed with these values for $L$ and $\mu$. Figure~\ref{fig:exp_muL} visualizes the results. In Figure~\ref{fig:exp_muL_sg}, we observe that, when $L$ is set close to $50$, D-SG performs similarly, whereas for lower or higher values of $L$ the performance degrades. On the other hand, in Figure~\ref{fig:exp_muL_asg}, we observe that D-ASG is also 
robust to the values of $L$ and $\mu$: the performance of the algorithm 
does not significantly vary for varying $L$ and $\mu$. Finally, Figure~\ref{fig:exp_muL_masg} illustrates the performance of D-MASG. Here, in terms of varying $L$, we again observe a robust behavior, where the performance of the algorithm stays almost the same for different values of $L$. On the other hand, we also observe that the algorithm has a strong dependency on the estimate of $\mu$, where an overestimation of the value of $\mu$ might significantly slow down the convergence. 

We conclude that, when a reasonably good estimate for $L$ and $\mu$ can be obtained, D-SG and D-MASG perform well. We also observe that on this dataset the performance of D-ASG is robust when subject to changes in the parameters $L$ and $\mu$.

}



\section{Conclusion}
Stochastic gradient (SG) methods are workhorse algorithms in machine learning practice. There is an increasing need to run stochastic gradient methods in distributed environments, either because the data is inherently distributed (for instance when  collected by autonomous units such as smart phones or sensors) and processing it in a non-distributed way is impractical for real-time decision making, or the data is non-distributed but due to its volume  distributing the data to multiple computational units become unavoidable for scalability reasons. This motivates the study of the performance of SG methods on arbitrary networks where there the performance depends on the interplay between the bias, variance and network effects. In this paper, we focused on distributed stochastic gradient (D-SG) and its accelerated version (D-ASG) with constant and decaying stepsize. We provided a number of convergence results for D-SG and D-ASG that improve the existing convergence results. Our performance bounds captures the trade-offs in the bias, variance terms and the network effects and are illustrated by our numerical experiments. We also proposed a multi-stage variant of D-ASG with an optimal dependency to bias and variance terms. 
In this work, we considered synchronous algorithms which require nodes to update their local copies synchronously. As part of future work, it would be interesting to study momentum acceleration in the context of asynchronous stochastic gradient algorithms where the nodes can do updates without requiring synchronization between the nodes.}


\section*{Acknowledgements}
\rev{The authors are also grateful to the Associate Editor and three anonymous referees for helpful suggestions
and comments.
Mert G\"{u}rb\"{u}zbalaban's research is supported in part by the grants Office of Naval Research Award Number
N00014-21-1-2244, National Science Foundation (NSF)
CCF-1814888, NSF DMS-2053485, NSF DMS-1723085. 
Umut \c{S}im\c{s}ekli's research is partly supported by the French government under management of Agence Nationale de la Recherche as part of the ``Investissements d’avenir'' program, reference ANR-19-P3IA-0001 (PRAIRIE 3IA Institute).
Lingjiong Zhu is grateful to the partial support from a Simons Foundation Collaboration Grant
and the grant NSF DMS-2053454 from the National Science Foundation.}


\bibliography{robust}

\begin{thebibliography}{76}
\providecommand{\natexlab}[1]{#1}
\providecommand{\url}[1]{\texttt{#1}}
\expandafter\ifx\csname urlstyle\endcsname\relax
  \providecommand{\doi}[1]{doi: #1}\else
  \providecommand{\doi}{doi: \begingroup \urlstyle{rm}\Url}\fi

\bibitem[Agarwal and Duchi(2011)]{duchi11}
A.~Agarwal and J.~C. Duchi.
\newblock Distributed delayed stochastic optimization.
\newblock In J.~Shawe-Taylor, R.~S. Zemel, P.~L. Bartlett, F.~Pereira, and
  K.~Q. Weinberger, editors, \emph{Advances in Neural Information Processing
  Systems 24}, pages 873--881. Curran Associates, Inc., 2011.

\bibitem[Alghunaim and Sayed(2018)]{alghunaim2018distributed}
S.~A. Alghunaim and A.~H. Sayed.
\newblock Distributed coupled learning over adaptive networks.
\newblock In \emph{2018 IEEE International Conference on Acoustics, Speech and
  Signal Processing (ICASSP)}, pages 6353--6357. IEEE, 2018.

\bibitem[Alghunaim and Sayed(2020)]{sayed2019distributed}
S.~A. Alghunaim and A.~H. Sayed.
\newblock Distributed coupled multi-agent stochastic optimization.
\newblock \emph{IEEE Transactions on Automatic Control}, 65\penalty0
  (1):\penalty0 175--190, 2020.

\bibitem[Arjevani and Shamir(2016)]{arjevani2016iteration}
Y.~Arjevani and O.~Shamir.
\newblock On the iteration complexity of oblivious first-order optimization
  algorithms.
\newblock In M.~F. Balcan and K.~Q. Weinberger, editors, \emph{Proceedings of
  The 33rd International Conference on Machine Learning}, volume~48 of
  \emph{Proceedings of Machine Learning Research}, pages 908--916, New York,
  New York, USA, 20--22 Jun 2016. PMLR.

\bibitem[Arjevani et~al.(2020)Arjevani, Bruna, Can, Gurbuzbalaban, Jegelka, and
  Lin]{arjevani2020ideal}
Y.~Arjevani, J.~Bruna, B.~Can, M.~Gurbuzbalaban, S.~Jegelka, and H.~Lin.
\newblock {IDEAL}: Inexact {DE}centralized accelerated augmented {L}agrangian
  method.
\newblock \emph{Advances in Neural Information Processing Systems}, 33, 2020.

\bibitem[Aybat et~al.(2019)Aybat, Fallah, Gurbuzbalaban, and
  Ozdaglar]{aybat2019universally}
N.~S. Aybat, A.~Fallah, M.~Gurbuzbalaban, and A.~Ozdaglar.
\newblock A universally optimal multistage accelerated stochastic gradient
  method.
\newblock In \emph{Advances in Neural Information Processing Systems 32}.
  Curran Associates, Inc., 2019.

\bibitem[Aybat et~al.(2020)Aybat, Fallah, G\"{u}rb\"{u}zbalaban, and
  Ozdaglar]{StrConvex}
N.~S. Aybat, A.~Fallah, M.~G\"{u}rb\"{u}zbalaban, and A.~Ozdaglar.
\newblock Robust accelerated gradient methods for smooth strongly convex
  functions.
\newblock \emph{SIAM Journal on Optimization}, 30\penalty0 (1):\penalty0
  717--751, 2020.

\bibitem[Beck and Teboulle(2009)]{beck2009fast}
A.~Beck and M.~Teboulle.
\newblock A fast iterative shrinkage-thresholding algorithm for linear inverse
  problems.
\newblock \emph{SIAM Journal on Imaging Sciences}, 2\penalty0 (1):\penalty0
  183--202, 2009.

\bibitem[Blatt et~al.(2007)Blatt, Hero, and Gauchman]{blatt2007convergent}
D.~Blatt, A.~O. Hero, and H.~Gauchman.
\newblock A convergent incremental gradient method with a constant step size.
\newblock \emph{SIAM Journal on Optimization}, 18\penalty0 (1):\penalty0
  29--51, 2007.

\bibitem[Boyd et~al.(2006)Boyd, Ghosh, Prabhakar, and Shah]{boyd2006randomized}
S.~Boyd, A.~Ghosh, B.~Prabhakar, and D.~Shah.
\newblock Randomized gossip algorithms.
\newblock \emph{IEEE/ACM Transactions on Networking (TON)}, 14\penalty0
  (SI):\penalty0 2508--2530, 2006.

\bibitem[Can et~al.(2019)Can, G\"{u}rb\"{u}zbalaban, and Zhu]{can-gur-ling19}
B.~Can, M.~G\"{u}rb\"{u}zbalaban, and L.~Zhu.
\newblock Accelerated linear convergence of stochastic momentum methods in
  {W}asserstein distances.
\newblock In \emph{Proceedings of the 34th International Conference on Machine
  Learning}, volume~97 of \emph{Proceedings of Machine Learning Research},
  pages 891--901. PMLR, 2019.

\bibitem[Chapman(2015)]{chapman2015semi}
A.~Chapman.
\newblock \emph{Semi-Autonomous Networks: Effective Control of Networked
  Systems Through Protocols, Design, and Modeling}.
\newblock Springer, 2015.

\bibitem[Chung(1997)]{chung1997spectral}
F.~R. Chung.
\newblock \emph{Spectral Graph Theory}.
\newblock American Mathematical Society, 1997.

\bibitem[d'Aspremont(2008)]{d2008smooth}
A.~d'Aspremont.
\newblock Smooth optimization with approximate gradient.
\newblock \emph{SIAM Journal on Optimization}, 19\penalty0 (3):\penalty0
  1171--1183, 2008.

\bibitem[Di~Lorenzo and Scutari(2015)]{di2015distributed}
P.~Di~Lorenzo and G.~Scutari.
\newblock Distributed nonconvex optimization over networks.
\newblock In \emph{2015 IEEE 6th International Workshop on Computational
  Advances in Multi-Sensor Adaptive Processing (CAMSAP)}, pages 229--232. IEEE,
  2015.

\bibitem[Di~Lorenzo and Scutari(2016)]{di2016next}
P.~Di~Lorenzo and G.~Scutari.
\newblock Next: In-network nonconvex optimization.
\newblock \emph{IEEE Transactions on Signal and Information Processing over
  Networks}, 2\penalty0 (2):\penalty0 120--136, 2016.

\bibitem[Dieuleveut et~al.(2020)Dieuleveut, Durmus, and
  Bach]{dieuleveut2017bridging}
A.~Dieuleveut, A.~Durmus, and F.~Bach.
\newblock Bridging the gap between constant step size stochastic gradient
  descent and {M}arkov chains.
\newblock \emph{Annals of Statistics}, 48\penalty0 (3):\penalty0 1348--1382,
  2020.

\bibitem[Duchi et~al.(2012{\natexlab{a}})Duchi, Agarwal, and
  Wainwright]{duchi2012dual}
J.~C. Duchi, A.~Agarwal, and M.~J. Wainwright.
\newblock Dual averaging for distributed optimization: Convergence analysis and
  network scaling.
\newblock \emph{IEEE Transactions on Automatic Control}, 57\penalty0
  (3):\penalty0 592--606, 2012{\natexlab{a}}.

\bibitem[Duchi et~al.(2012{\natexlab{b}})Duchi, Bartlett, and
  Wainwright]{duchi2012randomized}
J.~C. Duchi, P.~L. Bartlett, and M.~J. Wainwright.
\newblock Randomized smoothing for stochastic optimization.
\newblock \emph{SIAM Journal on Optimization}, 22\penalty0 (2):\penalty0
  674--701, 2012{\natexlab{b}}.

\bibitem[Dvinskikh and Gasnikov(2019)]{dvinskikh2019decentralized}
D.~Dvinskikh and A.~Gasnikov.
\newblock Decentralized and parallelized primal and dual accelerated methods
  for stochastic convex programming problems.
\newblock \emph{arXiv preprint arXiv:1904.09015}, 2019.

\bibitem[Fazlyab et~al.(2018)Fazlyab, Ribeiro, Morari, and
  Preciado]{fazlyab2017analysis}
M.~Fazlyab, A.~Ribeiro, M.~Morari, and V.~Preciado.
\newblock Analysis of optimization algorithms via integral quadratic
  constraints: Nonstrongly convex problems.
\newblock \emph{SIAM Journal on Optimization}, 28\penalty0 (3):\penalty0
  2654--2689, 2018.

\bibitem[Flammarion and Bach(2015)]{flammarion2015averaging}
N.~Flammarion and F.~Bach.
\newblock From averaging to acceleration, there is only a step-size.
\newblock In \emph{Conference on Learning Theory}, pages 658--695, 2015.

\bibitem[Grant et~al.(2008)Grant, Boyd, and Ye]{grant2008cvx}
M.~Grant, S.~Boyd, and Y.~Ye.
\newblock {CVX}: Matlab software for disciplined convex programming, 2008.

\bibitem[G{\"u}rb{\"u}zbalaban et~al.(2020)G{\"u}rb{\"u}zbalaban, Gao, Hu, and
  Zhu]{gurbuzbalaban2020decentralized}
M.~G{\"u}rb{\"u}zbalaban, X.~Gao, Y.~Hu, and L.~Zhu.
\newblock Decentralized stochastic gradient {L}angevin dynamics and
  {H}amiltonian {M}onte {C}arlo.
\newblock \emph{arXiv preprint arXiv:2007.00590}, 2020.

\bibitem[Hakimi et~al.(2019)Hakimi, Barkai, Gabel, and
  Schuster]{hakimi2019dana}
I.~Hakimi, S.~Barkai, M.~Gabel, and A.~Schuster.
\newblock {DANA}: Scalable out-of-the-box distributed {ASGD} without retuning,
  2019.

\bibitem[Hardt(2014)]{Hardt-blog}
M.~Hardt.
\newblock Robustness versus acceleration, Aug. 2014.
\newblock URL
  \url{http://blog.mrtz.org/2014/08/18/robustness-versus-acceleration}.

\bibitem[Hu and Lessard(2017)]{hu2017dissipativity}
B.~Hu and L.~Lessard.
\newblock Dissipativity theory for {N}esterov's accelerated method.
\newblock In \emph{Proceedings of the 34th International Conference on Machine
  Learning}, volume~70 of \emph{Proceedings of Machine Learning Research},
  pages 1549--1557, International Convention Centre, Sydney, Australia, 2017.
  PMLR.

\bibitem[Jaggi et~al.(2014)Jaggi, Smith, Tak{\'a}c, Terhorst, Krishnan,
  Hofmann, and Jordan]{jaggi2014communication}
M.~Jaggi, V.~Smith, M.~Tak{\'a}c, J.~Terhorst, S.~Krishnan, T.~Hofmann, and
  M.~I. Jordan.
\newblock Communication-efficient distributed dual coordinate ascent.
\newblock In \emph{Advances in Neural Information Processing Systems}, pages
  3068--3076, 2014.

\bibitem[Jain et~al.(2018)Jain, Kakade, Kidambi, Netrapalli, and
  Sidford]{jain2018accelerating}
P.~Jain, S.~M. Kakade, R.~Kidambi, P.~Netrapalli, and A.~Sidford.
\newblock Accelerating stochastic gradient descent for least squares
  regression.
\newblock In \emph{Conference On Learning Theory}, pages 545--604. PMLR, 2018.

\bibitem[{Jakoveti{\'c}}(2019)]{Jakovetic19}
D.~{Jakoveti{\'c}}.
\newblock A unification and generalization of exact distributed first-order
  methods.
\newblock \emph{IEEE Transactions on Signal and Information Processing over
  Networks}, 5\penalty0 (1):\penalty0 31--46, 2019.

\bibitem[Jakoveti{\'c} et~al.(2014)Jakoveti{\'c}, Xavier, and
  Moura]{jakovetic2014fast}
D.~Jakoveti{\'c}, J.~Xavier, and J.~M. Moura.
\newblock Fast distributed gradient methods.
\newblock \emph{IEEE Transactions on Automatic Control}, 59\penalty0
  (5):\penalty0 1131--1146, 2014.

\bibitem[Kaya et~al.(2019)Kaya, {\"O}ztoprak, Birbil, Cemgil,
  {\c{S}}im{\c{s}}ekli, Kuru, Koptagel, and {\"O}zt{\"u}rk]{kaya2019framework}
K.~Kaya, F.~{\"O}ztoprak, {\c{S}}.~{\.I}. Birbil, A.~T. Cemgil,
  U.~{\c{S}}im{\c{s}}ekli, N.~Kuru, H.~Koptagel, and M.~K. {\"O}zt{\"u}rk.
\newblock A framework for parallel second order incremental optimization
  algorithms for solving partially separable problems.
\newblock \emph{Computational Optimization and Applications}, 72\penalty0
  (3):\penalty0 675--705, 2019.

\bibitem[Koloskova et~al.(2019)Koloskova, Stich, and
  Jaggi]{koloskova2019decentralized}
A.~Koloskova, S.~U. Stich, and M.~Jaggi.
\newblock Decentralized stochastic optimization and gossip algorithms with
  compressed communication.
\newblock In \emph{Proceedings of the 36th International Conference on Machine
  Learning}, pages 3478--3487. PMLR, 2019.

\bibitem[Kone{\v{c}}n{\`y} et~al.(2016)Kone{\v{c}}n{\`y}, McMahan, Yu,
  Richt{\'a}rik, Suresh, and Bacon]{konevcny2016federated}
J.~Kone{\v{c}}n{\`y}, H.~B. McMahan, F.~X. Yu, P.~Richt{\'a}rik, A.~T. Suresh,
  and D.~Bacon.
\newblock Federated learning: Strategies for improving communication
  efficiency.
\newblock \emph{arXiv preprint arXiv:1610.05492}, 2016.

\bibitem[Lan(2012)]{lan2012optimal}
G.~Lan.
\newblock An optimal method for stochastic composite optimization.
\newblock \emph{Mathematical Programming}, 133\penalty0 (1):\penalty0 365--397,
  2012.

\bibitem[Lan et~al.(2020)Lan, Lee, and Zhou]{lan2017communication}
G.~Lan, S.~Lee, and Y.~Zhou.
\newblock Communication-efficient algorithms for decentralized and stochastic
  optimization.
\newblock \emph{Mathematical Programming}, 180:\penalty0 237--284, 2020.

\bibitem[Lee et~al.(2018)Lee, Lim, and Wright]{lee2018distributed}
C.-p. Lee, C.~H. Lim, and S.~J. Wright.
\newblock A distributed quasi-{N}ewton algorithm for empirical risk
  minimization with nonsmooth regularization.
\newblock In \emph{Proceedings of the 24th ACM SIGKDD International Conference
  on Knowledge Discovery \& Data Mining}, pages 1646--1655. ACM, 2018.

\bibitem[Lessard et~al.(2016)Lessard, Recht, and Packard]{lessard2016analysis}
L.~Lessard, B.~Recht, and A.~Packard.
\newblock Analysis and design of optimization algorithms via integral quadratic
  constraints.
\newblock \emph{SIAM Journal on Optimization}, 26\penalty0 (1):\penalty0
  57--95, 2016.

\bibitem[Levin et~al.(2009)Levin, Peres, and Wilmer]{Peres2009}
D.~A. Levin, Y.~Peres, and E.~L. Wilmer.
\newblock \emph{Markov Chains and Mixing Times}.
\newblock American Mathematical Society, Providence, Rhode Island, 2009.

\bibitem[Li et~al.(2020)Li, Fang, Yin, and Lin]{li2018sharp}
H.~Li, C.~Fang, W.~Yin, and Z.~Lin.
\newblock Decentralized accelerated gradient methods with increasing penalty
  parameters.
\newblock \emph{IEEE Transactions on Signal Processing}, 68:\penalty0
  4855--4870, 2020.

\bibitem[Mansoori and Wei(2017)]{mansoori2017superlinearly}
F.~Mansoori and E.~Wei.
\newblock Superlinearly convergent asynchronous distributed network {N}ewton
  method.
\newblock In \emph{2017 IEEE 56th Annual Conference on Decision and Control
  (CDC)}, pages 2874--2879. IEEE, 2017.

\bibitem[McMahan et~al.(2017)McMahan, Moore, Ramage, Hampson, and
  y~Arcas]{mcmahan2017communication}
B.~McMahan, E.~Moore, D.~Ramage, S.~Hampson, and B.~A. y~Arcas.
\newblock Communication-efficient learning of deep networks from decentralized
  data.
\newblock In \emph{Artificial Intelligence and Statistics}, pages 1273--1282.
  PMLR, 2017.

\bibitem[Meng et~al.(2016)Meng, Chen, Yu, Wang, Ma, and
  Liu]{meng2016asynchronous}
Q.~Meng, W.~Chen, J.~Yu, T.~Wang, Z.~Ma, and T.-Y. Liu.
\newblock Asynchronous accelerated stochastic gradient descent.
\newblock In \emph{International Joint Conference on Artificial Intelligence
  (IJCAI)}, pages 1853--1859, 2016.

\bibitem[Mishchenko et~al.(2018)Mishchenko, Iutzeler, Malick, and
  Amini]{mishchenko18a}
K.~Mishchenko, F.~Iutzeler, J.~Malick, and M.-R. Amini.
\newblock A delay-tolerant proximal-gradient algorithm for distributed
  learning.
\newblock In J.~Dy and A.~Krause, editors, \emph{Proceedings of the 35th
  International Conference on Machine Learning}, volume~80 of \emph{Proceedings
  of Machine Learning Research}, pages 3587--3595, Stockholmsmässan, Stockholm
  Sweden, 10--15 Jul 2018. PMLR.

\bibitem[Mokhtari and Ribeiro(2016)]{mokhtari2016dsa}
A.~Mokhtari and A.~Ribeiro.
\newblock {D}{S}{A}: Decentralized double stochastic averaging gradient
  algorithm.
\newblock \emph{Journal of Machine Learning Research}, 17\penalty0
  (1):\penalty0 2165--2199, 2016.

\bibitem[Nedic and Ozdaglar(2009)]{nedic2009distributed}
A.~Nedic and A.~Ozdaglar.
\newblock Distributed subgradient methods for multi-agent optimization.
\newblock \emph{IEEE Transactions on Automatic Control}, 54\penalty0
  (1):\penalty0 48--61, 2009.

\bibitem[{Nedi\'c} et~al.(2018){Nedi\'c}, {Olshevsky}, and
  {Rabbat}]{decentralized-applications}
A.~{Nedi\'c}, A.~{Olshevsky}, and M.~G. {Rabbat}.
\newblock Network topology and communication-computation tradeoffs in
  decentralized optimization.
\newblock \emph{Proceedings of the IEEE}, 106\penalty0 (5):\penalty0 953--976,
  2018.

\bibitem[Nemirovski et~al.(2009)Nemirovski, Juditsky, Lan, and
  Shapiro]{nemirovski2009robust}
A.~Nemirovski, A.~Juditsky, G.~Lan, and A.~Shapiro.
\newblock Robust stochastic approximation approach to stochastic programming.
\newblock \emph{SIAM Journal on Optimization}, 19\penalty0 (4):\penalty0
  1574--1609, 2009.

\bibitem[Nesterov(2004)]{nesterov_convex}
Y.~Nesterov.
\newblock \emph{Introductory Lectures on Convex Optimization: A Basic Course},
  volume~87.
\newblock Springer, 2004.

\bibitem[Pirani et~al.(2018)Pirani, Shahrivar, Fidan, and Sundaram]{pirani}
M.~Pirani, E.~M. Shahrivar, B.~Fidan, and S.~Sundaram.
\newblock Robustness of leader-follower networked dynamical systems.
\newblock \emph{IEEE Transactions on Control of Network Systems}, 5\penalty0
  (4):\penalty0 1752--1763, 2018.

\bibitem[Pu and Nedi{\'c}(2018)]{pu2018distributed}
S.~Pu and A.~Nedi{\'c}.
\newblock A distributed stochastic gradient tracking method.
\newblock In \emph{2018 IEEE Conference on Decision and Control (CDC)}, pages
  963--968. IEEE, 2018.

\bibitem[Pu and Nedi\'{c}(2021)]{pu-nedich-journal}
S.~Pu and A.~Nedi\'{c}.
\newblock Distributed stochastic gradient tracking methods.
\newblock \emph{Mathematical Programming}, 187:\penalty0 409--457, 2021.

\bibitem[Pu et~al.(2019)Pu, Olshevsky, and Paschalidis]{olshevsky2019non}
S.~Pu, A.~Olshevsky, and I.~C. Paschalidis.
\newblock A sharp estimate on the transient time of distributed stochastic
  gradient descent.
\newblock \emph{arXiv preprint arXiv:1906.02702}, 2019.

\bibitem[Qu and Li(2016)]{qu2016accelerated}
G.~Qu and N.~Li.
\newblock Accelerated distributed nesterov gradient descent for smooth and
  strongly convex functions.
\newblock In \emph{2016 54th Annual Allerton Conference on Communication,
  Control, and Computing (Allerton)}, pages 209--216. IEEE, 2016.

\bibitem[Qu and Li(2018)]{QuLi}
G.~Qu and N.~Li.
\newblock Harnessing smoothness to accelerate distributed optimization.
\newblock \emph{IEEE Transactions on Control of Network Systems}, 5\penalty0
  (3):\penalty0 1245--1260, 2018.

\bibitem[Qu and Li(2020)]{qu-lina-accelerated}
G.~Qu and N.~Li.
\newblock Accelerated distributed {N}esterov gradient descent.
\newblock \emph{IEEE Transactions on Automatic Control}, 65\penalty0
  (6):\penalty0 2566--2581, 2020.

\bibitem[Rabbat(2015)]{rabbat2015multi}
M.~Rabbat.
\newblock Multi-agent mirror descent for decentralized stochastic optimization.
\newblock In \emph{2015 IEEE 6th International Workshop on Computational
  Advances in Multi-Sensor Adaptive Processing (CAMSAP)}, pages 517--520. IEEE,
  2015.

\bibitem[Raginsky et~al.(2017)Raginsky, Rakhlin, and
  Telgarsky]{raginsky2017non}
M.~Raginsky, A.~Rakhlin, and M.~Telgarsky.
\newblock Non-convex learning via stochastic gradient {L}angevin dynamics: a
  nonasymptotic analysis.
\newblock In \emph{Proceedings of the 2017 Conference on Learning Theory},
  volume~65 of \emph{Proceedings of Machine Learning Research}, pages
  1674--1703, Amsterdam, Netherlands, 07--10 Jul 2017. PMLR.
\newblock URL \url{http://proceedings.mlr.press/v65/raginsky17a.html}.

\bibitem[Sarkar et~al.(2018)Sarkar, Roozbehani, and
  Dahleh]{sarkar2018asymptotic}
T.~Sarkar, M.~Roozbehani, and M.~A. Dahleh.
\newblock Asymptotic network robustness.
\newblock \emph{IEEE Transactions on Control of Network Systems}, 6\penalty0
  (2):\penalty0 812--821, 2018.

\bibitem[Scaman et~al.(2018)Scaman, Bach, Bubeck, Massouli{\'e}, and
  Lee]{scaman2018optimal}
K.~Scaman, F.~Bach, S.~Bubeck, L.~Massouli{\'e}, and Y.~T. Lee.
\newblock Optimal algorithms for non-smooth distributed optimization in
  networks.
\newblock In \emph{Advances in Neural Information Processing Systems}, pages
  2740--2749, 2018.

\bibitem[Scaman et~al.(2019)Scaman, Bach, Bubeck, Lee, and
  Massouli{\'e}]{scaman2019optimal}
K.~Scaman, F.~Bach, S.~Bubeck, Y.~T. Lee, and L.~Massouli{\'e}.
\newblock Optimal convergence rates for convex distributed optimization in
  networks.
\newblock \emph{Journal of Machine Learning Research}, 20\penalty0
  (159):\penalty0 1--31, 2019.

\bibitem[Schmidt et~al.(2015)Schmidt, Babanezhad, Ahmed, Defazio, Clifton, and
  Sarkar]{schmidt2015non}
M.~Schmidt, R.~Babanezhad, M.~Ahmed, A.~Defazio, A.~Clifton, and A.~Sarkar.
\newblock Non-uniform stochastic average gradient method for training
  conditional random fields.
\newblock In \emph{Artificial Intelligence and Statistics}, pages 819--828,
  2015.

\bibitem[Shamir and Srebro(2014)]{shamir2014distributed}
O.~Shamir and N.~Srebro.
\newblock Distributed stochastic optimization and learning.
\newblock In \emph{2014 52nd Annual Allerton Conference on Communication,
  Control, and Computing (Allerton)}, pages 850--857. IEEE, 2014.

\bibitem[Shi et~al.(2015)Shi, Ling, Wu, and Yin]{shi2015extra}
W.~Shi, Q.~Ling, G.~Wu, and W.~Yin.
\newblock Extra: An exact first-order algorithm for decentralized consensus
  optimization.
\newblock \emph{SIAM Journal on Optimization}, 25\penalty0 (2):\penalty0
  944--966, 2015.

\bibitem[{\c{S}}im{\c{s}}ekli et~al.(2018){\c{S}}im{\c{s}}ekli, Y{\i}ld{\i}z,
  Nguyen, Richard, and Cemgil]{csimcsekli2018asynchronous}
U.~{\c{S}}im{\c{s}}ekli, {\c{C}}.~Y{\i}ld{\i}z, T.~H. Nguyen, G.~Richard, and
  A.~T. Cemgil.
\newblock Asynchronous stochastic quasi-{N}ewton {MCMC} for non-convex
  optimization.
\newblock In \emph{International Conference on Machine Learning}, pages
  4674--4683. PMLR, 2018.

\bibitem[Sundararajan et~al.(2017)Sundararajan, Hu, and
  Lessard]{sundararajan2017robust}
A.~Sundararajan, B.~Hu, and L.~Lessard.
\newblock Robust convergence analysis of distributed optimization algorithms.
\newblock In \emph{2017 55th Annual Allerton Conference on Communication,
  Control, and Computing (Allerton)}, pages 1206--1212. IEEE, 2017.

\bibitem[Sundararajan et~al.(2019)Sundararajan, Van~Scoy, and
  Lessard]{sundararajan2019canonical}
A.~Sundararajan, B.~Van~Scoy, and L.~Lessard.
\newblock A canonical form for first-order distributed optimization algorithms.
\newblock In \emph{2019 American Control Conference (ACC)}, pages 4075--4080.
  IEEE, 2019.

\bibitem[Sundararajan et~al.(2020)Sundararajan, Van~Scoy, and
  Lessard]{sundararajan2020analysis}
A.~Sundararajan, B.~Van~Scoy, and L.~Lessard.
\newblock Analysis and design of first-order distributed optimization
  algorithms over time-varying graphs.
\newblock \emph{IEEE Transactions on Control of Network Systems}, 7\penalty0
  (4):\penalty0 1597--1608, 2020.

\bibitem[Tran and Kibangou(2014)]{tran2014distributed}
T.~M.~D. Tran and A.~Y. Kibangou.
\newblock Distributed estimation of graph {L}aplacian eigenvalues by the
  alternating direction of multipliers method.
\newblock \emph{IFAC Proceedings Volumes}, 47\penalty0 (3):\penalty0
  5526--5531, 2014.

\bibitem[Tsianos and Rabbat(2012)]{tsianos2012distributed}
K.~I. Tsianos and M.~G. Rabbat.
\newblock Distributed strongly convex optimization.
\newblock In \emph{2012 50th Annual Allerton Conference on Communication,
  Control, and Computing (Allerton)}, pages 593--600. IEEE, 2012.

\bibitem[Uribe et~al.(2021)Uribe, Lee, Gasnikov, and Nedi{\'c}]{uribe2018dual}
C.~A. Uribe, S.~Lee, A.~Gasnikov, and A.~Nedi{\'c}.
\newblock A dual approach for optimal algorithms in distributed optimization
  over networks.
\newblock \emph{Optimization Methods and Software}, 36\penalty0 (1):\penalty0
  171--210, 2021.

\bibitem[Williams(1992)]{williams2by2}
K.~S. Williams.
\newblock The $n$th power of a $2\times 2$ matrix.
\newblock \emph{Mathematics Magazine}, 65\penalty0 (5):\penalty0 336--336,
  1992.

\bibitem[Xi et~al.(2017)Xi, Xin, and Khan]{xi2017add}
C.~Xi, R.~Xin, and U.~A. Khan.
\newblock Add-opt: Accelerated distributed directed optimization.
\newblock \emph{IEEE Transactions on Automatic Control}, 63\penalty0
  (5):\penalty0 1329--1339, 2017.

\bibitem[Xin and Khan(2020)]{xin2018distributed}
R.~Xin and U.~A. Khan.
\newblock Distributed heavy-ball: A generalization and acceleration of
  first-order methods with gradient tracking.
\newblock \emph{IEEE Transactions on Automatic Control}, 65\penalty0
  (6):\penalty0 2627--2633, 2020.

\bibitem[Yuan et~al.(2016)Yuan, Ling, and Yin]{yuan2016convergence}
K.~Yuan, Q.~Ling, and W.~Yin.
\newblock On the convergence of decentralized gradient descent.
\newblock \emph{SIAM Journal on Optimization}, 26\penalty0 (3):\penalty0
  1835--1854, 2016.

\bibitem[Zhou et~al.(1996)Zhou, Doyle, and Glover]{zhou1996robust}
K.~Zhou, J.~C. Doyle, and K.~Glover.
\newblock \emph{Robust and Optimal Control}, volume~40.
\newblock Prentice Hall New Jersey, 1996.

\end{thebibliography}

\newpage
\appendix

\section{Intermediate Results}
\begin{lemma}\label{lem:C1:gamma}\cite[Corollary 9]{yuan2016convergence} 
Recall the definition of 
$$x^\infty = 
\left[\left(x_{1}^{\infty}\right)^{T},\left(x_{2}^{\infty}\right)^{T},
\cdots,\left(x_{N}^{\infty}\right)^{T}\right]^{T},
$$
which is the unique fixed point of
\begin{equation}
(I_{Nd}-\calW) x^\infty +\alpha \nabla F(x^\infty)= 0.
\end{equation}
If $\alpha\leq \min\{\frac{1+\lambda_N^W}{L}, \frac{1}{L+\mu}\}$, then 
\begin{equation*}
\| x_i^{\infty} - x_*\| \leq C_1 \frac{\alpha}{1-\gamma}=  \mathcal{O}\left(\frac{\alpha}{1-\gamma}\right),
\end{equation*} 
where $x_*$ is the solution to the optimization problem \eqref{opt-pbm} and recall the definitions of $C_1, f_i^*,$ and $\gamma$:
\begin{equation*}
C_1 = \sqrt{2L\sum_{i=1}^N \left(f_i\left(0\right) -f_i^*\right)}\cdot \left(1 + \frac{2(L+\mu)}{\mu   }\right), \quad f_i^* = \min_{x\in\mathbb{R}^d} f_i(x), \quad \gamma = \max\left\{\left|\lambda_2^W\right|, \left|\lambda_N^W\right|\right\}.
\end{equation*}
\end{lemma}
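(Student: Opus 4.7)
The plan is to bound $\|x_i^\infty - x_*\|$ via the triangle inequality $\|x_i^\infty - x_*\| \leq \|x_i^\infty - \bar x^\infty\| + \|\bar x^\infty - x_*\|$, where $\bar x^\infty := \frac{1}{N}\sum_i x_i^\infty$, and to bound the two pieces separately using the spectral properties of $W$ and the strong convexity/smoothness of $F$. This is essentially the structure of the argument in Yuan et al.\ (2016), and I will follow that template.

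First, I would derive an ``averaging identity'' by applying $\tfrac{1}{N}(\mathbf{1}^T \otimes I_d)$ to the fixed point equation $(I_{Nd}-\mathcal{W})x^\infty = -\alpha \nabla F(x^\infty)$. Since $W$ is doubly stochastic, the left-hand side vanishes, giving $\sum_{i=1}^{N}\nabla f_i(x_i^\infty)=0$. Next, I would bound the ambient gradient $\|\nabla F(x^\infty)\|$. Since $x^\infty$ is the unique minimizer of $F_{\calW,\alpha}$ (as shown in Section~\ref{sec:reformulate}), we have $F_{\calW,\alpha}(x^\infty) \le F_{\calW,\alpha}(0) = F(0)$, and since the quadratic $\frac{1}{2\alpha}(x^\infty)^{T}(I_{Nd}-\calW)x^\infty$ is nonnegative, this yields $F(x^\infty)\leq F(0)$, hence $\sum_i (f_i(x_i^\infty) - f_i^*) \leq \sum_i (f_i(0) - f_i^*)$. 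Combined with the standard $L$-smoothness inequality $\|\nabla f_i(x_i^\infty)\|^2 \leq 2L(f_i(x_i^\infty) - f_i^*)$, this gives $\|\nabla F(x^\infty)\|^2 \leq 2L\sum_i (f_i(0) - f_i^*)$, which produces the $\sqrt{2L\sum_i(f_i(0)-f_i^*)}$ factor in $C_1$.

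For the consensus error, I would decompose $x^\infty = \mathbf{1}\otimes \bar x^\infty + u$ where $u$ lies in the orthogonal complement of the consensus subspace. On that subspace, $I_{Nd}-\calW$ is invertible with smallest eigenvalue $1-\lambda_2^W \geq 1-\gamma$, so projecting the fixed point equation onto this subspace and inverting yields $\|u\| \leq \frac{\alpha}{1-\gamma}\|\nabla F(x^\infty)\|$. In particular $\|x_i^\infty - \bar x^\infty\| \leq \|u\| \leq \frac{\alpha}{1-\gamma}\sqrt{2L\sum_i(f_i(0) - f_i^*)}$. For the optimality error of the average, I would use the averaging identity from Step~1 to write $\nabla f(\bar x^\infty) = \frac{1}{N}\sum_i [\nabla f_i(\bar x^\infty) - \nabla f_i(x_i^\infty)]$, bound by $L$-smoothness and Cauchy--Schwarz to obtain $\|\nabla f(\bar x^\infty)\| \leq \frac{L}{\sqrt{N}}\|u\|$, and then invoke $\mu$-strong convexity of $f$ (together with $\nabla f(x_*)=0$) to deduce $\|\bar x^\infty - x_*\| \leq \frac{1}{\mu}\|\nabla f(\bar x^\infty)\|$. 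Combining the two bounds via the triangle inequality gives a bound of the form $C'\alpha/(1-\gamma)$ as required.

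The main obstacle will be tracking the exact constant so as to match the precise coefficient $\bigl(1+\tfrac{2(L+\mu)}{\mu}\bigr)$ that appears in $C_1$. The straightforward approach above yields a factor of $1+\tfrac{L}{\mu\sqrt{N}}$ rather than $1+\tfrac{2(L+\mu)}{\mu}$, so matching Yuan et al.'s constant requires a slightly more refined argument that exploits the stepsize condition $\alpha \le \tfrac{1}{L+\mu}$ together with a contraction estimate for the DG fixed point iteration (using the co-coercivity of $\nabla F$ on $\mathcal{S}_{\mu,L}$). Since the lemma is a direct restatement of Corollary~9 of Yuan et al.\ (2016), I would at this point invoke their proof to close the gap, or alternatively absorb the $\sqrt{N}$ dependence into a slightly larger constant by bounding $\|u\|$ using the operator norm bound on $(I-\calW)^{-1}$ restricted to the orthogonal subspace.
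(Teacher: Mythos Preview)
Your first-principles argument is correct and in fact yields a constant $1+\tfrac{L}{\mu\sqrt N}$ that is \emph{strictly smaller} than the paper's $1+\tfrac{2(L+\mu)}{\mu}$, so there is no gap to close: since the lemma only asserts an upper bound with the constant $C_1$, your inequality already implies it. The ``obstacle'' you flag is not one.

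The paper's proof, by contrast, does not re-derive the bound from the fixed-point equation at all. It simply quotes the explicit two-term estimate of Corollary~9 in Yuan et al.\ (2016),
\[
\|x_i^\infty - x_*\| \le \frac{c_4}{\sqrt{1-c_3^2}} + \frac{\alpha \hat D}{1-\gamma},
\]
with $\hat D = \sqrt{2L\sum_i(f_i(0)-f_i^*)}$ and $c_3,c_4$ given by specific formulas in $\alpha,\mu,L$, and then performs an algebraic simplification of the first term to show $\tfrac{c_4}{\sqrt{1-c_3^2}}\le \tfrac{2(L+\mu)}{\mu}\cdot\tfrac{\alpha\hat D}{1-\gamma}$. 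Your approach is more self-contained and conceptually cleaner (triangle inequality into consensus error plus optimality-of-the-average error, each controlled by the spectral gap and strong convexity respectively); the paper's approach is pure bookkeeping on constants already computed elsewhere. What the paper's route buys is that the stated $C_1$ matches exactly the form obtained by Yuan et al., whereas your route gives a sharper but differently-shaped constant.
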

\begin{proof}
According to Corollary~9 in \cite{yuan2016convergence}, 
\begin{equation*}
\Vert x_{i}^{\infty}-x_*\Vert
\leq
\frac{c_{4}}{\sqrt{1-c_{3}^{2}}}
+\frac{\alpha\hat{D}}{1-\gamma},
\end{equation*}
where
\begin{equation*}
\hat{D}:=\sqrt{2L\sum_{i=1}^N \left(f_i\left(0\right) -f_i^*\right)},
\end{equation*}
and
\begin{equation*}
c_{4}:=\alpha^{3/2}\sqrt{\alpha+\delta^{-1}}\frac{L\hat{D}}{1-\gamma},
\qquad
c_{3}:=\sqrt{1-\alpha c_{2}+\alpha\delta-\alpha^{2}\delta c_{2}},
\end{equation*}
where
\begin{equation*}
\delta:=\frac{c_{2}}{2(1-\alpha c_{2})},
\qquad
c_{2}:=\frac{\mu L}{\mu+L}.
\end{equation*}
Hence, we can compute that
\begin{align*}
\frac{c_{4}}{\sqrt{1-c_{3}^{2}}}
&= \frac{\sqrt{2}\alpha\sqrt{\alpha+\delta^{-1}}L\hat{D}}{\sqrt{c_{2}}{(1-\gamma)}}=
\frac{\sqrt{2}\alpha\sqrt{\frac{2(\mu+L)}{\mu L}-\alpha}}{\sqrt{\frac{\mu L}{\mu+L}}}{\frac{L\hat{D}}{1-\gamma}}
\\
&\leq\frac{\sqrt{2}\alpha\sqrt{\frac{2(\mu+L)}{\mu L}}}{\sqrt{\frac{\mu L}{\mu+L}}}{\frac{L\hat{D}}{1-\gamma}
=\frac{2(\mu+L)}{\mu}\frac{\alpha\hat{D}}{1-\gamma}},
\end{align*}
where the first equality follows from the fact that 
\begin{equation*}
1-c_3^2 = \alpha c_2 + \alpha^2 \delta c_2 - \alpha \delta = \alpha c_2\left(1+\alpha \delta - \frac{\delta}{c_2}\right) = \frac{\alpha c_2}{2}.  
\end{equation*}
The proof is complete.
\end{proof}
\begin{lemma}\label{lem:average}
Recall the definitions of $x^{(k)}$ and $\bar{x}^{(k)}$ as
\begin{equation}
\begin{aligned}
x^{(k)} &= \left[\left(x_{1}^{(k)}\right)^{T},\left(x_{2}^{(k)}\right)^{T},\ldots,\left(x_{N}^{(k)}\right)^{T}\right]^{T}\in\mathbb{R}^{Nd}, \\
\bar{x}^{(k)} &=\frac{1}{N}\sum_{i=1}^{N}x_{i}^{(k)}\in\mathbb{R}^{d}.
\end{aligned}
\end{equation}
Then, for any $k\in\mathbb{N}$, we have
\begin{equation*}
\mathbb{E}\left\Vert\bar{x}^{(k)}-x_{\ast}\right\Vert^{2}
\leq\frac{1}{N}\mathbb{E}\left\Vert x^{(k)}-x^{\ast}\right\Vert^{2},
\end{equation*}
where $x_*$ is the solution to the optimization problem \eqref{opt-pbm} and $x^{\ast}=[x_{\ast}^{T},\ldots,x_{\ast}^{T}]^{T}$.
\end{lemma}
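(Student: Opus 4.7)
The plan is to write $\bar{x}^{(k)} - x_*$ as an average of the node errors and then apply Jensen's inequality (equivalently, Cauchy--Schwarz) to pass the square of the norm through the average.

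More concretely, from the definitions $\bar{x}^{(k)} = \frac{1}{N}\sum_{i=1}^N x_i^{(k)}$ and $x^* = [x_*^T,\dots,x_*^T]^T$, I would first rewrite
\begin{equation*}
\bar{x}^{(k)} - x_* \;=\; \frac{1}{N}\sum_{i=1}^{N}\left(x_i^{(k)} - x_*\right).
\end{equation*}
Then, by Jensen's inequality applied to the convex function $\|\cdot\|^2$ (or equivalently Cauchy--Schwarz on $\mathbb{R}^N$ with the constant vector),
\begin{equation*}
\left\Vert \bar{x}^{(k)} - x_* \right\Vert^{2}
\;\leq\; \frac{1}{N}\sum_{i=1}^{N}\left\Vert x_i^{(k)} - x_* \right\Vert^{2}
\;=\; \frac{1}{N}\left\Vert x^{(k)} - x^{*} \right\Vert^{2},
\end{equation*}
where the last identity uses that $\|x^{(k)} - x^*\|^2 = \sum_{i=1}^N \|x_i^{(k)} - x_*\|^2$ from the block structure of $x^{(k)}$ and $x^*$. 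Taking expectation on both sides yields the claimed bound.

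There is no real obstacle here; the statement is a purely deterministic (pathwise) inequality that becomes an expectation bound upon taking $\mathbb{E}[\cdot]$. The only thing worth emphasizing in the write-up is the block identification $\|x^{(k)} - x^*\|_{\mathbb{R}^{Nd}}^2 = \sum_i \|x_i^{(k)} - x_*\|_{\mathbb{R}^d}^2$, so that the Jensen step on $d$-dimensional vectors lines up with the $Nd$-dimensional norm on the right-hand side.
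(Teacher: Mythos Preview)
Your proposal is correct and matches the paper's proof essentially verbatim: the paper also writes $\bar{x}^{(k)}-x_*$ as the average of the $x_i^{(k)}-x_*$, invokes Jensen's inequality for the convex map $x\mapsto\|x-x_*\|^2$, identifies $\sum_i\|x_i^{(k)}-x_*\|^2 = \|x^{(k)}-x^*\|^2$, and then takes expectations.
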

\begin{proof}
Note that the function $x\mapsto\Vert x-x_{\ast}\Vert^{2}$ is convex.
Therefore, by Jensen's inequality, 
\begin{equation*}
\left\Vert \bar{x}^{(k)}-x^{\ast}\right\Vert^{2}
=\left\Vert\frac{1}{N}\sum_{i=1}^{N}x_{i}^{(k)}-x^{\ast}\right\Vert^{2}
\leq\frac{1}{N}\sum_{i}^{N}\left\Vert x_{i}^{(k)}-x_{\ast}\right\Vert^{2}
=\frac{1}{N}\left\Vert x^{(k)}-x^{\ast}\right\Vert^{2}.
\end{equation*}
By taking the expectations, we obtain the desired result.
\end{proof}
\section{Proofs of Main Results in Section~\ref{sec:strongly:convex}}\label{sec:main:proofs}

\subsection{Proofs of Main Results in Section~\ref{sec:DGD}}\label{proof_sec:DGD}

Before we proceed to the proof of Theorem~\ref{thm:DGD:explicit},
let us first state the following result from \cite{aybat2019universally} which is stated for Nesterov's accelerated stochastic gradient method but holds for stochastic gradient descent as well, as it is the special case of Nesterov's algorithm for $\beta=0$.

\begin{lemma}[Lemma B.1, \cite{aybat2019universally}]\label{lemma_B1_MASG} Let $P = p \otimes I_{Nd}$ where $p \geq 0$ and recall the Lyapunov function $V_P(\xi)= \xi^ \top P  \xi$. Then we have
\begin{equation}\label{storage_update_Expec}
\begin{aligned}
& \E[{V}_P(\xi_{k+1})] - \rho^2 \E[{V}_P(\xi_{k})] \\
& \quad \leq \E\left[\begin{bmatrix} 
	\xi_k\\
    \nabla F\left(x^{(k)}\right)
\end{bmatrix}^\top
\begin{bmatrix} 
	A^\top P A - \rho^2 P & A^\top P B\\
    B^\top P A & B^\top P B
\end{bmatrix}
\begin{bmatrix} 
	\xi_k\\
    \nabla F\left(x^{(k)}\right)
\end{bmatrix}\right]+ N\sigma^2 \alpha^2 p.
\end{aligned}
\end{equation}	
\end{lemma}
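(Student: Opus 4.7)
The plan is a direct quadratic expansion combined with the unbiased‑noise identity. First I would decompose the stochastic gradient as $\tilde{\nabla} F\bigl(x^{(k)}\bigr) = \nabla F\bigl(x^{(k)}\bigr) + w_k$, where by Assumption~\ref{assump_1} (cf.~\eqref{assump}) the effective noise $w_k$ satisfies $\E[w_k \mid \xi_k] = 0$ and $\E[\|w_k\|^2 \mid \xi_k] \leq N\sigma^2$. The dynamical system~\eqref{unified-dyn-sys} then takes the form
\begin{equation*}
\xi_{k+1} \;=\; A\xi_k + B\,\nabla F\bigl(x^{(k)}\bigr) + B w_k,
\end{equation*}
a deterministic affine part plus a zero‑mean perturbation.

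Second, I would substitute into $V_P(\xi_{k+1}) = \xi_{k+1}^\top P \xi_{k+1}$, expand the resulting quadratic form, and take conditional expectation given $\xi_k$. The three cross terms linear in $w_k$ vanish by unbiasedness, leaving exactly the $2\times 2$ block quadratic form in $\bigl[\xi_k^\top,\ \nabla F(x^{(k)})^\top\bigr]$ with blocks $A^\top P A,\ A^\top P B,\ B^\top P A,\ B^\top P B$, together with a residual pure‑noise contribution $\E\bigl[w_k^\top B^\top P B\, w_k \mid \xi_k\bigr]$. Subtracting $\rho^2 V_P(\xi_k) = \rho^2 \xi_k^\top P \xi_k$ from both sides replaces the $(1,1)$ block by $A^\top P A - \rho^2 P$, producing exactly the matrix displayed in the lemma statement.

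Third, I would bound the noise contribution using the Kronecker product structure. Since $P = p \otimes I_{Nd}$ and $B = \tilde B \otimes I_{Nd}$ (with $\tilde B = -\alpha$ in the D‑SG case~\eqref{DGD:xi} and $\tilde B = [-\alpha,\,0]^\top$ in the D‑ASG case~\eqref{ABC:DASG}), we have $B^\top P B = (\tilde B^\top p\, \tilde B) \otimes I_{Nd}$, which is a nonnegative scalar multiple of $I_{Nd}$ whose coefficient is at most $\alpha^2 p$ (interpreting $p$ as the relevant scalar; in the D‑ASG case $\tilde B^\top p\, \tilde B = \alpha^2 p_{11}$). Combining with $\E[\|w_k\|^2 \mid \xi_k] \leq N\sigma^2$ gives $\E\bigl[w_k^\top B^\top P B\, w_k \mid \xi_k\bigr] \leq N\sigma^2 \alpha^2 p$. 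Taking total expectation over $\xi_k$ then completes the proof.

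The only delicate point is the Kronecker‑product bookkeeping for the noise bound, because the statement collapses both the scalar D‑SG and the $2\times 2$ D‑ASG instances of $p$ into a single symbol; one must check that in both cases the variance coefficient indeed reduces to $\alpha^2 p$ times $I_{Nd}$. Beyond that, the argument is a standard identity‑level manipulation relying only on linearity of expectation and the conditional zero‑mean property $\E[w_k \mid \xi_k] = 0$.
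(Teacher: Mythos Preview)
The paper does not supply its own proof of this lemma; it is quoted verbatim as Lemma~B.1 of \cite{aybat2019universally} and invoked as a black box in the proof of Theorem~\ref{thm:DGD:explicit}. Your argument is therefore not competing against anything in the paper itself, and on its own merits it is correct: the decomposition $\tilde{\nabla}F = \nabla F + w_k$, the quadratic expansion of $\xi_{k+1}^\top P\,\xi_{k+1}$, the vanishing of the cross terms by $\E[w_k\mid \xi_k]=0$, and the bound $\E[w_k^\top B^\top P B\, w_k] \le \alpha^2 p\, N\sigma^2$ via $B^\top P B = \alpha^2 p\, I_{Nd}$ are exactly the right steps.

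One small remark on scope: as stated in the paper, the hypothesis ``$P = p\otimes I_{Nd}$ with $p\ge 0$'' forces $p$ to be a nonnegative scalar, so the lemma is really only being used for the D-SG instance \eqref{DGD:xi} (where $A=I_{Nd}$, $B=-\alpha I_{Nd}$). Your side discussion of the D-ASG case with $\tilde B^\top p\,\tilde B = \alpha^2 p_{11}$ is correct in spirit but goes beyond what the lemma as written claims; the D-ASG analysis in the paper instead invokes the separate Lemma~\ref{lem:aybat}.
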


Now, we are ready to prove Theorem~\ref{thm:DGD:explicit}.

\subsubsection{Proof of Theorem~\ref{thm:DGD:explicit}}

\begin{proof}
First note that $F_{\mathcal{W},\alpha}$ is $\mu$-strongly convex and $L_{\alpha}$-smooth
where $L_{\alpha}=\frac{1-\lambda_{N}^{W}}{\alpha}+L$. 

Next, note that, as it is shown in \cite{lessard2016analysis}, for every $\alpha \in (0, 2/L_\alpha)$, which is equivalent to $\alpha \in (0,(1+\lambda_N^W)/L)$, there exists $p > 0$ such that the following matrix inequality holds with $\rho(\alpha) = \max\{|1-\alpha \mu|, |1-\alpha L_\alpha|\} = \max\{|1-\alpha \mu|, |\lambda_N^W - \alpha L|\}$:
\begin{equation*}
\begin{bmatrix} 
	2\mu L_\alpha I_d & -(\mu+L_\alpha)I_d\\
    -(\mu+L_\alpha)I_d & 2I_d
\end{bmatrix}
\succeq 
\begin{bmatrix} 
	A^\top P A - \rho(\alpha)^2 P & A^\top P B\\
    B^\top P A & B^\top P B
\end{bmatrix}.	
\end{equation*}
As a consequence, and by using Lemma~\ref{lemma_B1_MASG}, we have
\begin{equation}\label{ineq1_B1}
\begin{aligned}
&\begin{bmatrix} 
	\xi_k\\
    \nabla F\left(x^{(k)}\right)
\end{bmatrix}^\top
\begin{bmatrix} 
	2\mu L_\alpha I_d & -(\mu+L_\alpha)I_d\\
    -(\mu+L_\alpha)I_d & 2I_d
\end{bmatrix}
\begin{bmatrix} 
	\xi_k\\
    \nabla F\left(x^{(k)}\right)
\end{bmatrix} \\
&\geq
\begin{bmatrix} 
	\xi_k\\
    \nabla F\left(x^{(k)}\right)
\end{bmatrix}^\top 
\begin{bmatrix} 
	A^\top P A - \rho(\alpha)^2 P & A^\top P B\\
    B^\top P A & B^\top P B
\end{bmatrix}
\begin{bmatrix} 
	\xi_k\\
    \nabla F\left(x^{(k)}\right)
\end{bmatrix}\\
& \geq \E[{V}_P(\xi_{k+1})] - \rho(\alpha)^2 \E[{V}_P(\xi_{k})] - \sigma^2 \alpha^2 p.
\end{aligned}
\end{equation}
Finally, note that, by using Theorem 2.1.12 in \cite{nesterov_convex}, we obtain
\begin{equation*}
\begin{bmatrix} 
	\xi_k\\
    \nabla F\left(x^{(k)}\right)
\end{bmatrix}^\top
\begin{bmatrix} 
	2\mu L_\alpha I_d & -(\mu+L_\alpha)I_d\\
    -(\mu+L_\alpha)I_d & 2I_d
\end{bmatrix}
\begin{bmatrix} 
	\xi_k\\
    \nabla F\left(x^{(k)}\right)
\end{bmatrix} \leq 0.	
\end{equation*}
Plugging this in \eqref{ineq1_B1} and dividing both sides by $p$, implies
\begin{equation}
\rho(\alpha)^2 \E \left[\left\|x^{(k)} - x^\infty\right\|^2 \right ] + N \sigma^2 \alpha^2	\geq \E \left [\left\|x^{(k+1)} - x^\infty\right\|^2 \right ].
\end{equation}
Finally, by iterating over $k$, we obtain
\begin{equation*}
\mathbb{E}\left[\left\Vert x^{(k)}-x^{\infty}\right\Vert^{2}\right]
\leq \rho(\alpha)^{2k}\left\Vert x^{(0)}-x^{\infty}\right\Vert^{2}
+\alpha^{2}\sigma^{2}N\frac{1-\rho(\alpha)^{2k}}{1-\rho(\alpha)^{2}}.
\end{equation*}
We also achieve the bound on robustness using the definition of $J_{\infty}(\alpha)$:
\begin{equation*}
J_{\infty}(\alpha)=\frac{1}{\sigma^2 N} \limsup_{k\to\infty} \mbox{Var}\left(x^{(k)}-x^{\infty}\right)
\leq\frac{1}{\sigma^2 N} \limsup_{k\to\infty}\mathbb{E}\left[\left\Vert x^{(k)}-x^{\infty}\right\Vert^{2}\right].
\end{equation*}
The proof is complete.
\end{proof}

\subsubsection{Proof of Corollary~\ref{DSG_cor}}

\begin{proof}
Note that we have
$\left\Vert x^{(k)}-x^{\ast}\right\Vert^{2}
\leq 
2\left\Vert x^{(k)}-x^{\infty}\right\Vert^{2}
+2\left\Vert x^{\infty}-x^{\ast}\right\Vert^{2}$,
and, for the case that $\alpha < \frac{1}{L+\mu}$, we also have $\Vert x^{\infty}-x^{\ast}\Vert\leq\frac{\alpha C_{1}\sqrt{N}}{(1-\gamma)}$
from \eqref{eqn:asymp_suboptim}, 
which yields that
\begin{equation*}
\mathbb{E}\left[\left\Vert x^{(k)}-x^{\ast}\right\Vert^{2}\right]
\leq
2\rho^{2k}\left\Vert x^{(0)}-x^{\infty}\right\Vert^{2}
+2\alpha^{2}\sigma^{2}N\frac{1-\rho^{2k}}{1-\rho^{2}}
+2\frac{\alpha^{2}C_{1}^{2}N}{(1-\gamma)^{2}}.
\end{equation*}
\end{proof}

\subsubsection{Proof of Proposition~\ref{prop:dsg-average-optimal-rate}}

{\color{black}
We recall that the average at $k$-th iteration is given by
$\bar{x}^{(k)}:=\frac{1}{N}\sum_{i=1}^{N}x_{i}^{(k)}$.
Since $\mathcal{W}$ is doubly stochastic, we get
\begin{equation} \label{eq:avg-iterates}
\bar{x}^{(k+1)}=\bar{x}^{(k)}-\alpha\frac{1}{N}\sum_{i=1}^{N}\nabla f_{i}\left(x_{i}^{(k)}\right)
-\alpha\bar{\xi}^{(k+1)},
\end{equation}
where 
\begin{equation*}
\bar{\xi}^{(k+1)}:=\frac{1}{N}\sum_{i=1}^{N}\left(\tilde{\nabla}f_{i}\left(x_{i}^{(k)}\right)-\nabla f_{i}\left(x_{i}^{(k)}\right)\right),
\end{equation*}
satisfies 
\begin{equation}\label{bar:grad:noise}
\mathbb{E}\left[\bar{\xi}^{(k+1)}\Big|\mathcal{F}_{k}\right]=0,
\qquad
\mathbb{E}\left\Vert\bar{\xi}^{(k+1)}\right\Vert^{2}\leq\frac{\sigma^{2}}{N}.
\end{equation}
We can deduce from \eqref{eq:avg-iterates} that
\begin{equation*}
\bar{x}^{(k+1)}=\bar{x}^{(k)}-\alpha \nabla f\left(\bar{x}^{(k)}\right)
+\alpha\mathcal{E}_{k+1}-\alpha\bar{\xi}^{(k+1)},
\end{equation*}
where
\begin{equation*}
\mathcal{E}_{k+1}
:= \nabla f\left(\bar{x}^{(k)}\right)-\frac{1}{N}\sum_{i=1}^{N}\nabla f_{i}\left(x_{i}^{(k)}\right).
\end{equation*}

First, we will show that the error term $\mathcal{E}_{k+1}$ is small.
The following result essentially follows from Lemma~7 in \cite{gurbuzbalaban2020decentralized}
and hence the proof is omitted here.

\begin{lemma}[Lemma~7 in \cite{gurbuzbalaban2020decentralized}]\label{lem:2}
Assume that $\alpha<\frac{1+\lambda_{N}^{W}}{L}$ and $\mu\alpha(1+\lambda_{N}^{W}-\alpha L)<1$.  
For any $k$, we have
\begin{equation*}
\mathbb{E}\left\Vert\mathcal{E}_{k+1}\right\Vert^{2}
\leq
\frac{4L^{2}\gamma^{2k}}{N}\mathbb{E}\left\Vert x^{(0)}\right\Vert^{2}
+\frac{4L^{2}D^{2}\alpha^{2}}{N(1-\gamma)^{2}}
+\frac{4L^{2}\sigma^{2}\alpha^{2}}{(1-\gamma^{2})},
\end{equation*} 
where $D^{2}$ is defined in \eqref{eqn:D1}.
\end{lemma}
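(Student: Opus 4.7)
The plan is to reduce the bound on $\mathbb{E}\|\mathcal{E}_{k+1}\|^{2}$ to a bound on the \emph{consensus error} $\sum_{i}\|x_{i}^{(k)}-\bar{x}^{(k)}\|^{2}$, and then analyze the consensus error through the spectral contraction properties of the mixing matrix $\mathcal{W}$. Since $f=\frac{1}{N}\sum_{i}f_{i}$, one has $\nabla f(\bar{x}^{(k)})=\frac{1}{N}\sum_{i}\nabla f_{i}(\bar{x}^{(k)})$, so $\mathcal{E}_{k+1}=\frac{1}{N}\sum_{i=1}^{N}\left[\nabla f_{i}(\bar{x}^{(k)})-\nabla f_{i}(x_{i}^{(k)})\right]$. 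Jensen's inequality together with $L$-Lipschitzness of each $\nabla f_{i}$ yields
\begin{equation*}
\|\mathcal{E}_{k+1}\|^{2}\leq\frac{L^{2}}{N}\sum_{i=1}^{N}\|x_{i}^{(k)}-\bar{x}^{(k)}\|^{2}=\frac{L^{2}}{N}\|(I_{Nd}-\mathcal{J})x^{(k)}\|^{2},
\end{equation*}
where $\mathcal{J}:=\frac{1}{N}\mathbf{1}\mathbf{1}^{T}\otimes I_{d}$ is the averaging projector. So it suffices to show $\mathbb{E}\|(I_{Nd}-\mathcal{J})x^{(k)}\|^{2}\leq 4\gamma^{2k}\mathbb{E}\|x^{(0)}\|^{2}+\frac{4D^{2}\alpha^{2}}{(1-\gamma)^{2}}+\frac{4\sigma^{2}N\alpha^{2}}{1-\gamma^{2}}$.

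Next I would derive a recursion for the consensus error. Since $W$ is doubly stochastic, $\mathcal{W}\mathcal{J}=\mathcal{J}\mathcal{W}=\mathcal{J}$, and applying $(I_{Nd}-\mathcal{J})$ to the D-SG update \eqref{eq-iter-dgd} gives $(I-\mathcal{J})x^{(k+1)}=\mathcal{W}(I-\mathcal{J})x^{(k)}-\alpha(I-\mathcal{J})\tilde\nabla F(x^{(k)})$. Unrolling, $(I-\mathcal{J})x^{(k)}=\mathcal{W}^{k}(I-\mathcal{J})x^{(0)}-\alpha\sum_{j=0}^{k-1}\mathcal{W}^{k-1-j}(I-\mathcal{J})\tilde\nabla F(x^{(j)})$. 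The critical spectral fact is that $\mathcal{W}$ restricted to the orthogonal complement of the consensus subspace has operator norm $\gamma=\max\{|\lambda_{2}^{W}|,|\lambda_{N}^{W}|\}$, giving $\|\mathcal{W}^{j}(I-\mathcal{J})v\|\leq\gamma^{j}\|v\|$.

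Decomposing $\tilde\nabla F(x^{(j)})=\nabla F(x^{(j)})+w^{(j)}$ with $w^{(j)}$ a mean-zero noise term (Assumption~\ref{assump_1}) and applying $\|u+v\|^{2}\leq 2\|u\|^{2}+2\|v\|^{2}$, I would split the squared consensus norm into an initialization term bounded by $2\gamma^{2k}\|x^{(0)}\|^{2}$, a deterministic term, and a noise term. For the noise term, independence of $w^{(j)}$ across $j$ collapses the cross-terms under expectation, giving $\mathbb{E}\|\sum_{j}\mathcal{W}^{k-1-j}(I-\mathcal{J})w^{(j)}\|^{2}\leq\sigma^{2}N\sum_{j=0}^{k-1}\gamma^{2(k-1-j)}\leq\sigma^{2}N/(1-\gamma^{2})$. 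For the deterministic term, Cauchy--Schwarz on the weighted sum yields $\|\sum_{j}\gamma^{k-1-j}\|\nabla F(x^{(j)})\|\,\|^{2}\leq\frac{1}{1-\gamma}\sum_{j}\gamma^{k-1-j}\|\nabla F(x^{(j)})\|^{2}$, so it remains to control $\mathbb{E}\|\nabla F(x^{(j)})\|^{2}$ uniformly in $j$.

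The main obstacle is this uniform gradient bound, which is exactly what the definition of $D^{2}$ in \eqref{eqn:D1} absorbs. The strategy is: by $L$-smoothness, $\|\nabla F(x^{(j)})\|^{2}\leq 2L^{2}\|x^{(j)}-x^{*}\|^{2}+2\|\nabla F(x^{*})\|^{2}$; then plug in the convergence bound for the D-SG iterates from Corollary~\ref{DSG_cor}, which controls $\mathbb{E}\|x^{(j)}-x^{*}\|^{2}$ in terms of the initial error $\|x^{(0)}-x^{*}\|^{2}$, the steady-state variance $\alpha\sigma^{2}N/\mu(\cdot)$, and the network-effect term $\alpha^{2}C_{1}^{2}N/(1-\gamma)^{2}$. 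Taking a worst-case over $j$ yields a uniform upper bound that matches the four contributions in $D^{2}$. Plugging back, the deterministic term becomes at most $D^{2}/(1-\gamma)^{2}$, combining all three pieces gives the stated bound (with loose constants), and multiplying by $L^{2}/N$ from the initial reduction finishes the proof. The exact constants match those in Lemma~7 of \cite{gurbuzbalaban2020decentralized}, which is why the paper refers to that result rather than reproducing the full calculation.
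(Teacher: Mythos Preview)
Your proposal is correct and follows exactly the approach of the cited reference (and of the paper's own proof of the analogous D-ASG result, Lemma~\ref{gradient:average:error:DASG}): reduce $\|\mathcal{E}_{k+1}\|^2$ to the consensus error via $L$-Lipschitzness, unroll $(I-\mathcal{J})x^{(k)}$ through the D-SG recursion using the spectral contraction $\|W^{j}-\frac{1}{N}\mathbf{1}\mathbf{1}^T\|\le\gamma^{j}$, split into initialization / deterministic-gradient / noise pieces, and close with a uniform bound on $\mathbb{E}\|\nabla F(x^{(j)})\|^2$ that yields the constant $D^2$ in \eqref{eqn:D1}. The paper itself does not give a proof here---it simply invokes Lemma~7 of \cite{gurbuzbalaban2020decentralized}---so there is nothing further to compare.
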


Let us define $x_{k}$ as the iterates of the centralized algorithm:
\begin{equation*}
x_{k+1}=x_{k}-\alpha  \nabla f\left(x_{k}\right)
-\alpha\bar{\xi}^{(k+1)},
\end{equation*}
with $x_{0}=\bar{x}^{(0)}$.
In the next lemma, we will show that the average of iterates $\bar{x}^{(k)}$
and the iterates of the centralized algorithm $x_{k}$ are close to each other.

\begin{lemma}\label{lem:central:approx}
Assume that $\alpha<\frac{1+\lambda_{N}^{W}}{L}$ and $\mu\alpha(1+\lambda_{N}^{W}-\alpha L)<1$.  
For any $k$, we have
\begin{align}
\mathbb{E}\left\Vert\bar{x}^{(k)}-x_{k}\right\Vert^{2}
&\leq\alpha\left(\frac{\alpha}{\mu(1-\frac{\alpha L}{2})}+\frac{(1+\alpha L)^{2}}{\mu^{2}(1-\frac{\alpha L}{2})^{2}}\right)
\left(\frac{4L^{2}D^{2}\alpha}{N(1-\gamma)^{2}}
+\frac{4L^{2}\sigma^{2}\alpha}{(1-\gamma^{2})}\right)
\nonumber
\\
&\qquad\qquad
+\frac{\gamma^{2k}-
\left(1-\alpha\mu\left(1-\frac{\alpha L}{2}\right)\right)^{k}}
{\gamma^{2}-1+\alpha\mu\left(1-\frac{\alpha L}{2}\right)}
\frac{4L^{2}\gamma^{2}}{N}\mathbb{E}\left\Vert x^{(0)}\right\Vert^{2}.\label{bar:x:x:k}
\end{align}
\end{lemma}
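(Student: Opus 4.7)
The plan is to analyze the deviation $e_k := \bar{x}^{(k)} - x_k$ as a perturbed scalar recursion driven by the consensus error $\mathcal{E}_{k+1}$, whose moment is already controlled by Lemma~\ref{lem:2}. Subtracting the centralized update from the averaged D-SG update, the common noise $\bar{\xi}^{(k+1)}$ cancels, so
\begin{equation*}
e_{k+1} = e_k - \alpha\bigl(\nabla f(\bar{x}^{(k)}) - \nabla f(x_k)\bigr) + \alpha\,\mathcal{E}_{k+1}, \qquad e_0 = 0.
\end{equation*}
This is the crucial observation: since both recursions are initialized at the same point and share the same stochastic-gradient noise, the only source of discrepancy is the consensus error, which \emph{is} small by Lemma~\ref{lem:2}.

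Next, I would expand $\|e_{k+1}\|^2$. Writing $g_k := \nabla f(\bar{x}^{(k)}) - \nabla f(x_k)$ and using the standard $\mu$-strong-convexity/$L$-smoothness inequalities $g_k^T e_k \ge \mu\|e_k\|^2$ together with $\|g_k\|^2 \le L\, g_k^T e_k$, one obtains the gradient-map contraction
\begin{equation*}
\|e_k - \alpha g_k\|^2 \;\le\; \bigl(1 - \alpha\mu(2-\alpha L)\bigr)\|e_k\|^2 \;=\; \bigl(1 - 2\alpha\mu(1-\tfrac{\alpha L}{2})\bigr)\|e_k\|^2,
\end{equation*}
valid whenever $\alpha L < 2$, which follows from the assumption $\alpha < (1+\lambda_N^W)/L$. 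For the cross term in the expansion, I would use the Lipschitz bound $\|e_k - \alpha g_k\| \le (1+\alpha L)\|e_k\|$ followed by Young's inequality with parameter chosen so that the induced $\|e_k\|^2$ penalty $\alpha\mu(1-\tfrac{\alpha L}{2})\|e_k\|^2$ exactly cancels half of the contractive margin. This gives the clean one-step recursion
\begin{equation*}
\|e_{k+1}\|^2 \;\le\; \bigl(1 - \alpha\mu(1-\tfrac{\alpha L}{2})\bigr)\|e_k\|^2 \;+\; \Bigl(\alpha^2 + \tfrac{\alpha(1+\alpha L)^2}{\mu(1-\alpha L/2)}\Bigr)\|\mathcal{E}_{k+1}\|^2,
\end{equation*}
in which the stepsize assumption $\mu\alpha(1+\lambda_N^W - \alpha L) < 1$ is precisely what makes the contraction factor lie in $(0,1)$.

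Taking expectations and substituting the Lemma~\ref{lem:2} bound $\mathbb{E}\|\mathcal{E}_{k+1}\|^2 \le \tfrac{4L^2\gamma^{2k}}{N}\mathbb{E}\|x^{(0)}\|^2 + B$, where $B := \tfrac{4L^2 D^2\alpha^2}{N(1-\gamma)^2} + \tfrac{4L^2\sigma^2\alpha^2}{1-\gamma^2}$, gives a scalar linear recursion $u_{k+1} \le \lambda u_k + C_0\bigl(A\gamma^{2k} + B\bigr)$ with $\lambda = 1-\alpha\mu(1-\tfrac{\alpha L}{2})$. I would unroll this recursion using $e_0 = 0$, splitting the sum into two geometric pieces via the identity $\sum_{j=0}^{k-1}\lambda^{k-1-j}\gamma^{2j} = \frac{\gamma^{2k}-\lambda^k}{\gamma^2-\lambda}$ for the transient, and $\sum_{j=0}^{k-1}\lambda^{k-1-j} \le \frac{1}{1-\lambda} = \frac{1}{\alpha\mu(1-\alpha L/2)}$ for the steady state. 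Dividing the noise coefficient by $1-\lambda$ produces precisely $\frac{\alpha}{\mu(1-\alpha L/2)} + \frac{(1+\alpha L)^2}{\mu^2(1-\alpha L/2)^2}$, which (times $B$) matches the first summand of the claimed bound, while the transient sum yields the second.

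The main obstacle will be the bookkeeping for the Young's-inequality parameter: it must be tuned so that (i) the effective contraction rate is exactly $1-\alpha\mu(1-\tfrac{\alpha L}{2})$ (not some looser rate like $1-\alpha\mu(1-\alpha L)$ one would get from a naive split), and simultaneously (ii) the resulting noise coefficient cleanly factors through the $1-\lambda$ division to produce the two-term coefficient exhibited in \eqref{bar:x:x:k}. The rest is a standard geometric-series unrolling, and the simplification that produces the final $\gamma^2$ multiplier on the transient contribution uses a mild upper bound on the Young's noise coefficient in the regime of admissible $\alpha$.
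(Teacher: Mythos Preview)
Your proposal is correct and follows essentially the same route as the paper: derive the error recursion $e_{k+1}=e_k-\alpha g_k+\alpha\mathcal{E}_{k+1}$, use co-coercivity and strong convexity to obtain the contraction factor $1-2\alpha\mu(1-\tfrac{\alpha L}{2})$, absorb the cross term via Young's inequality with parameter $\tfrac{\mu(1-\alpha L/2)}{1+\alpha L}$ so that exactly half the margin is consumed, and then unroll against the Lemma~\ref{lem:2} bound as a geometric series in $\lambda$ and $\gamma^{2}$. The paper's proof performs these same steps with the same constants; your summary matches it almost line for line, including the precise noise coefficient $\alpha\bigl(\alpha+\tfrac{(1+\alpha L)^2}{\mu(1-\alpha L/2)}\bigr)$.
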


\begin{proof}
{\color{black}The proof of Lemma~\ref{lem:central:approx} will be provided
in Appendix~\ref{sec:technical}.}
\end{proof}


Next, we quote the following result from \cite{gurbuzbalaban2020decentralized}
which provides an bound on the distance between $x_{i}^{(k)}$ and $\bar{x}^{(k)}$
for every $1\leq i\leq N$.

\begin{lemma}[Lemma~6 in \cite{gurbuzbalaban2020decentralized}]\label{bar:x:i:distance}
In the setting of Lemma \ref{lem:2}, for any $k$ and $i=1,\ldots,N$, we have
\begin{equation*}
\mathbb{E}\left\Vert x_{i}^{(k)}-\bar{x}^{(k)}\right\Vert^{2}
\leq
\sum_{i=1}^{N}\mathbb{E}\left\Vert x_{i}^{(k)}-\bar{x}^{(k)}\right\Vert^{2}
\leq
4\gamma^{2k}\mathbb{E}\left\Vert x^{(0)}\right\Vert^{2}
+\frac{4D^{2}\alpha^{2}}{(1-\gamma)^{2}}
+\frac{4\sigma^{2}N\alpha^{2}}{(1-\gamma^{2})},
\end{equation*}
where $D$ is defined in \eqref{eqn:D1}. 
\end{lemma}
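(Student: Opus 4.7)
\textbf{Proof plan for Lemma~\ref{bar:x:i:distance}.}

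\emph{Step 1 (reduction to the consensus error).} The first inequality is immediate since every summand is nonnegative. For the second, introduce the consensus error vector
\[
e^{(k)} := x^{(k)} - \mathbf{1}_{N}\otimes\bar{x}^{(k)} = \bigl((I_{N}-J)\otimes I_{d}\bigr)x^{(k)},
\qquad J := \tfrac{1}{N}\mathbf{1}_{N}\mathbf{1}_{N}^{T},
\]
whose squared Euclidean norm equals exactly $\sum_{i=1}^{N}\|x_{i}^{(k)}-\bar{x}^{(k)}\|^{2}$. Set $\tilde{\mathcal{W}} := (W-J)\otimes I_{d}$. Because $W$ is doubly stochastic, $WJ=JW=J$, so $\tilde{\mathcal{W}}$ leaves the consensus subspace invariant and satisfies $\|\tilde{\mathcal{W}}\|_{2} = \gamma$ with $(I-J\otimes I_{d})\mathcal{W} = \tilde{\mathcal{W}}$.

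\emph{Step 2 (linear recursion and unroll).} Applying $I-J\otimes I_{d}$ to the D-SG update \eqref{eq-iter-dgd} yields
\[
e^{(k+1)} = \tilde{\mathcal{W}}e^{(k)} - \alpha(I-J\otimes I_{d})\tilde{\nabla}F\bigl(x^{(k)}\bigr),
\]
which unrolls to $e^{(k)} = \tilde{\mathcal{W}}^{k}e^{(0)} - \alpha\sum_{t=0}^{k-1}M_{t}\tilde{\nabla}F(x^{(t)})$ with $M_{t} := \tilde{\mathcal{W}}^{k-1-t}(I-J\otimes I_{d})$ satisfying $\|M_{t}\|_{2}\leq \gamma^{k-1-t}$. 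Decompose $\tilde{\nabla}F(x^{(t)}) = \nabla F(x^{(t)})+\eta^{(t)}$ with $\mathbb{E}[\eta^{(t)}\mid\mathcal{F}_{t}] = 0$ and $\mathbb{E}\|\eta^{(t)}\|^{2}\leq \sigma^{2}N$ by Assumption~\ref{assump_1}.

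\emph{Step 3 (three separate bounds).} I would bound $\mathbb{E}\|e^{(k)}\|^{2}$ using a three-way splitting inequality $\|u+v+w\|^{2}\leq 4\|u\|^{2}+4\|v\|^{2}+4\|w\|^{2}$ (or equivalently first split off the deterministic part and then the gradient/noise parts and absorb the loose factor into $D^{2}$), applied to $u = \tilde{\mathcal{W}}^{k}e^{(0)}$, $v = -\alpha\sum_{t}M_{t}\nabla F(x^{(t)})$, and $w = -\alpha\sum_{t}M_{t}\eta^{(t)}$. Then: (a) $\|u\|^{2}\leq \gamma^{2k}\|e^{(0)}\|^{2}\leq \gamma^{2k}\|x^{(0)}\|^{2}$; (b) by the weighted Cauchy–Schwarz bound $(\sum_{t}\gamma^{k-1-t}\|g_{t}\|)^{2}\leq \tfrac{1}{1-\gamma}\sum_{t}\gamma^{k-1-t}\|g_{t}\|^{2}$ together with $L$-smoothness $\|\nabla F(x^{(t)})\|^{2}\leq 2L^{2}\|x^{(t)}-x^{*}\|^{2}+2\|\nabla F(x^{*})\|^{2}$ and the uniform bound on $\mathbb{E}\|x^{(t)}-x^{*}\|^{2}$ from Theorem~\ref{thm:DGD:explicit} and Corollary~\ref{DSG_cor} (which is precisely the bound whose constants define $D^{2}$ in \eqref{eqn:D1}), one gets $\mathbb{E}\|v\|^{2}\leq \tfrac{\alpha^{2}D^{2}}{(1-\gamma)^{2}}$; (c) by the conditional orthogonality of the noise terms, $\mathbb{E}\langle M_{s}\eta^{(s)},M_{t}\eta^{(t)}\rangle = 0$ for $s\neq t$ (take $\mathbb{E}[\cdot\mid\mathcal{F}_{t\vee s}]$ on the later index), so $\mathbb{E}\|w\|^{2} = \alpha^{2}\sum_{t}\mathbb{E}\|M_{t}\eta^{(t)}\|^{2}\leq \alpha^{2}\sigma^{2}N\sum_{t}\gamma^{2(k-1-t)}\leq \tfrac{\alpha^{2}\sigma^{2}N}{1-\gamma^{2}}$. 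Summing the three pieces yields the stated bound. The two different denominators $(1-\gamma)^{2}$ and $1-\gamma^{2}$ are crucial indicators that the argument is tight: the first arises from a deterministic Cauchy–Schwarz, the second from the statistical orthogonality of independent noise.

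\emph{Main obstacle.} The subtle point is that naively splitting $e^{(k)}$ into ``gradient'' and ``noise'' pieces does not give orthogonality, because for $s > t$ the gradient $\nabla F(x^{(s)})$ is a nonlinear function of $\eta^{(t)}$ through the iterates, so the cross terms $\mathbb{E}\langle M_{s}\nabla F(x^{(s)}),M_{t}\eta^{(t)}\rangle$ need not vanish. The cleanest fix is to apply the scalar inequality $\|u+v+w\|^{2}\leq 4\|u\|^{2}+4\|v\|^{2}+4\|w\|^{2}$ and absorb the constants, which dispenses entirely with the need to control these cross terms and produces precisely the factor $4$ appearing in front of each of the three summands in the claimed bound.
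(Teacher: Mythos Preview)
Your proposal is correct and follows essentially the same route as the paper. The paper does not prove this lemma directly (it is quoted from \cite{gurbuzbalaban2020decentralized}), but the D-ASG analogue, Lemma~\ref{gradient:average:error:DASG}, is proved in the appendix by exactly your scheme: unroll the recursion, project with $I-\tfrac{1}{N}\mathbf{1}\mathbf{1}^{T}$, split via $\|a_1+\cdots+a_n\|^{2}\le n\sum\|a_i\|^{2}$, bound the gradient sum with the weighted Jensen trick (yielding the $(1-\gamma)^{-2}$), and bound the noise sum via martingale orthogonality (yielding the $(1-\gamma^{2})^{-1}$). Your identification of the cross-term subtlety and its resolution by the crude splitting is exactly the point; the factor $4$ (rather than $3$) is inherited from the cited reference and is in any case an overbound of the three-term Cauchy--Schwarz constant.
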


\textbf{Completing the proof of Proposition~\ref{prop:dsg-average-optimal-rate}.} Finally, we are ready to complete the proof of Proposition~\ref{prop:dsg-average-optimal-rate}. 
First, we notice that 
\begin{align*}
\mathbb{E}\left\Vert\bar{x}^{(k)}-x_{\ast}\right\Vert^{2}
\leq
2\mathbb{E}\left\Vert\bar{x}^{(k)}-x_{k}\right\Vert^{2}
+2\mathbb{E}\left\Vert x_{k}-x_{\ast}\right\Vert^{2},
\end{align*}
and for every $i=1,2,\ldots,N$,
\begin{align*}
\mathbb{E}\left\Vert x_{i}^{(k)}-x_{\ast}\right\Vert^{2}
&\leq
2\mathbb{E}\left\Vert\bar{x}^{(k)}-x_{\ast}\right\Vert^{2}
+2\mathbb{E}\left\Vert x_{i}^{(k)}-\bar{x}^{(k)}\right\Vert^{2}
\\
&\leq
4\mathbb{E}\left\Vert\bar{x}^{(k)}-x_{k}\right\Vert^{2}
+4\mathbb{E}\left\Vert x_{k}-x_{\ast}\right\Vert^{2}
+2\mathbb{E}\left\Vert x_{i}^{(k)}-\bar{x}^{(k)}\right\Vert^{2}.
\end{align*}
By Proposition~4.3. in \cite{StrConvex}, we have
for any $\alpha\leq\frac{2}{L+\mu}$,
\begin{align}
\mathbb{E}\Vert x_{k}-x_{\ast}\Vert^{2}
&\leq(1-\alpha\mu)^{2k}
\Vert x_{0}-x_{\ast}\Vert^{2}
+\frac{1-(1-\alpha\mu)^{2k}}{1-(1-\alpha\mu)^{2}}\frac{\alpha^{2}\sigma^{2}}{N}
\nonumber
\\
&=(1-\alpha\mu)^{2k}
\Vert x_{0}-x_{\ast}\Vert^{2}
+\frac{1}{2}\frac{1-(1-\alpha\mu)^{2k}}{\mu(1-\alpha\mu/2)}\frac{\alpha\sigma^{2}}{N}.
\end{align}
By \eqref{bar:x:x:k}, we have
an upper bound for $\mathbb{E}\left\Vert\bar{x}^{(k)}-x_{k}\right\Vert^{2}$, 
and by Lemma~\ref{bar:x:i:distance}, we 
have an upper bound for $\mathbb{E}\left\Vert x_{i}^{(k)}-\bar{x}^{(k)}\right\Vert^{2}$, 
which completes the proof. 
}

\subsection{Proofs of Main Results in Section~\ref{subsec:dasg}}

Before we proceed to the proof of Theorem~\ref{thm:general:DASG},
let us state the following result from \cite{aybat2019universally}.
\begin{lemma}[Lemma 2.2, \cite{aybat2019universally}]\label{lem:aybat} 
Consider to ASG iterates to minimize the function $F_{\mathcal{W},\alpha}$ in \eqref{def-pen-obj}.
Assume there exist $\rho\in(0,1)$ and a positive semi-definite $2\times 2$
matrix $\tilde{P}$ such that
\begin{equation*}
\rho^{2}\tilde{X}_{1}+(1-\rho^{2})\tilde{X}_{2}
\succeq 
\left[\begin{array}{cc}
\tilde{A}_{\text{dasg}}^{T}\tilde{P}\tilde{A}_{\text{dasg}}-\rho^{2}\tilde{P} & \tilde{A}_{\text{dasg}}^{T}\tilde{P}\tilde{B}_{\text{dasg}}
\\
\tilde{B}_{\text{dasg}}^{T}\tilde{P}\tilde{A}_{\text{dasg}} & \tilde{B}_{\text{dasg}}^{T}\tilde{P}\tilde{B}_{\text{dasg}}
\end{array}\right],
\end{equation*}
where
\begin{equation*}
\tilde{X}_{1}:=
\left[\begin{array}{ccc}
\frac{\beta^{2}\mu}{2} & \frac{-\beta^{2}\mu}{2} & \frac{-\beta}{2}
\\
\frac{-\beta^{2}\mu}{2} & \frac{\beta^{2}\mu}{2} & \frac{\beta}{2}
\\
\frac{-\beta}{2} & \frac{\beta}{2} & \frac{\alpha(2-L_{\alpha}\alpha)}{2}
\end{array}\right],
\quad
\tilde{X}_{2}:=
\left[\begin{array}{ccc}
\frac{(1+\beta)^{2}\mu}{2} & \frac{-\beta(1+\beta)\mu}{2} & \frac{-(1+\beta)}{2}
\\
\frac{-\beta(1+\beta)\mu}{2} & \frac{\beta^{2}\mu}{2} & \frac{\beta}{2}
\\
\frac{-(1+\beta)}{2} & \frac{\beta}{2} & \frac{\alpha(2-L_{\alpha}\alpha)}{2}
\end{array}\right].
\end{equation*}
Let $P=\tilde{P}\otimes I_{Nd}$. Then, for every $k\geq 0$,
\begin{equation*}
\mathbb{E}[V_{P, \alpha, 1}(\xi_{k})]\leq
\rho^{2}\mathbb{E}[V_{P, \alpha, 1}(\xi_{k-1})]
+\alpha^{2}\sigma^{2}N\left(\tilde{P}_{11}+\frac{L_{\alpha}}{2}\right).
\end{equation*}
\end{lemma}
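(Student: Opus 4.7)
The plan is to derive a one-step contraction of $V_{P,\alpha,1}$ by: (i) writing $V_{P,\alpha,1}(\xi_{k+1})$ as the sum of a quadratic and a function-value piece, (ii) separating the deterministic gradient from the noise on each piece, and (iii) combining the remaining deterministic inequalities so that they align with the coordinate vector $\zeta_k := [(x^{(k)}-x^\infty)^T,\,(x^{(k-1)}-x^\infty)^T,\,\bar g_k^T]^T$ on which $\tilde X_1$, $\tilde X_2$, and $\tilde P$ act. Let $\bar g_k := \nabla F_{\calW,\alpha}(y^{(k)})$ and $e_k := \tilde\nabla F_{\calW,\alpha}(y^{(k)}) - \bar g_k$; since the penalty $\tfrac{1}{2\alpha}x^T(I-\calW)x$ is deterministic, Assumption~\ref{assump_1} transfers to $\mathbb{E}[e_k\mid\xi_k]=0$ and $\mathbb{E}[\|e_k\|^2\mid\xi_k]\le \sigma^2 N$. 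Expanding $\xi_{k+1}^T P \xi_{k+1}$ in $A\xi_k + B\bar g_k + B e_k$ and invoking the Kronecker structure $P=\tilde P\otimes I_{Nd}$, $B=\tilde B_{\text{dasg}}\otimes I_{Nd}$ with $\tilde B_{\text{dasg}}=[-\alpha,\,0]^T$ so that $B^T P B = \alpha^2\tilde P_{11}\, I_{Nd}$, the cross-term vanishes in conditional expectation and the pure-noise term contributes at most $\alpha^2 \tilde P_{11}\sigma^2 N$.

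For the function-value part, I would use the $L_\alpha$-smoothness of $F_{\calW,\alpha}$ at $y^{(k)} = C\xi_k + x^\infty$ together with the identity $T\xi_{k+1} - C\xi_k = -\alpha(\bar g_k + e_k)$ to obtain, after conditional expectation,
\begin{align*}
&\mathbb{E}\!\left[F_{\calW,\alpha}(T\xi_{k+1}+x^\infty) \mid \xi_k\right] - F_{\calW,\alpha}(x^\infty) \\
&\qquad\le F_{\calW,\alpha}(y^{(k)}) - F_{\calW,\alpha}(x^\infty) -\tfrac{\alpha(2-L_\alpha\alpha)}{2}\|\bar g_k\|^2 + \tfrac{L_\alpha\alpha^2}{2}\sigma^2 N.
\end{align*}
This supplies the other half of the noise term and, via $\alpha(2-L_\alpha\alpha)/2 = \alpha(1+\lambda_N^W-\alpha L)/2$, the $(3,3)$-entry of $\tilde X_1$ and $\tilde X_2$. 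Next, to trade $F_{\calW,\alpha}(y^{(k)})$ for $\rho^2 F_{\calW,\alpha}(x^{(k)})$, split
\begin{align*}
F_{\calW,\alpha}(y^{(k)}) - F_{\calW,\alpha}(x^\infty)
&= \rho^2\bigl[F_{\calW,\alpha}(y^{(k)}) - F_{\calW,\alpha}(x^{(k)})\bigr]
+ \rho^2\bigl[F_{\calW,\alpha}(x^{(k)}) - F_{\calW,\alpha}(x^\infty)\bigr] \\
&\quad + (1-\rho^2)\bigl[F_{\calW,\alpha}(y^{(k)}) - F_{\calW,\alpha}(x^\infty)\bigr],
\end{align*}
and upper-bound the first and third brackets using the $\mu$-strong-convexity interpolation inequalities anchored at $y^{(k)}$ with the reference points $x^{(k)}$ and $x^\infty$ respectively.

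Substituting the identities $y^{(k)} - x^{(k)} = \beta(x^{(k)}-x^{(k-1)})$ and $y^{(k)} - x^\infty = (1+\beta)(x^{(k)}-x^\infty) - \beta(x^{(k-1)}-x^\infty)$ into these two interpolation inequalities and collecting terms by the coordinates of $\zeta_k$, the $\rho^2$- and $(1-\rho^2)$-weighted inequalities together with the smoothness-generated $(3,3)$-contribution reproduce precisely the quadratic form $\zeta_k^T\bigl((\rho^2\tilde X_1 + (1-\rho^2)\tilde X_2)\otimes I_{Nd}\bigr)\zeta_k$. Assembling everything, one reaches
\begin{equation*}
\mathbb{E}[V_{P,\alpha,1}(\xi_{k+1}) \mid \xi_k] - \rho^2 V_{P,\alpha,1}(\xi_k) \le -\zeta_k^T (\Delta \otimes I_{Nd}) \zeta_k + \alpha^2\sigma^2 N\Bigl(\tilde P_{11} + \tfrac{L_\alpha}{2}\Bigr),
\end{equation*}
where $\Delta$ is the difference between $\rho^2\tilde X_1 + (1-\rho^2)\tilde X_2$ and the right-hand-side matrix of the assumed LMI; by hypothesis $\Delta\succeq 0$, the quadratic-form term is nonpositive, and taking total expectation delivers the claim. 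The main obstacle is precisely this algebraic identification: verifying that the exact entries of $\tilde X_1$ and $\tilde X_2$ stated in the lemma are what appears after expanding the two strong-convexity interpolation inequalities in the $\zeta_k$-basis and adding the smoothness-generated quadratic in $\bar g_k$; the specific shape of $\tilde X_1,\tilde X_2$ is not postulated freely but is forced by this combination. Once that identification is done, the remainder is routine Kronecker/LMI manipulation.
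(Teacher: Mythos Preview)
Your proof is correct. The paper does not supply its own proof of this lemma but simply cites it from \cite{aybat2019universally}; your argument is the standard derivation used there: split $V_{P,\alpha,1}(\xi_{k+1})$ into the quadratic and function-value pieces, isolate the noise on each (yielding the $\alpha^2\tilde P_{11}\sigma^2 N$ and $\tfrac{L_\alpha\alpha^2}{2}\sigma^2 N$ contributions), bound the deterministic function-value part via $L_\alpha$-smoothness at $y^{(k)}$ and the two $\mu$-strong-convexity inequalities anchored at $y^{(k)}$ with reference points $x^{(k)}$ and $x^\infty$, and then verify that the collected quadratic form in $\zeta_k$ matches $-(\rho^2\tilde X_1+(1-\rho^2)\tilde X_2)\otimes I_{Nd}$ so that the assumed LMI kills it. The algebraic identification you flag as the ``main obstacle'' does go through exactly: writing $a=x^{(k)}-x^\infty$, $b=x^{(k-1)}-x^\infty$, $g=\bar g_k$, the $\rho^2$-weighted inequality gives the off-$(3,3)$ part of $-\rho^2\tilde X_1$, the $(1-\rho^2)$-weighted inequality gives the off-$(3,3)$ part of $-(1-\rho^2)\tilde X_2$, and the smoothness term $-\tfrac{\alpha(2-L_\alpha\alpha)}{2}\|g\|^2$ supplies the common $(3,3)$-entry, with $\alpha(2-L_\alpha\alpha)/2=\alpha(1+\lambda_N^W-\alpha L)/2$ via $L_\alpha=\tfrac{1-\lambda_N^W}{\alpha}+L$.
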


Now, we are ready to prove Theorem~\ref{thm:general:DASG}.

\subsubsection{Proof of Theorem~\ref{thm:general:DASG}}

\begin{proof}
First recall that $F_{\mathcal{W},\alpha}$
is $\mu$-strongly convex and $L_{\alpha}$-smooth
where $L_{\alpha}=\frac{1-\lambda_{N}^{W}}{\alpha}+L$. 

Next, Lemma~\ref{lem:aybat} implies that
\begin{equation*}
\mathbb{E}\left[V_{P, \alpha, 1}(\xi_{k})\right]\leq
\rho^{2k}V_{P, \alpha, 1}(\xi_{0})
+\frac{1}{1-\rho^{2}}\alpha^{2}\sigma^{2}N\left(\tilde{P}_{11}+\frac{L_{\alpha}}{2}\right).
\end{equation*}
By the $\mu$-strong convexity of $F_{\calW,\alpha}$ and the fact that $\nabla F_{\calW,\alpha}(x^\infty)=0$ , we have 
\begin{equation*}
\left\| x^{(k)} -x^\infty \right\|^2 \leq \frac{2}{\mu}\left[ F_{\calW,\alpha}\left(x^{(k)}\right) - F_{\calW,\alpha}\left(x^\infty\right)\right]\leq 2\frac{V_{P,\alpha,1}(\xi_{k})}{\mu}.
\end{equation*}
Also, by the definition of $J_{\infty}(\alpha)$,
\begin{equation*}
J_{\infty}(\alpha)=\frac{1}{\sigma^2 N} \limsup_{k\to\infty} \mbox{Var}\left(x^{(k)}-x^{\infty}\right)
\leq\frac{1}{\sigma^2 N} \limsup_{k\to\infty}\mathbb{E}\left[\left\Vert x^{(k)}-x^{\infty}\right\Vert^{2}\right].
\end{equation*}
The proof is complete.
\end{proof}

\subsubsection{Proof of Corollary~\ref{cor:general:DASG}}

\begin{proof}
{\color{black}If $\alpha \leq \frac{1}{L+\mu}$, then we have
\begin{equation*}
\left\Vert x^{(k)}-x^{\ast}\right\Vert^{2}
\leq 
2\left\Vert x^{(k)}-x^{\infty}\right\Vert^{2}
+2\left\Vert x^{\infty}-x^{\ast}\right\Vert^{2},
\end{equation*}
and $\Vert x^{\infty}-x^{\ast}\Vert\leq\frac{\alpha C_{1}\sqrt{N}}{(1-\gamma)}$
from \eqref{eqn:asymp_suboptim}.
Also, by the proof of Lemma~\ref{lem:average}, 
\begin{equation*}
\left\Vert\bar{x}^{(k)}-x_{\ast}\right\Vert^{2}
\leq\frac{1}{N}\left\Vert x^{(k)}-x^{\ast}\right\Vert^{2}.
\end{equation*}
The proof is complete.}
\end{proof}

\subsubsection{Proof of Theorem~\ref{thm-rate-dasg}}

\begin{proof}
D-ASG reduces to the iterations \eqref{eq-centr-asg} which are equivalent to applying non-distributed ASG to minimize the function $F_{\calW,\alpha} \in S_{\mu,L_\alpha}(\R^{Nd})$. Therefore, applying \cite[Proposition 4.6]{StrConvex} and \cite[Corollary 4.9]{StrConvex} from the literature for non-distributed ASG, we obtain 
\begin{equation}\label{ineq-dasg-lyapunov-2}
\E \left[V_{S,\alpha}\left(\xi_{k+1}\right)\right] 
\leq \left(1-\sqrt{\alpha \mu}\right)  \mathbb{E}V_{S,\alpha}\left(\xi_{k}\right)
+ \frac{\sigma^2 N \alpha}{2}\left(1+\alpha L_\alpha \right),
\end{equation}
which yields
\begin{equation*}
\E \left[V_{S,\alpha}\left(\xi_{k}\right)\right] 
\leq \left(1-\sqrt{\alpha \mu}\right)^{k}  V_{S,\alpha}\left(\xi_{0}\right)
+ \frac{\sigma^2 N \alpha}{2\sqrt{\alpha\mu}}\left(1+\alpha L_\alpha \right),
\end{equation*}
provided that $ \alpha \in (0,\frac{1}{L_\alpha}]$ where $L_\alpha = \frac{1-\lambda_N^W}{\alpha} + L$ is the smoothness constant of $F_{\calW,\alpha}$. It can be checked that if $\alpha \in (0, \frac{\lambda_N^W}{L}]$ then, the condition $ \alpha \in (0,\frac{1}{L_\alpha}]$ is satisfied. Plugging the value of $L_\alpha$ into \eqref{ineq-dasg-lyapunov-2} proves 
\beq \E \left[V_{S,\alpha}\left(\xi_{k}\right)\right] \leq \left(1-\sqrt{\alpha \mu}\right)^{k}  V_{S,\alpha}\left(\xi_{0}\right)
+ \frac{\sigma^2 N \sqrt{\alpha}}{2\sqrt{\mu}}\left(2-\lambda_N^W +\alpha L \right),\label{ineq-dasg-lyapunov}
\eeq
for any $k\geq 0$.


By the $\mu$-strong convexity of $F_{\calW,\alpha}$ and the fact that $\nabla F_{\calW,\alpha}(x^\infty)=0$ , we have 
$$\left\| x^{(k)} -x^\infty \right\|^2 \leq \frac{2}{\mu}\left[ F_{\calW,\alpha}\left(x^{(k)}\right) - F_{\calW,\alpha}\left(x^\infty\right)\right].$$
Therefore, \eqref{ineq-dasg-lyapunov} implies \eqref{ineq-to-prove-1}. 
Finally, by the definition of $J_{\infty}(\alpha)$,
\begin{equation*}
J_{\infty}(\alpha)=\frac{1}{\sigma^2 N} \limsup_{k\to\infty} \mbox{Var}\left(x^{(k)}-x^{\infty}\right)
\leq\frac{1}{\sigma^2 N} \limsup_{k\to\infty}\mathbb{E}\left[\left\Vert x^{(k)}-x^{\infty}\right\Vert^{2}\right].
\end{equation*}
The proof is complete.
\end{proof}


\subsubsection{Proof of Corollary~\ref{cor:rate:dasg}}

\begin{proof}
{\color{black}If $\alpha \leq \frac{1}{L+\mu}$, then we obtain \eqref{ineq-to-prove-2} by applying \eqref{eqn:asymp_suboptim} 
and moreover, we obtain \eqref{ineq-to-prove-1} 
by applying Lemma~\ref{lem:average}.}
\end{proof}

\subsubsection{Proof of Proposition~\ref{prop:dsg-average-optimal-rate-DASG}}

{\color{black}We recall that the averages at $k$-th iteration are given by
$\bar{x}^{(k)}:=\frac{1}{N}\sum_{i=1}^{N}x_{i}^{(k)}$
and $\bar{y}^{(k)}:=\frac{1}{N}\sum_{i=1}^{N}y_{i}^{(k)}$.
Since $\mathcal{W}$ is doubly stochastic, we get
\begin{align}\label{eq:avg-iterates:DASG}
&\bar{x}^{(k+1)}=\bar{y}^{(k)}-\alpha\frac{1}{N}\sum_{i=1}^{N}\nabla f_{i}\left(y_{i}^{(k)}\right)
-\alpha\bar{\xi}^{(k+1)},
\\
&\bar{y}^{(k)}=(1+\beta)\bar{x}^{(k)}-\beta\bar{x}^{(k-1)},
\nonumber
\end{align}
where 
\begin{equation*}
\bar{\xi}^{(k+1)}:=\frac{1}{N}\sum_{i=1}^{N}\left(\tilde{\nabla}f_{i}\left(y_{i}^{(k)}\right)-\nabla f_{i}\left(y_{i}^{(k)}\right)\right),
\end{equation*}
satisfies 
\begin{equation}\label{bar:grad:noise:DASG}
\mathbb{E}\left[\bar{\xi}^{(k+1)}\Big|\mathcal{F}_{k}\right]=0,
\qquad
\mathbb{E}\left\Vert\bar{\xi}^{(k+1)}\right\Vert^{2}\leq\frac{\sigma^{2}}{N}.
\end{equation}

We will establish several lemmas for completing the proof of Proposition \ref{prop:dsg-average-optimal-rate-DASG}. We start with stating and proving the following lemma which provides a bound on the $L_2$ distance between the local variables $x_i^{(k)}$ and the node averages $\bar{x}^{(k)}$.}

\begin{lemma}\label{gradient:average:error:DASG}
{\color{black}
Assume the conditions in Proposition~\ref{prop:dsg-average-optimal-rate-DASG} hold. 
For any $k$, we have
\begin{align*}
\sum_{i=1}^{N}\mathbb{E}\left\Vert x_{i}^{(k)}-\bar{x}^{(k)}\right\Vert^{2} 
&\leq
8\gamma^{2k}\left(4\frac{ V_{S,\alpha}\left(\xi_{0}\right)}{\mu}
+ \frac{2\sigma^2 N \sqrt{\alpha}}{\mu\sqrt{\mu}}\left(2-\lambda_N^W +\alpha L \right) +  \frac{2C_{1}^{2}N\alpha^2}{(1-\gamma)^2}
+\Vert x^{\ast}\Vert^{2}\right)
\\
&\qquad
+\frac{4D_{y}^{2}\alpha^{2}}{(1-\gamma)^{2}}
+\frac{4\sigma^{2}N\alpha^{2}}{(1-\gamma)^{2}}
+\frac{8C_{0}\alpha}{(1-\gamma)^{2}},
\end{align*} 
where $C_{0}$ is defined in Lemma~\ref{lem:difference} and $D_{y}$ is defined in \eqref{D:y:eqn}.
}
\end{lemma}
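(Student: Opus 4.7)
The strategy is to bound the consensus error $\sum_{i=1}^N \|x_i^{(k)} - \bar x^{(k)}\|^2$ by projecting the D-ASG dynamics onto the disagreement subspace, exploiting the contractive action of $\mathcal{W}$ on that subspace, and then controlling the resulting gradient and noise terms with the uniform bound on $\mathbb{E}\|x^{(k)}-x^*\|^2$ already supplied by Corollary~\ref{cor:rate:dasg}. Let $J := \tfrac{1}{N}(\mathbf{1}\mathbf{1}^T)\otimes I_d$ denote the averaging projection and define the consensus error $e_k := (I_{Nd}-J)x^{(k)}$, for which $\sum_{i=1}^N\|x_i^{(k)}-\bar x^{(k)}\|^2 = \|e_k\|^2$. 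Because $W$ is doubly stochastic we have $J\mathcal{W}=\mathcal{W}J=J$, so $\|\mathcal{W}-J\|_2 \leq \gamma$. Applying $(I_{Nd}-J)$ to the update~\eqref{def-dasg-iters} produces the closed second-order recursion
\begin{equation*}
e_{k+1} = (\mathcal{W}-J)\bigl[(1+\beta)\,e_k - \beta\,e_{k-1}\bigr] - \alpha\,(I_{Nd}-J)\,\tilde{\nabla}F\bigl(y^{(k)}\bigr),
\end{equation*}
initialised with $e_0 = e_{-1} = 0$ since $x^{(0)}=x^{(-1)}=0$.

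Next, I would uniformly bound the driving term. Split $\tilde{\nabla}F(y^{(k)}) = \nabla F(y^{(k)}) + \zeta_k$ with $\mathbb{E}[\zeta_k \mid x^{(k)}, x^{(k-1)}] = 0$ and $\mathbb{E}\|\zeta_k\|^2 \leq \sigma^2 N$, a direct consequence of Assumption~\ref{assump_1}. Combining $L$-smoothness with the momentum identity $y^{(k)} = (1+\beta)x^{(k)} - \beta x^{(k-1)}$ and the uniform bound on $\mathbb{E}\|x^{(k)}-x^*\|^2$ from Corollary~\ref{cor:rate:dasg} yields
\begin{equation*}
\mathbb{E}\|\nabla F(y^{(k)})\|^2 \leq 4L^2\bigl((1+\beta)^2 + \beta^2\bigr)\sup_{k\geq 0}\mathbb{E}\|x^{(k)}-x^*\|^2 + 2\|\nabla F(x^*)\|^2 = D_y^2,
\end{equation*}
with $D_y^2$ as in \eqref{D:y:eqn}. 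Unrolling the recursion writes $e_k$ as a sum over $j<k$ of matrix-weighted inputs $-\alpha G_{k,j}(I_{Nd}-J)\bigl(\nabla F(y^{(j)})+\zeta_j\bigr)$, where $G_{k,j}$ is a polynomial in $\mathcal{W}-J$. Taking expected squared norms and using (i) the spectral bound on $\mathcal{W}-J$ so that $\|G_{k,j}\|_2$ is controlled by a constant times $\gamma^{k-j}$, and (ii) the conditional independence of the $\zeta_j$ to kill noise cross-terms, one sums the geometric series via $\sum_j\gamma^{k-j}\leq (1-\gamma)^{-1}$ to obtain the two inhomogeneous terms $\tfrac{4D_y^2\alpha^2}{(1-\gamma)^2}$ and $\tfrac{4\sigma^2 N\alpha^2}{(1-\gamma)^2}$. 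The refined term $\tfrac{8C_0\alpha}{(1-\gamma)^2}$ comes from a sharper treatment of the cross-interaction between the consensus error and the Lyapunov potential $V_{S,\alpha}$ from \eqref{eqn:V:S:alpha}, with $C_0$ furnished by Lemma~\ref{lem:difference}; the conversion of one power of $\alpha$ into $\sqrt{\alpha}\cdot\sqrt{\alpha}$ reflects the $\sqrt{\alpha\mu}$-rate decay of the Lyapunov function established in the proof of Theorem~\ref{thm-rate-dasg}. The transient $8\gamma^{2k}(\cdots)$ then absorbs the residual initial contribution via $\mathbb{E}\|x^{(k)}\|^2 \leq 2\mathbb{E}\|x^{(k)}-x^*\|^2 + 2\|x^*\|^2$ and Corollary~\ref{cor:rate:dasg}.

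The principal technical obstacle is that the momentum couples $e_k$ to $e_{k-1}$, so the simple first-order contraction driving the D-SG argument in Lemma~\ref{bar:x:i:distance} does not apply directly. Naively, $\|e_{k+1}\| \leq \gamma(1+\beta)\|e_k\| + \gamma\beta\|e_{k-1}\| + \alpha\|\tilde{\nabla}F(y^{(k)})\|$ carries a total coefficient $\gamma(1+2\beta)$ which, with $\beta = (1-\sqrt{\alpha\mu})/(1+\sqrt{\alpha\mu})\approx 1$ for small $\alpha$, can exceed $1$. I would resolve this by diagonalising $\mathcal{W}-J$ in its eigenbasis and analysing the scalar recursion $\mu^2 - \lambda(1+\beta)\mu + \lambda\beta = 0$ in each eigenmode $\lambda\in[-\gamma,\gamma]$ separately, whose roots admit a bound sufficient to propagate a geometric estimate on $\|G_{k,j}\|_2$. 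Equivalently, a coupled quadratic Lyapunov potential of the form $c_1\|e_k\|^2 + c_2\|e_{k-1}\|^2$ with carefully tuned $c_1,c_2$ provides an alternative route to the same $(1-\gamma)^{-2}$ prefactor and $\gamma^{2k}$ transient decay.
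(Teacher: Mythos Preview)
Your overall framing (project onto the disagreement subspace, unroll, and bound the gradient/noise inputs via Corollary~\ref{cor:rate:dasg}) is sound, but the route you take through the second-order recursion is not the one the paper uses, and your explanation of the $8C_0\alpha/(1-\gamma)^2$ term misidentifies its origin.

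The paper does \emph{not} analyse the second-order consensus recursion
\[
e_{k+1}=(\mathcal{W}-J)\bigl[(1+\beta)e_k-\beta e_{k-1}\bigr]-\alpha(I-J)\tilde\nabla F(y^{(k)})
\]
directly. Instead it rewrites the D-ASG update as the \emph{first-order} recursion
\[
x^{(k+1)}=(W\otimes I_d)x^{(k)}+(W\otimes I_d)\,\beta\bigl(x^{(k)}-x^{(k-1)}\bigr)-\alpha\nabla F\bigl(y^{(k)}\bigr)-\alpha\xi^{(k+1)},
\]
treats the momentum increment $\beta(x^{(k)}-x^{(k-1)})$ as an additional \emph{exogenous input}, and then unrolls against the first-order kernel $W^{k-s}$. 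After projecting with $(I-J)$ this yields four terms, bounded via Cauchy--Schwarz: an initial term ($\leq 4\gamma^{2k}\|x^{(0)}\|^2$), a gradient term ($\leq 4D_y^2\alpha^2/(1-\gamma)^2$), a noise term ($\leq 4\sigma^2N\alpha^2/(1-\gamma)^2$), and a momentum term. The last is handled by
\[
4\,\mathbb{E}\Bigl\|\sum_{s=0}^{k-1}\bigl((W^{k-s}-J)\otimes I_d\bigr)\beta\bigl(x^{(s)}-x^{(s-1)}\bigr)\Bigr\|^2
\;\le\;\frac{4\beta^2}{(1-\gamma)^2}\,\sup_s\mathbb{E}\bigl\|x^{(s)}-x^{(s-1)}\bigr\|^2
\;\le\;\frac{8C_0\alpha}{(1-\gamma)^2},
\]
where the last inequality is precisely Lemma~\ref{lem:difference}: $\sup_k\mathbb{E}\|x^{(k)}-x^{(k-1)}\|^2\le 2C_0\alpha/\beta^2$. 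So $C_0$ enters through the size of successive-iterate differences, not through any ``cross-interaction between the consensus error and the Lyapunov potential'', and no $\sqrt{\alpha}\cdot\sqrt{\alpha}$ argument is involved.

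Your direct second-order approach runs into the obstacle you correctly flag ($\gamma(1+2\beta)$ can exceed $1$), and while diagonalising in the eigenmodes of $\mathcal{W}-J$ is viable in principle, the resulting Green's function bound $\|G_{k,j}\|_2\le C\rho^{k-j}$ carries a prefactor $C$ and a rate $\rho$ (of order $\sqrt{\gamma\beta}$ in the complex-root regime) that are different from $1$ and $\gamma$; you would obtain \emph{a} consensus bound but not the specific constants of the statement, and there is no natural mechanism in that route for a term of order $\alpha$ (rather than $\alpha^2$) to appear. The paper's device of pushing the momentum into the forcing term is exactly what sidesteps the second-order stability analysis and simultaneously produces the $O(\alpha)$ contribution through Lemma~\ref{lem:difference}.
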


\begin{proof}
{\color{black}
The proof of Lemma~\ref{gradient:average:error:DASG} will be provided
in Appendix~\ref{sec:technical}.}
\end{proof}

{\color{black}We can deduce from \eqref{eq:avg-iterates:DASG} that
\begin{align*}
&\bar{x}^{(k+1)}=\bar{x}^{(k)}-\alpha \nabla f\left(\bar{y}^{(k)}\right)
+\alpha\mathcal{E}_{k+1}-\alpha\bar{\xi}^{(k+1)},
\\
&\bar{y}^{(k)}=(1+\beta)\bar{x}^{(k)}-\beta\bar{x}^{(k-1)},
\end{align*}
where
\begin{equation}
\mathcal{E}_{k+1}
:= \nabla f\left(\bar{y}^{(k)}\right)-\frac{1}{N}\sum_{i=1}^{N}\nabla f_{i}\left(y_{i}^{(k)}\right).
\label{def-mathcalEk}
\end{equation}
Notice that we are abusing
the notation here; and $\mathcal{E}_{k+1}$
is used to denote the error term for D-SG as well.
Next, we will show that the error term $\mathcal{E}_{k+1}$ is small.} 

\begin{lemma}\label{lem:E:D-ASG}
{\color{black}Assume the conditions in Proposition~\ref{prop:dsg-average-optimal-rate-DASG} hold. 
For any $k$, we have
\begin{align*}
\mathbb{E}\left\Vert\mathcal{E}_{k+1}\right\Vert^{2}
&\leq
\frac{2}{N}L^{2}
\left((1+\beta)^{2}+\beta^{2}\right)
\Bigg[4D_{y}^{2}\alpha^{2}\frac{1}{(1-\gamma)^{2}}
+\frac{4\sigma^{2}N\alpha^{2}}{(1-\gamma)^{2}}
+\frac{8C_{0}}{(1-\gamma)^{2}}\alpha
\\
&\qquad
+8\gamma^{2(k-1)}\left(4\frac{ V_{S,\alpha}\left(\xi_{0}\right)}{\mu}
+ \frac{2\sigma^2 N \sqrt{\alpha}}{\mu\sqrt{\mu}}\left(2-\lambda_N^W +\alpha L \right) +  \frac{2C_{1}^{2}N\alpha^2}{(1-\gamma)^2}
+\Vert x^{\ast}\Vert^{2}\right)\Bigg],
\end{align*} 
}
{\color{black}
where $C_{0}$ is defined in Lemma~\ref{lem:difference}
and $\mathcal{E}_{k}$ is defined in \eqref{def-mathcalEk}}.
\end{lemma}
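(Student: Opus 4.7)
\textbf{Proof proposal for Lemma~\ref{lem:E:D-ASG}.}

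The plan is to reduce the bound on $\mathbb{E}\|\mathcal{E}_{k+1}\|^2$ to a bound on the deviation of the $y_i^{(k)}$'s from their node average $\bar y^{(k)}$, and then reduce this in turn to the deviation of the $x_i^{(k)}$'s from $\bar x^{(k)}$, which has already been controlled in Lemma~\ref{gradient:average:error:DASG}.

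First, using $f=\frac{1}{N}\sum_{i=1}^N f_i$ so that $\nabla f(\bar y^{(k)}) = \frac{1}{N}\sum_{i=1}^N \nabla f_i(\bar y^{(k)})$, I would rewrite
\begin{equation*}
\mathcal{E}_{k+1} = \frac{1}{N}\sum_{i=1}^N \bigl[\nabla f_i(\bar y^{(k)}) - \nabla f_i(y_i^{(k)})\bigr].
\end{equation*}
Applying Jensen's inequality (convexity of $\|\cdot\|^2$) and then $L$-smoothness of each $f_i$ gives
\begin{equation*}
\|\mathcal{E}_{k+1}\|^2 \leq \frac{1}{N}\sum_{i=1}^N \|\nabla f_i(\bar y^{(k)}) - \nabla f_i(y_i^{(k)})\|^2 \leq \frac{L^2}{N}\sum_{i=1}^N \|y_i^{(k)}-\bar y^{(k)}\|^2.
\end{equation*}

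Next, I would translate deviations of $y_i^{(k)}$ into deviations of $x_i^{(k)}$ via the momentum step. Since $y_i^{(k)}-\bar y^{(k)} = (1+\beta)(x_i^{(k)}-\bar x^{(k)}) - \beta(x_i^{(k-1)}-\bar x^{(k-1)})$, the elementary inequality $\|a+b\|^2 \leq 2\|a\|^2 + 2\|b\|^2$ yields
\begin{equation*}
\sum_{i=1}^N \|y_i^{(k)}-\bar y^{(k)}\|^2 \leq 2(1+\beta)^2 \sum_{i=1}^N \|x_i^{(k)}-\bar x^{(k)}\|^2 + 2\beta^2 \sum_{i=1}^N \|x_i^{(k-1)}-\bar x^{(k-1)}\|^2.
\end{equation*}
Taking expectation and combining with the previous inequality produces
\begin{equation*}
\mathbb{E}\|\mathcal{E}_{k+1}\|^2 \leq \frac{2L^2}{N}\Bigl[(1+\beta)^2 \sum_{i=1}^N \mathbb{E}\|x_i^{(k)}-\bar x^{(k)}\|^2 + \beta^2 \sum_{i=1}^N \mathbb{E}\|x_i^{(k-1)}-\bar x^{(k-1)}\|^2 \Bigr].
\end{equation*}

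Finally, I would invoke Lemma~\ref{gradient:average:error:DASG} to bound each of the two inner sums. The bound at time $k-1$ is dominated by the bound at time $k$ for all terms that do not depend on the iteration index, while the time-dependent factor $\gamma^{2k}$ at index $k$ is itself bounded by $\gamma^{2(k-1)}$ (since $\gamma<1$). Combining the two contributions and using $(1+\beta)^2 + \beta^2$ as the common coefficient on the time-independent terms, and $8\gamma^{2(k-1)}[(1+\beta)^2+\beta^2]$ on the decaying term, yields exactly the stated inequality. The main technical subtlety is only to make sure the $\gamma^{2(k-1)}$ factor is taken at the \emph{earlier} index so that one single decaying bound absorbs both sums; this is automatic because $\gamma \in (0,1)$. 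No further calculation is required beyond Lemma~\ref{gradient:average:error:DASG}, which has already done the heavy lifting.
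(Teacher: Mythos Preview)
Your proposal is correct and follows essentially the same approach as the paper: rewrite $\mathcal{E}_{k+1}$ as an average of $\nabla f_i(\bar y^{(k)})-\nabla f_i(y_i^{(k)})$, apply Jensen and $L$-smoothness, use the momentum relation to pass from $y_i^{(k)}-\bar y^{(k)}$ to $x_i^{(\cdot)}-\bar x^{(\cdot)}$, and then invoke Lemma~\ref{gradient:average:error:DASG}, bounding $\gamma^{2k}\le \gamma^{2(k-1)}$ to merge the two sums.
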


\begin{proof}
{\color{black}
The proof of Lemma~\ref{lem:E:D-ASG} will be provided
in Appendix~\ref{sec:technical}.}
\end{proof}

{\color{black}We recall from \eqref{eqn:V:S:alpha:bar}
that
\begin{equation}
V_{\bar{S},\alpha}(\bar{\xi}) := \bar{\xi}^T  \bar{S}_{\alpha} \bar{\xi}+ f\left(\bar{T}\bar{\xi} + x_{\ast}\right) - f(x_{\ast}),
\label{def-Lyap-fun-V-barS-alpha}
\end{equation}
where $\bar{S}_{\alpha}=\tilde{S}_{\alpha}\otimes I_{d}$.
We can represent the average of the D-ASG iterations as the dynamical system 
\begin{equation}\label{unified-dyn-sys-average}
\bar{\xi}_{k+1}=A\bar{\xi}_{k}+B \tilde{\nabla} f\left(C\bar{\xi}_{k}\right)+D_{k+1},
\end{equation} 
where $\xi_{k}$ is the state, and $A, B, C$ are system matrices that are appropriately chosen such that
\begin{equation}\label{eqn:xi:DASG:average}
\xi_{k}:=\left[\left(\bar{x}^{(k)}-x_{\ast}\right)^{T},\left(\bar{x}^{(k-1)}-x_{\ast}\right)^{T}\right]^{T}, 
\end{equation}
and 
$A = \tilde{A}_{\text{dasg}}\otimes I_{d}, B:=\tilde{B}_{\text{dasg}}\otimes I_{d}, C:=\tilde{C}_{\text{dasg}}\otimes I_{d}$
where
\begin{equation}\label{ABC:DASG:average}
\tilde{A}_{\text{dasg}}
:=\left[\begin{array}{cc}
1+\beta & -\beta
\\
1 & 0
\end{array}\right],
\quad
\tilde{B}_{\text{dasg}}
:=\left[\begin{array}{c}
-\alpha
\\
0
\end{array}\right],
\quad
\tilde{C}_{\text{dasg}}
:=\left[\begin{array}{cc}
1+\beta & -\beta
\end{array}\right],
\end{equation}
and
\begin{equation}
D_{k}:=\left[\begin{array}{c}
\alpha\mathcal{E}_{k}
\\
0
\end{array}\right],\label{def-D-k-vector}
\end{equation}
where $\mathcal{E}_{k}$ is defined in \eqref{def-mathcalEk}.}
{\color{black}We will make use of the Lyapunov function \eqref{def-Lyap-fun-V-barS-alpha} to establish the convergence of the averaged iterates in the remaining part of the proof. In the next result, we will obtain a helper lemma that shows that the difference between the consecutive iterates $x^{(k)}-x^{(k-1)}$ is bounded in $L_2$ with a bound that is proportional to the stepsize $\alpha$.}

\begin{lemma}\label{lem:difference}
{\color{black}Consider running D-ASG method with $\alpha \in (0,\frac{\lambda_N^W}{L}]$, $\beta = \frac{ 1-\sqrt{\alpha\mu}}{1+\sqrt{\alpha\mu}}$ and initialization $x^{(0)}=x^{(-1)}=0$. Then, we have
\begin{equation}
\sup_{k\geq 0}\mathbb{E}\left\Vert x^{(k)}-x^{(k-1)}\right\Vert^{2}
\leq\frac{2C_{0}}{\beta^{2}}\alpha,
\end{equation}
for a positive constant $C_0$ that can be made explicit. Furthermore, $C_0$ is such that $C_0 = \mathcal{O}(1)$ as $\alpha \to 0$.} 
\end{lemma}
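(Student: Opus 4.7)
The plan is to exploit the quadratic structure of the Lyapunov function $V_{S,\alpha}$ from Theorem~\ref{thm-rate-dasg}. Writing $\tilde{S}_\alpha = vv^T$ with $v = \bigl[1/\sqrt{2\alpha},\; \sqrt{\mu/2}-1/\sqrt{2\alpha}\bigr]^T$ and $\xi_k = [(x^{(k)}-x^\infty)^T, (x^{(k-1)}-x^\infty)^T]^T$, the Kronecker product structure yields the identity
\begin{equation*}
\xi_k^T(\tilde{S}_\alpha \otimes I_{Nd})\xi_k = \Bigl\|\tfrac{1}{\sqrt{2\alpha}}(x^{(k)}-x^{(k-1)}) + \sqrt{\tfrac{\mu}{2}}(x^{(k-1)}-x^\infty)\Bigr\|^2,
\end{equation*}
where the two $x^\infty$ contributions cancel out. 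Applying the elementary inequality $\|a\|^2 \leq 2\|a+b\|^2 + 2\|b\|^2$ then gives
\begin{equation*}
\tfrac{1}{2\alpha}\|x^{(k)}-x^{(k-1)}\|^2 \leq 2\,\xi_k^T(\tilde{S}_\alpha\otimes I_{Nd})\xi_k + \mu\,\|x^{(k-1)}-x^\infty\|^2 \leq 2\,V_{S,\alpha}(\xi_k) + \mu\,\|x^{(k-1)}-x^\infty\|^2,
\end{equation*}
where the last step uses the non-negativity of $F_{\calW,\alpha}(T\xi + x^\infty) - F_{\calW,\alpha}(x^\infty)$.

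Next I would take expectations on both sides and invoke Theorem~\ref{thm-rate-dasg}: iterating the one-step Lyapunov descent embedded in its proof yields $\E[V_{S,\alpha}(\xi_k)] \leq V_{S,\alpha}(\xi_0) + \tfrac{\sigma^2 N \sqrt{\alpha}}{2\sqrt{\mu}}\bigl(2-\lambda_N^W + \alpha L\bigr)$ uniformly in $k$, while the theorem itself produces a matching uniform bound on $\E\|x^{(k-1)}-x^\infty\|^2$. Combining these two estimates gives $\E\|x^{(k)}-x^{(k-1)}\|^2 \leq 2\alpha\,C$ for some constant $C$ that is $\mathcal{O}(1)$ as $\alpha\to 0$; setting $C_0 := C\beta^2$ (with $\beta \in (0,1)$) produces the stated bound with $C_0 = \mathcal{O}(1)$.

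It remains to verify that $V_{S,\alpha}(\xi_0) = \mathcal{O}(1)$ under the zero initialization $x^{(0)} = x^{(-1)} = 0$. Direct substitution shows $\xi_0^T(\tilde{S}_\alpha \otimes I_{Nd})\xi_0 = \tfrac{\mu}{2}\|x^\infty\|^2$, and since $T\xi_0 + x^\infty = 0$, the function-gap part collapses to $F(0) - F(x^\infty) - \tfrac{1}{2\alpha}(x^\infty)^T(I-\mathcal{W})x^\infty \leq F(0) - F(x^\infty)$. Because $\|x^\infty\|$ is controlled by $\sqrt{N}\|x_*\| + \mathcal{O}(\alpha/(1-\gamma))$ via Lemma~\ref{lem:C1:gamma}, both contributions to $V_{S,\alpha}(\xi_0)$ stay bounded as $\alpha \to 0$. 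The case $k=0$ is immediate since $x^{(0)} - x^{(-1)} = 0$.

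The main obstacle is ensuring the extracted constant remains $\mathcal{O}(1)$ as $\alpha \to 0$: a naive triangle-inequality argument via $\|x^{(k)}-x^{(k-1)}\|^2 \leq 2\|x^{(k)}-x^\infty\|^2 + 2\|x^{(k-1)}-x^\infty\|^2$ would only yield an $\mathcal{O}(1)$ bound, so one genuinely needs the $1/(2\alpha)$ weighting of the $(x^{(k)}-x^{(k-1)})$ component baked into $\tilde{S}_\alpha$ in order to recover the correct $\mathcal{O}(\alpha)$ scaling. The remaining work is bookkeeping to collect the explicit constant and confirm it is independent of $k$.
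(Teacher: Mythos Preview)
Your proof is correct and takes a genuinely different, and considerably more elementary, route than the paper.

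The paper does not use the Lyapunov function at all here. Instead it lifts the D-ASG recursion to the $2Nd$-dimensional variable $z^{(k)}=[x^{(k)};x^{(k-1)}]$ driven by the block matrix $\tilde M$ built from $W$ and $\beta$, block-diagonalizes $\tilde M$ into $2\times 2$ blocks via the eigendecomposition of $W$, and introduces a rank-one projector $\bar M$ onto the consensus direction. It then shows $\sup_k \mathbb{E}\|(I-\bar M)z^{(k)}\|^2 = \mathcal{O}(\alpha)$ by summing the geometric series $\sum_j (\tilde M-\bar M)^j$ against the gradient terms; the delicate part is tracking that the constants in the bound $\|(\tilde M-\bar M)^k\|\leq C_3 r_3^k$ satisfy $C_3=\Theta(1/\sqrt{\alpha})$ and $1-r_3=\Theta(\sqrt{\alpha})$, which requires a separate lemma working through Jordan forms of the $2\times 2$ blocks. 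Finally, the paper extracts $\|x^{(k)}-y^{(k)}\|^2\leq 2C_0\alpha$ from the $(I-\bar M)z^{(k)}$ bound and converts via $x^{(k)}-x^{(k-1)}=\beta^{-1}(y^{(k)}-x^{(k)})$, which is where the $1/\beta^2$ factor in the statement originates.

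Your argument bypasses all of this by exploiting the rank-one structure $\tilde S_\alpha = vv^T$: the quadratic part of $V_{S,\alpha}(\xi_k)$ is literally a weighted norm of $x^{(k)}-x^{(k-1)}$ plus a lower-order correction, with the crucial weight $1/(2\alpha)$ already built in. You then reuse the Lyapunov descent bound from Theorem~\ref{thm-rate-dasg} that the paper has already established, and the only remaining work is the (easy) check that $V_{S,\alpha}(\xi_0)=\mathcal{O}(1)$ under zero initialization. The $1/\beta^2$ factor is vestigial in your approach; you recover it only by the cosmetic substitution $C_0:=C\beta^2$, which is harmless since $\beta\to 1$. What the paper's longer approach buys is a slightly more detailed decomposition of $z^{(k)}$ along network modes, but for the stated lemma your argument is both shorter and more transparent.
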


\begin{proof}
{\color{black}
The proof of Lemma~\ref{lem:difference} will be provided
in Appendix~\ref{sec:technical}.}
\end{proof}

\begin{lemma}\label{lem:C:epsilon}{\color{black}Assume the conditions in Proposition~\ref{prop:dsg-average-optimal-rate-DASG} hold.} 
{\color{black}For any $\epsilon>0$, there exists $C_{\epsilon}>0$ such that
\begin{equation}
\E \left[V_{\bar{S},\alpha}\left(\bar{\xi}_{k+1}\right)\right] 
\leq 
(1+\epsilon)\E \left[V_{\bar{S},\alpha}\left(\bar{\xi}_{k+1}-D_{k+1}\right)\right]
+C_{\epsilon}\mathbb{E}\Vert D_{k+1}\Vert^{2}, \label{ineq-lyap-pert}
\end{equation}
where
\begin{equation}
C_{\epsilon}:=\frac{1}{2\epsilon}\max\left(4\left(\frac{1}{\alpha} + \frac{\mu}{2}-\frac{\sqrt{\mu}}{\sqrt{\alpha}}\right), \frac{L^{2}}{\mu}\right) + \frac{L}{2}+\frac{1}{\alpha} + \frac{\mu}{2}-\frac{\sqrt{\mu}}{\sqrt{\alpha}},
\end{equation}
}
{\color{black}$\bar{\xi}_{k+1}$ is defined by \eqref{unified-dyn-sys-average} and $D_{k+1}$ is defined by \eqref{def-D-k-vector}}.
\end{lemma}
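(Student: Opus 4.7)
The plan is to split the Lyapunov function $V_{\bar S,\alpha}(\bar\xi)=\bar\xi^\top \bar S_\alpha \bar\xi + f(\bar T\bar\xi+x_\ast)-f(x_\ast)$ into its quadratic part and its functional part, apply a perturbation argument to each piece with the splitting $\bar\xi_{k+1}=a+D_{k+1}$ where $a:=\bar\xi_{k+1}-D_{k+1}$, and then absorb all cross terms into a multiplicative $(1+\epsilon)$ blow-up on $V_{\bar S,\alpha}(a)$ plus an additive $\mathcal{O}(\|D_{k+1}\|^2)$ term. The matrix $\bar S_\alpha=\tilde S_\alpha\otimes I_d$ is positive semidefinite with $\tilde S_\alpha=vv^\top$, $\|v\|^2=\tfrac{1}{\alpha}+\tfrac{\mu}{2}-\sqrt{\mu/\alpha}$, and $\bar T$ has operator norm one, so the eventual constants will be explicit.

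For the quadratic part, expand $(a+D)^\top \bar S_\alpha(a+D)=a^\top \bar S_\alpha a+2a^\top \bar S_\alpha D+D^\top \bar S_\alpha D$ and apply the weighted Young inequality $2a^\top \bar S_\alpha D\le \epsilon\, a^\top \bar S_\alpha a+\tfrac{1}{\epsilon}D^\top \bar S_\alpha D$ (valid because $\bar S_\alpha\succeq 0$), yielding a $(1+\epsilon)$ factor on $a^\top \bar S_\alpha a$ and a $(1+\tfrac1\epsilon)D^\top \bar S_\alpha D$ remainder, which is bounded by $(1+\tfrac1\epsilon)\|v\|^2\|D\|^2$.

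For the functional part, use $L$-smoothness of $f$ at the point $\bar Ta+x_\ast$ with increment $\bar TD$:
\begin{equation*}
f(\bar T(a+D)+x_\ast)\le f(\bar Ta+x_\ast)+\langle \nabla f(\bar Ta+x_\ast),\bar TD\rangle+\tfrac{L}{2}\|\bar TD\|^2.
\end{equation*}
The inner product is the main term to tame; I would apply Young's inequality with a weight $\tfrac{\epsilon\mu}{L^2}$, bound $\|\nabla f(\bar Ta+x_\ast)\|^2\le L^2\|\bar Ta\|^2$ using $\nabla f(x_\ast)=0$ and the Lipschitz continuity of $\nabla f$, and then invoke strong convexity $\tfrac{\mu}{2}\|\bar Ta\|^2\le f(\bar Ta+x_\ast)-f(x_\ast)$ so that the $\|\bar Ta\|^2$ factor converts into $\epsilon[f(\bar Ta+x_\ast)-f(x_\ast)]$. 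This produces the matching $(1+\epsilon)$ factor on the functional part, plus a residual of order $(\tfrac{L^2}{2\epsilon\mu}+\tfrac{L}{2})\|\bar TD\|^2\le(\tfrac{L^2}{2\epsilon\mu}+\tfrac{L}{2})\|D\|^2$.

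Adding the two bounds gives $V_{\bar S,\alpha}(a+D)\le(1+\epsilon)V_{\bar S,\alpha}(a)+\big[(1+\tfrac1\epsilon)\|v\|^2+\tfrac{L^2}{2\epsilon\mu}+\tfrac{L}{2}\big]\|D\|^2$; using $u+w\le 2\max(u,w)$ on the two $\tfrac1\epsilon$-scaled contributions repackages the bracket into the form $\tfrac{1}{2\epsilon}\max(4(\tfrac1\alpha+\tfrac\mu2-\sqrt{\mu/\alpha}),\tfrac{L^2}{\mu})+\tfrac{L}{2}+\tfrac1\alpha+\tfrac\mu2-\sqrt{\mu/\alpha}$, which is exactly the stated $C_\epsilon$. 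Taking expectations over $D_{k+1}$ (which involves the randomness from the stochastic noise and past iterates) yields \eqref{ineq-lyap-pert}. The main obstacle is the bookkeeping to ensure that the same $(1+\epsilon)$ factor arises both on the quadratic piece and on the functional piece simultaneously; this forces the particular Young scaling choice $\tfrac{\epsilon\mu}{L^2}$ in the smoothness step, and then the extraction of $\|D\|^2$ from the $\bar S_\alpha$-seminorm requires bounding $\|\bar S_\alpha\|_{\mathrm{op}}=\|v\|^2$ explicitly.
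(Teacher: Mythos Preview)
Your splitting into the quadratic piece and the functional piece, with a weighted Young inequality on each cross term, is correct and yields a valid inequality of the desired form. It differs from the paper's route: the paper treats $V_{\bar S,\alpha}$ as a single $\bar L_\alpha$-smooth function with $\bar L_\alpha = L + 2\|\bar S_\alpha\|$, writes $V_{\bar S,\alpha}(\xi+\Delta)\le V_{\bar S,\alpha}(\xi)+\langle\nabla V_{\bar S,\alpha}(\xi),\Delta\rangle+\tfrac{\bar L_\alpha}{2}\|\Delta\|^2$, and then proves the self-bounding estimate $\|\nabla V_{\bar S,\alpha}(\xi)\|^2\le c_2\,V_{\bar S,\alpha}(\xi)$ with $c_2=2\max\!\big(4\|v\|^2,\tfrac{L^2}{\mu}\big)$ (using $\bar S_\alpha^2=\|v\|^2\bar S_\alpha$ and strong convexity of $f$). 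Choosing $c_1=\epsilon/c_2$ in Young's inequality then gives exactly $C_\epsilon=\tfrac{c_2}{4\epsilon}+\tfrac{\bar L_\alpha}{2}$. Your piecewise argument reaches the same structural endpoint via slightly different algebra.

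There is, however, a genuine slip in your last step. After combining the two pieces you arrive at the bracket $(1+\tfrac1\epsilon)\|v\|^2+\tfrac{L^2}{2\epsilon\mu}+\tfrac{L}{2}$, i.e.\ the $\tfrac1\epsilon$-part is $\tfrac{1}{\epsilon}\big(\|v\|^2+\tfrac{L^2}{2\mu}\big)$. Applying $u+w\le 2\max(u,w)$ bounds this by $\tfrac{1}{\epsilon}\max\!\big(2\|v\|^2,\tfrac{L^2}{\mu}\big)=\tfrac{1}{2\epsilon}\max\!\big(4\|v\|^2,\tfrac{2L^2}{\mu}\big)$, which is \emph{not} $\le \tfrac{1}{2\epsilon}\max\!\big(4\|v\|^2,\tfrac{L^2}{\mu}\big)$ in general (for instance whenever $4\|v\|^2<\tfrac{L^2}{\mu}$, or when $2\|v\|^2<\tfrac{L^2}{\mu}\le 4\|v\|^2$). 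So your repackaging produces a constant that can be up to a factor of two larger than the stated $C_\epsilon$; it proves a version of the lemma, but not the lemma with the specific constant written. The paper's unified treatment avoids this loss because the $\max$ already appears at the stage of bounding $\|\nabla V_{\bar S,\alpha}\|^2$, before any $u+w\le 2\max(u,w)$ step is needed.
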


\begin{proof}
{\color{black}
The proof of Lemma~\ref{lem:C:epsilon} will be provided
in Appendix~\ref{sec:technical}.}
\end{proof}

{\color{black}
\textbf{Completing the proof of Proposition~\ref{prop:dsg-average-optimal-rate-DASG}.}
D-ASG reduces to the iterations which are equivalent to applying non-distributed ASG to minimize the function $f \in S_{\mu,L_\alpha}(\R^{d})$. Therefore, applying \cite[Proposition 4.6]{StrConvex} and \cite[Corollary 4.9]{StrConvex} from the literature for non-distributed ASG, we obtain
\begin{equation}\label{ineq-dasg-lyapunov-2-average}
\E \left[V_{\bar{S},\alpha}\left(\bar{\xi}_{k+1}-D_{k+1}\right)\right] 
\leq \left(1-\sqrt{\alpha \mu}\right)  \mathbb{E}V_{\bar{S},\alpha}\left(\bar{\xi}_{k}\right)
+ \frac{\sigma^2  \alpha}{2N}\left(1+\alpha L \right),
\end{equation}
which yields
\begin{align*}
\E \left[V_{\bar{S},\alpha}\left(\bar{\xi}_{k+1}\right)\right] 
&\leq 
(1+\epsilon)\E \left[V_{\bar{S},\alpha}\left(\bar{\xi}_{k+1}-D_{k+1}\right)\right]
+C_{\epsilon}\mathbb{E}\Vert D_{k+1}\Vert^{2}
\\
&\leq
(1+\epsilon)\left(\left(1-\sqrt{\alpha \mu}\right)  \mathbb{E}V_{\bar{S},\alpha}\left(\bar{\xi}_{k}\right)
+ \frac{\sigma^2  \alpha}{2N}\left(1+\alpha L \right)\right)
+C_{\epsilon}\mathbb{E}\Vert D_{k+1}\Vert^{2}.
\end{align*}
Let us take $\epsilon=\frac{1}{2}\sqrt{\alpha\mu}$, then we get
\begin{align*}
\E \left[V_{\bar{S},\alpha}\left(\bar{\xi}_{k+1}\right)\right] 
&\leq
\left(1+\frac{\sqrt{\alpha\mu}}{2}\right)\left(\left(1-\sqrt{\alpha \mu}\right)  \mathbb{E}V_{\bar{S},\alpha}\left(\bar{\xi}_{k}\right)
+ \frac{\sigma^2  \alpha}{2N}\left(1+\alpha L \right)\right)
+C_{\frac{\sqrt{\alpha\mu}}{2}}\mathbb{E}\Vert D_{k+1}\Vert^{2}
\\
&\leq
\left(1-\frac{\sqrt{\alpha\mu}}{2}\right)\mathbb{E}V_{\bar{S},\alpha}\left(\bar{\xi}_{k}\right)
+\left(1+\frac{\sqrt{\alpha\mu}}{2}\right)\frac{\sigma^2  \alpha}{2N}\left(1+\alpha L \right)
+C_{\frac{\sqrt{\alpha\mu}}{2}}\mathbb{E}\Vert D_{k+1}\Vert^{2},
\end{align*}
and by Lemma~\ref{lem:C:epsilon}, we have
\begin{align*}
C_{\frac{\sqrt{\alpha\mu}}{2}}
&=\frac{1}{\sqrt{\alpha\mu}}\max\left(4\left(\frac{1}{\alpha} + \frac{\mu}{2}-\frac{\sqrt{\mu}}{\sqrt{\alpha}}\right), \frac{L^{2}}{\mu}\right) + \frac{L}{2}+\frac{1}{\alpha} + \frac{\mu}{2}-\frac{\sqrt{\mu}}{\sqrt{\alpha}}
\\
&\leq
\frac{1}{\sqrt{\alpha\mu}}\left(4\left(\frac{1}{\alpha} + \frac{\mu}{2}-\frac{\sqrt{\mu}}{\sqrt{\alpha}}\right)+\frac{L^{2}}{\mu}\right) + \frac{L}{2}+\frac{1}{\alpha} + \frac{\mu}{2}-\frac{\sqrt{\mu}}{\sqrt{\alpha}}
\\
&=\frac{1}{2\alpha\sqrt{\alpha\mu}}H_{1},
\end{align*}
where
\begin{equation}
H_{1}=8\left(1 + \frac{\mu\alpha}{2}-\sqrt{\alpha\mu}\right)+\frac{2L^{2}\alpha}{\mu} + \left(L\alpha+2 + \mu\alpha\right)\sqrt{\alpha\mu}-2\mu\alpha.
\end{equation}
By Lemma~\ref{lem:E:D-ASG}, we have
\begin{align*}
\mathbb{E}\Vert D_{k+1}\Vert^{2}
\leq
\alpha^{2}
\left[\alpha H_{2}+8\gamma^{2(k-1)}H_{3}\right],
\end{align*}
where $D_k$ is defined by \eqref{def-D-k-vector},
\begin{align*}
&H_{2}=\frac{2}{N}L^{2}
\left((1+\beta)^{2}+\beta^{2}\right)\left(4D_{y}^{2}\alpha\frac{1}{(1-\gamma)^{2}}
+\frac{4\sigma^{2}N\alpha}{(1-\gamma)^{2}}+\frac{8C_{0}}{(1-\gamma)^{2}}\right),
\\
&H_{3}=\frac{2}{N}L^{2}
\left((1+\beta)^{2}+\beta^{2}\right)
\left(4\frac{ V_{S,\alpha}\left(\xi_{0}\right)}{\mu}
+ \frac{2\sigma^2 N \sqrt{\alpha}}{\mu\sqrt{\mu}}\left(2-\lambda_N^W +\alpha L \right) +  \frac{2C_{1}^{2}N\alpha^2}{(1-\gamma)^2}
+\Vert x^{\ast}\Vert^{2}\right).
\end{align*}
Therefore, 
\begin{align*}
\E \left[V_{\bar{S},\alpha}\left(\bar{\xi}_{k+1}\right)\right] 
&\leq
\left(1-\frac{\sqrt{\alpha\mu}}{2}\right)\mathbb{E}V_{\bar{S},\alpha}\left(\bar{\xi}_{k}\right)
+\left(1+\frac{\sqrt{\alpha\mu}}{2}\right)\frac{\sigma^2  \alpha}{2N}\left(1+\alpha L \right)
\\
&\qquad
+\frac{1}{2\alpha\sqrt{\alpha\mu}}H_{1}\alpha^{2}
\left[\alpha H_{2}+8\gamma^{2(k-1)}H_{3}\right]
\\
&=\left(1-\frac{\sqrt{\alpha\mu}}{2}\right)\mathbb{E}V_{\bar{S},\alpha}\left(\bar{\xi}_{k}\right)
+\left(1+\frac{\sqrt{\alpha\mu}}{2}\right)\frac{\sigma^2  \alpha}{2N}\left(1+\alpha L \right)
\\
&\qquad
+\frac{1}{2\sqrt{\mu}}\alpha\sqrt{\alpha}H_{1}H_{2}+\frac{4}{\gamma^{2}\sqrt{\mu}}\sqrt{\alpha}H_{1}H_{3}\gamma^{2k},
\end{align*}
which implies that
\begin{align*}
\E \left[V_{\bar{S},\alpha}\left(\bar{\xi}_{k}\right)\right] 
&\leq
\left(1-\frac{\sqrt{\alpha\mu}}{2}\right)^{k}V_{\bar{S},\alpha}\left(\bar{\xi}_{0}\right)
+\frac{4}{\gamma^{2}\sqrt{\mu}}\sqrt{\alpha}H_{1}H_{3}\sum_{i=0}^{k-1}\left(1-\frac{\sqrt{\alpha\mu}}{2}\right)^{i}\left(\gamma^{2}\right)^{k-1-i}
\\
&\qquad
+\sum_{i=0}^{k-1}\left(1-\frac{\sqrt{\alpha\mu}}{2}\right)^{i}
\left(\left(1+\frac{\sqrt{\alpha\mu}}{2}\right)\frac{\sigma^2  \alpha}{2N}\left(1+\alpha L \right)+\frac{1}{2\sqrt{\mu}}\alpha\sqrt{\alpha}H_{1}H_{2}\right)
\\
&\leq
\left(1-\frac{\sqrt{\alpha\mu}}{2}\right)^{k}V_{\bar{S},\alpha}\left(\bar{\xi}_{0}\right)
+\frac{4}{\gamma^{2}\sqrt{\mu}}\sqrt{\alpha}H_{1}H_{3}\frac{\gamma^{2k}-(1-\sqrt{\alpha\mu}/2)^{k}}{\gamma^{2}-(1-\sqrt{\alpha\mu}/2)}
\\
&\qquad
+\frac{2}{\sqrt{\alpha\mu}}
\left(\left(1+\frac{\sqrt{\alpha\mu}}{2}\right)\frac{\sigma^2  \alpha}{2N}\left(1+\alpha L \right)+\frac{1}{2\sqrt{\mu}}\alpha\sqrt{\alpha}H_{1}H_{2}\right).
\end{align*}
By the $\mu$-strong convexity of $f$ and the fact that $\nabla f(x_{\ast})=0$ , we have 
$$\left\| \bar{x}^{(k)} -x_{\ast} \right\|^2 \leq \frac{2}{\mu}\left[ f\left(\bar{x}^{(k)}\right) - f\left(x_{\ast}\right)\right]
\leq\frac{2}{\mu}V_{\bar{S},\alpha}\left(\bar{\xi}_{k}\right),$$
which implies that
\begin{align*}
&\mathbb{E}\left\| \bar{x}^{(k)} -x_{\ast} \right\|^2
\\
&\leq\left(1-\frac{\sqrt{\alpha\mu}}{2}\right)^{k}\frac{2V_{\bar{S},\alpha}\left(\bar{\xi}_{0}\right)}{\mu}
+\frac{4}{\mu\sqrt{\mu}}
\left(\left(1+\frac{\sqrt{\alpha\mu}}{2}\right)\frac{\sigma^2  \sqrt{\alpha}}{2N}\left(1+\alpha L \right)+\frac{1}{2\sqrt{\mu}}\alpha H_{1}H_{2}\right)
\\
&\qquad
+\frac{8}{\gamma^{2}\mu\sqrt{\mu}}\sqrt{\alpha}H_{1}H_{3}\frac{\gamma^{2k}-(1-\sqrt{\alpha\mu}/2)^{k}}{\gamma^{2}-(1-\sqrt{\alpha\mu}/2)}.
\end{align*}
Finally, for every $1\leq i\leq N$,
\begin{align*}
\mathbb{E}\left\| x_{i}^{(k)} -x_{\ast} \right\|^2
\leq
2\mathbb{E}\left\| x_{i}^{(k)} -\bar{x}^{(k)} \right\|^2
+2\mathbb{E}\left\| \bar{x}^{(k)} -x_{\ast} \right\|^2,
\end{align*}
and by applying Lemma~\ref{gradient:average:error:DASG}, the proof is complete.}

\section{Quadratic Objectives}\label{sec:quadratic}

In this section, we analyze the special case when $f_i$ is quadratic at every node $i$ under the same Assumption~\ref{assump_1} with the main text. We assume
\begin{equation*}
f_i(x)=\frac{1}{2}x^{T}Q_ix-p_i^{T}x+r_i,
\end{equation*}
where $Q_i$ is an $d\times d$ symmetric positive definite matrix, $p_{i}\in\mathbb{R}^{d}$ and $r_{i}\in\mathbb{R}$ for $i=1,2,\dots,N$. In this special case, 
the optimum to the \eqref{opt-pbm} is explicitly given by
\begin{equation*}
x^* = \left(\sum_{i=1}^{N} Q_i\right)^{-1} \sum_{i=1}^{N}p_{i}. 
\end{equation*} 
Furthermore, the function $F$ defined 
as $F(x):=F(x_{1},\ldots,x_{N}):=\frac{1}{N}\sum_{i=1}^{N}f_{i}(x_{i})$ is also a  quadratic function of the form
\begin{equation}\label{eqn:quadratic:assump}
F(x)=\frac{1}{2}x^{T}Qx-p^{T}x+r,
\end{equation}
where $Q = \textbf{\mbox{diag}}(\{Q_i\}_{i=1}^N)$ is an $Nd\times Nd$ symmetric positive definite matrix:
\begin{equation}\label{eqn:Q}
Q := \begin{bmatrix} Q_1 & 0_d & \dots & 0_d \\
                    0_d & Q_2 & \ddots & 0_d \\
                    \vdots & \ddots & \ddots& 0_d\\
                     0_d   & \hdots & 0_d& Q_N
    \end{bmatrix},
\end{equation}
and
\begin{equation}\label{eqn:P}
p:=\left[p_1^T ~p_2^T \dots ~p_N^T\right]^T \in\mathbb{R}^{Nd}
\end{equation}
is a column vector and $r=\sum_{i=1}^{N} r^i\in\mathbb{R}$
is a scalar.
Moreover, the gradient
of $F$ is given by $\nabla F(x)=Qx-p$.

Throughout this section, and to simplify the derivations for quadratic functions, we focus on the case of additive noise. More formally, we consider the following noise assumption for this section:
\begin{assumption}\label{assump_quad}
At iteration $k$, node $i$ has access to $\tilde{\nabla} f_i \left(x_i^{(k)}, w_i^{(k)}\right)$ which is an estimate of $\nabla f_i \left(x_i^{(k)}\right)$ and satisfies the conditions given in Assumption~\ref{assump_1}. In addition, we assume this randomness is in the form of additive noise, i.e., $\tilde{\nabla} f_i \left(x_i^{(k)}, w_i^{(k)}\right) = \nabla f_i \left(x_i^{(k)}\right) + w_i^{(k)}$.
\end{assumption}
Also, similar to \eqref{column_x-nonave}, we define the vector $w^{(k)}$ as:
\begin{equation}\label{column_w}
w^{(k)}=\left[\left(w_{1}^{(k)}\right)^{T},\left(w_{2}^{(k)}\right)^{T},\ldots,\left(w_{N}^{(k)}\right)^{T}\right]^{T}\in\mathbb{R}^{Nd}.
\end{equation}

\subsection{Distributed stochastic gradient (D-SG)}\label{sec:quadratic:DSG}
The network-wide D-SG update \eqref{unified-dyn-sys} reduces to a linear recursion 
\begin{eqnarray*}
x^{(k+1)}= \left(W\otimes I_d\right) x^{(k)}-\alpha\left[Q x^{(k)} + p \right] -\alpha w^{(k+1)},
\end{eqnarray*}
where $Q$ and $p$ are defined in \eqref{eqn:Q} and \eqref{eqn:P}.
Then, the network-wide update \eqref{unified-dyn-sys} reduces to
\begin{equation*} 
\xi_{k+1} = A_Q \xi_{k} -\alpha w^{(k+1)},
\end{equation*}
where {\color{black}$\xi_{k}=x^{(k)}-x^{\infty}$ and} 
\begin{equation*}
A_{Q}=\mathcal{W}-\alpha Q.
\end{equation*}
By the assumption that $f_{i}$'s are $\mu$-strongly convex with $L$-Lipschitz gradients, we have $\mu I_{Nd}\preceq Q \preceq LI_{Nd}$. Since the stepsize $\alpha >0$, it is easy to see that
\begin{equation}\label{ineq-eigval-bound} 
\left(\lambda_N^W - \alpha L\right) I_{Nd} \preceq A_Q \preceq (1- \alpha \mu) I_{Nd}.
\end{equation}
The next result is on the \emph{spectral radius} of $A_Q$ which is defined as the maximum of the Euclidean norm of the eigenvalues of $A_Q$.

\begin{proposition}\label{thm:quadratic:rate:DGD}
For any stepsize $\alpha > 0$,
\begin{equation}\label{ineq-rho-AQ-ub}
\rho(A_{Q})= \| \mathcal{W} - \alpha Q\| =  \max\left\{\left|1-\alpha\mu\right|, \left|\lambda_N^W - \alpha L\right|\right\}.
\end{equation}
where $\rho$ denotes the spectral radius of $A_Q$. 
In particular, if $\alpha \in (0, \frac{1 + \lambda_N^W}{L+\mu}] $, then 
 $$\rho(A_Q) = 1-\alpha\mu \in [0,1). $$
\end{proposition}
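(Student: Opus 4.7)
The plan is to exploit the symmetry of $A_Q=\mathcal{W}-\alpha Q$ together with the two-sided Loewner bound \eqref{ineq-eigval-bound}, so that computing the spectral radius reduces to examining the endpoints of an explicit interval in which all eigenvalues lie. First I would observe that $\mathcal{W}=W\otimes I_d$ is symmetric (since $W$ is), and $Q$ is block diagonal with each block $Q_i$ being a Hessian of a quadratic and hence symmetric; so $A_Q$ is real symmetric. Consequently its eigenvalues are real and $\rho(A_Q)=\|A_Q\|_2$, establishing the first equality in the proposition.

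For the second equality, the Loewner bound \eqref{ineq-eigval-bound} immediately implies that every eigenvalue of $A_Q$ lies in $[\lambda_N^W-\alpha L,\,1-\alpha\mu]$, yielding the upper bound $\rho(A_Q)\leq\max\{|1-\alpha\mu|,|\lambda_N^W-\alpha L|\}$. To obtain the matching lower bound, I would exhibit Rayleigh quotients attaining each endpoint. Taking $v=\mathbf{1}_N\otimes u$ with $u$ a unit eigenvector of $\nabla^2 f=\tfrac{1}{N}\sum_i Q_i$ for the eigenvalue $\mu$ (available by tightness of $\mu$-strong convexity of $f$), and using $W\mathbf{1}_N=\mathbf{1}_N$, a direct computation gives $v^{T}A_Q v=(1-\alpha\mu)\|v\|^2$, so $\|A_Q\|_2\geq 1-\alpha\mu$. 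An analogous construction using the eigenvector $w_N$ of $W$ for $\lambda_N^W$, tensored with an appropriate direction achieving the $L$-smoothness bound on the blocks $Q_i$ supporting $w_N$, produces a Rayleigh quotient equal to $\lambda_N^W-\alpha L$, giving the matching second lower bound.

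The ``in particular'' clause follows from elementary arithmetic. The condition $\alpha\leq\frac{1+\lambda_N^W}{L+\mu}$ rearranges to $-(1-\alpha\mu)\leq\lambda_N^W-\alpha L$; together with the obvious $\lambda_N^W-\alpha L\leq 1-\alpha\mu$, this gives $|\lambda_N^W-\alpha L|\leq 1-\alpha\mu$, and since $\alpha\mu<1$ one also has $1-\alpha\mu\in[0,1)$, so the maximum collapses to $1-\alpha\mu$. The principal obstacle is the lower-bound construction: the second equality in \eqref{ineq-rho-AQ-ub} genuinely requires $\mu$ and $L$ to be attained as extreme eigenvalues of some $Q_i$ (the standard interpretation of tight strong convexity and smoothness constants); if $\mu$ and $L$ are viewed merely as one-sided bounds, then only ``$\leq$'' can be guaranteed, which is still the only direction needed for the convergence-rate results that follow.
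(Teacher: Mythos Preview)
Your upper-bound argument (symmetry of $A_Q$ plus the Loewner sandwich \eqref{ineq-eigval-bound}) and the arithmetic for the ``in particular'' clause match the paper's proof exactly. The paper in fact stops there: it asserts that the equality ``follows directly'' from \eqref{ineq-eigval-bound}, which strictly speaking yields only $\rho(A_Q)\leq\max\{|1-\alpha\mu|,|\lambda_N^W-\alpha L|\}$. Your Rayleigh-quotient constructions for the matching lower bound, together with your closing caveat that equality requires $\mu$ and $L$ to be \emph{attained} (otherwise only $\leq$ holds, which is all that is needed downstream), are more careful than what the paper supplies.

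One refinement of your caveat: tightness of $\mu$ for each $f_i$ does not imply tightness for $f=\frac{1}{N}\sum_i f_i$. For instance with $Q_1=\mathrm{diag}(\mu,L)$ and $Q_2=\mathrm{diag}(L,\mu)$ the average has smallest eigenvalue $\tfrac{\mu+L}{2}>\mu$, and one can check that $\rho(A_Q)$ is then strictly smaller than $1-\alpha\mu$. So your first Rayleigh quotient $v=\mathbf{1}_N\otimes u$ reaches $1-\alpha\mu$ only under the stronger hypothesis that $\mu$ is tight for the \emph{average} Hessian, and the analogous endpoint $\lambda_N^W-\alpha L$ requires a common $L$-direction across the support of $w_N$. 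You correctly flag the need for such tightness; this example just shows the assumption must be placed on $f$ (or on common eigendirections), not merely on the individual $f_i$.
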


\begin{proof}
The equality \eqref{ineq-rho-AQ-ub} follows directly from \eqref{ineq-eigval-bound}. The second part, note that we have $1-\alpha \mu > \lambda_N^W - \alpha L$ as $\mu \leq L$ and $\lambda_N^W < 1$. Furthermore, for $\alpha>0$ small enough, it is easy to see from \eqref{ineq-rho-AQ-ub} that $\rho(A_Q) = 1-\alpha \mu = |1-\alpha\mu |$. The proof follows after checking that $1-\alpha\mu = |1-\alpha\mu| \geq  |\lambda_N^W - \alpha L |$ for $\alpha \in [0, \frac{1 + \lambda_N^W}{L+\mu}] $. 
\end{proof}

\begin{remark}\label{remark-dgd-quad-tightness} 
In the noiseless case (when $\sigma = 0$), we have 
\begin{equation*} 
\left\| \xi_{k}\right\| = \left\|A_Q\right\|^k \left\|\xi_{0}\right\|, 
\end{equation*}
provided that $x^{(0)}$ is chosen as an eigenvector corresponding to a largest singular value of the    
$A_Q$ matrix. Therefore, by Proposition~\ref{thm:quadratic:rate:DGD}, this gives 
\begin{equation*}
\left\| \xi_{k}\right\| = \rho(\alpha)^k \left\|\xi_{0}\right\|, 
\end{equation*}
where $\rho(\alpha)$ is as in Theorem~\ref{thm:DGD:explicit}. This shows that the analysis of Theorem~\ref{thm:DGD:explicit} is tight in the sense that the convergence rate it provides for strongly convex objectives are attained for quadratics for particular choices of the initialization $\xi_{0}$ when $\sigma=0$.
\end{remark}

A consequence of Theorem~\ref{thm:DGD:explicit} for strongly convex objectives is that for $\rho(\alpha)=\rho(A_Q)< 1$, 
the robustness measure, or equivalently the variance of the iterates in the limit for the quadratic objectives, 
satisfies the bound
\begin{align*}
J_{\infty}(\alpha)&=
\frac{1}{\sigma^{2}N}\lim_{k\rightarrow\infty}\mbox{Var} \left(\xi_{k}\right)  = \frac{1}{\sigma^{2}N}\limsup_{k\to\infty}\mathbb{E}\left[\left\Vert x^{(k)}-x^{\infty}\right\Vert^{2}\right]\\
&\leq \frac{1}{1-\rho(\alpha)^{2}}\alpha^{2} = \frac{\alpha^{2}}{1-\max\left\{|1-\alpha\mu|, |\lambda_N^W - \alpha L|\right\}^2}.
\end{align*}
If we assume more structure on the noise, we can get tighter bounds. Consider the following assumption which says that the noise has a fixed covariance structure; this assumption is clearly stronger than Assumption ~\ref{assump_quad}.

\begin{assumption}\label{assump:iid}
The noise $w_{i}^{(k)}$ are independent, identically distributed (i.i.d.) for every $i$ and $k$ with zero mean and covariance matrix $\Sigma_{w_i} :=\mathbb{E}\left[w_{i}^{(k)} \left(w_{i}^{(k)}\right)^T\right] = \frac{\sigma^2}{d} I_d$. 
\end{assumption}

The next theorem shows that we can get a tighter explicit representation of the variance of the iterates in terms of the eigenvalues of the iteration matrix $A_Q$. 

\begin{theorem}\label{prop:quadratic:var:DGD} Under Assumption~\ref{assump_quad} and Assumption~\ref{assump:iid}, if $\alpha \in (0,\frac{1+\lambda_{N}^{W}}{\mu+L}]$, the D-SG iterates given by \eqref{DGD:xi} satisfy
\begin{equation}\label{eq-asymp-var-dgd}
\lim_{k\rightarrow\infty}\mbox{Var} \left(\xi_{k}\right) 
= \alpha^2 \frac{\sigma^{2}}{d} \sum_{i=1}^{Nd} \frac{1}{1-\mu_i^2},
\end{equation}
where $\mu_{i}$ are eigenvalues of $A_Q = \mathcal{W}-\alpha Q$, and hence the robustness measure is given by 
\begin{equation*}
J_{\infty}(\alpha)= \alpha^2 \frac{1}{Nd} \sum_{i=1}^{Nd} \frac{1}{1-\mu_i^2}.
\end{equation*}
\end{theorem}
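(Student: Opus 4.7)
The plan is to exploit the linear structure of the D-SG recursion for quadratics and the stationarity/independence of the noise to obtain an explicit closed-form expression for the covariance of $\xi_{k}$, and then diagonalize $A_Q$.

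First, I would unroll the recursion $\xi_{k+1} = A_Q \xi_k - \alpha w^{(k+1)}$ to obtain
\begin{equation*}
\xi_{k} = A_Q^{k} \xi_{0} - \alpha \sum_{j=0}^{k-1} A_Q^{k-1-j}\, w^{(j+1)} .
\end{equation*}
By Proposition~\ref{thm:quadratic:rate:DGD}, the hypothesis $\alpha \in (0,\tfrac{1+\lambda_N^W}{\mu+L}]$ guarantees $\rho(A_Q) = 1-\alpha\mu \in [0,1)$, so the deterministic term $A_Q^k \xi_0$ decays to zero and is irrelevant to $\lim_k \mathrm{Var}(\xi_k)$. Moreover, under Assumption~\ref{assump:iid}, $\mathbb{E}[w^{(j+1)}]=0$ with $\mathrm{Cov}(w^{(j+1)}) = \tfrac{\sigma^2}{d} I_{Nd}$ (since the $w_i^{(j+1)}$ are independent across $i$ with covariance $\tfrac{\sigma^2}{d}I_d$), and the $w^{(j+1)}$ are i.i.d.\ across $j$.

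Next, using independence of the noise terms, the covariance of the stochastic part telescopes to
\begin{equation*}
\mathrm{Cov}(\xi_k) = \alpha^2 \sum_{j=0}^{k-1} A_Q^{k-1-j}\, \Sigma_w\, \bigl(A_Q^{k-1-j}\bigr)^{\!T}
= \alpha^2\, \frac{\sigma^2}{d} \sum_{\ell=0}^{k-1} A_Q^{\ell}\, A_Q^{\ell} ,
\end{equation*}
where in the last step I use that $A_Q = \mathcal{W} - \alpha Q$ is symmetric (because both $\mathcal{W}=W \otimes I_d$ and the block-diagonal $Q$ are symmetric) and that $\Sigma_w = \tfrac{\sigma^2}{d}I_{Nd}$ commutes with every matrix. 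Taking the trace and using that $A_Q$ has a real orthogonal eigendecomposition with eigenvalues $\{\mu_i\}_{i=1}^{Nd}$ satisfying $|\mu_i|\leq \rho(A_Q)<1$, I obtain
\begin{equation*}
\mathrm{Var}(\xi_k) = \mathrm{tr}\bigl(\mathrm{Cov}(\xi_k)\bigr) = \alpha^{2}\,\frac{\sigma^{2}}{d}\sum_{i=1}^{Nd}\sum_{\ell=0}^{k-1} \mu_i^{2\ell} .
\end{equation*}

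Letting $k\to\infty$ and summing the geometric series (which converges because $\mu_i^2 < 1$ for each $i$) yields the claimed identity
\begin{equation*}
\lim_{k\to\infty} \mathrm{Var}(\xi_k) = \alpha^{2}\,\frac{\sigma^{2}}{d}\sum_{i=1}^{Nd}\frac{1}{1-\mu_i^{2}} .
\end{equation*}
Since $x^{\infty}$ is deterministic, $\mathrm{Var}(x^{(k)}) = \mathrm{Var}(\xi_k)$, and dividing by $\sigma^{2}N$ using the definition \eqref{def-robust} gives the stated expression for $J_{\infty}(\alpha)$. There is no real obstacle here beyond keeping track of the factor $\tfrac{1}{d}$ in the noise covariance and verifying the spectral condition $\rho(A_Q)<1$; the mild technical point is justifying symmetry of $A_Q$ so that $A_Q^\ell \Sigma_w (A_Q^\ell)^T$ reduces to $A_Q^{2\ell}$ times the scalar $\tfrac{\sigma^2}{d}$, which then cleanly diagonalizes.
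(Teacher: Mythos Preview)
Your proposal is correct and follows essentially the same approach as the paper: both exploit the symmetry of $A_Q$, the isotropic noise covariance $\tfrac{\sigma^2}{d}I_{Nd}$, and the spectral bound $\rho(A_Q)<1$ to reduce the asymptotic covariance to a geometric series in the eigenvalues. The only cosmetic difference is that the paper writes the Lyapunov recursion $\Sigma_{k+1}=A_Q\Sigma_kA_Q^T+\alpha^2\Sigma_w$ and passes to an orthogonal change of basis before summing, whereas you unroll the iterates directly and take the trace; your route is slightly more direct but the content is identical.
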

\begin{proof}
Note that the matrix $A_Q$ is symmetric with real eigenvalues. Furthermore, by Proposition~\ref{thm:quadratic:rate:DGD}, we have $|\mu_i| \leq \rho(A_Q)<1$ for every $i$. Therefore, the quantity on the right-hand side of \eqref{eq-asymp-var-dgd} is well-defined. Define the covariance matrix
$$\Sigma_k = \E \left[ \xi_{k}\xi_{k}^T \right].$$
We have the recursion
\begin{equation}\label{eq-recursion-cov-matrix}
\Sigma_{k+1} = A_{Q} \Sigma_k A_{Q}^T + \alpha^2 \Sigma_w ,
\end{equation}
where $\Sigma_w = \textbf{\mbox{diag}}([\Sigma_{w_i}]_{i=1}^N)$ is the covariance matrix of the noise,
which is equal to $\frac{\sigma^{2}}{d}I_{Nd}$ by Assumption~\ref{assump:iid}. 

Let $W = V D V^T$ be an eigenvalue decomposition of $W$. Assume without loss of generality, that diagonal of $D$ contains the 
eigenvalues in decreasing order, i.e. $D_{ii}=\lambda_i^W$. In this case, $j$-th column of $V$, say $v_j$ is an eigenvector corresponding to $\lambda_j^W$. Note that the eigenvalues of $\mathcal{W} = W \otimes I_d$ are $\lambda_j^W$ each with multiplicity $d$
and we can choose the corresponding eigenvectors as $v_j \otimes e_i$ for $j=1,2,\dots,N$ and $i=1,2,\dots,d$ where $e_i$ is the standard basis. In other words, we can write
 $$ \mathcal{W} = \mathcal{V} \mathcal{D} \mathcal{V}^T, \quad \mbox{where} \quad \mathcal{D} = \begin{bmatrix} \lambda_1^W I_d & 0_d &    \dots&  ~0_d \\
  0_d & \lambda_2^W I_d &  \ddots & ~0_d \\
  \vdots & \ddots & \ddots & ~\vdots \\
  0_d    &  \hdots  &  0_d  & \lambda_N^W I_d
 \end{bmatrix}\,,
 $$
for some $\mathcal{V}$. We will write the D-SG iterations \eqref{eq-iter-dgd} with respect to this basis. Let
 $$\hat{Q} := \mathcal{V}^T Q \mathcal{V}, \quad
 \hat{\xi}_{k} := \mathcal{V}^T \xi_{k} \mathcal{V}, \quad  \hat{\Sigma}_k := \mathcal{V}^T \Sigma_k \mathcal{V}.$$
For $\Sigma_w = (\sigma^2/d) I_{Nd}$, we can write \eqref{eq-recursion-cov-matrix} as 
\begin{equation}\label{eq-recursion-cov-matrix-2}
{\hat{\Sigma}}_{k+1} = A_{\hat{Q}} {\hat\Sigma}_k A_{\hat{Q}}^T 
+ \alpha^2 (\sigma^2/d) I_{Nd},
\end{equation} 
where 
$$
A_{\hat Q} := \mathcal{D}-\alpha \hat{Q}.
$$
We obtain 
$$
\lim_{k\rightarrow\infty} {\hat{\Sigma}}_{k} 
=  \alpha^2 (\sigma^{2}/d)\sum_{k=0}^{\infty} A_{\hat Q}^{2k}  
= \alpha^2 (\sigma^{2}/d)\left(I -A_{\hat Q}^2\right)^{-1}, 
$$
where ${\hat\mu}_i$ are the eigenvalues of $A_{\hat Q}$. Therefore,
$$
\lim_{k\rightarrow\infty}\mbox{Var} \left(\xi_{k}\right) = \lim_{k\rightarrow\infty}\mbox{trace} (\Sigma_k) = \lim_{k\rightarrow\infty}\mbox{trace}( {\hat\Sigma}_k) = \alpha^2 (\sigma^{2}/d)\sum_{i=1}^{Nd} \frac{1}{1-\mu_i^2},
$$ 
where $\mu_i$ are the eigenvalues of $A_{\hat Q}$ or equivalently of $A_Q$.
\end{proof}

\begin{proposition}\label{prop:var:bar:x:DSG}
{\color{black}Assume that Assumption~\ref{assump_quad} and Assumption~\ref{assump:iid} hold.
For any $j=1,\ldots,d$ and any $k\in\mathbb{N}$,
$$ \lim_{k\rightarrow\infty}\mbox{Var}\left( \bar{x}^{(k)}(j) \right) \leq \frac{\sigma^2}{Nd} \max_{i=1,2,\dots,Nd} \frac{\alpha^{2}}{1-\mu_{i}^{2}},
$$
where $\bar{x}^{(k)}(j)$ denotes the $j$-th entry of the node average $\bar{x}^{(k)}$ and $\mu_{i}$ are the eigenvalues of $\mathcal{W}-\alpha Q$.}
\end{proposition}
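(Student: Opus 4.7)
The plan is to express the $j$-th coordinate of the node average as a linear functional of the error vector $\xi_k := x^{(k)} - x^\infty$, and then bound the resulting quadratic form by the operator norm of the asymptotic covariance of $\xi_k$ times the squared length of the defining vector. Specifically, define
\begin{equation*}
u_j := \tfrac{1}{N}\left(\mathbf{1}_N \otimes e_j\right) \in \mathbb{R}^{Nd},
\end{equation*}
where $e_j \in \mathbb{R}^d$ is the $j$-th standard basis vector. A direct computation gives $\|u_j\|^2 = N\cdot(1/N^2) = 1/N$ and $\bar{x}^{(k)}(j) = u_j^T x^{(k)}$. Hence $\bar x^{(k)}(j) - \mathbb{E}\bar x^{(k)}(j) = u_j^T(\xi_k - \mathbb{E}\xi_k)$ and consequently $\operatorname{Var}\bigl(\bar x^{(k)}(j)\bigr) = u_j^T \operatorname{Cov}(\xi_k) u_j$.

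Next, I will leverage the linear recursion $\xi_{k+1} = A_Q \xi_k - \alpha w^{(k+1)}$ established for the quadratic setting. Since the stepsize is chosen so that $\rho(A_Q) < 1$ (as in Proposition~\ref{thm:quadratic:rate:DGD}), the covariance $\Sigma_k := \operatorname{Cov}(\xi_k)$ converges, and its limit $\Sigma_\infty$ is characterized by the discrete Lyapunov equation
\begin{equation*}
\Sigma_\infty = A_Q \Sigma_\infty A_Q^T + \alpha^2 \tfrac{\sigma^2}{d} I_{Nd},
\end{equation*}
where I used that by Assumption~\ref{assump:iid} the noise vector $w^{(k)}$ has covariance $\tfrac{\sigma^2}{d} I_{Nd}$ and is uncorrelated across iterations. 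Because $A_Q = \mathcal{W} - \alpha Q$ is real symmetric, unrolling the recursion yields $\Sigma_\infty = \alpha^2 \tfrac{\sigma^2}{d} \sum_{k=0}^\infty A_Q^{2k} = \alpha^2\tfrac{\sigma^2}{d}(I_{Nd} - A_Q^2)^{-1}$, which shares its eigenvectors with $A_Q$ and has eigenvalues $\alpha^2 (\sigma^2/d)/(1-\mu_i^2)$ for $i=1,\ldots,Nd$.

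Finally, I will bound $u_j^T \Sigma_\infty u_j \leq \lambda_{\max}(\Sigma_\infty) \|u_j\|^2$ using that $\Sigma_\infty$ is positive semi-definite. Plugging in the eigenvalue expression and $\|u_j\|^2 = 1/N$ gives
\begin{equation*}
\lim_{k\to\infty} \operatorname{Var}\bigl(\bar x^{(k)}(j)\bigr) \leq \frac{1}{N}\cdot \alpha^2 \frac{\sigma^2}{d} \max_{i=1,\ldots,Nd} \frac{1}{1-\mu_i^2} = \frac{\sigma^2}{Nd}\max_{i=1,\ldots,Nd}\frac{\alpha^2}{1-\mu_i^2},
\end{equation*}
which is the desired inequality. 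The only delicate point, and the one I anticipate requiring the most care, is confirming symmetry of $A_Q$ and the commutativity of the summands in the Neumann series so that $\Sigma_\infty$ really is a polynomial in $A_Q$ and thus simultaneously diagonalizable with it; once this is set, the rest is a direct application of the Rayleigh quotient bound $u_j^T M u_j \leq \lambda_{\max}(M)\|u_j\|^2$ for PSD $M$.
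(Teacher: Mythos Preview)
Your argument is correct and, in fact, cleaner for the D-SG case than what the paper does. The paper's proof of Proposition~\ref{prop:var:bar:x:DSG} is a one-line reduction: it observes that D-SG is D-ASG with $\beta=0$ and then invokes the D-ASG result (Proposition~\ref{prop:var:bar:x:DASG}). That D-ASG proof block-diagonalizes the (non-symmetric) iteration matrix, computes the covariance along each eigen-direction explicitly, expands an arbitrary unit vector in the eigenbasis, uses orthogonality to kill the cross terms, and then bounds the resulting convex combination by the maximum. Finally it specializes the unit vector to $u$ with $1/\sqrt{N}$ in the appropriate coordinates to extract $\bar{x}^{(k)}(j)$.

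You exploit something the paper's route does not: for D-SG the matrix $A_Q=\mathcal{W}-\alpha Q$ is symmetric, so the limiting covariance is literally $\alpha^2(\sigma^2/d)(I-A_Q^2)^{-1}$, a function of $A_Q$, and the whole argument collapses to a single Rayleigh-quotient inequality $u_j^T\Sigma_\infty u_j\le \lambda_{\max}(\Sigma_\infty)\|u_j\|^2$ with $\|u_j\|^2=1/N$. This is exactly what the paper's D-ASG computation reduces to when $\beta=0$, but you get there without the block-diagonalization and permutation machinery. The trade-off is that your shortcut does not generalize to D-ASG, where $A_{\text{dasg},Q}$ is not symmetric and $\Sigma_\infty$ is not a simple function of it; the paper's longer route is needed there.
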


\begin{proof}
{\color{black}D-SG can be viewed a special case of D-ASG when the momentum parameter $\beta=0$. The conclusion follows from the more general result for D-ASG in Proposition~\ref{prop:var:bar:x:DASG}.}
\end{proof}


Next, for D-SG iterates, we provide 
bounds on $\mathbb{E}\left[\left\Vert x^{(k)}-x^{\infty}\right\Vert^{2}\right]$ and
$\mathbb{E}\left[\left\Vert x^{(k)}-x^{\ast}\right\Vert^{2}\right]$.

\begin{theorem}\label{thm:quadratic:DSG}
Consider the D-SG iterates under Assumption~\ref{assump_quad} and Assumption~\ref{assump:iid}. For every $k\in\mathbb{N}$,
\begin{align}
&\mathbb{E}\left[\left\Vert x^{(k)}-x^{\infty}\right\Vert^{2}\right]
\leq
\rho_{\text{dsg}}^{2k}\left(\left\Vert\xi_{0}\xi_{0}^{T}\right\Vert
+\frac{\alpha^{2}\sigma^{2}N}{1-\rho_{\text{dsg}}^{2}}\right)
+\alpha^{2}\frac{\sigma^{2}}{d}\sum_{i=1}^{Nd}\frac{1}{1-\mu_{i}^{2}},
\label{quadratic:dgd:main:1}
\end{align}
where $\rho_{\text{dsg}}:=\max_{1\leq i\leq Nd}|\mu_{i}|$,
where $\mu_{i}$ are eigenvalues of $A_{Q}$. 

In particular, if $\alpha\in(0,\frac{1+\lambda_{N}^{W}}{L+\mu}]$,
then
\begin{align*}
&\mathbb{E}\left[\left\Vert x^{(k)}-x^{\infty}\right\Vert^{2}\right]
\leq
(1-\alpha\mu)^{2k}\left(\left\Vert\xi_{0}\xi_{0}^{T}\right\Vert
+\frac{\alpha^{2}\sigma^{2}N}{1-(1-\alpha\mu)^{2}}\right)
+\alpha^{2}\frac{\sigma^{2}}{d}\sum_{i=1}^{Nd}\frac{1}{1-\mu_{i}^{2}}.
\end{align*}
In addition, if we have $\alpha\in(0,\frac{1}{L+\mu}]$, then
\begin{align}
\mathbb{E}\left[\left\Vert x^{(k)}-x^{\ast}\right\Vert^{2}\right]
\leq
(1-\alpha\mu)^{2k}\left(2\left\Vert\xi_{0}\xi_{0}^{T}\right\Vert
+\frac{2\alpha^{2}\sigma^{2}N}{1-(1-\alpha\mu)^{2}}\right)
+\frac{2\alpha^{2}\sigma^{2}}{d}\sum_{i=1}^{Nd}\frac{1}{1-\mu_{i}^{2}}
+\frac{2\alpha^{2}C_{1}^{2}N}{(1-\gamma)^{2}}. \label{quadratic:dgd:main:2}
\end{align}
\end{theorem}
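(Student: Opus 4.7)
The plan is to exploit the linearity of the D-SG iteration for quadratics to derive a closed-form Lyapunov-type recursion on the covariance matrix, and then use the results already established in Proposition~\ref{thm:quadratic:rate:DGD} and Theorem~\ref{prop:quadratic:var:DGD}. First, since $F$ is quadratic and the noise is additive, the error sequence $\xi_k := x^{(k)}-x^\infty$ satisfies the linear stochastic recursion
\begin{equation*}
\xi_{k+1} = A_Q\,\xi_k - \alpha\, w^{(k+1)}, \qquad A_Q = \mathcal{W}-\alpha Q,
\end{equation*}
with zero-mean noise of covariance $\Sigma_w = \tfrac{\sigma^2}{d}I_{Nd}$ (by Assumption~\ref{assump:iid}), and $w^{(k+1)}$ independent of $\xi_k$. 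Taking outer products and expectations, the covariance $\Sigma_k := \mathbb{E}[\xi_k \xi_k^T]$ obeys the Lyapunov recursion $\Sigma_{k+1}=A_Q \Sigma_k A_Q^T + \alpha^2 \Sigma_w$, with initial condition $\Sigma_0 = \xi_0\xi_0^T$.

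Next, I would introduce the steady-state covariance $\Sigma_\infty$ which satisfies the discrete Lyapunov equation $\Sigma_\infty = A_Q \Sigma_\infty A_Q^T + \alpha^2 \Sigma_w$; existence is guaranteed because Proposition~\ref{thm:quadratic:rate:DGD} gives $\rho(A_Q)=\rho_{\text{dsg}}<1$ whenever $\alpha \in (0,(1+\lambda_N^W)/(L+\mu)]$ (in fact for any $\alpha \in (0,(1+\lambda_N^W)/L)$ the same spectral argument applies after checking $\rho_{\text{dsg}}<1$). Subtracting the two Lyapunov equations yields the exact identity
\begin{equation*}
\Sigma_k - \Sigma_\infty = A_Q^k\,(\Sigma_0-\Sigma_\infty)\,(A_Q^T)^k.
\end{equation*}
Taking the trace and using symmetry of $A_Q$ together with the triangle inequality and positive-semidefiniteness of $\Sigma_0$ and $\Sigma_\infty$ gives
\begin{equation*}
\operatorname{tr}(\Sigma_k) \le \operatorname{tr}(\Sigma_\infty) + \rho_{\text{dsg}}^{2k}\bigl(\operatorname{tr}(\Sigma_0)+\operatorname{tr}(\Sigma_\infty)\bigr).
\end{equation*}
Here $\operatorname{tr}(\Sigma_0)=\|\xi_0\xi_0^T\|$ (rank-one), and Theorem~\ref{prop:quadratic:var:DGD} identifies $\operatorname{tr}(\Sigma_\infty) = \alpha^2 (\sigma^2/d)\sum_{i=1}^{Nd}\frac{1}{1-\mu_i^2}$. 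Bounding each $\frac{1}{1-\mu_i^2}\le \frac{1}{1-\rho_{\text{dsg}}^2}$ in the factor multiplying $\rho_{\text{dsg}}^{2k}$ (but keeping the exact expression in the standalone variance term) produces exactly \eqref{quadratic:dgd:main:1}, since $Nd \cdot (\sigma^2/d)=\sigma^2 N$.

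The refinement follows by invoking Proposition~\ref{thm:quadratic:rate:DGD}: whenever $\alpha \in (0, (1+\lambda_N^W)/(L+\mu)]$ one has $\rho_{\text{dsg}}=1-\alpha\mu$, which we substitute in the previous bound. For \eqref{quadratic:dgd:main:2}, I would use the triangle inequality $\|x^{(k)}-x^\ast\|^2 \le 2\|x^{(k)}-x^\infty\|^2 + 2\|x^\infty-x^\ast\|^2$ and apply Lemma~\ref{lem:C1:gamma} (valid under $\alpha\le 1/(L+\mu)$) entry-wise across nodes so that $\|x^\infty - x^\ast\|^2 \le C_1^2 N \alpha^2/(1-\gamma)^2$. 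Combining this with the already-established bound on $\mathbb{E}\|x^{(k)}-x^\infty\|^2$ yields the stated inequality. The main technical point, which is essentially the crux of the argument, is the step where we bound the non-sign-definite matrix $\Sigma_0-\Sigma_\infty$ by triangle inequality; keeping the $\operatorname{tr}(\Sigma_\infty)$ term in the exact ``sum over eigenvalues'' form rather than the looser $\frac{\alpha^2\sigma^2 N}{1-\rho_{\text{dsg}}^2}$ everywhere is what makes this quadratic bound strictly sharper than the strongly-convex bound of Theorem~\ref{thm:DGD:explicit}.
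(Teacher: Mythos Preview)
Your proposal is correct and takes essentially the same approach as the paper: both exploit the linear recursion $\xi_{k+1}=A_Q\xi_k-\alpha w^{(k+1)}$, pass to the covariance Lyapunov recursion, subtract the stationary covariance, and bound the transient using $\|A_Q^k\|=\rho_{\text{dsg}}^k$; the paper writes this via the explicit finite sum $\Sigma_k=A_Q^k\Sigma_0(A_Q^T)^k+\sum_{j<k}A_Q^j(\alpha^2\Sigma_w)(A_Q^T)^j$ and then subtracts the infinite tail, while you equivalently use the cleaner identity $\Sigma_k-\Sigma_\infty=A_Q^k(\Sigma_0-\Sigma_\infty)(A_Q^T)^k$. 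The specialization via Proposition~\ref{thm:quadratic:rate:DGD} and the passage to $x^\ast$ via Lemma~\ref{lem:C1:gamma} match the paper exactly.
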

\begin{proof}
We recall that with $\xi_{k}=x^{(k)}-x^{\infty}$,
\begin{equation*}
\xi_{k+1}=A_{Q}\xi_{k}-\alpha w^{(k+1)},
\end{equation*}
and therefore, we get:
\begin{equation}\label{eqn:iterate:k:DSG}
\mathbb{E}\left[\xi_{k}\xi_{k}^{T}\right]
=
A_{Q}\mathbb{E}\left[\xi_{k-1}\xi_{k-1}^{T}\right](A_{Q})^{T}
+
\alpha^{2}\frac{\sigma^{2}}{d}I_{Nd},
\end{equation}
Therefore, 
\begin{equation*}
X:=\mathbb{E}\left[\xi_{\infty}\xi_{\infty}^{T}\right]
\end{equation*}
satisfies the discrete Lyapunov equation:
\begin{equation*}
X=A_{Q}X(A_{Q})^{T}+\alpha^{2}\frac{\sigma^{2}}{d}I_{Nd}.
\end{equation*}
By Theorem~\ref{prop:quadratic:var:DGD}, we have
\begin{equation*}
\mbox{trace}(X)=\alpha^{2}\frac{\sigma^{2}}{d}\sum_{i=1}^{Nd}\frac{1}{1-\mu_{i}^{2}}.
\end{equation*}

Next by iterating equation \eqref{eqn:iterate:k:DSG} over $k$, we immediately obtain
\begin{equation*}
\mathbb{E}\left[\xi_{k}\xi_{k}^{T}\right]
=\left(A_{Q}\right)^{k}\xi_{0}\xi_{0}^{T}
\left((A_{Q})^{T}\right)^{k}
+
\sum_{j=0}^{k-1}
\left(A_{Q}\right)^{j}
\alpha^{2}\frac{\sigma^{2}}{d}I_{Nd}
\left((A_{Q})^{T}\right)^{j},
\end{equation*}
so that
\begin{equation*}
\mathbb{E}\left[\xi_{k}\xi_{k}^{T}\right]
=\mathbb{E}\left[\xi_{\infty}\xi_{\infty}^{T}\right]
+\left(A_{Q}\right)^{k}
\xi_{0}\xi_{0}^{T}\left((A_{Q})^{T}\right)^{k}
-\sum_{j=k}^{\infty}
\left(A_{Q}\right)^{j}
\alpha^{2}\frac{\sigma^{2}}{d}I_{Nd}
\left((A_{Q})^{T}\right)^{j},
\end{equation*}
which implies that
\begin{align*}
\mbox{trace}\left(\mathbb{E}\left[\xi_{k}\xi_{k}^{T}\right]\right)
&=\mbox{trace}\left(\mathbb{E}\left[\xi_{\infty}\xi_{\infty}^{T}\right]\right)
+\left(A_{Q}\right)^{k}\xi_{0}\xi_{0}^{T}\left((A_{Q})^{T}\right)^{k}
\\
&\qquad
-\sum_{j=k}^{\infty}
\left(A_{Q}\right)^{j}
\alpha^{2}\frac{\sigma^{2}}{d}I_{Nd}
\left((A_{Q})^{T}\right)^{j}
\\
&\leq
\mbox{trace}(X)
+\left\Vert (A_{Q})^{k}\right\Vert^{2}
\left\Vert\xi_{0}\xi_{0}^{T}\right\Vert
+\sum_{j=k}^{\infty}\left\Vert (A_{Q})^{j}\right\Vert^{2}\alpha^{2}\sigma^{2}N
\\
&\leq\mbox{trace}(X)
+\rho_{\text{dsg}}^{2k}\left\Vert\xi_{0}\xi_{0}^{T}\right\Vert
+\alpha^{2}\sigma^{2}N\frac{\rho_{\text{dsg}}^{2k}}{1-\rho_{\text{dsg}}^{2}},
\end{align*}
where we used the estimate:
\begin{equation*}
\left\Vert A_{Q}^{k}\right\Vert
\leq\Vert V\Vert^{2}\left(\max_{1\leq i\leq Nd}|\mu_{i}|\right)^{k}
=\left(\max_{1\leq i\leq Nd}|\mu_{i}|\right)^{k}
=\rho_{\text{dsg}}^{k},
\end{equation*}
where we used the fact that $A_{Q}=\mathcal{W}-\alpha Q$
is symmetric with the decomposition $A_{Q}=V \textbf{\mbox{diag}}\left([\mu_i]_{i=1}^{Nd}\right)V^{T}$,
where $\mu_i$ are the eigenvalues of $A_{Q}$
and the fact that $\Vert V\Vert=1$ since $V$
is orthogonal. Note that $\xi_{k}=x^{(k)}-x^{\infty}$,
and this proves \eqref{quadratic:dgd:main:1}.

Finally, when $\alpha\in(0,\frac{1+\lambda_{N}^{W}}{L+\mu}]$, by Proposition~\ref{thm:quadratic:rate:DGD}, we get
\begin{equation*}
\rho_{\text{dsg}}=\max_{1\leq i\leq Nd}|\mu_{i}|=1-\alpha\mu.
\end{equation*}
Moreover,
\begin{equation*}
\left\Vert x^{(k)}-x^{\ast}\right\Vert^{2}
\leq 2\left\Vert x^{(k)}-x^{\infty}\right\Vert^{2}
+2\left\Vert x^{\infty}-x^{\ast}\right\Vert^{2},
\end{equation*}
and together with \eqref{eqn:asymp_suboptim},
it proves \eqref{quadratic:dgd:main:2}.
The proof is complete.
\end{proof}

\subsection{Distributed accelerated stochastic gradient (D-ASG)}\label{sec:quadratic:DASG}

First, let us recall that the network-wide update for D-ASG is given by
\begin{align}
&x^{(k+1)}=\mathcal{W}y^{(k)}-\alpha\left[\nabla F\left(y^{(k)}\right)+w^{(k+1)}\right],\label{DASG-Q-1}
\\
&y^{(k)}=(1+\beta)x^{(k)}-\beta x^{(k-1)},\label{DASG-Q-2}
\end{align}
where $F:\mathbb{R}^{Nd}\rightarrow\mathbb{R}$, is defined as
$F(y):=F(y_{1},\ldots,y_{N})
=\sum_{i=1}^{N}f_{i}(y_{i})$,
and the noise $w^{(k+1)}$ satisfies \eqref{assump}.

In the quadratic case, i.e. $F$ is quadratic and defined in \eqref{eqn:quadratic:assump},
we can re-write the D-ASG iterates \eqref{DASG-Q-1}-\eqref{DASG-Q-2} as
\begin{equation}\label{eq-iter-quad-dasg}
\xi_{k+1}=A_{\text{dasg},Q}\xi_{k}+B_{\text{dasg}}w^{(k+1)},
\end{equation}
where 
\begin{equation*}
\xi_{k}:=\left[\left(x^{(k)}-x^{\infty}\right)^{T},\left(x^{(k-1)}-x^{\infty}\right)^{T}\right]^{T},
\end{equation*}
and
\begin{equation}
A_{\text{dasg},Q}:=
\left[
\begin{array}{cc}
(1+\beta)(\mathcal{W}-\alpha Q) & -\beta(\mathcal{W}-\alpha Q)
\\
I_{Nd} & 0_{Nd}
\end{array}
\right],
\label{def-A-dasg-Q}
\end{equation}
and $B_{\text{dasg}}$ is defined in Section~\ref{sec:reformulate}
and $Q$ is given in \eqref{eqn:Q}.
Next, we obtain the spectral radius of $A_{\text{dasg},Q}$, that is
the maximum of the Euclidean norm of the eigenvalues of $A_{\text{dasg},Q}$.

\begin{proposition}\label{thm:DASG:rho}
Let $\mu_{i}$, $1\leq i\leq Nd$, be the eigenvalues
of $\mathcal{W}-\alpha Q$ listed in non-increasing order. We have
\begin{equation*}
\rho(A_{\text{dasg},Q})=
\max_{1\leq i\leq Nd}
\left\{\left|\frac{(1+\beta)\mu_{i}\pm\sqrt{(1+\beta)^{2}\mu_{i}^{2}-4\beta\mu_{i}}}{2}\right|\right\}.
\end{equation*}
\end{proposition}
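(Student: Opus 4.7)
The plan is to diagonalize $\mathcal{W}-\alpha Q$ and reduce the eigenvalue problem for the $2Nd \times 2Nd$ matrix $A_{\text{dasg},Q}$ to a collection of $Nd$ independent $2\times 2$ eigenvalue problems, for which the eigenvalues are easy to write down explicitly.

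First I would observe that $\mathcal{W}=W\otimes I_d$ is symmetric (since $W$ is symmetric) and $Q$ is block diagonal with symmetric blocks $Q_i$, so $M:=\mathcal{W}-\alpha Q$ is real symmetric. Hence there exists an orthogonal $V\in\mathbb{R}^{Nd\times Nd}$ and a diagonal $\Lambda=\text{diag}(\mu_1,\dots,\mu_{Nd})$ with $M=V\Lambda V^T$. Using the block similarity transform $T:=\text{diag}(V,V)$, which is also orthogonal, one gets
\begin{equation*}
T^T A_{\text{dasg},Q} \, T
= \begin{pmatrix} (1+\beta)\Lambda & -\beta \Lambda \\ I_{Nd} & 0_{Nd} \end{pmatrix}.
\end{equation*}
This has the same spectrum as $A_{\text{dasg},Q}$.

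Next, I would apply a permutation that interleaves the first $Nd$ and second $Nd$ coordinates, i.e. maps the basis from $(e_1,\dots,e_{Nd},e_1',\dots,e_{Nd}')$ to $(e_1,e_1',e_2,e_2',\dots,e_{Nd},e_{Nd}')$. Since $\Lambda$ is diagonal, the result is a block-diagonal matrix with $Nd$ blocks
\begin{equation*}
B_i := \begin{pmatrix} (1+\beta)\mu_i & -\beta\mu_i \\ 1 & 0 \end{pmatrix},\qquad i=1,\dots,Nd.
\end{equation*}
Therefore the spectrum of $A_{\text{dasg},Q}$ is the union $\bigcup_{i=1}^{Nd}\sigma(B_i)$. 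A direct computation of the characteristic polynomial of $B_i$ yields $\lambda^2 - (1+\beta)\mu_i\,\lambda + \beta\mu_i = 0$, whose roots are
\begin{equation*}
\lambda_i^{\pm} = \frac{(1+\beta)\mu_i \pm \sqrt{(1+\beta)^2\mu_i^2 - 4\beta\mu_i}}{2}.
\end{equation*}
Taking the maximum modulus over all $i$ and both signs gives the claimed formula for $\rho(A_{\text{dasg},Q})$; note that the square root may be purely imaginary when $(1+\beta)^2\mu_i^2<4\beta\mu_i$, in which case $|\lambda_i^{\pm}|=\sqrt{\beta\mu_i}$, but the formula stated in the proposition correctly covers this case once the modulus is taken.

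The only conceptual step is setting up the orthogonal change of basis together with the interleaving permutation, which decouples the $2Nd$-dimensional problem into the $Nd$ decoupled $2\times 2$ problems; after that everything is a one-line characteristic-polynomial computation, so I do not anticipate any real obstacle.
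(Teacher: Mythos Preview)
Your proposal is correct and follows essentially the same approach as the paper: orthogonally diagonalize the symmetric matrix $\mathcal{W}-\alpha Q$, conjugate $A_{\text{dasg},Q}$ by the block-diagonal orthogonal matrix $\text{diag}(V,V)$, then apply the interleaving permutation to obtain a block-diagonal matrix with the $2\times 2$ blocks $B_i$ (the paper calls them $\tilde{T}_i$), and read off their eigenvalues from the quadratic characteristic polynomial. The paper's proof is identical in structure, differing only in notation.
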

\begin{proof}
Consider the eigenvalue decomposition 
\begin{equation*}
\mathcal{W} -\alpha Q = \rev{R} \textbf{\mbox{diag}}\left([\mu_i]_{i=1}^{Nd}\right)\rev{R^{T}},
\end{equation*} 
where \rev{$R$} is real orthogonal and the eigenvalues $\mu_i$ are listed in non-increasing order. Next, we introduce the matrix
\beq U = \textbf{\mbox{diag}}(\rev{R,R}),
\label{def-U-matrix}
\eeq
and the permutation matrix $P_{\pi}$
associated with the permutation $\pi$ over $\{1,2,\ldots,2Nd\}$
that satisfies
\begin{equation}
\pi(i)=
\begin{cases}
2i-1 &\text{if $1\leq i\leq Nd$},
\\
2(i-Nd) &\text{if $Nd+1\leq i\leq 2Nd$}.
\end{cases}
\label{def-perm-matrix}
\end{equation}
By definition, $P_{\pi}^{-1}=P_{\pi}^{T}=P_{\pi^{-1}}$. Then, we can write
\rev{
\begin{align}
U A_{\text{dasg},Q} U^T &= \left[
\begin{array}{cc}
(1+\beta)\textbf{\mbox{diag}}\left([\mu_i]_{i=1}^{Nd}\right) & -\beta\textbf{\mbox{diag}}\left([\mu_i]_{i=1}^{Nd}\right)
\\
I_{Nd} & 0_{Nd}
\end{array} 
\right]
\\
&= P_\pi^T \textbf{\mbox{diag}}\left([\tilde{T}_i]_{i=1}^{Nd}\right) P_\pi,
\label{eq-blockdiag-ASG-iter-mat}
\end{align}
and
}
\begin{equation} 
\tilde{T}_i :=\left[
\begin{array}{cc}
(1+\beta)\mu_{i} & -\beta\mu_{i}
\\
1 & 0
\end{array}
\right] \in {\mathbb{R}}^{2\times 2},
\qquad 1\leq i\leq Nd.
\label{def-Ti-matrices}
\end{equation}
Therefore, the eigenvalues of $A_{\text{dasg},Q}$ coincide with the eigenvalues of $\tilde{T}_i$ which can be computed explicitly as $\frac{(1+\beta)\mu_{i}\pm\sqrt{(1+\beta)^{2}\mu_{i}^{2}-4\beta\mu_{i}}}{2}$. This completes the proof.
\end{proof}
\begin{remark} 
In the noiseless case (when $\sigma = 0$ and $w^k = 0$), we have 
\begin{equation*} 
\left\| \xi_{k}\right\| = \left\|A_{\text{dasg},Q}^k\right\| \left\|\xi_{0}\right\|, 
\end{equation*}
provided that $x^{(0)}$ is chosen as an eigenvector corresponding to a largest singular value of the
$A_{\text{dasg},Q}$ matrix. By Gelfand's formula, we have
$$
\rho(A_{\text{dasg},Q}) = \lim_{k\to\infty} \left({\left\| \xi_{k}\right\|}/{ \left\|\xi_{0}\right\|}\right)^{1/k}.
$$
Therefore, Proposition~\ref{thm:DASG:rho} gives an explicit characterization of the asymptotic convergence rate. 

\end{remark}

For $\rho=\rho(A_{\text{dasg},Q})< 1$, it is clear from the iterations \eqref{eq-iter-quad-dasg} that the second moments $\mathbb{E}\left[\left\Vert x^{(k)}-x^{\infty}\right\Vert^{2}\right]
$ will stay bounded over $k$. In fact, a consequence of Theorem~\ref{thm:general:DASG} for strongly convex objectives is that, the variance of the iterates satisfies 
\begin{equation*}
\limsup_{k\to\infty}\mathbb{E}\left[\left\Vert x^{(k)}-x^{\infty}\right\Vert^{2}\right]
\leq
\frac{1}{1-\rho^{2}}\alpha^{2}\frac{2\sigma^{2}N}{\mu}\left(\tilde{P}_{11}+\frac{1-\lambda_{N}^{W}+\alpha L}{2\alpha}\right),
\end{equation*}
and hence the robustness measure satisfies
\begin{equation*}
J_{\infty}(\alpha)
\leq\frac{1}{1-\rho^{2}}\alpha^{2}\frac{2}{\mu}\left(\tilde{P}_{11}+\frac{1-\lambda_{N}^{W}+\alpha L}{2\alpha}\right).
\end{equation*}

The next theorem shows that we can get a tighter explicit representation of the variance of the iterates in terms of the eigenvalues of the iteration matrix $A_{\text{dasg},Q}$. 

\begin{theorem}\label{prop:quadratic:var:acc}
Assume that Assumption~\ref{assump_quad} and Assumption~\ref{assump:iid} hold.
Let $\mu_{i}$ be the eigenvalues of $\mathcal{W}-\alpha Q$. 
Then we have
\begin{equation}\label{formula:Q:var:acc}
\lim_{k\rightarrow\infty}\text{Var}\left(x^{(k)}-x^{\infty}\right)
=\frac{\sigma^{2}}{d}\sum_{i=1}^{Nd}\frac{\alpha^{2}(1+\beta\mu_{i})}{(1-\mu_{i})(1-\beta\mu_{i})(2+2\beta-(1-\mu_{i})(1+2\beta))},
\end{equation}
and hence the robustness measure is given by
\begin{equation*}
J_{\infty}(\alpha)
=\frac{1}{Nd}\sum_{i=1}^{Nd}\frac{\alpha^{2}(1+\beta\mu_{i})}{(1-\mu_{i})(1-\beta\mu_{i})(2+2\beta-(1-\mu_{i})(1+2\beta))}.
\end{equation*}
\end{theorem}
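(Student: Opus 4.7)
The plan is to mirror the strategy used for D-SG in the proof of Theorem~\ref{prop:quadratic:var:DGD}: set up the discrete Lyapunov equation for the steady-state covariance of the state $\xi_k$, exploit the block-diagonalization of $A_{\text{dasg},Q}$ established in the proof of Proposition~\ref{thm:DASG:rho} to decouple the Lyapunov equation into $Nd$ independent $2\times 2$ problems indexed by the eigenvalues $\mu_i$ of $\mathcal{W}-\alpha Q$, and finally solve each $2\times 2$ Lyapunov equation in closed form.

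Starting from \eqref{eq-iter-quad-dasg} with additive noise $w^{(k+1)}$ satisfying $\mathbb{E}[w^{(k+1)}(w^{(k+1)})^{T}]=\tfrac{\sigma^{2}}{d}I_{Nd}$, the covariance matrix $\Sigma_{k}:=\mathbb{E}[\xi_{k}\xi_{k}^{T}]$ evolves according to
\begin{equation*}
\Sigma_{k+1}=A_{\text{dasg},Q}\,\Sigma_{k}\,A_{\text{dasg},Q}^{T}+\tfrac{\alpha^{2}\sigma^{2}}{d}\,\mbox{diag}(I_{Nd},0_{Nd}),
\end{equation*}
and since $\rho(A_{\text{dasg},Q})<1$ under our assumptions the limit $\Sigma_{\infty}:=\lim_{k\to\infty}\Sigma_{k}$ exists and solves the discrete Lyapunov equation above with equality. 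The first step is to apply the orthogonal change of basis $\tilde{U}=P_{\pi}U$, with $U$ as in \eqref{def-U-matrix} and $P_{\pi}$ as in \eqref{def-perm-matrix}, which by \eqref{eq-blockdiag-ASG-iter-mat} block-diagonalizes $A_{\text{dasg},Q}$ into the $2\times 2$ blocks $\tilde{T}_{i}$ of \eqref{def-Ti-matrices}. A short check shows that $\tilde{U}\,\mbox{diag}(I_{Nd},0_{Nd})\,\tilde{U}^{T}$ is itself block-diagonal with identical $2\times 2$ blocks $\mbox{diag}(1,0)$ (the noise acts only on the first coordinate of each two-dimensional block). Hence $\hat{\Sigma}_{\infty}:=\tilde{U}\Sigma_{\infty}\tilde{U}^{T}$ is block-diagonal with $Nd$ blocks $\hat{\Sigma}_{i}^{\infty}$, each solving the decoupled $2\times 2$ Lyapunov equation
\begin{equation*}
\hat{\Sigma}_{i}^{\infty}=\tilde{T}_{i}\,\hat{\Sigma}_{i}^{\infty}\,\tilde{T}_{i}^{T}+\tfrac{\alpha^{2}\sigma^{2}}{d}\,\mbox{diag}(1,0).
\end{equation*}

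The second step is to solve each of these $2\times 2$ Lyapunov equations explicitly. Writing $\hat{\Sigma}_{i}^{\infty}=\bigl[\begin{smallmatrix}a_{i}&b_{i}\\b_{i}&c_{i}\end{smallmatrix}\bigr]$ and expanding the matrix product using \eqref{def-Ti-matrices}, the $(2,2)$ entry yields $c_{i}=a_{i}$, the $(1,2)$ entry yields $b_{i}=\tfrac{(1+\beta)\mu_{i}}{1+\beta\mu_{i}}\,a_{i}$, and substitution into the $(1,1)$ entry produces a scalar equation for $a_{i}$ whose solution is
\begin{equation*}
a_{i}=\frac{(\alpha^{2}\sigma^{2}/d)(1+\beta\mu_{i})}{1+\beta\mu_{i}-(1+2\beta+2\beta^{2})\mu_{i}^{2}+\beta(1+2\beta)\mu_{i}^{3}}.
\end{equation*}
A direct algebraic simplification, using the identity $2+2\beta-(1-\mu_{i})(1+2\beta)=1+(1+2\beta)\mu_{i}$ and expanding $(1-\mu_{i})(1-\beta\mu_{i})$, shows that the denominator factorizes as $(1-\mu_{i})(1-\beta\mu_{i})(2+2\beta-(1-\mu_{i})(1+2\beta))$, matching the claimed formula.

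The final step is to read off the asymptotic variance of $x^{(k)}-x^{\infty}$. Since $x^{(k)}-x^{\infty}$ is the top block of $\xi_{k}$, we have $\mbox{Var}(x^{(k)}-x^{\infty})=\mbox{trace}\bigl([I_{Nd},0_{Nd}]\,\Sigma_{\infty}\,[I_{Nd},0_{Nd}]^{T}\bigr)$, which under the orthogonal block-diagonalization equals $\sum_{i=1}^{Nd}a_{i}$, yielding \eqref{formula:Q:var:acc}. The robustness expression for $J_{\infty}(\alpha)$ then follows immediately from its definition \eqref{def-robust}. The only non-routine step is the algebraic factorization of the denominator, but this is straightforward once the $(1,1)$-entry equation is reduced to a cubic in $\mu_{i}$; the rest of the argument is a direct adaptation of the D-SG Lyapunov-equation analysis to the $2\times 2$ block structure of D-ASG.
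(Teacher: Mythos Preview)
Your proposal is correct and follows essentially the same approach as the paper: set up the discrete Lyapunov equation for the stationary covariance of $\xi_k$, block-diagonalize via the orthogonal map $P_\pi U$ from Proposition~\ref{thm:DASG:rho}, and solve the resulting $Nd$ decoupled $2\times 2$ Lyapunov equations in closed form. The paper reads off the variance as $\tfrac{1}{2}\mbox{trace}(X)=\tfrac{1}{2}\mbox{trace}(Y)$ (using that the $(1,1)$ and $(2,2)$ entries of each $Y_i$ coincide), whereas you extract $\sum_i a_i$ directly from the top-left block; these are equivalent, and your explicit elimination $c_i=a_i$, $b_i=\tfrac{(1+\beta)\mu_i}{1+\beta\mu_i}a_i$ followed by the cubic factorization is exactly what underlies the paper's stated formula for $Y_i$.
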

\begin{proof}
\rev{Similar to the D-SG case, the equilibrium covariance matrix $X =\lim_{k\to\infty} \mathbb{E}[\xi_k \xi_k^T]$ of the D-ASG iterates satisfies the corresponding discrete Lyapunov equation
$$ A_{dasg,Q} X A_{dasg,Q}^T + \frac{\sigma^2}{d} B B^T = 0.$$
where $A_{dasg,Q}$ is as in \eqref{def-A-dasg-Q}. The proof will be based on constructing a solution to this equation by block diagonalizing the matrix $A_{dasg,Q}$ with a change of variable technique. More specifically, if we introduce the matrix $Y = P_{\pi} (U X U^T) P_{\pi}^{T}$, where $U$ is an orthogonal matrix defined by \eqref{def-U-matrix} and $P_\pi$ is the permutation matrix defined in \eqref{def-perm-matrix}. It follows from \eqref{eq-blockdiag-ASG-iter-mat} that $Y$ satisfies the discrete Lyapunov equation:
 \begin{equation*}   
 \textbf{\mbox{diag}}\left([\tilde{T}_i]_{i=1}^{Nd}\right) Y \left[\textbf{\mbox{diag}}\left([\tilde{T}_i]_{i=1}^{Nd}\right)\right]^T -Y
 +\frac{\sigma^{2}}{d}P_{\pi}U^T BB^{T}UP_{\pi}^{T}=0,
 \end{equation*}
where $T_i$ is defined by \eqref{def-Ti-matrices}}. \rev{Furthermore, $\mbox{trace}(Y)=\mbox{trace}{(X)}$ since $U$ is orthogonal}. 
Similar as in the proof of Proposition 3.7. \cite{StrConvex}, we can solve for $Y$ which takes the block diagonal matrix form:
\begin{equation}
Y=
\left[\begin{array}{cccc}
Y_{1} & 0_{Nd} & \cdots & 0_{Nd}
\\
0_{Nd} & Y_{2} & \cdots & 0_{Nd}
\\
\vdots & \vdots & \ddots & \vdots
\\
0_{Nd} & 0_{Nd} & \cdots & Y_{Nd}
\end{array}\right],
\label{def-blockdiag-Y-mat}
\end{equation}
where $Y_{i}$ satisfies the equation
\begin{equation*}
\left[
\begin{array}{cc}
(1+\beta)\mu_{i} & -\beta\mu_{i}
\\
1 & 0
\end{array}
\right]
Y_{i}
\left[
\begin{array}{cc}
(1+\beta)\mu_{i} & 1
\\
-\beta\mu_{i} & 0
\end{array}
\right]
-Y_{i}
+
\frac{\sigma^{2}}{d}\left[
\begin{array}{cc}
\alpha^{2} & 0
\\
0 & 0
\end{array}
\right]
=0.
\end{equation*}
We can explicitly solve for $Y_{i}$ and get
\begin{equation}
Y_{i}=\frac{\sigma^{2}}{d}
\left[
\begin{array}{cc}
\frac{\alpha^{2}(1+\beta\mu_{i})}{(1-\mu_{i})(1-\beta\mu_{i})(2+2\beta-(1-\mu_{i})(1+2\beta))} & \frac{\alpha^{2}(1+\beta)\mu_{i}}{(1-\mu_{i})(1-\beta\mu_{i})(2+2\beta-(1-\mu_{i})(1+2\beta))}
\\
\frac{\alpha^{2}(1+\beta)\mu_{i}}{(1-\mu_{i})(1-\beta\mu_{i})(2+2\beta-(1-\mu_{i})(1+2\beta))} & \frac{\alpha^{2}(1+\beta\mu_{i})}{(1-\mu_{i})(1-\beta\mu_{i})(2+2\beta-(1-\mu_{i})(1+2\beta))}
\end{array}
\right].
\label{def-Yi}
\end{equation}
Since $\xi_{k}=[(x^{(k)}-x^{\infty})^{T},(x^{(k-1)}-x^{\infty})^{T}]^{T}$, we have
\begin{align*}
\lim_{k\rightarrow\infty}\text{Var}\left(x^{(k)}-x^{\infty}\right)
&=\lim_{k\rightarrow\infty}\frac{1}{2}\text{Var}(\xi_{k}) = \rev{\frac{1}{2} \mbox{trace}(X)} =  \rev{\frac{1}{2} \mbox{trace}(Y)}
\\
&=\frac{\sigma^{2}}{d}\sum_{i=1}^{Nd}\frac{\alpha^{2}(1+\beta\mu_{i})}{(1-\mu_{i})(1-\beta\mu_{i})(2+2\beta-(1-\mu_{i})(1+2\beta))},
\end{align*}
\rev{which completes the proof}.
\end{proof}



\begin{proposition}\label{prop:var:bar:x:DASG}
{\color{black}Assume that Assumption~\ref{assump_quad} and Assumption~\ref{assump:iid} hold.
For any $j=1,\ldots,d$ and any $k\in\mathbb{N}$,
$$ \lim_{k\rightarrow\infty}\mbox{Var}\left( \bar{x}^{(k)}(j) \right) \leq \frac{\sigma^2}{Nd} \max_{i=1,2,\dots,Nd} \frac{\alpha^{2}(1+\beta\mu_{i})}{(1-\mu_{i})(1-\beta\mu_{i})(2+2\beta-(1-\mu_{i})(1+2\beta))},
$$
where $\bar{x}^{(k)}(j)$ denotes the $j$-th entry of the node average $\bar{x}^{(k)}$ and $\mu_{i}$ are the eigenvalues of $\mathcal{W}-\alpha Q$.}
\end{proposition}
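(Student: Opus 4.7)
The plan is to exploit the fact that, in the quadratic setting, the D-ASG recursion admits an orthogonal diagonalization under which the iterates split into $Nd$ uncorrelated scalar second-order recursions, whose stationary variances are precisely the diagonal entries $(Y_i)_{11}$ of the blocks in \eqref{def-Yi}. The $1/N$ factor in the bound will then emerge through $\|v\|^2$ for an appropriate test vector $v$.

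First, I would set $v := \frac{1}{N}\mathbf{1}_N \otimes e_j \in \mathbb{R}^{Nd}$, so that $\bar{x}^{(k)}(j) = v^T x^{(k)}$ and $\|v\|^2 = 1/N$. Letting $\Sigma_\infty := \lim_{k\to\infty} \mbox{Cov}\left(x^{(k)} - x^\infty\right)$, we have $\lim_{k\to\infty} \mbox{Var}\left(\bar{x}^{(k)}(j)\right) = v^T \Sigma_\infty v$. Next, I would diagonalize the symmetric matrix $\mathcal{W} - \alpha Q = R\,\textbf{\mbox{diag}}([\mu_i]_{i=1}^{Nd})\,R^T$ with $R$ orthogonal, and introduce the rotated iterates $\hat{x}^{(k)} := R^T(x^{(k)} - x^\infty)$ and rotated noise $\hat{w}^{(k)} := R^T w^{(k)}$. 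Using the fixed-point identity for $x^\infty$, the recursion \eqref{DASG-Q-1}--\eqref{DASG-Q-2} would then rewrite coordinate by coordinate as
\begin{equation*}
\hat{x}^{(k+1)}_i = \mu_i\bigl[(1+\beta)\hat{x}^{(k)}_i - \beta\hat{x}^{(k-1)}_i\bigr] - \alpha\,\hat{w}^{(k+1)}_i,\qquad 1\le i\le Nd.
\end{equation*}
Under Assumption~\ref{assump:iid}, the orthogonality of $R$ yields $\mbox{Cov}\bigl(\hat{w}^{(k)}\bigr) = R^T\bigl(\tfrac{\sigma^2}{d} I_{Nd}\bigr) R = \tfrac{\sigma^2}{d} I_{Nd}$, so the driving noise remains uncorrelated across modes.

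Because the dynamics and the noise are both decoupled across $i$, the stationary cross-covariances between distinct modes $i\neq i'$ will vanish; this is exactly the block-diagonal structure of the matrix $Y$ displayed in \eqref{def-blockdiag-Y-mat}, with the stationary variance of the $i$-th mode equal to the top-left entry $(Y_i)_{11}$ of the block $Y_i$ in \eqref{def-Yi}. Therefore $\lim_{k\to\infty}\mbox{Cov}\bigl(\hat{x}^{(k)}\bigr) = \textbf{\mbox{diag}}\bigl([(Y_i)_{11}]_{i=1}^{Nd}\bigr)$ and $\Sigma_\infty = R\,\textbf{\mbox{diag}}([(Y_i)_{11}])\,R^T$. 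The proof will then conclude by computing
\begin{equation*}
v^T \Sigma_\infty v \;=\; \sum_{i=1}^{Nd}(Y_i)_{11}\,\bigl(R^T v\bigr)_i^2 \;\le\; \Bigl(\max_i (Y_i)_{11}\Bigr)\|R^T v\|^2 \;=\; \frac{1}{N}\max_i (Y_i)_{11},
\end{equation*}
after which substituting the explicit formula for $(Y_i)_{11}$ from \eqref{def-Yi} yields the claimed inequality. The main bookkeeping obstacle will be verifying that the rotated noise is indeed isotropic and that the stationary cross-covariances between distinct modes vanish; both facts follow from $R^T R = I$ and from the block-diagonal structure of $Y$ already established in the proof of Theorem~\ref{prop:quadratic:var:acc}, so no genuinely new estimate is required.
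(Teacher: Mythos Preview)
Your proposal is correct and follows essentially the same route as the paper: both diagonalize $\mathcal{W}-\alpha Q$ to decouple the dynamics into $Nd$ scalar modes with stationary variances $(Y_i)_{11}$, then bound $v^T\Sigma_\infty v$ by $\|v\|^2\max_i(Y_i)_{11}$ for the averaging vector. The only cosmetic difference is that the paper works with the stacked state $\xi_k\in\mathbb{R}^{2Nd}$ and a unit test vector $u$ (so the $1/N$ appears when passing from $u^T x^{(k)}=\sqrt{N}\,\bar{x}^{(k)}(j)$ back to $\bar{x}^{(k)}(j)$), whereas you work directly in $\mathbb{R}^{Nd}$ with $\|v\|^2=1/N$; the arguments are otherwise identical.
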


\begin{proof}
{\color{black}
It follows from the proof of Proposition \ref{prop:quadratic:var:acc} that the covariance matrix has the form 
\begin{equation} 
\mathbb{E}\left[\xi_{\infty} {\xi_{\infty}}^T\right] =  Z Y Z^T,
\end{equation}
where $Y$ is as in \eqref{def-blockdiag-Y-mat}, $Z = U P_\pi$ is orthogonal, 
where $\xi_\infty$ is a random vector whose distribution coincides with the distribution of $\xi_{k}$ in the limit as $k\to \infty$. For a random vector $q$ with mean zero, let $\mbox{Cov}(q)$ denote the covariance matrix of $q$, i.e. $\mbox{Cov}(q) = \mathbb{E} [q q^T ]$.
It follows that 
    $$ \mbox{Cov}(Z^T \xi_\infty) = Z^T  \mathbb{E}\left[\xi_{\infty} {\xi_{\infty}}^T\right] Z = Y,$$ 
where 
$$ Z^T \xi_\infty =  \lim_{k\to\infty}\begin{bmatrix} r_1^T x^{(k)} \\
                                     r_1^T x^{(k-1)}  \\
                                     r_2^T x^{(k)} \\
                                     r_2^T x^{(k-1)} \\
                                     \vdots \\
                                     r_{Nd}^T x^{(k)} \\
                                     r_{Nd}^T x^{(k-1)} \\
                        \end{bmatrix},
$$
where $r_i$ are the columns of $R$ in the eigenvalue decomposition 
$\mathcal{W} -\alpha Q = R \textbf{\mbox{diag}}\left([\mu_i]_{i=1}^{Nd}\right)R^{T}$. In other words, $r_i$ are the eigenvectors corresponding to the eigenvalues $\mu_i$. Using the block diagonal structure of $Y$ with blocks $Y_i$, this shows that 
\begin{align} 
&\lim_{k\to\infty}\mbox{Cov} \left( 
                \begin{bmatrix} r_i^T x^{(k)} \\
                                     r_i^T x^{(k-1)}  
                \end{bmatrix}
             \right) = Y_i \in \mathbb{R}^2, \nonumber
             \\
             &\lim_{k\rightarrow\infty}\mathbb{E}\left(\left(r_i^T x^{(k)}\right)\left(r_j^T x^{(k)}\right)\right) = Y_{2i-1,2j-1} = 0 \quad \mbox{for} \quad i\neq j,
             \label{eq-zero-covar}
\end{align}         
where $Y_i$ is given by \eqref{def-Yi} and the matrix $Y$ is given by \eqref{def-blockdiag-Y-mat}. Therefore, 
    \beq \lim_{k\to\infty}\mbox{Var}\left(v_i^T x^{(k)}\right) = \begin{bmatrix} 1 & 0 \end{bmatrix} Y_i \begin{bmatrix} 1\\0 \end{bmatrix} = \frac{\sigma^2}{d} \frac{\alpha^{2}(1+\beta\mu_{i})}{(1-\mu_{i})(1-\beta\mu_{i})(2+2\beta-(1-\mu_{i})(1+2\beta))},
    \label{eq-var-projections}
    \eeq
where $\mbox{Var}$ denotes the variance and we used \eqref{def-Yi}. The eigenvectors $v_i$ are not explicitly available, but we know they are orthogonal forming a basis; therefore for any unit vector $u\in \mathbb{R}^{Nd}$, we can express it in a unique way as linear combinations of the basis vectors $v_i$, i.e.  
$$ u = \sum_{i=1}^{Nd} m_i v_i, \qquad m_{i} = \langle u, v_i \rangle,$$
for some scalars $m_i$ that are not all zero. Since $u$ has unit norm in $\mathbb{R}^{Nd}$, we have also
  $$ \|u\|^2 = 1 = \sum_{i=1}^{Nd}m_i^2.$$
Consequently, 
\begin{align} 
&\lim_{k\rightarrow\infty} \mbox{Var}\left(u^T x^{(k)}\right) 
\nonumber
\\
&=  \lim_{k\rightarrow\infty}\mbox{Var}\left( \left(\sum_{i=1}^{Nd} m_i r_i^T\right) x^{(k)}\right) \nonumber
\\
&= \sum_{i=1}^{Nd} \alpha_i^2 \lim_{k\rightarrow\infty}\mbox{Var}\left(r_i^T x^{(k)}\right) + 2\sum_{1\leq i<j\leq Nd} m_i m_j \lim_{k\rightarrow\infty}\mathbb{E}\left[\left(r_i^T x^{(k)}\right) \left(r_j^T x^{(k)}\right)\right]  \nonumber\\
&= \sum_{i=1}^{Nd} m_i^2 \lim_{k\rightarrow\infty}\mbox{Var}\left(r_i^T x^{(k)}\right) \label{eq-intermediate}\\
&= \sum_{i=1}^{Nd} (\langle u, v_i\rangle)^2 \frac{\sigma^2}{d} \frac{\alpha^{2}(1+\beta\mu_{i})}{(1-\mu_{i})(1-\beta\mu_{i})(2+2\beta-(1-\mu_{i})(1+2\beta))}\nonumber,
\end{align}
where we used \eqref{eq-zero-covar}. This formula expresses the asymptotic variance along any unit direction $u$. However, we can also obtain an upper bound from \eqref{eq-intermediate},
\begin{align} 
\lim_{k\rightarrow\infty}\mbox{Var}\left(u^T x^{(k)}\right) &\leq \max_{1\leq i\leq Nd} \lim_{k\rightarrow\infty}\mbox{Var}\left(r_i^T x^{(k)}\right) \sum_{i=1}^{Nd}{m_i^2} \\
&= \max_{1\leq i\leq Nd} \lim_{k\rightarrow\infty}\mbox{Var}\left(r_i^T x^{(k)}\right) \\
&= \frac{\sigma^2}{d} \max_{i=1,2,\dots,Nd} \frac{\alpha^{2}(1+\beta\mu_{i})}{(1-\mu_{i})(1-\beta\mu_{i})(2+2\beta-(1-\mu_{i})(1+2\beta))}, 
\end{align}
for any unit vector $u\in\mathbb{R}^{Nd}$ where we used \eqref{eq-var-projections}. Furthermore, this bound does not grow with $N$ as the eigenvalues $\mu_i$ are bounded satisfying $\lambda_N^W - \alpha L \leq  \mu_i \leq 1-\alpha\mu$. 
If we choose the vector $u$ such that its entries are
$u_j = 1/\sqrt{N}$ if $j\in\{1,d+1,2d+1,\ldots,(N-1)d+1\}$ else 0. Then, $u$ is a unit vector satisfying
   $$ u^T x^{(k)} = \sqrt{N} \bar{x}^{(k)}(1),$$
where $\bar{x}^{(k)}(1)$ denotes the first entry of the node average $\bar{x}^{(k)}$. Therefore, we get
\begin{align*}
\lim_{k\rightarrow\infty}\mbox{Var}\left(u^T x^{(k)}\right) 
&= N \lim_{k\rightarrow\infty}\mbox{Var}\left( \bar{x}^{(k)}(1) \right) 
\\
&\leq \frac{\sigma^2}{d} \max_{i=1,2,\dots,Nd} \frac{\alpha^{2}(1+\beta\mu_{i})}{(1-\mu_{i})(1-\beta\mu_{i})(2+2\beta-(1-\mu_{i})(1+2\beta))}. 
\end{align*}
Consequently, 
$$ \lim_{k\rightarrow\infty}\mbox{Var}\left( \bar{x}^{(k)}(1) \right) \leq \frac{\sigma^2}{Nd} \max_{i=1,2,\dots,Nd} \frac{\alpha^{2}(1+\beta\mu_{i})}{(1-\mu_{i})(1-\beta\mu_{i})(2+2\beta-(1-\mu_{i})(1+2\beta))}.
$$
Similarly, choosing $u$ appropriately, we can obtain
$$ \lim_{k\rightarrow\infty}\mbox{Var}\left( \bar{x}^{(k)}(j)\right) \leq \frac{\sigma^2}{Nd} \max_{i=1,2,\dots,Nd} \frac{\alpha^{2}(1+\beta\mu_{i})}{(1-\mu_{i})(1-\beta\mu_{i})(2+2\beta-(1-\mu_{i})(1+2\beta))},
$$
for any $j=1,\ldots,d$, which completes the proof.}
\end{proof}

Next, for D-ASG iterates, we provide 
bounds on $\mathbb{E}\left[\left\Vert x^{(k)}-x^{\infty}\right\Vert^{2}\right]$ and
$\mathbb{E}\left[\left\Vert x^{(k)}-x^{\ast}\right\Vert^{2}\right]$.

\begin{theorem}\label{thm:quadratic:DASG}
Assume that Assumption~\ref{assump_quad} and Assumption~\ref{assump:iid} hold.
Consider the D-ASG iterates. For every $k\in\mathbb{N}$,
\begin{align}
&\mathbb{E}\left[\left\Vert x^{(k)}-x^{\infty}\right\Vert^{2}\right]
\leq
(C_{k})^{2}\rho_{\text{dasg}}^{2k}
\left(\left\Vert\xi_{0}\xi_{0}^{T}\right\Vert
+\frac{\alpha^{2}\sigma^{2}N}{1-\rho_{\text{dasg}}^{2}}\right)
\nonumber
\\
&\qquad\qquad\qquad\qquad\qquad
+\frac{\alpha^{2}\sigma^{2}}{d}\sum_{i=1}^{Nd}\frac{(1+\beta\mu_{i})}{(1-\mu_{i})(1-\beta\mu_{i})(2+2\beta-(1-\mu_{i})(1+2\beta))}.
\label{quadratic:acc:main:1}
\end{align}
In addition, if $\alpha \leq \frac{1}{L+\mu}$, we have:
\begin{align}
&\mathbb{E}\left[\left\Vert x^{(k)}-x^{\ast}\right\Vert^{2}\right]
\leq
(C_{k})^{2}\rho_{\text{dasg}}^{2k}\left(2\left\Vert\xi_{0}\xi_{0}^{T}\right\Vert
+\frac{2\alpha^{2}\sigma^{2}N}{1-\rho_{\text{dasg}}^{2}}\right)
+2\frac{\alpha^{2}C_{1}^{2}N}{(1-\gamma)^{2}}
\nonumber
\\
&\qquad\qquad\qquad\qquad\qquad
+\frac{\alpha^{2}\sigma^{2}}{d}\sum_{i=1}^{Nd}\frac{(1+\beta\mu_{i})}{(1-\mu_{i})(1-\beta\mu_{i})(2+2\beta-(1-\mu_{i})(1+2\beta))},
\label{quadratic:acc:main:2}
\end{align}
where $C_{k}$, $\rho_{\text{dasg}}$
are defined in Lemma~\ref{lem:DASG:Q}
and $\mu_{i}$ are the eigenvalues of $\mathcal{W}-\alpha Q$.

In particular, when $\beta=\frac{1-\sqrt{\alpha\mu}}{1+\sqrt{\alpha\mu}}$,
$\lambda_{N}^{W}>0$ and $\alpha\in \left (0,\min\{\frac{1}{L+\mu},\frac{\lambda_{N}^{W}}{L}\}\right ]$,
we have
\begin{align}
&\mathbb{E}\left[\left\Vert x^{(k)}-x^{\infty}\right\Vert^{2}\right]
\leq
(C_{k})^{2}(1-\sqrt{\alpha\mu})^{2k}
\left(\left\Vert\xi_{0}\xi_{0}^{T}\right\Vert
+\frac{\alpha^{2}\sigma^{2}N}{1-(1-\sqrt{\alpha\mu})^{2}}\right)
\nonumber
\\
&\qquad\qquad\qquad
+\frac{\alpha^{2}\sigma^{2}}{d}\sum_{i=1}^{Nd}\frac{(1+\sqrt{\alpha\mu})(1+\sqrt{\alpha\mu}+(1-\sqrt{\alpha\mu})\mu_{i})}{(1-\mu_{i})(1+\sqrt{\alpha\mu}-(1-\sqrt{\alpha\mu})\mu_{i})(4-(1-\mu_{i})(3-\sqrt{\alpha\mu}))},
\label{quadratic:acc:main:3}
\\
&\mathbb{E}\left[\left\Vert x^{(k)}-x^{\ast}\right\Vert^{2}\right]
\leq
(C_{k})^{2}(1-\sqrt{\alpha\mu})^{2k}
\left(2\left\Vert\xi_{0}\xi_{0}^{T}\right\Vert
+\frac{2\alpha^{2}\sigma^{2}N}{1-(1-\sqrt{\alpha\mu})^{2}}\right)
+2\frac{\alpha^{2}C_{1}^{2}N}{(1-\gamma)^{2}}
\nonumber
\\
&\qquad\qquad\qquad
+\frac{\alpha^{2}\sigma^{2}}{d}\sum_{i=1}^{Nd}\frac{(1+\sqrt{\alpha\mu})(1+\sqrt{\alpha\mu}+(1-\sqrt{\alpha\mu})\mu_{i})}{(1-\mu_{i})(1+\sqrt{\alpha\mu}-(1-\sqrt{\alpha\mu})\mu_{i})(4-(1-\mu_{i})(3-\sqrt{\alpha\mu}))},
\label{quadratic:acc:main:4}
\end{align}
where $\mu_{i}$ are the eigenvalues of $\mathcal{W}-\alpha Q$
and
\begin{equation*}
C_{k}=\max\left\{2k-1,\max_{i:0<\mu_{i}<1-\alpha\mu}
\frac{1+\sqrt{\alpha\mu}+(1-\sqrt{\alpha\mu})\mu_{i}}{2\sqrt{\mu_{i}(1-\alpha\mu-\mu_{i})}}\right\}.
\end{equation*}
\end{theorem}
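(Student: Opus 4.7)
The proof plan mirrors the structure of Theorem~\ref{thm:quadratic:DSG} (the D-SG quadratic analysis) but must confront the fact that $A_{\text{dasg},Q}$ is not symmetric, so we cannot simply bound $\|A_{\text{dasg},Q}^k\|$ by its spectral radius raised to the $k$-th power. The plan is to start from the linear recursion \eqref{eq-iter-quad-dasg}, derive a discrete Lyapunov equation for the stationary covariance, and then quantify how fast the transient term decays using a careful non-symmetric matrix-power bound $\|A_{\text{dasg},Q}^k\|\leq C_k\,\rho_{\text{dasg}}^k$ supplied by Lemma~\ref{lem:DASG:Q}.

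Concretely, first I would iterate \eqref{eq-iter-quad-dasg} to obtain
\begin{equation*}
\mathbb{E}\!\left[\xi_{k}\xi_{k}^{T}\right]=A_{\text{dasg},Q}^{k}\,\xi_{0}\xi_{0}^{T}\,(A_{\text{dasg},Q}^{T})^{k}
+\sum_{j=0}^{k-1}A_{\text{dasg},Q}^{j}\,B_{\text{dasg}}\!\left(\tfrac{\sigma^{2}}{d}I_{Nd}\right)\!B_{\text{dasg}}^{T}\,(A_{\text{dasg},Q}^{T})^{j},
\end{equation*}
using Assumption~\ref{assump:iid} and the independence of the noise across iterations. The stationary covariance $X:=\lim_{k\to\infty}\mathbb{E}[\xi_{k}\xi_{k}^{T}]$ satisfies the discrete Lyapunov equation whose trace was already computed in Theorem~\ref{prop:quadratic:var:acc}; this gives the second summand on the right-hand side of \eqref{quadratic:acc:main:1} directly. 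The transient contribution is then written as $\mathbb{E}[\xi_{k}\xi_{k}^{T}]-X = A_{\text{dasg},Q}^{k}(\xi_{0}\xi_{0}^{T}-X)(A_{\text{dasg},Q}^{T})^{k}$, so taking traces and using the sub-multiplicativity of the operator norm yields $\mathbb{E}\|x^{(k)}-x^{\infty}\|^{2}\le \|A_{\text{dasg},Q}^{k}\|^{2}(\|\xi_{0}\xi_{0}^{T}\|+\mathrm{trace}(X))+\mathrm{trace}(X)$.

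Next, I would invoke Lemma~\ref{lem:DASG:Q} to substitute $\|A_{\text{dasg},Q}^{k}\|\le C_{k}\rho_{\text{dasg}}^{k}$. The block-diagonalization \eqref{eq-blockdiag-ASG-iter-mat} reduces this to bounding $\|\tilde T_{i}^{k}\|$ for each of the $2\times 2$ blocks $\tilde T_{i}$ in \eqref{def-Ti-matrices}; when the eigenvalues of $\tilde T_{i}$ are a complex conjugate pair of equal modulus $\sqrt{\beta\mu_{i}}$, $\tilde T_{i}$ is similar to a scaled rotation and its power has norm $O(k\cdot\rho^{k-1})$, which yields the polynomial-in-$k$ growth of $C_{k}$; in the real-eigenvalue regime, an explicit spectral decomposition gives the second term in the definition of $C_{k}$. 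Combining everything with the asymptotic variance formula \eqref{formula:Q:var:acc} gives \eqref{quadratic:acc:main:1}. The bound \eqref{quadratic:acc:main:2} on $\mathbb{E}\|x^{(k)}-x^{\ast}\|^{2}$ follows by $\|x^{(k)}-x^{\ast}\|^{2}\le 2\|x^{(k)}-x^{\infty}\|^{2}+2\|x^{\infty}-x^{\ast}\|^{2}$ together with the network-effect estimate \eqref{eqn:asymp_suboptim} (valid under $\alpha\le 1/(L+\mu)$), exactly as in Corollary~\ref{DSG_cor}.

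Finally, for the special-case bounds \eqref{quadratic:acc:main:3}--\eqref{quadratic:acc:main:4}, I would substitute $\beta=\frac{1-\sqrt{\alpha\mu}}{1+\sqrt{\alpha\mu}}$ into Proposition~\ref{thm:DASG:rho} to identify $\rho_{\text{dasg}}=1-\sqrt{\alpha\mu}$ (one must verify this using $\alpha\le \lambda_N^W/L$ so that $\mu_i\in[\lambda_N^W-\alpha L,\,1-\alpha\mu]$ keeps the spectral radius of every $\tilde T_i$ below $1-\sqrt{\alpha\mu}$; in particular one checks the real and complex cases of the $\tilde T_i$ eigenvalues separately), and then substitute the same $\beta$ into the asymptotic variance formula \eqref{formula:Q:var:acc} and simplify $(1-\beta\mu_i)$ and $(2+2\beta-(1-\mu_i)(1+2\beta))$ algebraically. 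The main obstacle will be Step 3 — obtaining the tight non-symmetric matrix-power bound $\|A_{\text{dasg},Q}^{k}\|\le C_k\rho_{\text{dasg}}^{k}$ with the precise form of $C_k$ stated in the theorem — since it requires a case analysis on the eigenvalues of each $2\times 2$ block $\tilde T_i$ (real distinct, real repeated, or complex conjugate) and tracking the condition numbers of the corresponding similarity transforms; everything else is algebraic manipulation or a direct reuse of arguments already developed for D-SG in Theorem~\ref{thm:quadratic:DSG}.
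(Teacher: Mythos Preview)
Your plan is essentially the same as the paper's: iterate the linear recursion \eqref{eq-iter-quad-dasg}, identify the stationary covariance via the discrete Lyapunov equation (Theorem~\ref{prop:quadratic:var:acc}), control the transient with the matrix--power bound of Lemma~\ref{lem:DASG:Q}, and then pass to $x^{\ast}$ via \eqref{eqn:asymp_suboptim}. Two small points are worth correcting.

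First, your compact identity $\mathbb{E}[\xi_k\xi_k^{T}]-X=A_{\text{dasg},Q}^{k}(\xi_0\xi_0^{T}-X)(A_{\text{dasg},Q}^{T})^{k}$ is valid, but tracing it yields $(C_k)^2\rho_{\text{dasg}}^{2k}\bigl(\|\xi_0\xi_0^{T}\|+\operatorname{trace}(X)\bigr)$ rather than the theorem's $(C_k)^2\rho_{\text{dasg}}^{2k}\bigl(\|\xi_0\xi_0^{T}\|+\tfrac{\alpha^{2}\sigma^{2}N}{1-\rho_{\text{dasg}}^{2}}\bigr)$. The paper instead writes $\mathbb{E}[\xi_k\xi_k^{T}]=X+A_{\text{dasg},Q}^{k}\xi_0\xi_0^{T}(A_{\text{dasg},Q}^{T})^{k}-\sum_{j\ge k}A_{\text{dasg},Q}^{j}(\cdots)(A_{\text{dasg},Q}^{T})^{j}$ and bounds the tail sum directly by $\alpha^{2}\sigma^{2}N\sum_{j\ge k}\|A_{\text{dasg},Q}^{j}\|^{2}$, which produces exactly the constant in the statement.

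Second, your case analysis for Lemma~\ref{lem:DASG:Q} is inverted. The polynomial-in-$k$ prefactor (the $2k-1$ term in $C_k$) arises from the \emph{repeated real eigenvalue} case $\gamma_{i,+}=\gamma_{i,-}$, where $\tilde{T}_i$ has a nontrivial Jordan block and $(\tilde{T}_i)^{k}=\gamma_{i,+}^{k-1}\bigl(k\tilde{T}_i-(k-1)\gamma_{i,+}I\bigr)$. When the eigenvalues are a complex-conjugate pair, $\tilde{T}_i$ is diagonalizable and $\|(\tilde{T}_i)^k\|$ is bounded by a $k$-independent constant times $(\sqrt{\beta\mu_i})^{k}$; this is precisely the source of the second term $\frac{1+\max\{|\gamma_{i,+}|,|\gamma_{i,-}|\}^{2}}{|\gamma_{i,+}-\gamma_{i,-}|}$ in $C_k$ (which covers all distinct-eigenvalue cases, real or complex). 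In the special parameter regime $\beta=\tfrac{1-\sqrt{\alpha\mu}}{1+\sqrt{\alpha\mu}}$, $\alpha\le\lambda_N^W/L$, one checks $0\le\mu_i\le 1-\alpha\mu$, the discriminant is nonpositive, and the complex-pair formula simplifies to the expression for $C_k$ stated in the theorem.
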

Before we proceed to the proof of Theorem~\ref{thm:quadratic:DASG}, let us first derive
the following lemma providing an upper bound
on the norm of $A_{\text{dasg},Q}^{k}$
for every $k\in\mathbb{N}$, which will be used later.

\begin{lemma}\label{lem:DASG:Q}
For any $k\in\mathbb{N}$,
\begin{equation*}
\left\Vert A_{\text{dasg},Q}^{k}\right\Vert\leq
C_{k}\rho_{\text{dasg}}^{k},
\end{equation*}
where
\begin{align*}
&C_{k}:=
\max\left\{
2k-1
,\max_{i:\gamma_{i,+}\neq\gamma_{i,-}}
\frac{1+\max\{|\gamma_{i,+}|,|\gamma_{i,-}|\}^{2}}{|\gamma_{i,+}-\gamma_{i,-}|}
\right\},
\\
&\rho_{\text{dasg}}:=
\max_{1\leq i\leq Nd}\max\{|\gamma_{i,+}|,|\gamma_{i,-}|\},
\end{align*}
where
$\gamma_{i,\pm}:=\frac{(1+\beta)\mu_{i}\pm\sqrt{(1+\beta)^{2}\mu_{i}^{2}-4\beta\mu_{i}}}{2}$,
and $\mu_{i}$ are the eigenvalues of $A_{Q}=\mathcal{W}-\alpha Q$.

In particular, when $\beta=\frac{1-\sqrt{\alpha\mu}}{1+\sqrt{\alpha\mu}}$,
$\lambda_{N}^{W}>0$ and $\alpha\in(0,\frac{\lambda_{N}^{W}}{L}]$,
we have $\rho_{\text{dasg}}=1-\sqrt{\alpha\mu}$,
and
\begin{equation*}
C_{k}=\max\left\{2k-1,\max_{i:0<\mu_{i}<1-\alpha\mu}
\frac{1+\sqrt{\alpha\mu}+(1-\sqrt{\alpha\mu})\mu_{i}}{2\sqrt{\mu_{i}(1-\alpha\mu-\mu_{i})}}\right\}.
\end{equation*}
\end{lemma}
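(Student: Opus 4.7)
Step 1 is to reduce to a family of $2\times 2$ problems. Recall from the proof of Proposition~\ref{thm:DASG:rho} that $U A_{\mathrm{dasg},Q} U^T = P_\pi^T \,\mathrm{diag}([\tilde{T}_i]_{i=1}^{Nd})\, P_\pi$, where $U=\mathrm{diag}(R,R)$ with $R$ orthogonal, $P_\pi$ is the permutation matrix from \eqref{def-perm-matrix}, and each $\tilde T_i$ is the $2\times 2$ block in \eqref{def-Ti-matrices} with eigenvalues $\gamma_{i,\pm}$. Since orthogonal and permutation matrices preserve the spectral norm, one has
\[
\|A_{\mathrm{dasg},Q}^k\| \;=\; \|\mathrm{diag}([\tilde T_i^k]_{i=1}^{Nd})\| \;=\; \max_{1\le i\le Nd}\|\tilde T_i^k\|,
\]
so it suffices to bound each $\|\tilde T_i^k\|$ by a constant times $\rho_i^k$ with $\rho_i:=\max(|\gamma_{i,+}|,|\gamma_{i,-}|)$ and then use $\rho_i\le\rho_{\mathrm{dasg}}$.

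Step 2 analyses $\tilde T_i^k$ in closed form. Since $\tilde T_i$ is a companion matrix with characteristic polynomial $x^2-(1+\beta)\mu_i x+\beta\mu_i$, Cayley--Hamilton yields $\tilde T_i^k=\alpha_{k+1}\tilde T_i+c_k I$ with $\alpha_j:=(\gamma_{i,+}^j-\gamma_{i,-}^j)/(\gamma_{i,+}-\gamma_{i,-})$ when $\gamma_{i,+}\ne\gamma_{i,-}$; alternatively, $(\gamma_{i,\pm},1)^T$ are eigenvectors, giving the diagonalization $\tilde T_i=V_iD_iV_i^{-1}$ with $V_i=\bigl[\begin{smallmatrix}\gamma_{i,+} & \gamma_{i,-}\\ 1 & 1\end{smallmatrix}\bigr]$. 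Estimating $\|V_i\|\|V_i^{-1}\|$ via the trace/determinant identities $\det(V_i^TV_i)=(\gamma_{i,+}-\gamma_{i,-})^2$ and $\mathrm{trace}(V_i^TV_i)=\gamma_{i,+}^2+\gamma_{i,-}^2+2$ gives the condition number bound
\[
\|\tilde T_i^k\| \;\le\; \|V_i\|\|V_i^{-1}\|\,\rho_i^k \;\le\; \frac{1+\rho_i^2}{|\gamma_{i,+}-\gamma_{i,-}|}\,\rho_i^k.
\]

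Step 3 handles the defective case $\gamma_{i,+}=\gamma_{i,-}=:\gamma$. Here write $\tilde T_i=\gamma I+N_i$; the Cayley--Hamilton identity forces $N_i^2=0$, so the binomial theorem gives $\tilde T_i^k=\gamma^k I+k\gamma^{k-1}N_i$, and a direct estimate of $\|N_i\|$ yields $\|\tilde T_i^k\|\le(2k-1)|\gamma|^k$ for $|\gamma|\le 1$. The same $(2k-1)\rho_i^k$ bound also serves as a uniform fall-back in the diagonalizable case through the telescoping identity $\alpha_j=\sum_{\ell=0}^{j-1}\gamma_{i,+}^\ell\gamma_{i,-}^{j-1-\ell}$, which gives $|\alpha_j|\le j\rho_i^{j-1}$; this is essential because the Step~2 bound blows up as $|\gamma_{i,+}-\gamma_{i,-}|\to 0$. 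Taking the minimum of the two bounds per $i$ and then the maximum over $i$ produces exactly the formula for $C_k$ and the identification $\rho_{\mathrm{dasg}}=\max_i\rho_i$.

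Step 4 specialises to $\beta=(1-\sqrt{\alpha\mu})/(1+\sqrt{\alpha\mu})$. Setting $s:=\sqrt{\alpha\mu}$, the key algebraic identities are
\[
\frac{4\beta}{(1+\beta)^2} \;=\; 1-\alpha\mu,\qquad \beta(1-\alpha\mu) \;=\; (1-\sqrt{\alpha\mu})^2.
\]
Under $\lambda_N^W>0$ and $\alpha\le\lambda_N^W/L$ the eigenvalues of $\mathcal{W}-\alpha Q$ satisfy $0<\mu_i\le 1-\alpha\mu$, so the discriminant $(1+\beta)^2\mu_i^2-4\beta\mu_i=\mu_i\bigl((1+\beta)^2\mu_i-4\beta\bigr)$ is nonpositive for every $i$. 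Hence $\gamma_{i,\pm}$ are either complex conjugates with modulus $\sqrt{\beta\mu_i}\le\sqrt{\beta(1-\alpha\mu)}=1-\sqrt{\alpha\mu}$, or coincide (when $\mu_i=1-\alpha\mu$) at the real value $(1+\beta)\mu_i/2=1-\sqrt{\alpha\mu}$, giving $\rho_{\mathrm{dasg}}=1-\sqrt{\alpha\mu}$. Substituting the same identity into $|\gamma_{i,+}-\gamma_{i,-}|^2=4\beta\mu_i-(1+\beta)^2\mu_i^2$ and $1+\rho_i^2=1+\beta\mu_i$ and simplifying produces exactly the stated form
\[
\frac{1+\sqrt{\alpha\mu}+(1-\sqrt{\alpha\mu})\mu_i}{2\sqrt{\mu_i(1-\alpha\mu-\mu_i)}}
\]
on the branch $0<\mu_i<1-\alpha\mu$.

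The main obstacle will be Step~4: the constants must be tracked through the $2\times 2$ analysis with enough care to recover exactly the displayed expression, and the case analysis separating the complex, real-distinct, and defective sub-cases of the discriminant must be exhaustive. The hypothesis $\alpha\le\lambda_N^W/L$ enters precisely to prevent $\mu_i\le 0$, which would open a sub-case with real eigenvalues of opposite sign whose moduli need not respect the bound $1-\sqrt{\alpha\mu}$; Steps 2 and 3 are essentially routine $2\times 2$ calculations.
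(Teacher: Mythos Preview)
Your plan mirrors the paper's proof almost exactly: the same orthogonal/permutation reduction to the $2\times 2$ companion blocks $\tilde T_i$ (Step~1), the same distinct-versus-repeated eigenvalue split (Steps~2--3), and the same algebra for the specialisation (Step~4), where the paper likewise uses $4\beta/(1+\beta)^2=1-\alpha\mu$ to force the discriminant nonpositive on $(0,1-\alpha\mu]$.

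The one substantive difference is in Step~2. The paper does not bound $\|V_i\|\,\|V_i^{-1}\|$; instead it applies Sylvester's interpolation formula
\[
\tilde T_i^{\,k}=\frac{\gamma_{i,+}^k(\tilde T_i-\gamma_{i,-}I)-\gamma_{i,-}^k(\tilde T_i-\gamma_{i,+}I)}{\gamma_{i,+}-\gamma_{i,-}}
\]
and then uses the rank-one factorisation $\tilde T_i-\gamma_{i,\mp}I=\bigl(\begin{smallmatrix}\gamma_{i,\pm}\\ 1\end{smallmatrix}\bigr)\!\bigl(1,\,-\gamma_{i,\mp}\bigr)$, which gives $\|\tilde T_i-\gamma_{i,\mp}I\|=\sqrt{(1+|\gamma_{i,\pm}|^2)(1+|\gamma_{i,\mp}|^2)}\le 1+\rho_i^2$. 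Your trace/determinant route only yields
\[
\kappa(V_i)\le \frac{\mathrm{tr}(V_i^*V_i)}{|\det V_i|}=\frac{|\gamma_{i,+}|^2+|\gamma_{i,-}|^2+2}{|\gamma_{i,+}-\gamma_{i,-}|}\le \frac{2(1+\rho_i^2)}{|\gamma_{i,+}-\gamma_{i,-}|},
\]
a factor of two looser than what you write; in fact the condition number itself can exceed $(1+\rho_i^2)/|\gamma_{i,+}-\gamma_{i,-}|$ (take $\gamma_{i,\pm}$ close and real), so the discrepancy is not just slack in the estimate. To recover the constant exactly as stated in the lemma you will need the rank-one observation rather than the eigenvector-conditioning bound.
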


\begin{proof}
{\color{black}The proof of Lemma~\ref{lem:DASG:Q} will be provided in Appendix~\ref{sec:technical}.}
\end{proof}

Now, we are ready to prove Theorem~\ref{thm:quadratic:DASG}.

\subsubsection{Proof of Theorem~\ref{thm:quadratic:DASG}}

\begin{proof}
We recall that
\begin{equation*}
\xi_{k+1}=A_{\text{dasg},Q}\xi_{k}+B_{\text{dasg}}w^{(k+1)},
\end{equation*}
and therefore, we get:
\begin{equation}\label{eqn:iterate:k:DASG}
\mathbb{E}\left[\xi_{k}\xi_{k}^{T}\right]
=
A_{\text{dasg},Q}\mathbb{E}\left[\xi_{k-1}\xi_{k-1}^{T}\right](A_{\text{dasg},Q})^{T}
+
\left(\begin{array}{cc}
\alpha^{2}\frac{\sigma^{2}}{d}I_{Nd} & 0_{Nd}
\\
0_{Nd} & 0_{Nd}
\end{array}\right),
\end{equation}
Therefore, 
\begin{equation*}
X_{\text{dasg}}:=\mathbb{E}\left[\xi_{\infty}\xi_{\infty}^{T}\right]
\end{equation*}
satisfies the discrete Lyapunov equation:
\begin{equation*}
X_{\text{dasg}}=A_{\text{dasg},Q}X_{\text{dasg}}(A_{\text{dasg},Q})^{T}
+\left(\begin{array}{cc}
\alpha^{2}\frac{\sigma^{2}}{d}I_{Nd} & 0_{Nd}
\\
0_{Nd} & 0_{Nd}
\end{array}\right).
\end{equation*}
By Theorem~\ref{prop:quadratic:var:acc}
we have
\begin{equation*}
\mbox{trace}(X_{\text{dasg}})=\frac{\sigma^{2}}{d}\sum_{i=1}^{Nd}\frac{\alpha^{2}(1+\beta\mu_{i})}{(1-\mu_{i})(1-\beta\mu_{i})(2+2\beta-(1-\mu_{i})(1+2\beta))}.
\end{equation*}

Next by iterating equation \eqref{eqn:iterate:k:DASG} over $k$, we immediately obtain
\begin{align*}
\mathbb{E}\left[\xi_{k}\xi_{k}^{T}\right]
&=\left(A_{\text{dasg},Q}\right)^{k}\xi_{0}\xi_{0}^{T}
\left((A_{\text{dasg},Q})^{T}\right)^{k}
\\
&\qquad\qquad\qquad
+
\sum_{j=0}^{k-1}
\left(A_{\text{dasg},Q}\right)^{j}\left(\begin{array}{cc}
\alpha^{2}\frac{\sigma^{2}}{d}I_{Nd} & 0_{Nd}
\\
0_{Nd} & 0_{Nd}
\end{array}\right)\left((A_{\text{dasg},Q})^{T}\right)^{j},
\end{align*}
so that
\begin{align*}
\mathbb{E}\left[\xi_{k}\xi_{k}^{T}\right]
&=\mathbb{E}\left[\xi_{\infty}\xi_{\infty}^{T}\right]
+\left(A_{\text{dasg},Q}\right)^{k}
\xi_{0}\xi_{0}^{T}\left(\left(A_{\text{dasg},Q}\right)^{T}\right)^{k}
\\
&\qquad\qquad
-\sum_{j=k}^{\infty}
\left(A_{\text{dasg},Q}\right)^{j}\left(\begin{array}{cc}
\alpha^{2}\frac{\sigma^{2}}{d}I_{Nd} & 0_{Nd}
\\
0_{Nd} & 0_{Nd}
\end{array}\right)\left(\left(A_{\text{dasg},Q}\right)^{T}\right)^{j},
\end{align*}
which implies that
\begin{align*}
\mbox{trace}\left(\mathbb{E}\left[\xi_{k}\xi_{k}^{T}\right]\right)
&=\mbox{trace}\left(\mathbb{E}\left[\xi_{\infty}\xi_{\infty}^{T}\right]\right)
+\left(A_{\text{dasg},Q}\right)^{k}\xi_{0}\xi_{0}^{T}\left((A_{\text{dasg},Q})^{T}\right)^{k}
\\
&\qquad
-\sum_{j=k}^{\infty}
\left(A_{\text{dasg},Q}\right)^{j}\left(\begin{array}{cc}
\alpha^{2}\frac{\sigma^{2}}{d}I_{Nd} & 0_{Nd}
\\
0_{Nd} & 0_{Nd}
\end{array}\right)\left((A_{\text{dasg},Q})^{T}\right)^{j}
\\
&\leq
\mbox{trace}(X_{\text{dasg}})
+\left\Vert (A_{\text{dasg},Q})^{k}\right\Vert^{2}
\Vert\xi_{0}\xi_{0}^{T}\Vert
+\sum_{j=k}^{\infty}\left\Vert (A_{\text{dasg},Q})^{j}\right\Vert^{2}\alpha^{2}\sigma^{2}N
\\
&\leq\mbox{trace}(X_{\text{dasg}})
+(C_{k})^{2}(\rho_{\text{dasg}})^{2k}\Vert\xi_{0}\xi_{0}^{T}\Vert
+\alpha^{2}\sigma^{2}N (C_{k})^{2}\frac{(\rho_{\text{dasg}})^{2k}}{1-(\rho_{\text{dasg}})^{2}},
\end{align*}
where we used the estimate 
from the proof of Lemma~\ref{lem:DASG:Q}.

Note that $\xi_{k}=x^{(k)}-x^{\infty}$,
this proves \eqref{quadratic:acc:main:1}.
Moreover,
\begin{equation*}
\left\Vert x^{(k)}-x^{\ast}\right\Vert^{2}
\leq 2\left\Vert x^{(k)}-x^{\infty}\right\Vert^{2}
+2\left\Vert x^{\infty}-x^{\ast}\right\Vert^{2},
\end{equation*}
and together with \eqref{eqn:asymp_suboptim},
it proves \eqref{quadratic:acc:main:2}.

Finally, when $\beta=\frac{1-\sqrt{\alpha\mu}}{1+\sqrt{\alpha\mu}}$, $\lambda_{N}^{W}>0$
and $\alpha\in(0,\frac{\lambda_{N}^{W}}{L}]$, 
we have $\rho_{\text{dasg}}=1-\sqrt{\alpha\mu}$,
and
\begin{equation*}
\left\Vert A_{\text{dasg},Q}^{k}\right\Vert
\leq C_{k}\cdot(1-\sqrt{\alpha\mu})^{k},
\end{equation*}
where
\begin{equation*}
C_{k}=\max\left\{2k-1,\max_{i:0<\mu_{i}<1-\alpha\mu}
\frac{1+\sqrt{\alpha\mu}+(1-\sqrt{\alpha\mu})\mu_{i}}{2\sqrt{\mu_{i}(1-\alpha\mu-\mu_{i})}}\right\},
\end{equation*}
and by Theorem~\ref{prop:quadratic:var:acc}
with $\beta=\frac{1-\sqrt{\alpha\mu}}{1+\sqrt{\alpha\mu}}$
we have
\begin{align*}
\mbox{trace}(X_{\text{dasg}})
&=\frac{\sigma^{2}}{d}\sum_{i=1}^{Nd}\frac{\alpha^{2}(1+\beta\mu_{i})}{(1-\mu_{i})(1-\beta\mu_{i})(2+2\beta-(1-\mu_{i})(1+2\beta))}
\\
&=\frac{\alpha^{2}\sigma^{2}}{d}\sum_{i=1}^{Nd}\frac{(1+\sqrt{\alpha\mu})(1+\sqrt{\alpha\mu}+(1-\sqrt{\alpha\mu})\mu_{i})}{(1-\mu_{i})(1+\sqrt{\alpha\mu}-(1-\sqrt{\alpha\mu})\mu_{i})(4-(1-\mu_{i})(3-\sqrt{\alpha\mu}))}.
\end{align*}
The proof is complete.
\end{proof}


\section{Proofs of Main Results in Section~\ref{sec:multi}}

\subsubsection{Proof of Proposition~\ref{thm_one_stage}}

\begin{proof}
Recall that, from Theorem~\ref{thm-rate-dasg}, we have
\begin{equation}\label{thm_1_DMASG_ineq1}
\E\left[ \left\| x^{(k)} -x^* \right\|^2\right]
\leq
 4\left(1-\sqrt{\alpha \mu}\right)^{k} \frac{ V_{S,\alpha}\left(\xi_{0}\right)}{\mu}
+ \frac{2\sigma^2 N \alpha}{\mu\sqrt{\alpha\mu}}\left(2-\lambda_N^W +\alpha L \right) +  \frac{2C_{1}^{2}N\alpha^2}{(1-\gamma)^2}.    
\end{equation}
Next, note that, {\color{black}$\xi_{0} = \left [{\left(x^{(0)}-x^{\infty}\right)}^\top,{\left(x^{(0)}-x^{\infty}\right)}^\top \right ]^\top$}, and therefore,
\begin{align*}
V_{S,\alpha}\left(\xi_{0}\right) &= \rev{\xi_{0}^\top S_{\alpha}\xi_{0}} \nonumber\\
&= \left\|x^{(0)} - x^\infty \right\|^2 \left (\frac{1}{2\alpha}+\left(\sqrt{\frac{\mu}{2}}-\sqrt{\frac{1}{2\alpha}}\right)^2+\frac{\sqrt{2}}{\sqrt{\alpha}}\left(\sqrt{\frac{\mu}{2}}-\sqrt{\frac{1}{2\alpha}}\right)\right ) \nonumber\\
&= \frac{\mu}{2} \left\|x^{(0)} - x^\infty \right\|^2 \nonumber\\
& \leq \mu \left\|x^{(0)} - x^* \right\|^2 + \mu \left\|x^\infty - x^* \right\|^2 \nonumber \\
& \leq \mu \left ( \left\|x^{(0)} - x^* \right\|^2 + \frac{C_{1}^{2}N\alpha^2}{(1-\gamma)^2} \right ), \nonumber
\end{align*}
where the last inequality follows from \eqref{eqn:asymp_suboptim}. Plugging this bound in \eqref{thm_1_DMASG_ineq1} along with these straightforward inequalities
\begin{equation*}
1- \sqrt{\alpha \mu} \leq \exp(- \sqrt{\alpha \mu}), \quad 2-\lambda_N^W +\alpha L \leq 3, \quad 2 +4\left(1-\sqrt{\alpha \mu}\right)^{k} \leq 6,
\end{equation*}
completes the proof.
\end{proof}

\subsubsection{Proof of Corollary~\ref{cor_one_stage}}

\begin{proof}
First of all, notice that $x\mapsto\frac{\log x}{x}$ is decreasing for any $x\geq e$.
To simplify the notation, let $\hat{k} = \max\{2 \log(p \sqrt{\tilde{{\kappa}}}), e \}$. First note that, since $k \geq p \sqrt{\tilde{{\kappa}}} \hat{k}$, we have
\begin{align}
\frac{p \sqrt{\tkappa} \log k}{k} & \leq  \frac{p \sqrt{\tkappa} \log (p \sqrt{\tkappa} \hat{k})}{p \sqrt{\tkappa}\hat{k} } = \frac{\log (p \sqrt{\tkappa}) + \log \hat{k}}{\hat{k}}\leq \frac{1}{2} + \frac{\log \hat{k}}{\hat{k}} \leq 1, 
\end{align}
where the second inequality follows from $\hat{k} \geq 2 \log (p \sqrt{\tkappa})$ and the last inequality is obtained using $\hat{k} \geq e$. Hence, $\alpha_1$ satisfies the condition $\alpha_1 \leq \min\{{\lambda_N^W}/{L}, 1/(L+\mu)\}$ in Proposition~\ref{thm_one_stage}. In addition, note that $\alpha_1$ can be written as
\begin{equation*}
\alpha_1 = \frac{\lambda_N^W}{L+\mu} \left ({p \sqrt{\tilde{\kappa}} \log k}/{k}\right )^2 = \frac{1}{\mu} \left ({p \log k}/{k}\right )^2.    
\end{equation*}
Plugging this into Proposition~\ref{thm_one_stage} completes the proof.
\end{proof}


\subsubsection{Proof of Proposition~\ref{main_DMASG_result}}

\begin{proof}
We show this result by using induction. First note that, for $t=0$, the argument holds using Proposition~\ref{thm_one_stage}. Now, assume it holds for $t$ and we show it for $t+1$. Using Proposition~\ref{thm_one_stage}, and taking expectation from both sides, we have
\begin{align}
\E&\left[ \left\| x^{(L_{t+2})} -x^* \right\|^2\right] \nonumber \\
\leq & 4 \exp\left(-k_{t+1} \sqrt{\alpha_{t+1} \mu}\right) \E\left[ \left\| x^{(L_{t+1})} -x^* \right\|^2\right]
+ 6N \left ( \frac{\sqrt{\alpha_{t+1}}}{\mu\sqrt{\mu}} \sigma^2 +  \frac{C_{1}^{2}\alpha_{t+1}^2}{(1-\gamma)^2} \right) \nonumber \\
=& \frac{1}{2^{p-2}} \E\left[ \left\| x^{(L_{t+1})} -x^* \right\|^2\right]
+ \frac{6N}{2^{t+1}} \sqrt{\frac{\lambda_N^W }{(L+\mu) \mu^3}} \sigma^2 + \frac{6N}{2^{4(t+1)}} \left (\frac{C_1 \lambda_N^W}{(L+\mu)(1-\gamma)} \right )^2 \label{thm_multi1_eq1} \\ 
\leq & \frac{4}{2^{(p-2)(t+1)}} \exp\left(-\frac{k_1}{\sqrt{\tilde{\kappa}}}\right) \left\| x^{(0)} - x^* \right\|^2 + 12N\left(\frac{1/2^{(p-2)}}{2^t}+\frac{1/2}{2^{t+1}}\right) \frac{\sigma^2}{\mu^2 \sqrt{\tilde{\kappa}}} \nonumber \\ 
& \quad + 12N\left(\frac{1/2^{(p-2)}}{2^{4t}}+\frac{1/2}{2^{4(t+1)}}\right) \left (\frac{C_1 \lambda_N^W}{(L+\mu)(1-\gamma)} \right )^2 \label{thm_multi1_ineq1} \\ 
\leq & \frac{4}{2^{(p-2)(t+1)}} \exp\left(-\frac{k_1}{\sqrt{\tilde{\kappa}}}\right) \left\| x^{(0)} - x^* \right\|^2 + \frac{12N}{2^{t+1}} \frac{\sigma^2}{\mu^2 \sqrt{\tilde{\kappa}}} + \frac{12N}{2^{4(t+1)}} \left (\frac{C_1 \lambda_N^W}{L(1-\gamma)} \right )^{2}, \label{thm_multi1_ineq2}
\end{align}
where \eqref{thm_multi1_eq1} follows from substituting $\alpha_{t+1}$ and $k_{t+1}$ and \eqref{thm_multi1_ineq1} is obtained using the induction hypothesis for $t$. Finally, \eqref{thm_multi1_ineq2} is obtained by replacing $L+\mu$ by $L$ in \eqref{thm_multi1_ineq1} along with the assumption $p \geq 7$ so that the term $12N\left(\frac{1/2^{(p-2)}}{2^{4t}}+\frac{1/2}{2^{4(t+1)}}\right)$ in network effect in \eqref{thm_multi1_ineq1} is bounded by $\frac{12N}{2^{4(t+1)}}$ in \eqref{thm_multi1_ineq2}, which completes the proof.
\end{proof}


\subsubsection{Proof of Proposition~\ref{D-MASG_any_n}}

\begin{proof}
Let $T$ denote the largest $t$ such that $k \geq L_t$. In particular, we have
$$ L_T \leq k < L_{T+1}.$$
Now, using Proposition~\ref{thm_one_stage}, we have
\begin{align}
\E & \left[ \left\| x^{(k)} -x^* \right\|^2\right] \leq
 4 \E \left[\left\|x^{(L_T)} -x^*\right\|^2 \right] + 6N \left ( \frac{\sqrt{\alpha_{T+1}}}{\mu\sqrt{\mu}} \sigma^2 +  \frac{C_{1}^{2}\alpha_{T+1}^2}{(1-\gamma)^2} \right) \nonumber \\
= & 4 \E \left[\left\|x^{(L_T)} -x^*\right\|^2 \right] + \frac{6N}{2^{T+1}} \sqrt{\frac{\lambda_N^W }{(L+\mu) \mu^3}} \sigma^2 + \frac{6N}{2^{4(T+1)}} \left (\frac{C_1 \lambda_N^W}{(L+\mu)(1-\gamma)} \right )^2 \nonumber \\
\leq & \bigO(1) \left ( \frac{1}{2^{(p-2)T}} \exp\left(-\frac{k_1}{\sqrt{\tilde{\kappa}}}\right) \left\| x^{(0)} - x^* \right\|^2 + \frac{N \sigma^2}{2^T \mu^2 \sqrt{\tilde{\kappa}}}  + \frac{N}{2^{4T}} \left (\frac{C_1 \lambda_N^W}{L(1-\gamma)} \right )^2  \right ), \label{thm_multi2_ineq1}
\end{align}
where the last inequality follows from Proposition~\ref{main_DMASG_result}.
Next, note that $$k-k_1 \leq L_{T+1} - k_1 \leq 2 (L_T - k_1) \leq p 2^{T+3} \log(2) \sqrt{\tilde{\kappa}},$$
where the last two inequalities follows from the special pattern of the sequence $\{k_i\}_i$.
Therefore, we have
$$ \frac{1}{2^T} \leq \frac{8 p \log(2) \sqrt{\tilde{\kappa}}}{k-k_1} \leq \frac{6 p \sqrt{\tilde{\kappa}}}{k-k_1}.$$
Plugging this bound in \eqref{thm_multi2_ineq1} completes the proof.
\end{proof}


\subsubsection{Proof of Corollary~\ref{cor_D-MASG:bar}}

\begin{proof}
{\color{black}
By Proposition~\ref{prop:dsg-average-optimal-rate-DASG}, 
for any $k$, we have
\begin{align*}
&\mathbb{E}\left\Vert\bar{x}^{(k)}-x_{\ast}\right\Vert^{2}
\\
&\leq
\left(1-\frac{\sqrt{\alpha\mu}}{2}\right)^{k}\frac{2V_{\bar{S},\alpha}\left(\bar{\xi}_{0}\right)}{\mu}
+\frac{4}{\mu\sqrt{\mu}}
\left(\left(1+\frac{\sqrt{\alpha\mu}}{2}\right)\frac{\sigma^2  \sqrt{\alpha}}{2N}\left(1+\alpha L \right)+\frac{1}{2\sqrt{\mu}}\alpha H_{1}H_{2}\right)
\\
&\qquad
+\frac{8}{\gamma^{2}\mu\sqrt{\mu}}\sqrt{\alpha}H_{1}H_{3}\frac{\gamma^{2k}-(1-\sqrt{\alpha\mu}/2)^{k}}{\gamma^{2}-(1-\sqrt{\alpha\mu}/2)}.
\end{align*}
where $V_{\bar{S},\alpha}$ is defined by \eqref{eqn:V:S:alpha:bar}. As $\alpha\rightarrow 0$, one can check
that $H_{1}=\mathcal{O}(1)$, 
$H_{2}=\mathcal{O}(1)\frac{L^{2}}{N}\frac{C_{0}}{(1-\gamma)^{2}}$
and $H_{3}=\mathcal{O}(1)\frac{L^{2}}{N}$
since it follows from the proof of Proposition~\ref{thm_one_stage} that $V_{S,\alpha}(\xi_{0})\leq\mu\Vert x^{(0)}-x^{\ast}\Vert^{2}+\mu\frac{C_{1}^{2}N\alpha^{2}}{(1-\gamma)^{2}}$.
When $\alpha$ is sufficiently small, 
\begin{align*}
\frac{8}{\gamma^{2}\mu\sqrt{\mu}}\sqrt{\alpha}H_{1}H_{3}\frac{\gamma^{2k}-(1-\sqrt{\alpha\mu}/2)^{k}}{\gamma^{2}-(1-\sqrt{\alpha\mu}/2)}
&\leq
\frac{8}{\gamma^{2}\mu\sqrt{\mu}}\sqrt{\alpha}H_{1}H_{3}\frac{(1-\sqrt{\alpha\mu}/2)^{k}}{(1-\sqrt{\alpha\mu}/2)-\gamma^{2}}
\\
&\leq
\left(1-\frac{\sqrt{\alpha\mu}}{2}\right)^{k}\frac{2V_{\bar{S},\alpha}\left(\bar{\xi}_{0}\right)}{\mu}.
\end{align*}
Moreover, it follows similarly as in the proof of Proposition~\ref{thm_one_stage} that $V_{\bar{S},\alpha}(\bar{\xi}_{0})\leq\frac{\mu}{N}\Vert x^{(0)}-x^{\ast}\Vert^{2}+\mu\frac{C_{1}^{2}\alpha^{2}}{(1-\gamma)^{2}}$.
Hence, as $\alpha\rightarrow 0$, we have
\begin{align*}
&\mathbb{E}\left\Vert\bar{x}^{(k)}-x_{\ast}\right\Vert^{2}
\\
&\leq
\mathcal{O}(1)\left(\left(1-\frac{\sqrt{\alpha\mu}}{2}\right)^{k}\frac{1}{N}\Vert x^{(0)}-x^{\ast}\Vert^{2}
+\frac{1}{\mu\sqrt{\mu}}
\left(\frac{\sigma^2  \sqrt{\alpha}}{N}+\frac{1}{\sqrt{\mu}}\alpha \frac{L^{2}}{N}\frac{C_{0}}{(1-\gamma)^{2}}\right)\right).
\end{align*}
Then, similar as in Corollary~\ref{cor_D-MASG}, we can show
that by choosing $k_1 = \ceil{(p-2) \log(6 p \tilde{\kappa})\sqrt{\tilde{\kappa}}}$, we have
\begin{align*}
\E \left[ \left\| \bar{x}^{(k)} -x_* \right\|^2\right] \leq \bigO(1) \left ( \frac{1}{k^{p-2}} \frac{\left\| x^{(0)} - x^* \right\|^2}{N} + \frac{p \sigma^2}{N\mu\sqrt{\mu} k}  + \frac{p^4 C_{0}L^{2}( 1-\gamma)^{-2}}{N\mu^2 k^4} \right ),
\end{align*}
for any $k \geq 2 k_1$.
Also, for a given number of iterations, $k$, by choosing $p=7$
and $k_1 = \ceil{\frac{k}{C}}$ for some constant $C \geq 2$, we have
\begin{align*}
\E \left[ \left\| \bar{x}^{(k)} -x_* \right\|^2\right] \leq \bigO(1) \left ( \exp\left(-\frac{k}{C\sqrt{\tilde{\kappa}}}\right)\frac{\left\| x^{(0)} - x^* \right\|^2}{N} + \frac{  \sigma^2}{N\mu\sqrt{\mu}k}  + \frac{C_{0}L^{2}( 1-\gamma)^{-2}}{N\mu^2 k^4} \right ),
\end{align*}
for any $k \geq 2 \sqrt{\tilde{\kappa}}$, {\color{black}where $C_{1},\gamma$ are given in \eqref{eqn:asymp_suboptim} and $\tilde{\kappa}$ is given in \eqref{tilde_kappa}}.

Moreover, we recall from Proposition~\ref{prop:dsg-average-optimal-rate-DASG} that
for every $i=1,2,\ldots,N$ and any $k$,
\begin{align*}
&\mathbb{E}\left\Vert x_{i}^{(k)}-x_{\ast}\right\Vert^{2}
\\
&\leq
\left(1-\frac{\sqrt{\alpha\mu}}{2}\right)^{k}\frac{4V_{\bar{S},\alpha}\left(\bar{\xi}_{0}\right)}{\mu}
+\frac{8}{\mu\sqrt{\mu}}
\left(\left(1+\frac{\sqrt{\alpha\mu}}{2}\right)\frac{\sigma^2  \sqrt{\alpha}}{2N}\left(1+\alpha L \right)+\frac{1}{2\sqrt{\mu}}\alpha H_{1}H_{2}\right)
\\
&\qquad
+\frac{16}{\gamma^{2}\mu\sqrt{\mu}}\sqrt{\alpha}H_{1}H_{3}\frac{\gamma^{2k}-(1-\sqrt{\alpha\mu}/2)^{k}}{\gamma^{2}-(1-\sqrt{\alpha\mu}/2)}
\\
&\qquad
+16\gamma^{2k}\left(4\frac{ V_{S,\alpha}\left(\xi_{0}\right)}{\mu}
+ \frac{2\sigma^2 N \sqrt{\alpha}}{\mu\sqrt{\mu}}\left(2-\lambda_N^W +\alpha L \right) +  \frac{2C_{1}^{2}N\alpha^2}{(1-\gamma)^2}
+\Vert x^{\ast}\Vert^{2}\right)
\\
&\qquad\qquad
+\frac{8D_{y}^{2}\alpha^{2}}{(1-\gamma)^{2}}
+\frac{8\sigma^{2}N\alpha^{2}}{(1-\gamma)^{2}}
+\frac{16C_{0}\alpha}{(1-\gamma)^{2}}.
\end{align*}
When $\alpha$ is sufficiently small, 
\begin{align*}
&16\gamma^{2k}\left(4\frac{ V_{S,\alpha}\left(\xi_{0}\right)}{\mu}
+ \frac{2\sigma^2 N \sqrt{\alpha}}{\mu\sqrt{\mu}}\left(2-\lambda_N^W +\alpha L \right) +  \frac{2C_{1}^{2}N\alpha^2}{(1-\gamma)^2}
+\Vert x^{\ast}\Vert^{2}\right)
\\
&\leq
\left(1-\frac{\sqrt{\alpha\mu}}{2}\right)^{k}\frac{4V_{\bar{S},\alpha}\left(\bar{\xi}_{0}\right)}{\mu}.
\end{align*}
Similar as before, we can show that
as $\alpha\rightarrow 0$, we have
\begin{align*}
&\mathbb{E}\left\Vert x_{i}^{(k)}-x_{\ast}\right\Vert^{2}
\\
&\leq
\mathcal{O}(1)\left(\left(1-\frac{\sqrt{\alpha\mu}}{2}\right)^{k}\frac{\Vert x^{(0)}-x^{\ast}\Vert^{2}}{N}
+\frac{1}{\mu\sqrt{\mu}}
\left(\frac{\sigma^2  \sqrt{\alpha}}{N}+\frac{1}{\sqrt{\mu}}\alpha \frac{L^{2}}{N}\frac{C_{0}}{(1-\gamma)^{2}}\right)
+\frac{C_{0}\alpha}{(1-\gamma)^{2}}\right),
\end{align*}
and thus similar as in Corollary~\ref{cor_D-MASG}, we can show
that by choosing $k_1 = \ceil{(p-2) \log(6 p \tilde{\kappa})\sqrt{\tilde{\kappa}}}$, we have
\begin{align*}
&\E \left[ \left\| x_{i}^{(k)} -x_* \right\|^2\right] 
\\
&\leq \bigO(1) \left ( \frac{1}{k^{p-2}} \frac{\left\| x^{(0)} - x^* \right\|^2}{N} + \frac{p \sigma^2}{N\mu\sqrt{\mu} k}  + \left(\frac{L^{2}}{N\mu^{2}}+1\right)\frac{p^4 C_{0}( 1-\gamma)^{-2}}{k^4} \right ),
\end{align*}
for any $k \geq 2 k_1$ and $i=1,2,\ldots,N$.
Also, for a given number of iterations, $k$, by choosing $p=7$
and $k_1 = \ceil{\frac{k}{C}}$ for some constant $C \geq 2$, we have
\begin{align*}
&\E \left[ \left\| x_{i}^{(k)} -x_* \right\|^2\right] 
\\
&\leq \bigO(1) \left ( \exp\left(-\frac{k}{C\sqrt{\tilde{\kappa}}}\right)\frac{\left\| x^{(0)} - x^* \right\|^2}{N} + \frac{  \sigma^2}{N\mu\sqrt{\mu}k}  + \left(\frac{L^{2}}{N\mu^{2}}+1\right)\frac{C_{0}( 1-\gamma)^{-2}}{k^4} \right ),
\end{align*}
for any $k \geq 2 \sqrt{\tilde{\kappa}}$ and $i=1,2,\ldots,N$, {\color{black}where $C_{1},\gamma$ are given in \eqref{eqn:asymp_suboptim} and $\tilde{\kappa}$ is given in \eqref{tilde_kappa}}.
The proof is complete.}
\end{proof}



%




\section{{\color{black}Results for More General Noise Setting}}\label{sec:general:noise}

{\color{black}Consider the following assumption on noise which is more general than Assumption~\ref{assump_1},
and we will show that the main results in this paper for D-SG and D-ASG still hold
under this more general assumption on gradient noise.}

\begin{assumption}\label{assump:unbounded}
{\color{black}Recall that $x_i^{(k)}$ denotes the decision variable of node $i$ at iteration $k$. We assume at iteration $k$, node $i$ has access to $\tilde{\nabla} f_i \left(x_i^{(k)}, w_i^{(k)}\right)$ which is an estimate of $\nabla f_i \left(x_i^{(k)}\right)$ where $w_{i}^{(k)}$ is a random variable independent of $\left\{w_{j}^{(t)}\right\}_{j=1,\ldots,N, t=1,\ldots,k-1}$ and $\left\{w_{j}^{(k)}\right\}_{j \neq i}$. Moreover, we assume $\mathbb{E}\left[\tilde{\nabla} f_i \left(x_i^{(k)}, w_i^{(k)}\right)\Big|x_i^{(k)}\right]= \nabla f_i \left(x_i^{(k)}\right)$ and
\begin{equation*}
\mathbb{E}\left[\left\Vert \tilde{\nabla} f_i \left(x_i^{(k)}, w_i^{(k)}\right) - \nabla f_i \left(x_i^{(k)}\right) \right\Vert^{2}\Big|x_i^{(k)}\right]\leq\sigma^{2}
+\frac{\eta^{2}}{2}\left\Vert x_{i}^{(k)}-x_{\ast}\right\Vert^{2}. 
\end{equation*}	
for some constant $\eta > 0$. To simplify the notation, we suppress the $w_i^{(k)}$ dependence, and denote $\tilde{\nabla} f_i \left(x_i^{(k)}, w_i^{(k)}\right)$ by $\tilde{\nabla} f_i \left(x_i^{(k)}\right)$.}
\end{assumption}
\rev{Such assumptions could hold if gradients are estimates from batches (randomly selected subset of data points) in the context of empirical risk minimization problems \citep{jain2018accelerating, gurbuzbalaban2020decentralized}. The constant $\eta^2$ is often inversely proportional to the batch size (see e.g. \cite{raginsky2017non})}.
\subsection{Distributed stochastic gradient (D-SG)}

{\color{black}Let us recall the D-SG in \eqref{eqn:dsg_update},
which takes the equivalent form \eqref{eq-iter-dgd}.
Then it follows from Assumption~\ref{assump:unbounded} that $\mathbb{E}\left[\tilde{\nabla}F\left(x^{(k)}\right)\Big|x^{(k)}\right]= \nabla F\left(x^{(k)}\right)$ and
\begin{equation}
\mathbb{E}\left[\left\Vert \tilde{\nabla}F\left(x^{(k)}\right) - \nabla F\left(x^{(k)}\right) \right\Vert^{2}\Big|x^{(k)}\right]
\leq\sigma^{2}N+\frac{\eta^{2}}{2}\left\Vert x^{(k)}-x^{\ast}\right\Vert^{2}.
\end{equation}
We recall that $\Vert x^{\infty}-x^{\ast}\Vert\leq\frac{\alpha C_{1}\sqrt{N}}{(1-\gamma)}$
from \eqref{eqn:asymp_suboptim}.
Therefore, we have
\begin{align}
\mathbb{E}\left[\left\Vert \tilde{\nabla}F\left(x^{(k)}\right) - \nabla F\left(x^{(k)}\right) \right\Vert^{2}\Big|x^{(k)}\right]
&\leq\sigma^{2}N+\eta^{2}\left\Vert x^{(k)}-x^{\infty}\right\Vert^{2}
+\eta^{2}\left\Vert x^{\infty}-x^{\ast}\right\Vert^{2}
\nonumber
\\
&\leq
\left(\sigma^{2}+\eta^{2}\frac{\alpha^{2}(C_{1})^{2}}{(1-\gamma)^{2}}\right)N+\eta^{2}\left\Vert x^{(k)}-x^{\infty}\right\Vert^{2}.
\end{align}}

{\color{black}We have the following explicit performance bounds on the convergence and the robustness of D-SG iterates.}

\begin{theorem}\label{thm:DSG:unbounded}
{\color{black}Assume that $\alpha\leq\frac{\frac{1}{2}+\lambda_{N}^{W}}{L+\frac{\eta^{2}}{\mu}}$.
For any $k\geq 0$, 
\begin{align*}
\mathbb{E}\left\Vert x^{(k)}-x^{\ast}\right\Vert^{2}
\leq
2\left(1-\alpha\mu/2\right)^{k}\mathbb{E}\left\Vert x^{(0)}-x^{\infty}\right\Vert^{2}
+\frac{4\alpha}{\mu}\left(\sigma^{2}+\eta^{2}\frac{\alpha^{2}(C_{1})^{2}}{(1-\gamma)^{2}}\right)N
+\frac{2\alpha^{2}(C_{1})^{2}N}{(1-\gamma)^{2}}.
\end{align*}}
\end{theorem}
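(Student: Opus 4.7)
The plan is to reformulate D-SG as a centralized stochastic gradient method on the penalized objective $F_{\mathcal{W},\alpha}$ from \eqref{def-pen-obj} and then track the one-step contraction of $\xi_k := x^{(k)} - x^\infty$. Recall that $F_{\mathcal{W},\alpha}\in\mathcal{S}_{\mu,L_\alpha}(\mathbb{R}^{Nd})$ with $L_\alpha = (1-\lambda_N^W)/\alpha + L$ from \eqref{def-L-alpha}, and that $\nabla F_{\mathcal{W},\alpha}(x^\infty)=0$. Writing $\omega_k := \tilde\nabla F(x^{(k)}) - \nabla F(x^{(k)})$, I would decompose
\begin{equation*}
\xi_{k+1} = \xi_k - \alpha\bigl[\nabla F_{\mathcal{W},\alpha}(x^{(k)}) - \nabla F_{\mathcal{W},\alpha}(x^\infty)\bigr] - \alpha\,\omega_k,
\end{equation*}
take conditional expectations, and split the analysis into a deterministic contraction piece and a noise piece.

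For the deterministic contraction I would invoke co-coercivity of $\nabla F_{\mathcal{W},\alpha}$, $\|\nabla F_{\mathcal{W},\alpha}(x)-\nabla F_{\mathcal{W},\alpha}(x^\infty)\|^2 \leq L_\alpha\langle\nabla F_{\mathcal{W},\alpha}(x)-\nabla F_{\mathcal{W},\alpha}(x^\infty),\,x-x^\infty\rangle$, together with $\mu$-strong convexity to obtain
\begin{equation*}
\bigl\|\xi_k - \alpha[\nabla F_{\mathcal{W},\alpha}(x^{(k)})-\nabla F_{\mathcal{W},\alpha}(x^\infty)]\bigr\|^2 \leq \bigl(1-\alpha\mu(2-\alpha L_\alpha)\bigr)\|\xi_k\|^2,
\end{equation*}
valid as soon as $\alpha\leq 2/L_\alpha$, i.e.\ $\alpha L\leq 1+\lambda_N^W$, which is implied by the hypothesis. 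A short computation gives $2-\alpha L_\alpha = 1+\lambda_N^W - \alpha L$, and the assumed stepsize bound $\alpha(L+\eta^2/\mu)\leq 1/2+\lambda_N^W$ rearranges to $1+\lambda_N^W-\alpha L \geq 1/2 + \alpha\eta^2/\mu$, hence $\alpha\mu(2-\alpha L_\alpha) \geq \alpha\mu/2 + \alpha^2\eta^2$.

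For the noise piece I would use Assumption~\ref{assump:unbounded} together with a triangle inequality to re-center the state-dependent variance around $x^\infty$, using Lemma~\ref{lem:C1:gamma} to bound $\|x^\infty-x^*\|^2 \leq \alpha^2 C_1^2 N/(1-\gamma)^2$. This yields the aggregated bound
\begin{equation*}
\mathbb{E}\bigl[\|\omega_k\|^2\,\big|\,x^{(k)}\bigr]\leq \Bigl(\sigma^{2}+\eta^{2}\tfrac{\alpha^{2}C_{1}^{2}}{(1-\gamma)^{2}}\Bigr)N+\eta^{2}\|\xi_k\|^{2}.
\end{equation*}
Combining with the deterministic estimate, the $\alpha^2\eta^2\|\xi_k\|^2$ piece of the noise is \emph{exactly} absorbed by the slack in the contraction factor, producing the clean one-step recursion
\begin{equation*}
\mathbb{E}\|\xi_{k+1}\|^{2} \leq \Bigl(1-\tfrac{\alpha\mu}{2}\Bigr)\mathbb{E}\|\xi_k\|^{2} + \alpha^{2}\Bigl(\sigma^{2}+\eta^{2}\tfrac{\alpha^{2}C_{1}^{2}}{(1-\gamma)^{2}}\Bigr)N.
\end{equation*}

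I would then iterate this recursion, bound the resulting geometric series by $2/(\alpha\mu)$, and finally convert the bound on $\mathbb{E}\|\xi_k\|^2$ into one on $\mathbb{E}\|x^{(k)}-x^*\|^2$ via the triangle inequality $\|x^{(k)}-x^*\|^2\leq 2\|\xi_k\|^2+2\|x^\infty-x^*\|^2$ and a second application of \eqref{eqn:asymp_suboptim}. The factor of $4\alpha/\mu$ in the noise term of the claim is $2\cdot(2/\mu)$ from this doubling and the geometric sum, and the final $2\alpha^2 C_1^2N/(1-\gamma)^2$ is the network-effect term from $\|x^\infty-x^*\|^2$. The main obstacle is exactly the cancellation in the displayed recursion: the stepsize condition is tuned so that the extra $\alpha^2\eta^2$ from the state-dependent component of the noise matches the slack left behind in the deterministic contraction factor; identifying this structural matching is what makes the stepsize condition take the somewhat unusual form $\alpha\leq (1/2+\lambda_N^W)/(L+\eta^2/\mu)$.
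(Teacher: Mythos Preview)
Your proposal is correct and follows essentially the same route as the paper's proof: both rewrite D-SG as centralized SGD on $F_{\mathcal{W},\alpha}$, use co-coercivity plus $\mu$-strong convexity to obtain the contraction factor $1-\alpha\mu(2-\alpha L_\alpha)=1-2\alpha\mu\bigl(1-\tfrac{\alpha L_\alpha}{2}\bigr)$, re-center the state-dependent noise around $x^\infty$ via $\|x^\infty-x^*\|\leq \alpha C_1\sqrt{N}/(1-\gamma)$, and then observe that the stepsize assumption makes $1-\tfrac{\alpha L_\alpha}{2}-\tfrac{\alpha\eta^2}{2\mu}\geq \tfrac14$ so that the $\alpha^2\eta^2\|\xi_k\|^2$ term is absorbed, yielding the one-step recursion with factor $1-\alpha\mu/2$; iteration and the triangle inequality then give the claim. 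The only cosmetic difference is that you isolate the slack $\alpha^2\eta^2$ explicitly and cancel it against the noise, whereas the paper folds it into the parenthesis $1-\tfrac{\alpha L_\alpha}{2}-\tfrac{\alpha\eta^2}{2\mu}\geq \tfrac14$; these are the same computation.
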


\begin{proof}
{\color{black}The D-SG iterates are given by
\begin{equation}
x^{(k+1)}=x^{(k)}-\alpha\nabla F_{\mathcal{W},\alpha}\left(x^{(k)}\right)-\alpha\xi^{(k+1)},
\end{equation}
where $\mathbb{E}\left[\xi^{(k+1)}|\mathcal{F}_{k}\right]=0$
and
\begin{equation}\label{E:upper:bound:unbounded}
\mathbb{E}\left[\left\Vert\xi^{(k+1)}\right\Vert^{2}\Big|\mathcal{F}_{k}\right]
\leq
\left(\sigma^{2}+\eta^{2}\frac{\alpha^{2}(C_{1})^{2}}{(1-\gamma)^{2}}\right)N+\eta^{2}\left\Vert x^{(k)}-x^{\infty}\right\Vert^{2}.
\end{equation}
Therefore, we can compute that
\begin{align*}
\mathbb{E}\left\Vert x^{(k+1)}-x^{\infty}\right\Vert^{2}
&=\mathbb{E}\left\Vert x^{(k)}-x^{\infty}-\alpha\nabla F_{\mathcal{W},\alpha}\left(x^{(k)}\right)-\alpha\xi^{(k+1)}\right\Vert^{2}
\\
&=\mathbb{E}\left\Vert x^{(k)}-x^{\infty}-\alpha\nabla F_{\mathcal{W},\alpha}\left(x^{(k)}\right)\right\Vert^{2}
+\alpha^{2}\mathbb{E}\left\Vert\xi^{(k+1)}\right\Vert^{2}
\\
&=\mathbb{E}\left\Vert x^{(k)}-x^{\infty}\right\Vert^{2}
+\alpha^{2}\mathbb{E}\left\Vert\nabla F_{\mathcal{W},\alpha}\left(x^{(k)}\right)\right\Vert^{2}
\\
&\qquad\qquad
-2\alpha\mathbb{E}\left\langle x^{(k)}-x^{\infty},\nabla F_{\mathcal{W},\alpha}\left(x^{(k)}\right)\right\rangle
+\alpha^{2}\mathbb{E}\left\Vert\xi^{(k+1)}\right\Vert^{2}
\\
&=\mathbb{E}\left\Vert x^{(k)}-x^{\infty}\right\Vert^{2}
+\alpha^{2}L_{\alpha}\mathbb{E}\left\langle x^{(k)}-x^{\infty},\nabla F_{\mathcal{W},\alpha}\left(x^{(k)}\right)\right\rangle
\\
&\qquad\qquad
-2\alpha\mathbb{E}\left\langle x^{(k)}-x^{\infty},\nabla F_{\mathcal{W},\alpha}\left(x^{(k)}\right)\right\rangle
+\alpha^{2}\mathbb{E}\left\Vert\xi^{(k+1)}\right\Vert^{2}
\\
&\leq
\left(1-2\alpha\mu\left(1-\frac{\alpha L_{\alpha}}{2}\right)\right)\mathbb{E}\left\Vert x^{(k)}-x^{\infty}\right\Vert^{2}
+\alpha^{2}\mathbb{E}\left\Vert\xi^{(k+1)}\right\Vert^{2},
\end{align*}
where we used the fact that $\mathcal{F}_{\mathcal{W},\alpha}$ is $L_{\alpha}$-smooth
and $\mu$-strongly convex and the assumption 
$\alpha\leq\frac{\frac{1}{2}+\lambda_{N}^{W}}{L+\frac{\eta^{2}}{\mu}}<\frac{1+\lambda_{N}^{W}}{L}$ so that $\alpha L_{\alpha}=1-\lambda_{N}^{W}+\alpha L<2$.
By applying \eqref{E:upper:bound:unbounded}, 
we get
\begin{align*}
\mathbb{E}\left\Vert x^{(k+1)}-x^{\infty}\right\Vert^{2}
&\leq
\left(1-2\alpha\mu\left(1-\frac{\alpha L_{\alpha}}{2}-\frac{\alpha\eta^{2}}{2\mu}\right)\right)\mathbb{E}\left\Vert x^{(k)}-x^{\infty}\right\Vert^{2}
\\
&\qquad\qquad\qquad
+\alpha^{2}\left(\sigma^{2}+\eta^{2}\frac{\alpha^{2}(C_{1})^{2}}{(1-\gamma)^{2}}\right)N.
\end{align*}
We recall that assumption $\alpha\leq\frac{\frac{1}{2}+\lambda_{N}^{W}}{L+\frac{\eta^{2}}{\mu}}$
so that $1-\frac{\alpha L_{\alpha}}{2}-\frac{\alpha\eta^{2}}{2\mu}\geq\frac{1}{4}$.
Therefore, we have
\begin{align*}
\mathbb{E}\left\Vert x^{(k+1)}-x^{\infty}\right\Vert^{2}
\leq
\left(1-\alpha\mu/2\right)\mathbb{E}\left\Vert x^{(k)}-x^{\infty}\right\Vert^{2}
+\alpha^{2}\left(\sigma^{2}+\eta^{2}\frac{\alpha^{2}(C_{1})^{2}}{(1-\gamma)^{2}}\right)N,
\end{align*}
which implies that
\begin{align*}
\mathbb{E}\left\Vert x^{(k)}-x^{\infty}\right\Vert^{2}
\leq
\left(1-\alpha\mu/2\right)^{k}\mathbb{E}\left\Vert x^{(0)}-x^{\infty}\right\Vert^{2}
+\frac{2\alpha}{\mu}\left(\sigma^{2}+\eta^{2}\frac{\alpha^{2}(C_{1})^{2}}{(1-\gamma)^{2}}\right)N.
\end{align*}
Hence, we conclude that
\begin{align*}
\mathbb{E}\left\Vert x^{(k)}-x^{\ast}\right\Vert^{2}
\leq
2\left(1-\alpha\mu/2\right)^{k}\mathbb{E}\left\Vert x^{(0)}-x^{\infty}\right\Vert^{2}
+\frac{4\alpha}{\mu}\left(\sigma^{2}+\eta^{2}\frac{\alpha^{2}(C_{1})^{2}}{(1-\gamma)^{2}}\right)N
+\frac{2\alpha^{2}(C_{1})^{2}N}{(1-\gamma)^{2}}.
\end{align*}}
\end{proof}

{\color{black}
Next, we will provide the performance bounds for the average iterates and individual iterates.
Before we proceed, let us first introduce and prove a few technical lemmas.
Let us recall that
\begin{equation*}
\mathcal{E}_{k+1}
:= \nabla f\left(\bar{x}^{(k)}\right)-\frac{1}{N}\sum_{i=1}^{N}\nabla f_{i}\left(x_{i}^{(k)}\right).
\end{equation*}
Next, we will show that the error term $\mathcal{E}_{k+1}$ is small
and for every $i=1,2,\ldots,N$, $x_{i}^{(k)}$ is close to the average $\bar{k}^{(k)}$.}

\begin{lemma}\label{E:upper:bound:DSG:unbounded}
\rev{Assume that $\alpha\leq\frac{\frac{1}{2}+\lambda_{N}^{W}}{L+\frac{\eta^{2}}{\mu}}$ and $\alpha\mu(1+\lambda_{N}^{W}-\alpha L)<1$.
For any $k$ and $i=1,2,\ldots,N$, we have
\begin{equation}
\mathbb{E}\left\Vert x_{i}^{(k)}-\bar{x}^{(k)}\right\Vert^{2}
\leq
\sum_{i=1}^{N}\mathbb{E}\left\Vert x_{i}^{(k)}-\bar{x}^{(k)}\right\Vert^{2}
\leq
4\gamma^{2k}\mathbb{E}\left\Vert x^{(0)}\right\Vert^{2}
+\frac{4D_{1}^{2}\alpha^{2}}{(1-\gamma)^{2}}
+\frac{4N\alpha^{2}}{(1-\gamma^{2})}D_{2}^{2},
\end{equation}
and for any $k$, we have
\begin{align*}
\mathbb{E}\left\Vert\mathcal{E}_{k+1}\right\Vert^{2}
\leq
\frac{4L^{2}\gamma^{2k}}{N}\mathbb{E}\left\Vert x^{(0)}\right\Vert^{2}
+\frac{4L^{2}D_{1}^{2}\alpha^{2}}{N(1-\gamma)^{2}}
+\frac{4L^{2}\alpha^{2}}{(1-\gamma^{2})}D_{2}^{2},
\end{align*} 
where 
\begin{align}
&D_{1}^{2}:=L^{2}\mathbb{E}\left\Vert x^{(0)}-x^{\infty}\right\Vert^{2}
+L^{2}\frac{2\alpha}{\mu}\left(\sigma^{2}+\eta^{2}\frac{\alpha^{2}(C_{1})^{2}}{(1-\gamma)^{2}}\right)N,
\\
&D_{2}^{2}:=\left(\sigma^{2}+\eta^{2}\frac{\alpha^{2}(C_{1})^{2}}{(1-\gamma)^{2}}\right)\frac{\mu+2\alpha}{\mu}
+\frac{\eta^{2}}{N}\mathbb{E}\left\Vert x^{(0)}-x^{\infty}\right\Vert^{2}.
\end{align}} 
\end{lemma}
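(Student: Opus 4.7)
\textbf{Proof plan for Lemma~\ref{E:upper:bound:DSG:unbounded}.} The strategy is to mimic the consensus-error analysis in Lemma~\ref{lem:2} and Lemma~\ref{bar:x:i:distance} (i.e., Lemmas~6--7 of \cite{gurbuzbalaban2020decentralized}), but with the bounded-variance assumption replaced by the state-dependent bound of Assumption~\ref{assump:unbounded}. The central new ingredient will be a uniform-in-$k$ bound on $\mathbb{E}\Vert x^{(k)}-x^{\infty}\Vert^{2}$ which converts the state-dependent noise back into a constant so that the usual geometric-series argument can still close.

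First I would introduce the averaging projector $J:=(\tfrac{1}{N}\mathbf{1}\mathbf{1}^{T})\otimes I_{d}$, noting that $\mathcal{W}J=J\mathcal{W}=J$ because $W$ is doubly stochastic. Applying $I-J$ to the D-SG recursion \eqref{eq-iter-dgd} gives
\begin{equation*}
x^{(k+1)}-J x^{(k+1)}=(\mathcal{W}-J)\left(x^{(k)}-J x^{(k)}\right)-\alpha (I-J)\tilde{\nabla}F\left(x^{(k)}\right).
\end{equation*}
Since $\Vert \mathcal{W}-J\Vert_{2}\leq\gamma$ and $\Vert I-J\Vert_{2}\leq 1$, iterating and writing $\tilde{\nabla}F(x^{(k)})=\nabla F(x^{(k)})+\zeta^{(k+1)}$ where $\zeta^{(k+1)}$ is the martingale-difference gradient noise, yields after taking squared norms and using that the noise is a conditional martingale difference (so cross terms vanish in expectation)
\begin{equation*}
\mathbb{E}\Vert x^{(k)}-J x^{(k)}\Vert^{2}\leq 4\gamma^{2k}\Vert x^{(0)}\Vert^{2}+4\alpha^{2}\sum_{j=0}^{k-1}\gamma^{2(k-1-j)}\left(\mathbb{E}\Vert\nabla F(x^{(j)})\Vert^{2}+\mathbb{E}\Vert\zeta^{(j+1)}\Vert^{2}\right),
\end{equation*}
up to constants. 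Here I use $\sum_j\gamma^{2(k-1-j)}\leq 1/(1-\gamma^{2})$ for the noise term and $(\sum_j \gamma^{k-1-j})^{2}\leq 1/(1-\gamma)^{2}$ for the deterministic signal, which explains the two different denominators appearing in the stated bound.

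The main obstacle is the self-referential nature of the $\eta^{2}$ term in Assumption~\ref{assump:unbounded}: the noise variance depends on $\Vert x_{i}^{(k)}-x_{\ast}\Vert^{2}$, which is itself what we are trying to control. To handle this, I would extract from the proof of Theorem~\ref{thm:DSG:unbounded} the uniform bound
\begin{equation*}
\sup_{k\geq 0}\mathbb{E}\Vert x^{(k)}-x^{\infty}\Vert^{2}\leq \mathbb{E}\Vert x^{(0)}-x^{\infty}\Vert^{2}+\frac{2\alpha}{\mu}\left(\sigma^{2}+\eta^{2}\frac{\alpha^{2}C_{1}^{2}}{(1-\gamma)^{2}}\right)N,
\end{equation*}
which comes directly from the geometric sum in the contractive recursion in the proof of Theorem~\ref{thm:DSG:unbounded} (under the assumed stepsize condition $\alpha\le(\tfrac{1}{2}+\lambda_{N}^{W})/(L+\eta^{2}/\mu)$). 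Combined with $\Vert x^{\infty}-x^{\ast}\Vert^{2}\leq \alpha^{2}C_{1}^{2}N/(1-\gamma)^{2}$ from Lemma~\ref{lem:C1:gamma} and $\Vert x^{(k)}-x^{\ast}\Vert^{2}\leq 2\Vert x^{(k)}-x^{\infty}\Vert^{2}+2\Vert x^{\infty}-x^{\ast}\Vert^{2}$, Assumption~\ref{assump:unbounded} then gives a uniform bound $\mathbb{E}\Vert\zeta^{(k+1)}\Vert^{2}\leq N D_{2}^{2}$ with the $D_{2}^{2}$ as declared (the $(\mu+2\alpha)/\mu$ factor arising from collecting the $\sigma^{2}$ contribution with the $\eta^{2}\cdot 2\alpha/\mu\cdot\sigma^{2}$ contribution produced by the uniform iterate bound). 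Analogously, $L$-smoothness of $F$ together with the identity $\nabla F(x^{\infty})=\alpha^{-1}(\mathcal{W}-I)x^{\infty}$ gives $\mathbb{E}\Vert\nabla F(x^{(k)})\Vert^{2}\leq D_{1}^{2}$.

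Plugging these uniform bounds into the iterated consensus recursion produces the first claimed inequality for $\sum_{i}\mathbb{E}\Vert x_{i}^{(k)}-\bar{x}^{(k)}\Vert^{2}$; the bound for an individual $i$ follows since one term dominates the sum. For the bound on $\mathbb{E}\Vert\mathcal{E}_{k+1}\Vert^{2}$, I would write
\begin{equation*}
\mathcal{E}_{k+1}=\frac{1}{N}\sum_{i=1}^{N}\left(\nabla f_{i}\left(\bar{x}^{(k)}\right)-\nabla f_{i}\left(x_{i}^{(k)}\right)\right),
\end{equation*}
apply Jensen's inequality and $L$-smoothness of each $f_{i}$ to get $\Vert\mathcal{E}_{k+1}\Vert^{2}\leq (L^{2}/N)\sum_{i}\Vert x_{i}^{(k)}-\bar{x}^{(k)}\Vert^{2}$, and then invoke the consensus bound just proved. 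The only subtlety I anticipate is arithmetic bookkeeping to match exactly the constants $D_{1}^{2}$ and $D_{2}^{2}$ (in particular the $(\mu+2\alpha)/\mu$ factor), which is routine once the uniform iterate bound is in hand.
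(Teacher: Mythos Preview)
Your proposal is correct and follows essentially the same approach as the paper's own proof. The paper explicitly states that the argument reuses Lemmas~6--7 of \cite{gurbuzbalaban2020decentralized} (i.e., the consensus-error decomposition you describe with the projector $I-J$), with the only modification being to replace the bounded-variance constants by uniform-in-$k$ bounds obtained from the contractive recursion in the proof of Theorem~\ref{thm:DSG:unbounded}: namely $\mathbb{E}\Vert\nabla F(x^{(k)})\Vert^{2}\leq D_{1}^{2}$ via $L$-smoothness and the uniform iterate bound, and $\mathbb{E}\Vert\zeta^{(k+1)}\Vert^{2}\leq N D_{2}^{2}$ via the state-dependent noise bound combined with that same uniform iterate bound; the $\mathcal{E}_{k+1}$ inequality then follows by Jensen and $L$-smoothness exactly as you outline.
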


\begin{proof}
{\color{black}The proof of Lemma~\ref{E:upper:bound:DSG:unbounded} will be provided
in Appendix~\ref{sec:technical}.}
\end{proof}

{\color{black}Let us define $x_{k}$ as the iterates of the decentralized algorithm:
\begin{equation*} 
x_{k+1}=x_{k}-\alpha  \nabla f\left(x_{k}\right)
-\alpha\bar{\xi}^{(k+1)},
\end{equation*}
with $x_{0}=\bar{x}^{(0)}$, where we recall that
\begin{equation*}
\bar{\xi}^{(k+1)}:=\frac{1}{N}\sum_{i=1}^{N}\left(\tilde{\nabla}f_{i}\left(x_{i}^{(k)}\right)-\nabla f_{i}\left(x_{i}^{(k)}\right)\right),
\end{equation*}
so that we have 
\begin{equation}\label{bar:xi:ineq}
\mathbb{E}\left[\bar{\xi}^{(k+1)}\Big|\mathcal{F}_{k}\right]=0,
\qquad
\mathbb{E}\left\Vert\bar{\xi}^{(k+1)}\Big|\mathcal{F}_{k}\right\Vert^{2}
\leq\frac{\sigma^{2}}{N}
+\frac{\eta^{2}}{2N^{2}}\left\Vert x^{(k)}-x^{\ast}\right\Vert^{2}.
\end{equation}

Next, we will show that $x_{k}$ and the average iterates $\bar{x}^{(k)}$ are close
to each other in the $L^{2}$ norm.}

\begin{lemma}\label{lem:central:approx:unbounded}
{\color{black}Assume that $\alpha\leq\frac{\frac{1}{2}+\lambda_{N}^{W}}{L+\frac{\eta^{2}}{\mu}}$ and $\alpha\mu(1+\lambda_{N}^{W}-\alpha L)<1$.
For any $k$, we have
\begin{align*}
\mathbb{E}\left\Vert\bar{x}^{(k)}-x_{k}\right\Vert^{2}
&\leq\alpha\left(\frac{\alpha}{\mu(1-\frac{\alpha L}{2})}+\frac{(1+\alpha L)^{2}}{\mu^{2}(1-\frac{\alpha L}{2})^{2}}\right)
\left(\frac{4L^{2}D_{1}^{2}\alpha}{N(1-\gamma)^{2}}
+\frac{4L^{2}\alpha}{(1-\gamma^{2})}D_{2}^{2}\right)
\\
&\qquad\qquad
+\frac{\gamma^{2k}-
\left(1-\alpha\mu\left(1-\frac{\alpha L}{2}\right)\right)^{k}}
{\gamma^{2}-1+\alpha\mu\left(1-\frac{\alpha L}{2}\right)}
\frac{4L^{2}\gamma^{2}}{N}\mathbb{E}\left\Vert x^{(0)}\right\Vert^{2},
\end{align*}
where $D_{1},D_{2}$ are defined in Lemma~\ref{E:upper:bound:DSG:unbounded}.}
\end{lemma}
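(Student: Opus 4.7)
The strategy is to mirror the proof of Lemma~\ref{lem:central:approx} step for step, with the bounded-variance constants $D^{2}$ and $\sigma^{2}$ replaced by $D_{1}^{2}$ and $D_{2}^{2}$ respectively (these two constants were tailored in Lemma~\ref{E:upper:bound:DSG:unbounded} precisely so that the same template carries over). The starting observation is that the centralized iterates $x_{k}$ are driven by the same noise sequence $\bar{\xi}^{(k+1)}$ as the node averages $\bar{x}^{(k)}$; subtracting the two update rules therefore cancels the noise and leaves a purely deterministic (conditionally on $\mathcal{F}_{k}$) recursion
\begin{equation*}
\Delta_{k+1}
=\Delta_{k}-\alpha\bigl(\nabla f(\bar{x}^{(k)})-\nabla f(x_{k})\bigr)+\alpha\mathcal{E}_{k+1},
\qquad \Delta_{k}:=\bar{x}^{(k)}-x_{k},
\end{equation*}
with $\Delta_{0}=0$ since $x_{0}=\bar{x}^{(0)}$.

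Next, I would expand $\|\Delta_{k+1}\|^{2}$ and combine strong convexity with smoothness of $f$ in the standard way: using $\langle \nabla f(\bar{x}^{(k)})-\nabla f(x_{k}),\Delta_{k}\rangle\geq \mu\|\Delta_{k}\|^{2}$ together with $\|\nabla f(\bar{x}^{(k)})-\nabla f(x_{k})\|^{2}\leq L\langle \nabla f(\bar{x}^{(k)})-\nabla f(x_{k}),\Delta_{k}\rangle$ yields
\begin{equation*}
\|\Delta_{k}-\alpha(\nabla f(\bar{x}^{(k)})-\nabla f(x_{k}))\|^{2}
\leq \bigl(1-2\alpha\mu(1-\alpha L/2)\bigr)\|\Delta_{k}\|^{2}.
\end{equation*}
I would then apply Young's inequality $\|a+b\|^{2}\leq(1+\epsilon)\|a\|^{2}+(1+1/\epsilon)\|b\|^{2}$ with $\epsilon$ chosen so that the contraction factor absorbs into $\rho:=1-\alpha\mu(1-\alpha L/2)$ (the square root of the crude factor, which is why the explicit $\alpha/\mu(1-\alpha L/2)+(1+\alpha L)^{2}/\mu^{2}(1-\alpha L/2)^{2}$ prefactor appears in front of $\alpha^{2}\mathbb{E}\|\mathcal{E}_{k+1}\|^{2}$). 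This yields a recursion of the form
\begin{equation*}
\mathbb{E}\|\Delta_{k+1}\|^{2}\leq \rho\,\mathbb{E}\|\Delta_{k}\|^{2}
+\alpha\left(\frac{\alpha}{\mu(1-\alpha L/2)}+\frac{(1+\alpha L)^{2}}{\mu^{2}(1-\alpha L/2)^{2}}\right)\alpha\,\mathbb{E}\|\mathcal{E}_{k+1}\|^{2}.
\end{equation*}

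To finish, I would substitute the upper bound on $\mathbb{E}\|\mathcal{E}_{k+1}\|^{2}$ established in Lemma~\ref{E:upper:bound:DSG:unbounded}, which has the form $c_{1}\gamma^{2k}+c_{2}$ with $c_{1}=\frac{4L^{2}\gamma^{2}}{N}\mathbb{E}\|x^{(0)}\|^{2}$ (after writing $\gamma^{2(k-1)}=\gamma^{2k}/\gamma^{2}$ if needed) and $c_{2}=\frac{4L^{2}D_{1}^{2}\alpha^{2}}{N(1-\gamma)^{2}}+\frac{4L^{2}\alpha^{2}}{1-\gamma^{2}}D_{2}^{2}$. Unrolling the recursion from $\Delta_{0}=0$ produces a geometric sum $\sum_{j=0}^{k-1}\rho^{k-1-j}(c_{1}\gamma^{2j}+c_{2})$, whose closed form gives the two characteristic terms $\frac{\gamma^{2k}-\rho^{k}}{\gamma^{2}-\rho}\cdot c_{1}$ and $\frac{1-\rho^{k}}{1-\rho}\cdot c_{2}$; bounding $\frac{1-\rho^{k}}{1-\rho}\leq\frac{1}{1-\rho}=\frac{1}{\alpha\mu(1-\alpha L/2)}$ introduces the extra $1/(\alpha\mu(1-\alpha L/2))$ factor that, together with the Young prefactor, assembles into exactly the coefficient displayed in the statement.

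The only delicate step is the choice of the Young parameter in the second paragraph, since a naive choice produces the inferior contraction factor $1-2\alpha\mu(1-\alpha L/2)$ and a mismatched additive constant; one needs $\epsilon$ of order $\alpha\mu(1-\alpha L/2)$ so that $(1+\epsilon)(1-2\alpha\mu(1-\alpha L/2))\leq \rho$ while $(1+1/\epsilon)\alpha^{2}\mathbb{E}\|\mathcal{E}_{k+1}\|^{2}$ stays of order $(\alpha/\mu)\mathbb{E}\|\mathcal{E}_{k+1}\|^{2}$; once this calibration is done, the two stepsize conditions $\alpha\leq\tfrac{1/2+\lambda_{N}^{W}}{L+\eta^{2}/\mu}$ and $\alpha\mu(1+\lambda_{N}^{W}-\alpha L)<1$ guarantee $\rho\in(0,1)$ and $\gamma^{2}\neq\rho$, validating the geometric sum formula.
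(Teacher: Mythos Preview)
Your approach is correct and matches the paper's, which simply says the proof is identical to that of Lemma~\ref{lem:central:approx} with $D^{2}$ and $\sigma^{2}$ replaced by $D_{1}^{2}$ and $D_{2}^{2}$ via Lemma~\ref{E:upper:bound:DSG:unbounded}. One small technical point: in the proof of Lemma~\ref{lem:central:approx} the paper does not apply Young to $\|a+b\|^{2}$ directly; it expands the square fully, keeps the $\alpha^{2}\|\mathcal{E}_{k+1}\|^{2}$ term, bounds the cross term $2\langle a,\alpha\mathcal{E}_{k+1}\rangle$ using $\|a\|\leq(1+\alpha L)\|\Delta_{k}\|$ (triangle inequality plus $L$-smoothness), and only then applies $2xy\leq cx^{2}+y^{2}/c$ with $c=\frac{\mu(1-\alpha L/2)}{1+\alpha L}$ --- this is precisely where the $(1+\alpha L)^{2}$ factor originates, whereas your $\|a+b\|^{2}\leq(1+\epsilon)\|a\|^{2}+(1+1/\epsilon)\|b\|^{2}$ variant would not produce that specific constant.
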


\begin{proof}
{\color{black}The proof is similar to that of Lemma~\ref{lem:central:approx}
and is hence omitted here.}
\end{proof}

{\color{black}Finally, we are ready to present the performance bounds for the average iterates and individual iterates.}

\begin{proposition}
{\color{black}Assume that $\alpha\leq\frac{\frac{1}{2}+\lambda_{N}^{W}}{L+\frac{\eta^{2}}{\mu}}$ and $\alpha\mu(1+\lambda_{N}^{W}-\alpha L)<1$.
For any $k\geq 0$,
\begin{align*}
\mathbb{E}\left\Vert\bar{x}^{(k)}-x_{\ast}\right\Vert^{2}
&\leq
2(1-\alpha\mu)^{k}\mathbb{E}\left\Vert\bar{x}^{(0)}-x_{\ast}\right\Vert^{2}
+\frac{2\alpha}{\mu}\frac{\sigma^{2}}{N}
\\
&\quad
+\frac{2\alpha\eta^{2}}{\mu N^{2}}\left(\mathbb{E}\left\Vert x^{(0)}-x^{\infty}\right\Vert^{2}
+\frac{2\alpha}{\mu}\left(\sigma^{2}+\eta^{2}\frac{\alpha^{2}(C_{1})^{2}}{(1-\gamma)^{2}}\right)N
+\frac{\alpha^{2}(C_{1})^{2}N}{(1-\gamma)^{2}}\right)
\\
&\qquad
+\alpha\left(\frac{\alpha}{\mu(1-\frac{\alpha L}{2})}+\frac{(1+\alpha L)^{2}}{\mu^{2}(1-\frac{\alpha L}{2})^{2}}\right)
\left(\frac{8L^{2}D_{1}^{2}\alpha}{N(1-\gamma)^{2}}
+\frac{8L^{2}\alpha}{(1-\gamma^{2})}D_{2}^{2}\right)
\\
&\qquad\qquad
+\frac{\gamma^{2k}-
\left(1-\alpha\mu\left(1-\frac{\alpha L}{2}\right)\right)^{k}}
{\gamma^{2}-1+\alpha\mu\left(1-\frac{\alpha L}{2}\right)}
\frac{8L^{2}\gamma^{2}}{N}\mathbb{E}\left\Vert x^{(0)}\right\Vert^{2},
\end{align*}
and for any $k\geq 0$, $i=1,2,\ldots,N$,
\begin{align*}
\mathbb{E}\left\Vert x_{i}^{(k)}-x_{\ast}\right\Vert^{2}
&\leq
4(1-\alpha\mu)^{k}\mathbb{E}\left\Vert\bar{x}^{(0)}-x_{\ast}\right\Vert^{2}
+\frac{4\alpha}{\mu}\frac{\sigma^{2}}{N}
\\
&\quad
+\frac{4\alpha\eta^{2}}{\mu N^{2}}\left(\mathbb{E}\left\Vert x^{(0)}-x^{\infty}\right\Vert^{2}
+\frac{2\alpha}{\mu}\left(\sigma^{2}+\eta^{2}\frac{\alpha^{2}(C_{1})^{2}}{(1-\gamma)^{2}}\right)N
+\frac{\alpha^{2}(C_{1})^{2}N}{(1-\gamma)^{2}}\right)
\\
&\qquad
+\alpha\left(\frac{\alpha}{\mu(1-\frac{\alpha L}{2})}+\frac{(1+\alpha L)^{2}}{\mu^{2}(1-\frac{\alpha L}{2})^{2}}\right)
\left(\frac{16L^{2}D_{1}^{2}\alpha}{N(1-\gamma)^{2}}
+\frac{16L^{2}\alpha}{(1-\gamma^{2})}D_{2}^{2}\right)
\\
&\qquad\qquad
+\frac{\gamma^{2k}-
\left(1-\alpha\mu\left(1-\frac{\alpha L}{2}\right)\right)^{k}}
{\gamma^{2}-1+\alpha\mu\left(1-\frac{\alpha L}{2}\right)}
\frac{16L^{2}\gamma^{2}}{N}\mathbb{E}\left\Vert x^{(0)}\right\Vert^{2}
\\
&\qquad
+8\gamma^{2k}\mathbb{E}\left\Vert x^{(0)}\right\Vert^{2}
+\frac{8D_{1}^{2}\alpha^{2}}{(1-\gamma)^{2}}
+\frac{8N\alpha^{2}}{(1-\gamma^{2})}D_{2}^{2},
\end{align*}
where $D_{1},D_{2}$ are defined in Lemma~\ref{E:upper:bound:DSG:unbounded}.}
\end{proposition}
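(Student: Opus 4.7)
The plan is to mimic the decomposition used in the proof of Proposition~\ref{prop:dsg-average-optimal-rate}, but to carefully accommodate the state-dependent variance coming from Assumption~\ref{assump:unbounded}. Specifically, I will write
\begin{equation*}
\mathbb{E}\left\Vert \bar{x}^{(k)} - x_{\ast}\right\Vert^{2}
\leq 2\,\mathbb{E}\left\Vert \bar{x}^{(k)} - x_{k}\right\Vert^{2}
+ 2\,\mathbb{E}\left\Vert x_{k} - x_{\ast}\right\Vert^{2},
\end{equation*}
where $x_{k}$ is the auxiliary centralized iterate introduced just before Lemma~\ref{lem:central:approx:unbounded}, and for the individual iterates I will further use
\begin{equation*}
\mathbb{E}\left\Vert x_{i}^{(k)} - x_{\ast}\right\Vert^{2}
\leq 2\,\mathbb{E}\left\Vert x_{i}^{(k)} - \bar{x}^{(k)}\right\Vert^{2}
+ 2\,\mathbb{E}\left\Vert \bar{x}^{(k)} - x_{\ast}\right\Vert^{2}.
\end{equation*}
The first term of the last inequality is already controlled by Lemma~\ref{E:upper:bound:DSG:unbounded}, and the cross term $\mathbb{E}\Vert \bar{x}^{(k)} - x_{k}\Vert^{2}$ is controlled by Lemma~\ref{lem:central:approx:unbounded}. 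So the only step still to be carried out is a bound on $\mathbb{E}\Vert x_{k} - x_{\ast}\Vert^{2}$.

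For this, I will run the standard one-step contraction for the centralized recursion $x_{k+1} = x_{k} - \alpha \nabla f(x_{k}) - \alpha\bar{\xi}^{(k+1)}$. By the $\mu$-strong convexity and $L$-smoothness of $f$, for $\alpha \leq \frac{2}{L+\mu}$ (which is implied by the assumption $\alpha\leq\frac{\frac{1}{2}+\lambda_{N}^{W}}{L+\eta^{2}/\mu}$) one gets
\begin{equation*}
\mathbb{E}\left\Vert x_{k+1} - x_{\ast}\right\Vert^{2}
\leq (1-\alpha\mu)\,\mathbb{E}\left\Vert x_{k} - x_{\ast}\right\Vert^{2}
+ \alpha^{2}\,\mathbb{E}\left\Vert \bar{\xi}^{(k+1)}\right\Vert^{2}.
\end{equation*}
The novelty compared to Proposition~\ref{prop:dsg-average-optimal-rate} is that by \eqref{bar:xi:ineq} the noise term $\mathbb{E}\Vert \bar{\xi}^{(k+1)}\Vert^{2}$ is not just $\sigma^{2}/N$ but is augmented by $\frac{\eta^{2}}{2N^{2}}\mathbb{E}\Vert x^{(k)}-x^{\ast}\Vert^{2}$. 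To handle this, I will substitute the uniform-in-$k$ bound already produced by Theorem~\ref{thm:DSG:unbounded}, namely
\begin{equation*}
\mathbb{E}\Vert x^{(k)}-x^{\ast}\Vert^{2}
\leq M := 2\,\mathbb{E}\Vert x^{(0)}-x^{\infty}\Vert^{2}
+\tfrac{4\alpha}{\mu}\bigl(\sigma^{2}+\eta^{2}\tfrac{\alpha^{2}C_{1}^{2}}{(1-\gamma)^{2}}\bigr)N
+\tfrac{2\alpha^{2}C_{1}^{2}N}{(1-\gamma)^{2}},
\end{equation*}
which is the crucial input that lets me replace the state-dependent variance by a constant. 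Iterating the contraction then yields
\begin{equation*}
\mathbb{E}\Vert x_{k}-x_{\ast}\Vert^{2}
\leq (1-\alpha\mu)^{k}\mathbb{E}\Vert \bar{x}^{(0)}-x_{\ast}\Vert^{2}
+\tfrac{\alpha}{\mu}\Bigl(\tfrac{\sigma^{2}}{N}+\tfrac{\eta^{2}}{2N^{2}}\,M\Bigr),
\end{equation*}
using $x_{0} = \bar{x}^{(0)}$ and $\sum_{j=0}^{k-1}(1-\alpha\mu)^{j} \leq \frac{1}{\alpha\mu}$.

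Assembling these pieces yields the two displayed bounds exactly. The main obstacle, and the only genuinely new ingredient beyond the corresponding argument for Proposition~\ref{prop:dsg-average-optimal-rate}, is the closing of the circularity introduced by the state-dependent variance; this is resolved cleanly by plugging in the already-proved uniform-in-$k$ bound from Theorem~\ref{thm:DSG:unbounded}, which is precisely why that theorem is stated and proved first. The remaining algebra — matching the factors of $2$ (resp.\ $4$, $8$, $16$) arising from the two nested triangle inequalities with those in the statement, and verifying that the $D_{1},D_{2}$ terms from Lemmas~\ref{E:upper:bound:DSG:unbounded}--\ref{lem:central:approx:unbounded} appear with the claimed prefactors — is routine and parallels the final step of the proof of Proposition~\ref{prop:dsg-average-optimal-rate}.
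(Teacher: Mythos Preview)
Your proposal is correct and follows essentially the same approach as the paper's own proof: the same triangle-inequality decomposition into $\mathbb{E}\Vert \bar{x}^{(k)}-x_{k}\Vert^{2}$ and $\mathbb{E}\Vert x_{k}-x_{\ast}\Vert^{2}$, the same one-step contraction $(1-\alpha\mu)$ for the centralized iterate, and the same key move of substituting the uniform-in-$k$ bound from Theorem~\ref{thm:DSG:unbounded} into the state-dependent part of \eqref{bar:xi:ineq} before iterating. The assembly via Lemmas~\ref{E:upper:bound:DSG:unbounded} and~\ref{lem:central:approx:unbounded} is also identical.
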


\begin{proof}
{\color{black}By following the proof of Theorem~\ref{thm:DSG:unbounded}, we get
for any $\alpha\leq\frac{1}{L}$, 
\begin{align*}
\mathbb{E}\left\Vert x_{k+1}-x_{\ast}\right\Vert^{2}
&\leq
\left(1-2\alpha\mu\left(1-\frac{\alpha L}{2}\right)\right)
\mathbb{E}\left\Vert x_{k}-x_{\ast}\right\Vert^{2}
+\alpha^{2}\mathbb{E}\left\Vert\bar{\xi}^{(k+1)}\right\Vert^{2}
\\
&\leq
\left(1-\alpha\mu\right)
\mathbb{E}\left\Vert x_{k}-x_{\ast}\right\Vert^{2}
+\alpha^{2}\mathbb{E}\left\Vert\bar{\xi}^{(k+1)}\right\Vert^{2}.
\end{align*}
By \eqref{bar:xi:ineq} and Theorem~\ref{thm:DSG:unbounded}, we get
\begin{align*}
\mathbb{E}\left\Vert\bar{\xi}^{(k+1)}\right\Vert^{2}
&\leq\frac{\sigma^{2}}{N}
+\frac{\eta^{2}}{2N^{2}}\mathbb{E}\left\Vert x^{(k)}-x^{\ast}\right\Vert^{2}
\\
&\leq\frac{\sigma^{2}}{N}
+\frac{\eta^{2}}{2N^{2}}\left(2\mathbb{E}\left\Vert x^{(0)}-x^{\infty}\right\Vert^{2}
+\frac{4\alpha}{\mu}\left(\sigma^{2}+\eta^{2}\frac{\alpha^{2}(C_{1})^{2}}{(1-\gamma)^{2}}\right)N
+\frac{2\alpha^{2}(C_{1})^{2}N}{(1-\gamma)^{2}}\right).
\end{align*}
Therefore, we obtain
\begin{align*}
\mathbb{E}\left\Vert x_{k}-x_{\ast}\right\Vert^{2}
&\leq
(1-\alpha\mu)^{k}\mathbb{E}\left\Vert\bar{x}^{(0)}-x_{\ast}\right\Vert^{2}
+\frac{\alpha}{\mu}\frac{\sigma^{2}}{N}
\\
&\quad
+\frac{\alpha\eta^{2}}{\mu N^{2}}\left(\mathbb{E}\left\Vert x^{(0)}-x^{\infty}\right\Vert^{2}
+\frac{2\alpha}{\mu}\left(\sigma^{2}+\eta^{2}\frac{\alpha^{2}(C_{1})^{2}}{(1-\gamma)^{2}}\right)N
+\frac{\alpha^{2}(C_{1})^{2}N}{(1-\gamma)^{2}}\right).
\end{align*}
Finally, by applying
\begin{equation}
\mathbb{E}\left\Vert\bar{x}^{(k)}-x_{\ast}\right\Vert^{2}
\leq
2\mathbb{E}\left\Vert x_{k}-x_{\ast}\right\Vert^{2}
+2\mathbb{E}\left\Vert x_{k}-\bar{x}^{(k)}\right\Vert^{2},
\end{equation}
and for every $i=1,2,\ldots,N$,
\begin{align*}
\mathbb{E}\left\Vert x_{i}^{(k)}-x_{\ast}\right\Vert^{2}
&\leq
2\mathbb{E}\left\Vert\bar{x}^{(k)}-x_{\ast}\right\Vert^{2}
+2\mathbb{E}\left\Vert x_{i}^{(k)}-\bar{x}^{(k)}\right\Vert^{2}
\\
&\leq
4\mathbb{E}\left\Vert\bar{x}^{(k)}-x_{k}\right\Vert^{2}
+4\mathbb{E}\left\Vert x_{k}-x_{\ast}\right\Vert^{2}
+2\mathbb{E}\left\Vert x_{i}^{(k)}-\bar{x}^{(k)}\right\Vert^{2},
\end{align*}
and by applying Lemma~\ref{lem:central:approx:unbounded}, we complete the proof.}
\end{proof}

{\color{black}
\subsection{Distributed accelerated stochastic gradient (D-ASG)}

Let us recall the D-ASG \eqref{def-dasg-iters-0}. Define 
\begin{equation*}
\bar{\xi}^{(k+1)}:=\frac{1}{N}\sum_{i=1}^{N}\left(\tilde{\nabla}f_{i}\left(y_{i}^{(k)}\right)-\nabla f_{i}\left(y_{i}^{(k)}\right)\right),
\end{equation*}
so that by Assumption~\ref{assump:unbounded}, we have
\begin{equation}
\mathbb{E}\left[\bar{\xi}^{(k+1)}\Big|\mathcal{F}_{k}\right]=0,
\qquad
\mathbb{E}\left\Vert\bar{\xi}^{(k+1)}\Big|\mathcal{F}_{k}\right\Vert^{2}
\leq\frac{\sigma^{2}}{N}
+\frac{\eta^{2}}{2N^{2}}\left\Vert y^{(k)}-x^{\ast}\right\Vert^{2}.
\end{equation}

Let us define
\begin{equation}
V_{Q,\alpha}(\xi) := \xi^T  Q_{\alpha} \xi+ F_{\calW,\alpha}(T\xi + x^\infty) - F_{\calW,\alpha}(x^\infty),
\end{equation}
where $F_{\calW,\alpha}$ is defined in \eqref{def-pen-obj} and $Q_{\alpha}:=\tilde{Q}_{\alpha}\otimes I_{Nd}$ with
\begin{equation}\label{defn:Q:tilde}
\tilde{Q}_{\alpha}:=
\left[
\begin{array}{c}
\sqrt{\frac{1}{2\alpha}}
\\
\sqrt{\frac{\mu}{2}}-\sqrt{\frac{1}{2\alpha}}
\end{array}
\right]
\left[
\begin{array}{cc}
\sqrt{\frac{1}{2\alpha}} &\sqrt{\frac{\mu}{2}}-\sqrt{\frac{1}{2\alpha}}
\end{array}
\right]
+2\alpha\eta^{2}
\left[
\begin{array}{c}
1+\beta
\\
-\beta
\end{array}
\right]
\left[
\begin{array}{cc}
1+\beta & -\beta
\end{array}
\right].
\end{equation}

We have the following explicit performance bounds on the convergence and the robustness of D-ASG iterates.

\begin{theorem}\label{thm-rate-dasg-unbounded}
Assume $\kappa\geq 4$.
Consider running D-ASG method with $\alpha \in (0,\hat{\alpha}]$ and $\beta = \frac{ 1-\sqrt{\alpha\mu}}{1+\sqrt{\alpha\mu}}$ with 
\begin{equation}\label{eqn:hat:alpha}
\hat{\alpha}:=
\begin{cases}
\min\left\{\frac{\lambda_{N}^{W}}{L},\frac{\mu^{3}}{(60\eta^{2})^{2}}\right\} &\text{if $\eta>0$},
\\
\frac{\lambda_{N}^{W}}{L} &\text{if $\eta=0$}.
\end{cases}
\end{equation}
Then, for any $k\geq 0$, we have
\begin{align}
&\E \left[\left\|  x^{(k)} - x^\infty \right\|^2\right]
\leq
2\left(1-\sqrt{\alpha \mu}/3\right)^{k}  \frac{V_{Q,\alpha}\left(\xi_{0}\right)}{\mu}
+ \frac{12\sqrt{\alpha}}{\mu\sqrt{\mu}}\left(\sigma^{2}+\eta^{2}\frac{\alpha^{2}(C_{1})^{2}}{(1-\gamma)^{2}}\right)N.
\end{align}
In addition, if $\alpha \leq \frac{1}{L+\mu}$, we have
\begin{align}
&\E\left[ \left\| x^{(k)} -x^* \right\|^2\right]
\leq
4\left(1-\sqrt{\alpha \mu}/3\right)^{k}  \frac{V_{Q,\alpha}\left(\xi_{0}\right)}{\mu}
+ \frac{24\sqrt{\alpha}}{\mu\sqrt{\mu}}\left(\sigma^{2}+\eta^{2}\frac{\alpha^{2}(C_{1})^{2}}{(1-\gamma)^{2}}\right)N +  \frac{2C_{1}^{2}N\alpha^2}{(1-\gamma)^2},
\end{align}
where $C_{1},\gamma$ are given in \eqref{eqn:asymp_suboptim}.
\end{theorem}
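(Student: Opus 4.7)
The plan is to extend the dynamical system analysis of Theorem~\ref{thm:general:DASG} and Theorem~\ref{thm-rate-dasg} to the state-dependent noise setting of Assumption~\ref{assump:unbounded}, with the enlarged Lyapunov matrix $\tilde Q_\alpha$ in \eqref{defn:Q:tilde} playing the role of $\tilde S_\alpha$. The first step is to rewrite the D-ASG iteration as $x^{(k+1)} = \mathcal{W} y^{(k)} - \alpha \nabla F(y^{(k)}) - \alpha\, \tilde{\xi}^{(k+1)}$, where $\tilde{\xi}^{(k+1)} := \tilde\nabla F(y^{(k)})-\nabla F(y^{(k)})$ is zero-mean conditional on $\mathcal{F}_k$. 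Using Assumption~\ref{assump:unbounded} componentwise over the nodes, then the triangle inequality together with $\|x^\infty - x^*\|^2 \leq \alpha^2 C_1^2 N/(1-\gamma)^2$ from Lemma~\ref{lem:C1:gamma}, I would bound
\begin{equation*}
\mathbb{E}\bigl[\|\tilde\xi^{(k+1)}\|^2 \,\big|\, \mathcal{F}_k\bigr]
\;\leq\; N\Bigl(\sigma^2 + \eta^2 \tfrac{\alpha^2 C_1^2}{(1-\gamma)^2}\Bigr) \;+\; \eta^2\, \|y^{(k)}-x^\infty\|^2.
\end{equation*}
The key point is that the \emph{state-dependent} piece $\eta^2\|y^{(k)}-x^\infty\|^2$ equals $\eta^2\, \xi_k^T M \xi_k$, where $M := [1+\beta\ -\beta]^T[1+\beta\ -\beta]\otimes I_{Nd}$, i.e.\ it is a quadratic form that is structurally identical to one of the outer-product terms appearing in the definition of $\tilde Q_\alpha$.

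The second step is a one-step Lyapunov recursion for $V_{Q,\alpha}$. Starting from Lemma~\ref{lem:aybat} applied with $\tilde P = \tilde Q_\alpha$, I would expand $\mathbb{E}[V_{Q,\alpha}(\xi_{k+1})\mid\mathcal{F}_k]$ into a noiseless descent term plus an $\alpha^2\cdot(\tilde Q_\alpha)_{11}\cdot \mathbb{E}\|\tilde\xi^{(k+1)}\|^2$ contribution coming from the quadratic part and an $\alpha\, L_\alpha/2$ contribution from the $F_{\mathcal{W},\alpha}$ part (here $L_\alpha = (1-\lambda_N^W)/\alpha + L$). Substituting the noise bound above splits this noise contribution into (i) a deterministic piece $\alpha^2 N(\sigma^2 + \eta^2 \alpha^2 C_1^2/(1-\gamma)^2)\bigl((\tilde Q_\alpha)_{11}+ L_\alpha/2\bigr)$, which becomes the variance term of the theorem, and (ii) a state-dependent piece $\alpha^2\eta^2 \bigl((\tilde Q_\alpha)_{11}+L_\alpha/2\bigr) \xi_k^T M \xi_k$, which must be absorbed by the contractive term.

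The third step is where the choice $\tilde Q_\alpha = \tilde S_\alpha + 2\alpha\eta^2[1+\beta\ -\beta]^T[1+\beta\ -\beta]$ and the stepsize bound $\alpha \leq \mu^3/(60\eta^2)^2$ come into play. Because $\tilde S_\alpha$ already satisfies the matrix inequality \eqref{MI_general} with rate $1-\sqrt{\alpha\mu}$ (this was the content of the proof of Theorem~\ref{thm-rate-dasg}), the extra rank-one perturbation of size $O(\alpha\eta^2)$ only perturbs the matrix inequality by $O(\alpha\eta^2)$. Combined with an $O(\alpha^2\eta^2)$ slack coming from the state-dependent noise piece, one can afford to relax the rate from $1-\sqrt{\alpha\mu}$ down to $1-\sqrt{\alpha\mu}/3$: choosing $\alpha\leq \mu^3/(60\eta^2)^2$ makes $\alpha\eta^2/\mu \leq \tfrac{1}{60}\sqrt{\alpha\mu}$, which is enough slack to absorb both perturbations into the contraction factor $2\sqrt{\alpha\mu}/3$ that is freed up by slowing the rate from $1-\sqrt{\alpha\mu}$ to $1-\sqrt{\alpha\mu}/3$. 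I expect this matrix-perturbation bookkeeping — showing that
\begin{equation*}
\bigl(1-\tfrac{\sqrt{\alpha\mu}}{3}\bigr)^2 \tilde X_1 + \bigl(1-(1-\tfrac{\sqrt{\alpha\mu}}{3})^2\bigr)\tilde X_2 \;\succeq\; M(\tilde Q_\alpha,\alpha,\beta) + \alpha^2\eta^2\bigl((\tilde Q_\alpha)_{11}+L_\alpha/2\bigr) M
\end{equation*}
on the $3\times 3$ block — to be the main technical obstacle. The case $\eta=0$ reduces back to $\tilde Q_\alpha = \tilde S_\alpha$ and the rate $1-\sqrt{\alpha\mu}$, consistent with Theorem~\ref{thm-rate-dasg}.

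The final step is standard: iterating the one-step inequality
$\mathbb{E} V_{Q,\alpha}(\xi_{k+1}) \leq (1-\tfrac{\sqrt{\alpha\mu}}{3}) \mathbb{E} V_{Q,\alpha}(\xi_k) + \alpha^2 N(\sigma^2 + \eta^2 \alpha^2 C_1^2/(1-\gamma)^2)\bigl((\tilde Q_\alpha)_{11}+L_\alpha/2\bigr)$
yields a geometric bound on $\mathbb{E}[V_{Q,\alpha}(\xi_k)]$, where the asymptotic term is at most $\tfrac{3\alpha^{3/2}}{\sqrt{\mu}}N(\sigma^2 + \eta^2 \alpha^2 C_1^2/(1-\gamma)^2)\bigl((\tilde Q_\alpha)_{11}+L_\alpha/2\bigr)$ after dividing by $\sqrt{\alpha\mu}/3$. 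Using $\mu$-strong convexity of $F_{\mathcal{W},\alpha}$ to bound $\tfrac{\mu}{2}\|x^{(k)}-x^\infty\|^2 \leq F_{\mathcal{W},\alpha}(x^{(k)}) - F_{\mathcal{W},\alpha}(x^\infty) \leq V_{Q,\alpha}(\xi_k)$, and simplifying $(\tilde Q_\alpha)_{11}+L_\alpha/2 \leq c/\alpha$ for an explicit constant using the assumptions on $\alpha$, delivers the first displayed bound. The second bound (in terms of $x^*$) then follows from $\|x^{(k)}-x^*\|^2 \leq 2\|x^{(k)}-x^\infty\|^2 + 2\|x^\infty-x^*\|^2$ combined with $\|x^\infty-x^*\|^2\leq \alpha^2 C_1^2 N/(1-\gamma)^2$, which requires the additional $\alpha\leq 1/(L+\mu)$ to invoke Lemma~\ref{lem:C1:gamma}.
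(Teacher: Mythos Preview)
Your outline is correct and follows essentially the same strategy as the paper. The paper, however, does not re-derive the matrix-perturbation step you flag as the ``main technical obstacle''; instead it observes that D-ASG on $F$ is non-distributed ASG on $F_{\mathcal{W},\alpha}\in\mathcal{S}_{\mu,L_\alpha}(\mathbb{R}^{Nd})$ and then invokes Theorem~K.1 of \cite{aybat2019universally} as a black box, which already gives the one-step inequality
\[
\mathbb{E}\bigl[V_{Q,\alpha}(\xi_{k+1})\bigr]\;\leq\;\bigl(1-\tfrac{\sqrt{\alpha\mu}}{3}\bigr)\,\mathbb{E}\bigl[V_{Q,\alpha}(\xi_{k})\bigr]
\;+\;2\alpha\Bigl(\sigma^{2}+\eta^{2}\tfrac{\alpha^{2}C_{1}^{2}}{(1-\gamma)^{2}}\Bigr)N
\]
for $\alpha\leq\bar\alpha=\min\{1/L_\alpha,\mu^3/(60\eta^2)^2\}$, and then checks that $\alpha\leq 1/L_\alpha$ is equivalent to $\alpha\leq\lambda_N^W/L$, so $\bar\alpha=\hat\alpha$. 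The remainder (iterating, strong convexity, and the $x^*$ bound via Lemma~\ref{lem:C1:gamma}) is exactly as in your final step. In other words, your steps 2--3 are precisely what is packaged inside that cited theorem; your sketch re-derives the centralized unbounded-noise Lyapunov recursion rather than citing it, and your reading of why $\tilde Q_\alpha=\tilde S_\alpha+2\alpha\eta^{2}[1+\beta\ -\beta]^{T}[1+\beta\ -\beta]$ and the $\alpha\leq\mu^{3}/(60\eta^{2})^{2}$ bound are needed is correct.
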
    

\begin{proof}
D-ASG reduces to the iterations \eqref{eq-centr-asg} which are equivalent to applying non-distributed ASG to minimize the function $F_{\calW,\alpha} \in S_{\mu,L_\alpha}(\R^{Nd})$, where $F_{\calW,\alpha}$ is defined in \eqref{def-pen-obj} and $L_\alpha = \frac{1-\lambda_N^W}{\alpha} + L$. Therefore, 
applying Theorem~K.1 in \cite{aybat2019universally} from the literature for non-distributed ASG, 
for any $\alpha \in (0,\bar{\alpha}]$ and $\beta = \frac{ 1-\sqrt{\alpha\mu}}{1+\sqrt{\alpha\mu}}$ with
\begin{equation}
\bar{\alpha}:=
\begin{cases}
\min\left\{\frac{1}{L_{\alpha}},\frac{\mu^{3}}{(60\eta^{2})^{2}}\right\} &\text{if $\eta>0$},
\\
\frac{1}{L_{\alpha}} &\text{if $\eta=0$},
\end{cases}
\end{equation}
where $L_\alpha = \frac{1-\lambda_N^W}{\alpha} + L$, 
we obtain 
\begin{equation}
\E \left[V_{Q,\alpha}\left(\xi_{k+1}\right)\right] 
\leq \left(1-\sqrt{\alpha \mu}/3\right)  \mathbb{E}V_{Q,\alpha}\left(\xi_{k}\right)
+ 2\alpha\left(\sigma^{2}+\eta^{2}\frac{\alpha^{2}(C_{1})^{2}}{(1-\gamma)^{2}}\right)N,
\end{equation}
which yields
\begin{equation*}
\E \left[V_{Q,\alpha}\left(\xi_{k}\right)\right] 
\leq \left(1-\sqrt{\alpha \mu}/3\right)^{k}  V_{Q,\alpha}\left(\xi_{0}\right)
+ \frac{6\sqrt{\alpha}}{\sqrt{\mu}}\left(\sigma^{2}+\eta^{2}\frac{\alpha^{2}(C_{1})^{2}}{(1-\gamma)^{2}}\right)N.
\end{equation*}
Note that the condition $\alpha\in(0,\bar{\alpha}]$ 
is equivalent to $\alpha\in(0,\hat{\alpha}]$,
where
\begin{equation}
\hat{\alpha}:=
\begin{cases}
\min\left\{\frac{\lambda_{N}^{W}}{L},\frac{\mu^{3}}{(60\eta^{2})^{2}}\right\} &\text{if $\eta>0$},
\\
\frac{\lambda_{N}^{W}}{L} &\text{if $\eta=0$}.
\end{cases}
\end{equation}
The rest of the proof is similar to that of Theorem~\ref{thm-rate-dasg}
and is omitted here.
\end{proof}
}

{\color{black}Next, we show that the individual iterates and the average iterates are close.}

\begin{lemma}\label{gradient:average:error:DASG:general:noise}
{\color{black} 
Consider running D-ASG method under the assumptions in Theorem~\ref{thm-rate-dasg-unbounded}.
For any $k$ and $i=1,\ldots,N$, we have
\begin{align*}
\mathbb{E}\left\Vert x_{i}^{(k)}-\bar{x}^{(k)}\right\Vert^{2} 
&\leq
\sum_{i=1}^{N}\mathbb{E}\left\Vert x_{i}^{(k)}-\bar{x}^{(k)}\right\Vert^{2} 
\\
&\leq
8\gamma^{2k}\left(4\frac{V_{Q,\alpha}\left(\xi_{0}\right)}{\mu}
+ \frac{24\sqrt{\alpha}}{\mu\sqrt{\mu}}\left(\sigma^{2}+\eta^{2}\frac{\alpha^{2}(C_{1})^{2}}{(1-\gamma)^{2}}\right)N +  \frac{2C_{1}^{2}N\alpha^2}{(1-\gamma)^2}
+\Vert x^{\ast}\Vert^{2}\right)
\\
&\qquad
+\frac{4\tilde{D}_{y}^{2}\alpha^{2}}{(1-\gamma)^{2}}
+\frac{4\tilde{E}_{y}^{2}\alpha^{2}}{(1-\gamma^{2})}
+\frac{8\tilde{C}_{0}\alpha}{(1-\gamma)^{2}},
\end{align*} 
and for any $k$, we have
\begin{align*}
\mathbb{E}\left\Vert\mathcal{E}_{k+1}\right\Vert^{2}
&\leq
\frac{2}{N}L^{2}
\left((1+\beta)^{2}+\beta^{2}\right)
\Bigg[\frac{4\tilde{D}_{y}^{2}\alpha^{2}}{(1-\gamma)^{2}}
+\frac{4\tilde{E}_{y}^{2}\alpha^{2}}{(1-\gamma^{2})}
+\frac{8\tilde{C}_{0}\alpha}{(1-\gamma)^{2}}
\\
&\quad
+8\gamma^{2(k-1)}\left(4\frac{V_{Q,\alpha}\left(\xi_{0}\right)}{\mu}
+ \frac{24\sqrt{\alpha}}{\mu\sqrt{\mu}}\left(\sigma^{2}+\eta^{2}\frac{\alpha^{2}(C_{1})^{2}}{(1-\gamma)^{2}}\right)N +  \frac{2C_{1}^{2}N\alpha^2}{(1-\gamma)^2}
+\Vert x^{\ast}\Vert^{2}\right)\Bigg],
\end{align*} 
where $\tilde{C}_{0}$ is defined in \eqref{eqn:difference:general}, $\tilde{D}_{y}$ is defined in \eqref{D:y:eqn:general:noise}
and $\tilde{E}_{y}$ is defined in \eqref{E:y:eqn:general:noise}.
}
\end{lemma}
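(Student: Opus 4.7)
The plan is to mirror the structure of the proof of Lemma~\ref{gradient:average:error:DASG} but carry through the state-dependent variance term $\tfrac{\eta^2}{2}\|x_i^{(k)}-x_*\|^2$ that now appears in the noise bound. The two displayed inequalities are in fact linked: once the consensus error $\sum_i \mathbb{E}\|x_i^{(k)}-\bar x^{(k)}\|^2$ is controlled, the bound on $\mathbb{E}\|\mathcal{E}_{k+1}\|^2$ follows from $L$-smoothness after relating $y_i^{(k)}-\bar y^{(k)}$ to the consensus errors at steps $k$ and $k-1$ via the momentum identity $y^{(k)}=(1+\beta)x^{(k)}-\beta x^{(k-1)}$, picking up the factor $(1+\beta)^2+\beta^2$. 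So the main work is proving the first inequality; the second is then a short corollary.

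For the consensus bound, I would project the D-ASG recursion $x^{(k+1)}=\mathcal{W}y^{(k)}-\alpha\tilde\nabla F(y^{(k)})$ onto the subspace orthogonal to the consensus direction, i.e. write $e^{(k)}:=(I-\tfrac1N\mathbf{1}\mathbf{1}^T\otimes I_d)x^{(k)}$, and exploit that on this subspace the operator norm of $\mathcal{W}$ is at most $\gamma$. Unrolling the recursion produces a convolution sum of $\gamma^{k-j}$ against two driving terms: a "drift" term involving $\|\nabla F(y^{(j)})\|$ and a stochastic term whose conditional second moment is bounded by $\sigma^2 N + \eta^2 \|y^{(j)}-x^*\|^2$ in view of Assumption~\ref{assump:unbounded}. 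The drift term is bounded by $L\|y^{(j)}-x^*\|+\|\nabla F(x^*)\|$, with appropriate constants, which motivates the definition of $\tilde D_y^2$ and $\tilde E_y^2$ (the analogs of $D_y^2$ from \eqref{D:y:eqn} and of the additive noise factor in Lemma~\ref{gradient:average:error:DASG}).

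To close the loop I need uniform-in-$k$ bounds on $\mathbb{E}\|y^{(k)}-x^*\|^2$ and on $\mathbb{E}\|x^{(k)}-x^{(k-1)}\|^2$. The former follows from Theorem~\ref{thm-rate-dasg-unbounded}, which bounds $\mathbb{E}\|x^{(k)}-x^*\|^2$ by the right-hand side appearing inside the parenthesis of the statement (plus $\|x^*\|^2$ to get a bound on $\mathbb{E}\|x^{(k)}\|^2$, used to absorb the initial condition via $\gamma^{2k}$). Combining this with the momentum identity controls $\mathbb{E}\|y^{(k)}-x^*\|^2$ up to the $(1+\beta)^2+\beta^2$ factor. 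The successive-iterate bound $\sup_k \mathbb{E}\|x^{(k)}-x^{(k-1)}\|^2 \leq 2\tilde C_0\alpha/\beta^2$, i.e. the analog of Lemma~\ref{lem:difference} under the generalized noise (whose constant we denote $\tilde C_0$ and is referenced as being defined in \eqref{eqn:difference:general}), produces the $\tilde C_0\alpha/(1-\gamma)^2$ term after summing the geometric series $\sum_j \gamma^{k-j}\alpha$. Taking $L^2$ expectations and using $\gamma^k$ to absorb initialization yields the claimed consensus bound.

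The main obstacle is the self-referential nature introduced by the $\eta^2\|\cdot\|^2$ noise term: the noise bound at step $j$ depends on $\|x_i^{(j)}-x^*\|^2$, which in turn depends on the consensus error through the D-ASG dynamics. This is the reason why Theorem~\ref{thm-rate-dasg-unbounded} must be invoked first to produce an \emph{a priori} uniform-in-$k$ bound on $\mathbb{E}\|x^{(k)}-x^*\|^2$, after which the consensus recursion becomes driven by quantities that no longer depend on the consensus error. The bookkeeping for splitting $\|y^{(j)}-x^*\|^2 \leq 2\|y^{(j)}\|^2+2\|x^*\|^2$ and propagating it through the geometric sum is where the constants $\tilde D_y^2$ (multiplying $\alpha^2/(1-\gamma)^2$) and $\tilde E_y^2$ (multiplying $\alpha^2/(1-\gamma^2)$) are separated, exactly paralleling how $D_y^2$ and the noise factor split in Lemma~\ref{gradient:average:error:DASG}; the only modification is that $\sigma^2$ is replaced by $\sigma^2 + \eta^2(\text{uniform bound on }\|x^{(j)}-x^*\|^2)/N$ wherever the stochastic term appears. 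Once the first inequality is established, applying $L$-smoothness with the identity $y_i^{(k)}-\bar y^{(k)} = (1+\beta)(x_i^{(k)}-\bar x^{(k)})-\beta(x_i^{(k-1)}-\bar x^{(k-1)})$ and the triangle inequality $\|a-b\|^2\leq 2\|a\|^2+2\|b\|^2$ delivers the bound on $\mathbb{E}\|\mathcal{E}_{k+1}\|^2$ with the shift $\gamma^{2(k-1)}$ arising naturally from evaluating the consensus bound at steps $k$ and $k-1$.
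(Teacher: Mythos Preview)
Your proposal is correct and follows essentially the same route as the paper: the paper's proof simply says to repeat the argument of Lemma~\ref{gradient:average:error:DASG} (and Lemma~\ref{lem:E:D-ASG} for the second inequality), replacing the uniform gradient bound $D_y^2$ by $\tilde D_y^2$ via Theorem~\ref{thm-rate-dasg-unbounded}, and replacing the additive noise bound $\sigma^2 N$ by $\tilde E_y^2$ after controlling the extra $\eta^2\|y^{(k)}-x^\infty\|^2$ term again through Theorem~\ref{thm-rate-dasg-unbounded}. Your identification of the four driving terms (initialization, drift, noise, and the successive-difference term handled by the $\tilde C_0$ analogue of Lemma~\ref{lem:difference}) and the derivation of the $\mathcal{E}_{k+1}$ bound from the consensus bound via the momentum identity match the paper exactly.
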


\begin{proof}
{\color{black}
The proof of Lemma~\ref{gradient:average:error:DASG:general:noise} will be provided in Appendix~\ref{sec:technical}.}
\end{proof}

{\color{black}Finally, we provide the performance bounds for the average iterates and individual iterates. We first define
\begin{equation}
V_{\bar{Q},\alpha}(\bar{\xi}) := \bar{\xi}^T  \bar{Q}_{\alpha} \bar{\xi}+ f\left(T\bar{\xi} + x_{\ast}\right) - f(x_{\ast}),
\label{def-barQ-alpha}
\end{equation}
where $\bar{Q}_{\alpha}=\tilde{Q}_{\alpha}\otimes I_{d}$ and $\tilde{Q}_{\alpha}$
is defined in \eqref{defn:Q:tilde}.
Before we proceed, let us first
prove a technical lemma.}

\begin{lemma}\label{lem:C:epsilon-generalized}
{\color{black}
Consider running D-ASG method under the assumptions in Theorem~\ref{thm-rate-dasg-unbounded}.
For any $\epsilon>0$, there exists $M_{\epsilon}>0$ such that
\begin{equation}
\E \left[V_{\bar{Q},\alpha}\left(\bar{\xi}_{k+1}\right)\right]
\leq 
(1+\epsilon)\E \left[V_{\bar{Q},\alpha}\left(\bar{\xi}_{k+1}-D_{k+1}\right)\right]
+M_{\epsilon}\mathbb{E}\Vert D_{k+1}\Vert^{2}, \label{ineq-lyap-pert-generalized}
\end{equation}
where
\begin{equation}
M_{\epsilon}:= \frac{\max(4/m_2,L^2/\mu)}{2\epsilon} + \frac{L}{2} + \frac{1}{\alpha} + \frac{\mu}{2}-\frac{\sqrt{\mu}}{\sqrt{\alpha}} + 2\alpha\eta^2 \left((1+\beta)^2 + \beta^2 \right), 
\end{equation}
where $m_2 >0$ is the smallest eigenvalue of $\bar{Q}_\alpha$.}
\end{lemma}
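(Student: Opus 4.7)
My plan is to extend the proof of Lemma~\ref{lem:C:epsilon} in the bounded-noise setting to accommodate the modified weight matrix $\bar Q_\alpha=\tilde Q_\alpha\otimes I_d$, which differs from $\bar S_\alpha$ only by the extra rank-one summand $2\alpha\eta^2[(1+\beta),-\beta]^{\top}[(1+\beta),-\beta]$ appearing in $\tilde Q_\alpha$ (see \eqref{defn:Q:tilde}). This extra summand will be the source of the additional term $2\alpha\eta^2((1+\beta)^2+\beta^2)$ in $M_\epsilon$ compared to $C_\epsilon$.

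Fix $k$, write $\bar\xi=\bar\xi_{k+1}$ and $D=D_{k+1}$, and split $V_{\bar Q,\alpha}(\bar\xi)=\bar\xi^{\top}\bar Q_\alpha\bar\xi+[f(T\bar\xi+x_\ast)-f(x_\ast)]$ into its quadratic and functional parts, bounding each after expanding $\bar\xi=(\bar\xi-D)+D$. For the quadratic part, Young's inequality applied to the cross term $2(\bar\xi-D)^{\top}\bar Q_\alpha D$ with a parameter $\delta_1>0$ gives $\bar\xi^{\top}\bar Q_\alpha\bar\xi\le (1+\delta_1)(\bar\xi-D)^{\top}\bar Q_\alpha(\bar\xi-D)+(1+1/\delta_1)\,D^{\top}\bar Q_\alpha D$, and I would then bound $D^{\top}\bar Q_\alpha D\le \lambda_{\max}(\bar Q_\alpha)\|D\|^2$ using the explicit estimate $\lambda_{\max}(\bar Q_\alpha)\le 1/\alpha+\mu/2-\sqrt{\mu/\alpha}+2\alpha\eta^2((1+\beta)^2+\beta^2)$, which follows by summing the operator norms of the two rank-one summands in $\tilde Q_\alpha$ (the first produces exactly the three $\alpha,\mu$-terms, as already noted in the derivation of $C_\epsilon$, and the second produces $2\alpha\eta^2((1+\beta)^2+\beta^2)$).

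For the functional part, I would use $L$-smoothness of $f$ together with $\nabla f(x_\ast)=0$ to obtain the standard descent inequality $f(T\bar\xi+x_\ast)\le f(T(\bar\xi-D)+x_\ast)+\nabla f(T(\bar\xi-D)+x_\ast)^{\top}(TD)+\tfrac{L}{2}\|TD\|^2$, and handle the linear term by another Young's inequality with parameter $\delta_2>0$, using the consequence $\|\nabla f(y)\|^2\le (L^2/\mu)(f(y)-f(x_\ast))$ of $L$-smoothness combined with $\mu$-strong convexity. Throughout I would use $\|TD\|\le\|D\|$, since $T=[1\ 0]\otimes I_d$ is a projection, so all $\|TD\|^2$ contributions reduce to $\|D\|^2$ contributions.

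Choosing $\delta_1$ and $\delta_2$ so that the two resulting coefficients in front of $V_{\bar Q,\alpha}(\bar\xi-D)$ combine to exactly $1+\epsilon$ produces the claimed factor on the right-hand side of \eqref{ineq-lyap-pert-generalized}. Collecting all $\|D\|^2$ contributions (the $(1+1/\delta_1)\,\lambda_{\max}(\bar Q_\alpha)$ coefficient from the quadratic part and the $L/(2\delta_2)+L/2$ coefficient from the functional part) and organizing the $\epsilon^{-1}$ leading piece with the elementary bound $a+b\le 2\max(a,b)$ yields the stated expression for $M_\epsilon$, including the $\max(4/m_2,L^2/\mu)/(2\epsilon)$ term. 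The main obstacle I anticipate is the bookkeeping of the two Young parameters so that the final Lyapunov factor is a clean $1+\epsilon$ rather than $1+O(\epsilon)$; apart from this, the argument is a mechanical extension of Lemma~\ref{lem:C:epsilon}, the only genuinely new ingredient being the operator-norm bound on the additional rank-one piece of $\tilde Q_\alpha$ that contributes the $2\alpha\eta^2((1+\beta)^2+\beta^2)$ term.
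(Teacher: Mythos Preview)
Your approach is essentially the same as the paper's: both treat the passage from $V_{\bar Q,\alpha}(\bar\xi-D)$ to $V_{\bar Q,\alpha}(\bar\xi)$ via smoothness plus a Young/Cauchy--Schwarz split, and both handle the new rank-one summand in $\tilde Q_\alpha$ by bounding $\|\bar Q_\alpha\|\le\|\bar S_\alpha\|+2\alpha\eta^2((1+\beta)^2+\beta^2)$. The paper applies the descent lemma to $V_{\bar Q,\alpha}$ as a whole and then bounds $\|\nabla V_{\bar Q,\alpha}(\xi)\|^2\le m_3 V_{\bar Q,\alpha}(\xi)$, whereas you separate the quadratic and functional parts and apply Young's inequality to each; these are equivalent organizations.

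There is, however, one point to flag. Your route never touches the \emph{smallest} eigenvalue $m_2$ of $\bar Q_\alpha$: you bound $D^\top\bar Q_\alpha D\le\lambda_{\max}(\bar Q_\alpha)\|D\|^2$ and, after choosing $\delta_1=\epsilon$ and $\delta_2=2\mu\epsilon/L^2$, the $\epsilon^{-1}$ contribution you collect is
\[
\frac{\lambda_{\max}(\bar Q_\alpha)}{\epsilon}+\frac{L^2}{4\mu\epsilon}\;\le\;\frac{\max\bigl(4\lambda_{\max}(\bar Q_\alpha),\,L^2/\mu\bigr)}{2\epsilon},
\]
not $\max(4/m_2,L^2/\mu)/(2\epsilon)$. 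So the last sentence of your proposal---that the bookkeeping ``yields the stated expression for $M_\epsilon$, including the $\max(4/m_2,L^2/\mu)/(2\epsilon)$ term''---does not follow from what you outlined; you obtain the $\lambda_{\max}$ version instead. This is the direct analogue of $C_\epsilon$ in Lemma~\ref{lem:C:epsilon} (where the factor is $4\|\bar S_\alpha\|$), and it is a perfectly valid constant for the inequality \eqref{ineq-lyap-pert-generalized}; the discrepancy is cosmetic and does not affect the downstream use in Proposition~\ref{prop:dsg-average-optimal-rate-DASG:general:noise}, where only the form $M_\epsilon\le C/\epsilon+C'$ matters.
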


\begin{proof}
{\color{black}
The proof of Lemma~\ref{lem:C:epsilon-generalized} will be provided in Appendix~\ref{sec:technical}.}
\end{proof}

{\color{black}
Finally, we are ready to present the performance bounds for the average iterates and individual iterates.
}

\begin{proposition}\label{prop:dsg-average-optimal-rate-DASG:general:noise}
{\color{black} 
Consider running D-ASG method under the assumptions in Theorem~\ref{thm-rate-dasg-unbounded}.
For any $k$, we have
\begin{align*}
&\mathbb{E}\left\Vert\bar{x}^{(k)}-x_{\ast}\right\Vert^{2}
\\
&\leq
\left(1-\frac{\sqrt{\alpha\mu}}{6}\right)^{k}\frac{2V_{\bar{Q},\alpha}\left(\bar{\xi}_{0}\right)}{\mu} 
+\frac{12}{\mu\sqrt{\mu}}
\left(\left(1+\frac{\sqrt{\alpha\mu}}{6}\right)\frac{2\sqrt{\alpha}}{N}\left(\sigma^{2}+\eta^{2}\frac{\alpha^{2}(C_{1})^{2}}{(1-\gamma)^{2}}\right)
+\frac{\alpha\tilde{H}_{1}\tilde{H}_{2}}{2\sqrt{\mu}}\right)
\\
&\qquad
+\frac{8}{\gamma^{2}\mu\sqrt{\mu}}\sqrt{\alpha}\tilde{H}_{1}\tilde{H}_{3}\frac{\gamma^{2k}-(1-\sqrt{\alpha\mu}/6)^{k}}{\gamma^{2}-(1-\sqrt{\alpha\mu}/6)},
\end{align*}
and for every $i=1,2,\ldots,N$ and any $k$,
\begin{align*}
&\mathbb{E}\left\Vert x_{i}^{(k)}-x_{\ast}\right\Vert^{2}
\\
&\leq
\left(1-\frac{\sqrt{\alpha\mu}}{6}\right)^{k}\frac{4V_{\bar{Q},\alpha}\left(\bar{\xi}_{0}\right)}{\mu}
+\frac{24}{\mu\sqrt{\mu}}
\left(\left(1+\frac{\sqrt{\alpha\mu}}{6}\right)\frac{2\sqrt{\alpha}}{N}\left(\sigma^{2}+\eta^{2}\frac{\alpha^{2}(C_{1})^{2}}{(1-\gamma)^{2}}\right)
+\frac{\alpha\tilde{H}_{1}\tilde{H}_{2}}{2\sqrt{\mu}}\right)
\\
&\qquad
+\frac{16}{\gamma^{2}\mu\sqrt{\mu}}\sqrt{\alpha}\tilde{H}_{1}\tilde{H}_{3}\frac{\gamma^{2k}-(1-\sqrt{\alpha\mu}/6)^{k}}{\gamma^{2}-(1-\sqrt{\alpha\mu}/6)}
\\
&\qquad
+16\gamma^{2k}\left(4\frac{V_{Q,\alpha}\left(\xi_{0}\right)}{\mu}
+ \frac{24\sqrt{\alpha}}{\mu\sqrt{\mu}}\left(\sigma^{2}+\eta^{2}\frac{\alpha^{2}(C_{1})^{2}}{(1-\gamma)^{2}}\right)N +  \frac{2C_{1}^{2}N\alpha^2}{(1-\gamma)^2}
+\Vert x^{\ast}\Vert^{2}\right)
\\
&\qquad
+\frac{8\tilde{D}_{y}^{2}\alpha^{2}}{(1-\gamma)^{2}}
+\frac{8\tilde{E}_{y}^{2}\alpha^{2}}{(1-\gamma^{2})}
+\frac{16\tilde{C}_{0}\alpha}{(1-\gamma)^{2}},
\end{align*}
where $\tilde{C}_{0}$ is some constant
such that $\tilde{C}_{0}=\mathcal{O}(1)$ as $\alpha\rightarrow 0$, and
\begin{align}
&\tilde{D}_{y}^{2}:=4L^{2}\left((1+\beta)^{2}+\beta^{2}\right)\left(\frac{2V_{Q,\alpha}\left(\xi_{0}\right)}{\mu}
+ \frac{12\sqrt{\alpha}}{\mu\sqrt{\mu}}\left(\sigma^{2}+\eta^{2}\frac{\alpha^{2}(C_{1})^{2}}{(1-\gamma)^{2}}\right)N\right)
+2\Vert\nabla F(x^{\ast})\Vert^{2},\label{D:y:eqn:general:noise}
\\
&\tilde{E}_{y}^{2}
:=\left(\sigma^{2}+\eta^{2}\frac{\alpha^{2}(C_{1})^{2}}{(1-\gamma)^{2}}\right)N\nonumber
\\
&\qquad\qquad
+2\eta^{2}\left((1+\beta)^{2}+\beta^{2}\right)\left(\frac{2V_{Q,\alpha}\left(\xi_{0}\right)}{\mu}
+ \frac{12\sqrt{\alpha}}{\mu\sqrt{\mu}}\left(\sigma^{2}+\eta^{2}\frac{\alpha^{2}(C_{1})^{2}}{(1-\gamma)^{2}}\right)N\right),\label{E:y:eqn:general:noise}
\end{align}
and
\begin{align*}
&\tilde{H}_{1}:=6\alpha\max(4/m_2,L^2/\mu)+ \left(L\alpha+2 + \mu\alpha\right)\sqrt{\alpha\mu}-2\mu\alpha
+ 4\alpha^{2}\sqrt{\alpha\mu}\eta^2 \left((1+\beta)^2 + \beta^2 \right),
\\
&\tilde{H}_{2}:=\frac{2}{N}L^{2}
\left((1+\beta)^{2}+\beta^{2}\right)\left(\frac{4\tilde{D}_{y}^{2}\alpha}{(1-\gamma)^{2}}
+\frac{4\tilde{E}_{y}^{2}\alpha}{(1-\gamma)^{2}}+\frac{8\tilde{C}_{0}}{(1-\gamma)^{2}}\right),
\\
&\tilde{H}_{3}:=\frac{2}{N}L^{2}
\left((1+\beta)^{2}+\beta^{2}\right)
\\
&\qquad\qquad\cdot\left(4\frac{V_{Q,\alpha}\left(\xi_{0}\right)}{\mu}
+ \frac{24\sqrt{\alpha}}{\mu\sqrt{\mu}}\left(\sigma^{2}+\eta^{2}\frac{\alpha^{2}(C_{1})^{2}}{(1-\gamma)^{2}}\right)N +  \frac{2C_{1}^{2}N\alpha^2}{(1-\gamma)^2}
+\Vert x^{\ast}\Vert^{2}\right),
\end{align*}
where $m_{2}>0$ is the smallest eigenvalue of $\bar{Q}_{\alpha}$.
}
\end{proposition}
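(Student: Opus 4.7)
[Proof proposal for Proposition~\ref{prop:dsg-average-optimal-rate-DASG:general:noise}]
The plan is to follow the same template as the proof of Proposition~\ref{prop:dsg-average-optimal-rate-DASG}, but with all Lyapunov functions and intermediate lemmas replaced by their state-dependent noise counterparts. First I would note that, using double stochasticity of $\mathcal{W}$, the averages $\bar{x}^{(k)}$ and $\bar{y}^{(k)}$ satisfy
\begin{equation*}
\bar{x}^{(k+1)} = \bar{x}^{(k)} - \alpha \nabla f\left(\bar{y}^{(k)}\right) + \alpha \mathcal{E}_{k+1} - \alpha \bar{\xi}^{(k+1)},
\qquad \bar{y}^{(k)} = (1+\beta)\bar{x}^{(k)} - \beta \bar{x}^{(k-1)},
\end{equation*}
where $\mathcal{E}_{k+1}$ is as in \eqref{def-mathcalEk} and $\bar{\xi}^{(k+1)}$ is the (rescaled) averaged gradient noise. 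I would then cast this as the dynamical system $\bar{\xi}_{k+1} = A \bar{\xi}_k + B \tilde{\nabla} f(C\bar{\xi}_k) + D_{k+1}$ as in \eqref{unified-dyn-sys-average}--\eqref{def-D-k-vector}, where $D_{k+1} = [\alpha \mathcal{E}_{k+1}^{T}, 0]^{T}$.

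Next, I would apply Lemma~\ref{lem:C:epsilon-generalized} (the perturbed-Lyapunov inequality tailored to $V_{\bar{Q},\alpha}$) with the choice $\epsilon = \sqrt{\alpha\mu}/6$. The key algebraic fact is that $\bar{\xi}_{k+1} - D_{k+1}$ equals exactly one step of centralized ASG on $f$ starting from $\bar{\xi}_k$, and this centralized ASG is the object analyzed in Theorem~\ref{thm-rate-dasg-unbounded} (via Theorem~K.1 of \cite{aybat2019universally}). Concretely, that result gives a one-step contraction of the form
\begin{equation*}
\E\left[V_{\bar{Q},\alpha}\left(\bar{\xi}_{k+1} - D_{k+1}\right)\right]
\leq \left(1 - \tfrac{\sqrt{\alpha\mu}}{3}\right) \E\left[V_{\bar{Q},\alpha}\left(\bar{\xi}_k\right)\right]
+ \frac{2\alpha}{N}\left(\sigma^2 + \eta^2 \tfrac{\alpha^2 C_1^2}{(1-\gamma)^2}\right),
\end{equation*}
since the effective noise variance of $\bar{\xi}^{(k+1)}$ is $\mathcal{O}(\sigma^2/N + \eta^2 \|\bar{y}^{(k)} - x_*\|^2 / N^2)$, and the $\eta^2$ part is already absorbed into the definition of $\tilde{Q}_\alpha$ in \eqref{defn:Q:tilde}. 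Combining the two inequalities and using $(1+\sqrt{\alpha\mu}/6)(1-\sqrt{\alpha\mu}/3) \leq 1-\sqrt{\alpha\mu}/6$ gives
\begin{equation*}
\E\left[V_{\bar{Q},\alpha}\left(\bar{\xi}_{k+1}\right)\right] \leq \left(1 - \tfrac{\sqrt{\alpha\mu}}{6}\right)\E\left[V_{\bar{Q},\alpha}(\bar{\xi}_k)\right] + (1+\tfrac{\sqrt{\alpha\mu}}{6})\tfrac{2\alpha}{N}\left(\sigma^2 + \eta^2\tfrac{\alpha^2 C_1^2}{(1-\gamma)^2}\right) + M_{\sqrt{\alpha\mu}/6}\,\alpha^2\E\|\mathcal{E}_{k+1}\|^2.
\end{equation*}

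I would then bound $\E\|\mathcal{E}_{k+1}\|^2$ using Lemma~\ref{gradient:average:error:DASG:general:noise}, which splits it into an $\alpha$-order ``steady-state'' term and a $\gamma^{2(k-1)}$-order transient term. Substituting this bound, plugging in the explicit formula for $M_{\sqrt{\alpha\mu}/6}$ from Lemma~\ref{lem:C:epsilon-generalized}, and collecting terms into the constants $\tilde{H}_1, \tilde{H}_2, \tilde{H}_3$ analogous to the bounded-noise case, I obtain a recursion of the form
\begin{equation*}
\E[V_{\bar{Q},\alpha}(\bar{\xi}_{k+1})] \leq (1 - \tfrac{\sqrt{\alpha\mu}}{6})\E[V_{\bar{Q},\alpha}(\bar{\xi}_k)] + c_1 \alpha\sqrt{\alpha} \tilde{H}_1 \tilde{H}_2 + c_2 \sqrt{\alpha} \tilde{H}_1 \tilde{H}_3 \gamma^{2k} + c_3 \tfrac{\alpha}{N}(\sigma^2 + \eta^2 \alpha^2 C_1^2/(1-\gamma)^2).
\end{equation*}
Iterating this recursion using the geometric-sum identity $\sum_{i=0}^{k-1}(1-a)^i \gamma^{2(k-1-i)} = \frac{\gamma^{2k} - (1-a)^k}{\gamma^2 - (1-a)}$ and then converting from $V_{\bar{Q},\alpha}$ to distance via the $\mu$-strong convexity of $f$ (which gives $\|\bar{x}^{(k)} - x_*\|^2 \leq \frac{2}{\mu} V_{\bar{Q},\alpha}(\bar{\xi}_k)$) yields the claimed bound on $\E\|\bar{x}^{(k)} - x_*\|^2$. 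For the individual iterates, I apply the triangle inequality $\|x_i^{(k)} - x_*\|^2 \leq 2\|x_i^{(k)} - \bar{x}^{(k)}\|^2 + 2\|\bar{x}^{(k)} - x_*\|^2$ and bound the consensus error $\E\|x_i^{(k)} - \bar{x}^{(k)}\|^2$ directly from Lemma~\ref{gradient:average:error:DASG:general:noise}.

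The main obstacle I anticipate is tracking the interplay between the state-dependent noise term $\frac{\eta^2}{2}\|x_i^{(k)} - x_*\|^2$ and the Lyapunov function $V_{\bar{Q},\alpha}$: in the bounded-noise case the variance upper bound was a constant, but here it depends on the iterate itself, which is precisely why $\tilde{Q}_\alpha$ was augmented by $2\alpha\eta^2 [1+\beta,\, -\beta]^T[1+\beta,\, -\beta]$ compared to $\tilde{S}_\alpha$. Verifying that the centralized one-step contraction rate $1-\sqrt{\alpha\mu}/3$ from Theorem~\ref{thm-rate-dasg-unbounded} still applies (which requires the stepsize restriction $\alpha \leq \hat{\alpha}$ in \eqref{eqn:hat:alpha}) and that Lemma~\ref{lem:C:epsilon-generalized} produces an $M_\epsilon$ with the right scaling so that $M_\epsilon \alpha^2 \E\|\mathcal{E}_{k+1}\|^2$ can be absorbed into the desired rate, are the most delicate parts; everything else is bookkeeping analogous to the proof of Proposition~\ref{prop:dsg-average-optimal-rate-DASG}.
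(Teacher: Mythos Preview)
Your proposal is correct and mirrors the paper's proof almost exactly: the paper also invokes Lemma~\ref{lem:C:epsilon-generalized} with $\epsilon=\sqrt{\alpha\mu}/6$, combines it with the one-step contraction $(1-\sqrt{\alpha\mu}/3)$ from Theorem~K.1 of \cite{aybat2019universally}, bounds $M_{\sqrt{\alpha\mu}/6}$ to produce $\tilde H_1$, uses Lemma~\ref{gradient:average:error:DASG:general:noise} to bound $\mathbb{E}\|D_{k+1}\|^2$ in the form $\alpha^2(\alpha\tilde H_2 + 8\gamma^{2(k-1)}\tilde H_3)$, iterates the resulting recursion, and finishes with strong convexity and the triangle inequality for the individual iterates. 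The only point you leave implicit that the paper states explicitly is the analogue of Lemma~\ref{lem:difference} (equation~\eqref{eqn:difference:general}) giving $\sup_k \mathbb{E}\|x^{(k)}-x^{(k-1)}\|^2 \le 2\tilde C_0\alpha/\beta^2$, which is needed inside Lemma~\ref{gradient:average:error:DASG:general:noise}.
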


\begin{proof}
{\color{black}
The proof is similar to the proof of Proposition~\ref{prop:dsg-average-optimal-rate-DASG}.
Similar to Lemma~\ref{lem:difference}, we can show
that
\begin{equation}\label{eqn:difference:general}
\sup_{k}\mathbb{E}\left\Vert x^{(k)}-x^{(k-1)}\right\Vert^{2}
\leq\frac{2\tilde{C}_{0}}{\beta^{2}}\alpha,
\end{equation}
for some $\tilde{C}_{0}$ such that $\tilde{C}_{0}=\mathcal{O}(1)$ as $\alpha\rightarrow 0$.
By applying \eqref{eqn:difference:general} and Theorem~K.1 in \cite{aybat2019universally} from the literature for non-distributed ASG, 
for any $\alpha \in (0,\bar{\alpha}]$ and $\beta = \frac{ 1-\sqrt{\alpha\mu}}{1+\sqrt{\alpha\mu}}$, as well as Lemma~\ref{lem:C:epsilon-generalized}, we have
\begin{align*}
&\E \left[V_{\bar{Q},\alpha}\left(\bar{\xi}_{k+1}\right)\right] 
\\
&\leq 
(1+\epsilon)\E \left[V_{\bar{Q},\alpha}\left(\bar{\xi}_{k+1}-D_{k+1}\right)\right]
+M_{\epsilon}\mathbb{E}\Vert D_{k+1}\Vert^{2}
\\ 
&\leq
(1+\epsilon)\left(\left(1-\sqrt{\alpha \mu}/3\right)  \mathbb{E}V_{\bar{Q},\alpha}\left(\bar{\xi}_{k}\right)
+ \frac{2\alpha}{N}\left(\sigma^{2}+\eta^{2}\frac{\alpha^{2}(C_{1})^{2}}{(1-\gamma)^{2}}\right)\right)
+M_{\epsilon}\mathbb{E}\Vert D_{k+1}\Vert^{2}.
\end{align*}
Let us take $\epsilon=\frac{1}{6}\sqrt{\alpha\mu}$, then we get
\begin{align*}
&\E \left[V_{\bar{Q},\alpha}\left(\bar{\xi}_{k+1}\right)\right] 
\\
&\leq
\left(1-\frac{\sqrt{\alpha\mu}}{6}\right)\mathbb{E}V_{\bar{Q},\alpha}\left(\bar{\xi}_{k}\right)
+\left(1+\frac{\sqrt{\alpha\mu}}{6}\right)\frac{2\alpha}{N}\left(\sigma^{2}+\eta^{2}\frac{\alpha^{2}(C_{1})^{2}}{(1-\gamma)^{2}}\right)
+M_{\frac{\sqrt{\alpha\mu}}{6}}\mathbb{E}\Vert D_{k+1}\Vert^{2},
\end{align*}
and by Lemma~\ref{lem:C:epsilon-generalized}, we have
\begin{align*}
M_{\frac{\sqrt{\alpha\mu}}{6}}
&=\frac{3\max(4/m_2,L^2/\mu)}{\sqrt{\alpha\mu}} + \frac{L}{2} + \frac{1}{\alpha} + \frac{\mu}{2}-\frac{\sqrt{\mu}}{\sqrt{\alpha}} + 2\alpha\eta^2 \left((1+\beta)^2 + \beta^2 \right)
\\
&\leq\frac{1}{2\alpha\sqrt{\alpha\mu}}\tilde{H}_{1},
\end{align*}
where
\begin{equation}
\tilde{H}_{1}=6\alpha\max(4/m_2,L^2/\mu)+ \left(L\alpha+2 + \mu\alpha\right)\sqrt{\alpha\mu}-2\mu\alpha
+ 4\alpha^{2}\sqrt{\alpha\mu}\eta^2 \left((1+\beta)^2 + \beta^2 \right).
\end{equation}
By Lemma~\ref{gradient:average:error:DASG:general:noise}, we have
\begin{align*}
\mathbb{E}\Vert D_{k+1}\Vert^{2}
\leq
\alpha^{2}
\left[\alpha\tilde{H}_{2}+8\gamma^{2(k-1)}\tilde{H}_{3}\right],
\end{align*}
where
\begin{align*}
&\tilde{H}_{2}=\frac{2}{N}L^{2}
\left((1+\beta)^{2}+\beta^{2}\right)\left(\frac{4\tilde{D}_{y}^{2}\alpha}{(1-\gamma)^{2}}
+\frac{4\tilde{E}_{y}^{2}\alpha}{(1-\gamma)^{2}}+\frac{8\tilde{C}_{0}}{(1-\gamma)^{2}}\right),
\\
&\tilde{H}_{3}=\frac{2}{N}L^{2}
\left((1+\beta)^{2}+\beta^{2}\right)
\\
&\qquad\qquad\cdot\left(4\frac{V_{Q,\alpha}\left(\xi_{0}\right)}{\mu}
+ \frac{24\sqrt{\alpha}}{\mu\sqrt{\mu}}\left(\sigma^{2}+\eta^{2}\frac{\alpha^{2}(C_{1})^{2}}{(1-\gamma)^{2}}\right)N +  \frac{2C_{1}^{2}N\alpha^2}{(1-\gamma)^2}
+\Vert x^{\ast}\Vert^{2}\right).
\end{align*}
Therefore, 
\begin{align*}
\E \left[V_{\bar{Q},\alpha}\left(\bar{\xi}_{k+1}\right)\right] 
&\leq
\left(1-\frac{\sqrt{\alpha\mu}}{6}\right)\mathbb{E}V_{\bar{Q},\alpha}\left(\bar{\xi}_{k}\right)
+\left(1+\frac{\sqrt{\alpha\mu}}{6}\right)\frac{2\alpha}{N}\left(\sigma^{2}+\eta^{2}\frac{\alpha^{2}(C_{1})^{2}}{(1-\gamma)^{2}}\right)
\\
&\qquad
+\frac{1}{2\sqrt{\mu}}\alpha^{2}\tilde{H}_{1}\tilde{H}_{2}+\frac{4}{\gamma^{2}\sqrt{\mu}}\sqrt{\alpha}\tilde{H}_{1}\tilde{H}_{3}\gamma^{2k},
\end{align*}
which by following the similar argument in the proof of Proposition~\ref{prop:dsg-average-optimal-rate-DASG} implies that
\begin{align*}
\E \left[V_{\bar{Q},\alpha}\left(\bar{\xi}_{k}\right)\right] 
&\leq
\left(1-\frac{\sqrt{\alpha\mu}}{6}\right)^{k}V_{\bar{Q},\alpha}\left(\bar{\xi}_{0}\right)
+\frac{4}{\gamma^{2}\sqrt{\mu}}\sqrt{\alpha}\tilde{H}_{1}\tilde{H}_{3}\frac{\gamma^{2k}-(1-\sqrt{\alpha\mu}/6)^{k}}{\gamma^{2}-(1-\sqrt{\alpha\mu}/6)}
\\
&\qquad
+\frac{6}{\sqrt{\alpha\mu}}
\left(\left(1+\frac{\sqrt{\alpha\mu}}{6}\right)\frac{2\alpha}{N}\left(\sigma^{2}+\eta^{2}\frac{\alpha^{2}(C_{1})^{2}}{(1-\gamma)^{2}}\right)
+\frac{1}{2\sqrt{\mu}}\alpha\sqrt{\alpha}\tilde{H}_{1}\tilde{H}_{2}\right).
\end{align*}
The rest of the proof is similar to that of Proposition~\ref{prop:dsg-average-optimal-rate-DASG}.}
\end{proof}

\section{Proofs of Technical Lemmas}\label{sec:technical}

\subsection{Proofs of Technical Results in Appendix~\ref{sec:main:proofs}}

\subsubsection{Proof of Lemma~\ref{lem:central:approx}}

\begin{proof}
{\color{black}
We can compute that
\begin{equation}
\bar{x}^{(k+1)}-x_{k+1}=\bar{x}^{(k)}-x_{k}
-\alpha\left[\nabla f\left(\bar{x}^{(k)}\right)-\nabla f(x_{k})\right]
+\alpha\mathcal{E}_{k+1},
\label{eq-to-refer-to-noisy}
\end{equation}
where
$\mathcal{E}_{k+1}
:= \nabla f\left(\bar{x}^{(k)}\right)-\frac{1}{N}\sum_{i=1}^{N}\nabla f_{i}\left(x_{i}^{(k)}\right)$.

If the term $\mathcal{E}_{k+1}$ were not present in the recursion \eqref{eq-to-refer-to-noisy}, we could rely on standard analysis techniques for analyzing a gradient step in order to bound $\|\bar{x}^{(k+1)}-x_{k+1}\|^2$ with $\|\bar{x}^{(k)}-x_{k}\|^2$. However, in the presence of $\mathcal{E}_{k+1}$, we need to control this error term based on Lemma~\ref{lem:2}. To be more precise, we have
\begin{align}
&\left\Vert\bar{x}^{(k+1)}-x_{k+1}\right\Vert^{2}
\nonumber
\\
&=\left\Vert\bar{x}^{(k)}-x_{k}
-\alpha\left[\nabla f\left(\bar{x}^{(k)}\right)-\nabla f(x_{k})\right]\right\Vert^{2}
+\alpha^{2}\left\Vert\mathcal{E}_{k+1}\right\Vert^{2}
\nonumber
\\
&\qquad\qquad
+2\left\langle\bar{x}^{(k)}-x_{k}
-\alpha\left[\nabla f\left(\bar{x}^{(k)}\right)-\nabla f(x_{k})\right]
,\alpha\mathcal{E}_{k+1}\right\rangle
\nonumber
\\
&=\left\Vert\bar{x}^{(k)}-x_{k}\right\Vert^{2}
+\alpha^{2}\left\Vert\left[\nabla f\left(\bar{x}^{(k)}\right)-\nabla f(x_{k})\right]\right\Vert^{2}
\nonumber
\\
&\qquad
-2\left\langle\bar{x}^{(k)}-x_{k},\alpha\left[\nabla f\left(\bar{x}^{(k)}\right)-\nabla f(x_{k})\right]\right\rangle
+\alpha^{2}\left\Vert\mathcal{E}_{k+1}\right\Vert^{2}
\nonumber
\\
&\qquad\qquad
+2\left\langle\bar{x}^{(k)}-x_{k}
-\alpha\left[\nabla f\left(\bar{x}^{(k)}\right)-\nabla f(x_{k})\right]
,\alpha\mathcal{E}_{k+1}\right\rangle
\nonumber
\\
&\leq
\left\Vert\bar{x}^{(k)}-x_{k}\right\Vert^{2}
+\alpha^{2}L\left\langle\bar{x}^{(k)}-x_{k},\left[\nabla f\left(\bar{x}^{(k)}\right)-\nabla f(x_{k})\right]\right\rangle
\nonumber
\\
&\qquad
-2\left\langle\bar{x}^{(k)}-x_{k},\alpha\left[\nabla f\left(\bar{x}^{(k)}\right)-\nabla f(x_{k})\right]\right\rangle
+\alpha^{2}\left\Vert\mathcal{E}_{k+1}\right\Vert^{2}
\nonumber
\\
&\qquad\qquad
+2\left\langle\bar{x}^{(k)}-x_{k}
-\alpha\left[\nabla f\left(\bar{x}^{(k)}\right)-\nabla f(x_{k})\right]
,\alpha\mathcal{E}_{k+1}\right\rangle
\nonumber
\\
&=
\left\Vert\bar{x}^{(k)}-x_{k}\right\Vert^{2}
-2\alpha \left(1 - \frac{\alpha L}{2} \right) \left\langle\bar{x}^{(k)}-x_{k},\left[\nabla f\left(\bar{x}^{(k)}\right)-\nabla f(x_{k})\right]\right\rangle
\nonumber
\\
&\qquad\qquad
+\alpha^{2}\left\Vert\mathcal{E}_{k+1}\right\Vert^{2} +2\left\langle\bar{x}^{(k)}-x_{k}
-\alpha\left[\nabla f\left(\bar{x}^{(k)}\right)-\nabla f(x_{k})\right]
,\alpha\mathcal{E}_{k+1}\right\rangle
\nonumber
\\
&\leq
\left(1-2\alpha\mu\left(1-\frac{\alpha L}{2}\right)\right)\left\Vert\bar{x}^{(k)}-x_{k}\right\Vert^{2}
+\alpha^{2}\left\Vert\mathcal{E}_{k+1}\right\Vert^{2}
\nonumber
\\
&\qquad\qquad
+2\left\langle\bar{x}^{(k)}-x_{k}
-\alpha\left[\nabla f\left(\bar{x}^{(k)}\right)-\nabla f(x_{k})\right]
,\alpha\mathcal{E}_{k+1}\right\rangle,\label{take:expect}
\end{align} 
where we used \citet[Theorem 2.1.5]{nesterov_convex} on $L$-smooth and convex functions 
to obtain the second term after
the first inequality above 
and $\mu$-strong convexity of $f$ and the assumption that $\alpha< 2/L$ to obtain the first term after the second inequality above.
By taking expectations in \eqref{take:expect}, we get
\begin{align*}
\mathbb{E}\left\Vert\bar{x}^{(k+1)}-x_{k+1}\right\Vert^{2}
&\leq
\left(1-2\alpha\mu\left(1-\frac{\alpha L}{2}\right)\right)\mathbb{E}\left\Vert\bar{x}^{(k)}-x_{k}\right\Vert^{2}
+\alpha^{2}\mathbb{E}\left\Vert\mathcal{E}_{k+1}\right\Vert^{2}
\nonumber
\\
&\qquad\qquad
+\mathbb{E}\left[2\left\langle\bar{x}^{(k)}-x_{k}
-\alpha\left[\nabla f\left(\bar{x}^{(k)}\right)-\nabla f(x_{k})\right]
,\alpha\mathcal{E}_{k+1}\right\rangle\right]
\\
&=\left(1-2\alpha\mu\left(1-\frac{\alpha L}{2}\right)\right)\mathbb{E}\left\Vert\bar{x}^{(k)}-x_{k}\right\Vert^{2}
+\alpha^{2}\mathbb{E}\left\Vert\mathcal{E}_{k+1}\right\Vert^{2}
\nonumber
\\
&\qquad\qquad
+\mathbb{E}\left[2\left\langle\bar{x}^{(k)}-x_{k}
-\alpha\left[\nabla f\left(\bar{x}^{(k)}\right)-\nabla f(x_{k})\right]
,\alpha\mathcal{E}_{k+1}\right\rangle\right]
\\
&\leq
\left(1-2\alpha\mu\left(1-\frac{\alpha L}{2}\right)\right)\mathbb{E}\left\Vert\bar{x}^{(k)}-x_{k}\right\Vert^{2}
+\alpha^{2}\mathbb{E}\left\Vert\mathcal{E}_{k+1}\right\Vert^{2}
\nonumber
\\
&\qquad\qquad
+2(1+\alpha L)
\alpha\mathbb{E}\left[\left\Vert\bar{x}^{(k)}-x_{k}\right\Vert\cdot\left\Vert\mathcal{E}_{k+1}\right\Vert\right],
\end{align*}
where we used $L$-smoothness of $f$. 

For any $x,y\geq 0$ and $c>0$, we have the inequality $2xy\leq cx^{2}+\frac{y^{2}}{c}$,
which implies that
\begin{align*}
&\mathbb{E}\left\Vert\bar{x}^{(k+1)}-x_{k+1}\right\Vert^{2}
\\
&\leq
\left(1-2\alpha\mu\left(1-\frac{\alpha L}{2}\right)\right)\mathbb{E}\left\Vert\bar{x}^{(k)}-x_{k}\right\Vert^{2}
+\alpha^{2}\mathbb{E}\left\Vert\mathcal{E}_{k+1}\right\Vert^{2}
\\
&\qquad
+(1+\alpha L)\alpha
\left(\frac{\mu(1-\frac{\alpha L}{2})}{1+\alpha L}\mathbb{E}\left\Vert\bar{x}^{(k)}-x_{k}\right\Vert^{2}
+\frac{1+\alpha L}{\mu(1-\frac{\alpha L}{2})}\mathbb{E}\left\Vert\mathcal{E}_{k+1}\right\Vert^{2}\right)
\\
&=\left(1-\alpha\mu\left(1-\frac{\alpha L}{2}\right)\right)\mathbb{E}\left\Vert\bar{x}^{(k)}-x_{k}\right\Vert^{2}
+\alpha\left(\alpha+\frac{(1+\alpha L)^{2}}{\mu(1-\frac{\alpha L}{2})}\right)\mathbb{E}\left\Vert\mathcal{E}_{k+1}\right\Vert^{2}.
\end{align*}
By applying Lemma~\ref{lem:2}, we get
\begin{align*}
&\mathbb{E}\left\Vert\bar{x}^{(k+1)}-x_{k+1}\right\Vert^{2}
\\
&\leq\left(1-\alpha\mu\left(1-\frac{\alpha L}{2}\right)\right)
\mathbb{E}\left\Vert\bar{x}^{(k)}-x_{k}\right\Vert^{2}
\\
&\quad
+\alpha\left(\alpha+\frac{(1+\alpha L)^{2}}{\mu(1-\frac{\alpha L}{2})}\right)
\left(\frac{4L^{2}\rev{\gamma^{2k}}}{N}\mathbb{E}\left\Vert x^{(0)}\right\Vert^{2}
+\frac{4L^{2}D^{2}\alpha^{2}}{N(1-\gamma)^{2}}
+\frac{4L^{2}\sigma^{2}\alpha^{2}}{(1-\gamma^{2})}\right),
\end{align*}
for every $k$. Note that $\mathbb{E}\left\Vert\bar{x}^{(0)}-x_{0}\right\Vert^{2}=0$.
By iterating the above equation, we get
\begin{align*}
&\mathbb{E}\left\Vert\bar{x}^{(k)}-x_{k}\right\Vert^{2}
\\
&\leq
\sum_{i=0}^{k-1}
\left(1-\alpha\mu\left(1-\frac{\alpha L}{2}\right)\right)^{i}
\cdot\alpha\left(\alpha+\frac{(1+\alpha L)^{2}}{\mu(1-\frac{\alpha L}{2})}\right)
\left(\frac{4L^{2}D^{2}\alpha^{2}}{N(1-\gamma)^{2}}
+\frac{4L^{2}\sigma^{2}\alpha^{2}}{(1-\gamma^{2})}
\right)
\\
&\qquad
+\sum_{i=0}^{k-1}
\left(1-\alpha\mu\left(1-\frac{\alpha L}{2}\right)\right)^{i}
\alpha\left(\alpha+\frac{(1+\alpha L)^{2}}{\mu(1-\frac{\alpha L}{2})}\right)
\frac{4L^{2}\rev{\gamma^{2(k-i)}}}{N}\mathbb{E}\left\Vert x^{(0)}\right\Vert^{2}
\\
&=\frac{1-
\left(1-\alpha\mu\left(1-\frac{\alpha L}{2}\right)\right)^{k}}
{1-\left(1-\alpha\mu\left(1-\frac{\alpha L}{2}\right)\right)}
\cdot\alpha\left(\alpha+\frac{(1+\alpha L)^{2}}{\mu(1-\frac{\alpha L}{2})}\right)
\left(\frac{4L^{2}D^{2}\alpha^{2}}{N(1-\gamma)^{2}}
+\frac{4L^{2}\sigma^{2}\alpha^{2}}{(1-\gamma^{2})}
\right)
\\
&\qquad
+\rev{\frac{\gamma^{2k}-
\left(1-\alpha\mu\left(1-\frac{\alpha L}{2}\right)\right)^{k}}
{1-\left(1-\alpha\mu\left(1-\frac{\alpha L}{2}\right)\right)(\gamma)^{-2}}
\frac{4L^{2}}{N}\mathbb{E}\left\Vert x^{(0)}\right\Vert^{2}.}
\end{align*}
By our assumption on stepsize $\alpha$, we have
$1-\alpha\mu\left(1-\frac{\alpha L}{2}\right)\in[0,1)$.
Hence, we conclude that for every $k$,
\begin{align*}
\mathbb{E}\left\Vert\bar{x}^{(k)}-x_{k}\right\Vert^{2}
&\leq\frac{\alpha\left(\alpha+\frac{(1+\alpha L)^{2}}{\mu(1-\frac{\alpha L}{2})}\right)
\left(\frac{4L^{2}D^{2}\alpha^{2}}{N(1-\gamma)^{2}}
+\frac{4L^{2}\sigma^{2}\alpha^{2}}{(1-\gamma^{2})}
\right)}{
1-\left(1-\alpha\mu\left(1-\frac{\alpha L}{2}\right)\right)}
\\
&\qquad\qquad
+\rev{\frac{\gamma^{2k}-
\left(1-\alpha\mu\left(1-\frac{\alpha L}{2}\right)\right)^{k}}
{1-\left(1-\alpha\mu\left(1-\frac{\alpha L}{2}\right)\right)(\gamma)^{-2}}
\frac{4L^{2}}{N}\mathbb{E}\left\Vert x^{(0)}\right\Vert^{2}}
\\
&=\frac{\alpha\left(\alpha+\frac{(1+\alpha L)^{2}}{\mu(1-\frac{\alpha L}{2})}\right)
\left(\frac{4L^{2}D^{2}\alpha}{N(1-\gamma)^{2}}
+\frac{4L^{2}\sigma^{2}\alpha}{(1-\gamma^{2})}\right)}{
\mu\left(1-\frac{\alpha L}{2}\right)}
\\
&\qquad\qquad
+\rev{\frac{\gamma^{2k}-
\left(1-\alpha\mu\left(1-\frac{\alpha L}{2}\right)\right)^{k}}
{\gamma^{2}-1+\alpha\mu\left(1-\frac{\alpha L}{2}\right)}
\frac{4L^{2}\gamma^{2}}{N}\mathbb{E}\left\Vert x^{(0)}\right\Vert^{2}.}
\end{align*}
The proof is complete.
}
\end{proof}


\subsubsection{Proof of Lemma~\ref{gradient:average:error:DASG}}

\begin{proof}
{\color{black}By the definition of $x^{(k)}$ and $y^{(k)}$, we get
\begin{align*}
&x^{(k+1)}=(W\otimes I_{d})y^{(k)}-\alpha \nabla F\left(y^{(k)}\right)
-\alpha\xi^{(k+1)},
\\
&y^{(k)}=(1+\beta)x^{(k)}-\beta x^{(k-1)},
\end{align*}
which implies that
\begin{align*}
x^{(k+1)}=(W\otimes I_{d})x^{(k)}
+(W\otimes I_{d})\beta\left(x^{(k)}-x^{(k-1)}\right)-\alpha \nabla F\left(y^{(k)}\right)
-\alpha\xi^{(k+1)}.
\end{align*}
It follows that
\begin{align}
x^{(k)}&=\left(W^{k}\otimes I_{d}\right)x^{(0)}-\alpha\sum_{s=0}^{k-1}\left(W^{k-1-s}\otimes I_{d}\right)\nabla F\left(y^{(s)}\right)
\nonumber
\\
&\qquad\qquad
-\alpha\sum_{s=0}^{k-1}\left(W^{k-1-s}\otimes I_{d}\right)\xi^{(s+1)}
+\sum_{s=0}^{k-1}\left(W^{k-s}\otimes I_{d}\right)\beta\left(x^{(s)}-x^{(s-1)}\right).
\label{x:k:expression}
\end{align}
Let us define $\mathbf{\bar{x}}^{(k)}:=\left[\left(\bar{x}^{(k)}\right)^{T},\cdots,\left(\bar{x}^{(k)}\right)^{T}\right]^{T}\in\mathbb{R}^{Nd}$ 
and equivalently
\begin{equation}
\mathbf{\bar{x}}^{(k)}=\frac{1}{N}\left(\left(1_{N}1_{N}^{T}\right)\otimes I_{d}\right)x^{(k)}\,,
\label{def-bold-bar-xk}
\end{equation}
where $1_N \in \mathbb{R}^N$ is a vector of ones; i.e. it is a column vector with all entries equal to one and the superscript $^T$ denotes the vector transpose.
Therefore, we get
\begin{equation*}
\sum_{i=1}^{N}\left\Vert x_{i}^{(k)}-\bar{x}^{(k)}\right\Vert^{2}
=\left\Vert x^{(k)}-\mathbf{\bar{x}}^{(k)}\right\Vert^{2}
=\left\Vert x^{(k)}-\frac{1}{N}\left(\left(1_{N}1_{N}^{T}\right)\otimes I_d\right)x^{(k)}\right\Vert^{2}.
\end{equation*}
Note that it follows from \eqref{x:k:expression} that
\begin{align*}
&x^{(k)}-\frac{1}{N}\left(\left(1_{N}1_{N}^{T}\right)\otimes I_{d}\right)x^{(k)}
\\
&=\left(W^{k}\otimes I_{d}\right)x^{(0)}-\frac{1}{N}\left(\left(1_{N}1_{N}^{T}W^{k}\right)\otimes I_{d}\right)x^{(0)}
\\
&-\alpha\sum_{s=0}^{k-1}\left(W^{k-1-s}\otimes I_{d}\right)\nabla F\left(y^{(s)}\right)
+\alpha\sum_{s=0}^{k-1}\frac{1}{N}\left(\left(1_{N}1_{N}^{T}W^{k-1-s}\right)\otimes I_{d}\right)\nabla F\left(y^{(s)}\right)
\\
&\quad
-\alpha\sum_{s=0}^{k-1}\left(W^{k-1-s}\otimes I_{d}\right)\xi^{(s+1)}
+\alpha\sum_{s=0}^{k-1}\frac{1}{N}\left(\left(1_{N}1_{N}^{T}W^{k-1-s}\right)\otimes I_{d}\right)\xi^{(s+1)}
\\
&\quad
+\sum_{s=0}^{k-1}\left(W^{k-s}\otimes I_{d}\right)\beta\left(x^{(s)}-x^{(s-1)}\right)
-\sum_{s=0}^{k-1}\frac{1}{N}\left(\left(1_{N}1_{N}^{T}W^{k-s}\right)\otimes I_{d}\right)\beta\left(x^{(s)}-x^{(s-1)}\right).
\end{align*}
By the Cauchy-Schwarz inequality, we have
\begin{align*}
&\left\Vert x^{(k)}-\frac{1}{N}\left(\left(1_{N}1_{N}^{T}\right)\otimes I_{d}\right)x^{(k)}\right\Vert^{2}
\\
&\leq 
4\left\Vert(W^{k}\otimes I_{d})x^{(0)}-\frac{1}{N}\left(\left(1_{N}1_{N}^{T}W^{k}\right)\otimes I_{d}\right)x^{(0)}\right\Vert^{2}
\\
&\quad
+4\left\Vert-\alpha\sum_{s=0}^{k-1}\left(W^{k-1-s}\otimes I_{d}\right)\nabla F\left(y^{(s)}\right)
+\alpha\sum_{s=0}^{k-1}\frac{1}{N}\left(\left(1_{N}1_{N}^{T}W^{k-1-s}\right)\otimes I_{d}\right)\nabla F\left(y^{(s)}\right)\right\Vert^{2}
\\
&\quad
+4\left\Vert\alpha\sum_{s=0}^{k-1}\left(W^{k-1-s}\otimes I_{d}\right)\xi^{(s+1)}
-\alpha\sum_{s=0}^{k-1}\frac{1}{N}\left(\left(1_{N}1_{N}^{T}W^{k-1-s}\right)\otimes I_{d}\right)\xi^{(s+1)}\right\Vert^{2}
\\
&\quad
+4\left\Vert\sum_{s=0}^{k-1}\left(W^{k-s}\otimes I_{d}\right)\beta\left(x^{(s)}-x^{(s-1)}\right)
-\sum_{s=0}^{k-1}\frac{1}{N}\left(\left(1_{N}1_{N}^{T}W^{k-s}\right)\otimes I_{d}\right)\beta\left(x^{(s)}-x^{(s-1)}\right)\right\Vert^{2}
\\
&=4\left\Vert(W^{k}\otimes I_{d})x^{(0)}-\frac{1}{N}\left(\left(1_{N}1_{N}^{T}\right)\otimes I_{d}\right)x^{(0)}\right\Vert^{2}
\\
&\quad
+4\left\Vert-\alpha\sum_{s=0}^{k-1}\left(W^{k-1-s}\otimes I_{d}\right)\nabla F\left(y^{(s)}\right)
+\alpha\sum_{s=0}^{k-1}\frac{1}{N}\left(\left(1_{N}1_{N}^{T}\right)\otimes I_{d}\right)\nabla F\left(y^{(s)}\right)\right\Vert^{2}
\\
&\quad
+4\left\Vert\alpha\sum_{s=0}^{k-1}\left(W^{k-1-s}\otimes I_{d}\right)\xi^{(s+1)}
-\alpha\sum_{s=0}^{k-1}\frac{1}{N}\left(\left(1_{N}1_{N}^{T}\right)\otimes I_{d}\right)\xi^{(s+1)}\right\Vert^{2}
\\
&\quad
+4\left\Vert\sum_{s=0}^{k-1}\left(W^{k-s}\otimes I_{d}\right)\beta\left(x^{(s)}-x^{(s-1)}\right)
-\sum_{s=0}^{k-1}\frac{1}{N}\left(\left(1_{N}1_{N}^{T}\right)\otimes I_{d}\right)\beta\left(x^{(s)}-x^{(s-1)}\right)\right\Vert^{2},
\end{align*}
where we used the property that $W$ is doubly stochastic. Therefore, we get
\begin{align}
&\left\Vert x^{(k)}-\frac{1}{N}\left(\left(1_{N}1_{N}^{T}\right)\otimes I_{d}\right)x^{(k)}\right\Vert^{2}
\nonumber
\\
&\leq 4\left\Vert\left(\left(W^{k}-\frac{1}{N}1_{N}1_{N}^{T}\right)\otimes I_{d}\right)x^{(0)}\right\Vert^{2}
\nonumber
\\
&\qquad
+4\alpha^{2}\left\Vert\sum_{s=0}^{k-1}\left(\left(W^{k-1-s}-\frac{1}{N}1_{N}1_{N}^{T}\right)\otimes I_{d}\right)\nabla F\left(y^{(s)}\right)\right\Vert^{2}
\nonumber \\
&\qquad\qquad+4\alpha^{2}\left\Vert\sum_{s=0}^{k-1}\left(\left(W^{k-1-s}-\frac{1}{N}1_{N}1_{N}^{T}\right)\otimes I_{d}\right)\xi^{(s+1)}\right\Vert^{2}
\nonumber
\\
&\qquad\qquad\qquad\qquad
+4\left\Vert\sum_{s=0}^{k-1}\left(\left(W^{k-s}-\frac{1}{N}1_{N}1_{N}^{T}\right)\otimes I_{d}\right)\beta\left(x^{(s)}-x^{(s-1)}\right)\right\Vert^{2}. 
\label{eq:inter-estimate:DASG}
\end{align}
Note that
\begin{align}
&4\alpha^{2}\left\Vert\sum_{s=0}^{k-1}\left(\left(W^{k-1-s}-\frac{1}{N}1_{N}1_{N}^{T}\right)\otimes I_{d}\right)\nabla F\left(y^{(s)}\right)\right\Vert^{2} \nonumber
\\
&\leq
4\alpha^{2}\left(\sum_{s=0}^{k-1}\left\Vert\left(W^{k-1-s}-\frac{1}{N}1_{N}1_{N}^{T}\right)\otimes I_{d}\right\Vert
\cdot\left\Vert\nabla F\left(y^{(s)}\right)\right\Vert\right)^{2} \nonumber
\\
&\leq
4\alpha^{2}\left(\sum_{s=0}^{k-1}\left\Vert W^{k-1-s}-\frac{1}{N}1_{N}1_{N}^{T}\right\Vert
\cdot\left\Vert\nabla F\left(y^{(s)}\right)\right\Vert\right)^{2} \nonumber
\\
&=
4\alpha^{2}\left(\sum_{s=0}^{k-1}\gamma^{k-1-s}
\cdot\left\Vert\nabla F\left(y^{(s)}\right)\right\Vert\right)^{2} \nonumber
\\
&=4\alpha^{2}\left(\sum_{s=0}^{k-1}\gamma^{k-1-s}\right)^{2}\left(\frac{\sum_{s=0}^{k-1}\gamma^{k-1-s}
\cdot\left\Vert\nabla F\left(y^{(s)}\right)\right\Vert}{\sum_{s=0}^{k-1}\gamma^{k-1-s}}\right)^{2} \nonumber
\\
&\leq
4\alpha^{2}\left(\sum_{s=0}^{k-1}\gamma^{k-1-s}\right)^{2}
\sum_{s=0}^{k-1}\frac{\gamma^{k-1-s}}{\sum_{s=0}^{k-1}\gamma^{k-1-s}}\left\Vert\nabla F\left(y^{(s)}\right)\right\Vert^{2}, \label{last-ineq-dev-from-mean-DASG}
\end{align}
where we used Jensen's inequality in the last step above,
and the fact that $W^{k-1-s}$ has eigenvalues
$(\lambda_{i}^{W})^{k-1-s}$
with $1=\lambda_{1}^{W}>\lambda_{2}^{W}\geq\cdots\geq\lambda_{N}^{W}>-1$, and hence
$\left\Vert W^{k-1-s}-\frac{1}{N}1_{N}1_{N}^{T}\right\Vert
=\max\{|\lambda_{2}^{W}|^{k-1-s},|\lambda_{N}^W|^{k-1-s}\}=\gamma^{k-1-s}$.
Moreover, we can compute that for any $k$
\begin{align}
\mathbb{E}\left\Vert\nabla F\left(y^{(k)}\right)\right\Vert^{2}
&\leq
2\mathbb{E}\left\Vert\nabla F\left(y^{(k)}\right)-\nabla F\left(x^{\ast}\right)\right\Vert^{2}
+2\left\Vert\nabla F\left(x^{\ast}\right)\right\Vert^{2}\nonumber
\\
&\leq
2L^{2}\mathbb{E}\left\Vert y^{(k)}-x^{\ast}\right\Vert^{2}
+2\left\Vert\nabla F\left(x^{\ast}\right)\right\Vert^{2}\nonumber
\\
&=2L^{2}\mathbb{E}\left\Vert (1+\beta)\left(x^{(k)}-x^{\ast}\right)-\beta\left(x^{(k-1)}-x^{\ast}\right)\right\Vert^{2}
+2\left\Vert\nabla F\left(x^{\ast}\right)\right\Vert^{2}\nonumber
\\
&\leq
4L^{2}(1+\beta)^{2}\mathbb{E}\left\Vert x^{(k)}-x^{\ast}\right\Vert^{2}
+4L^{2}\beta^{2}\mathbb{E}\left\Vert x^{(k-1)}-x^{\ast}\right\Vert^{2}
+2\left\Vert\nabla F\left(x^{\ast}\right)\right\Vert^{2}\nonumber
\\
&\leq
D_{y}^{2}, \label{ineq-stoc-grad-bound-ASG}
\end{align}
where $D_{y}^{2}$ is defined in \eqref{D:y:eqn}
and we used Corollary~\ref{cor:rate:dasg} to obtain the last line above. 
Therefore, by \eqref{last-ineq-dev-from-mean-DASG}, we have
\begin{align*}
&4\alpha^{2}\mathbb{E}\left[\left\Vert\sum_{s=0}^{k-1}\left(\left(W^{k-1-s}-\frac{1}{N}1_{N}1_{N}^{T}\right)\otimes I_{d}\right)\nabla F\left(y^{(s)}\right)\right\Vert^{2}\right]
\\
&\leq
4D_{y}^{2}\alpha^{2}\left(\sum_{s=0}^{k-1}\gamma^{k-1-s}\right)^{2}
\sum_{s=0}^{k-1}\frac{\gamma^{k-1-s}}{\sum_{s=0}^{k-1}\gamma^{k-1-s}}
\leq 
4D_{y}^{2}\alpha^{2}\frac{1}{(1-\gamma)^{2}}.
\end{align*}
Similarly, we can show that
\begin{align*}
4\left\Vert\left(\left(W^{k}-\frac{1}{N}1_{N}1_{N}^{T}\right)\otimes I_{d}\right)x^{(0)}\right\Vert^{2}
&\leq
4\left\Vert\left(W^{k}-\frac{1}{N}1_{N}1_{N}^{T}\right)\otimes I_{d}\right\Vert^{2} \left\Vert x^{(0)}\right\Vert^{2}
\\
&\leq 4\gamma^{2k}\left\Vert x^{(0)}\right\Vert^{2}.
\end{align*}
This implies that
\begin{align*}
&4\mathbb{E}\left\Vert\left(\left(W^{k}-\frac{1}{N}1_{N}1_{N}^{T}\right)\otimes I_{d}\right)x^{(0)}\right\Vert^{2}
\\
&\leq 8\gamma^{2k}\mathbb{E}\left\Vert x^{(0)}-x^{\ast}\right\Vert^{2}
+8\gamma^{2k}\Vert x^{\ast}\Vert^{2}
\\
&\leq 8\gamma^{2k}\left(4\frac{ V_{S,\alpha}\left(\xi_{0}\right)}{\mu}
+ \frac{2\sigma^2 N \sqrt{\alpha}}{\mu\sqrt{\mu}}\left(2-\lambda_N^W +\alpha L \right) +  \frac{2C_{1}^{2}N\alpha^2}{(1-\gamma)^2}
+\Vert x^{\ast}\Vert^{2}\right),
\end{align*}
where we used Corollary~\ref{cor:rate:dasg}.

In addition, by applying Lemma~\ref{lem:difference}, we can show that
\begin{align*}
&4\sum_{s=0}^{k-1}\mathbb{E}\left\Vert\left(\left(W^{k-s}-\frac{1}{N}1_{N}1_{N}^{T}\right)\otimes I_{d}\right)\beta\left(x^{(s)}-x^{(s-1)}\right)\right\Vert^{2}
\\
&\leq
4\frac{\beta^{2}}{(1-\gamma)^{2}}\sup_{s\geq 0}\mathbb{E}\left\Vert\left(x^{(s)}-x^{(s-1)}\right)\right\Vert^{2}
\\
&\leq
4\frac{\beta^{2}}{(1-\gamma)^{2}}\frac{2C_{0}}{\beta^{2}}\alpha
=\frac{8C_{0}}{(1-\gamma)^{2}}\alpha.
\end{align*}
Finally, we can show that
\begin{align*}
&4\alpha^{2}\sum_{s=0}^{k-1}\mathbb{E}\left\Vert\left(\left(W^{k-1-s}-\frac{1}{N}1_{N}1_{N}^{T}\right)\otimes I_{d}\right)\xi^{(s+1)}\right\Vert^{2}
\\
&\leq
4\alpha^{2}\sum_{s=0}^{k-1}\left\Vert W^{k-1-s}-\frac{1}{N}1_{N}1_{N}^{T}\right\Vert^{2}
\mathbb{E}\left\Vert \xi^{(s+1)}\right\Vert^{2}
\leq
\frac{4\sigma^{2}N\alpha^{2}}{(1-\gamma)^{2}}.
\end{align*}
Hence, it follows from \eqref{eq:inter-estimate:DASG} that 
\begin{align*}
&\sum_{i=1}^{N}\mathbb{E}\left\Vert x_{i}^{(k)}-\bar{x}^{(k)}\right\Vert^{2} 
\\
&=\left\Vert x^{(k)}-\frac{1}{N}\left(\left(1_{N}1_{N}^{T}\right)\otimes I_{d}\right)x^{(k)}\right\Vert^{2}
\\
&\leq
4\gamma^{2k}\mathbb{E}\left\Vert x^{(0)}\right\Vert^{2}
+4D_{y}^{2}\alpha^{2}\frac{1}{(1-\gamma)^{2}}
+4\alpha^{2}\sum_{s=0}^{k-1}\mathbb{E}\left\Vert\left(\left(W^{k-1-s}-\frac{1}{N}1_{N}1_{N}^{T}\right)\otimes I_{d}\right)\xi^{(s+1)}\right\Vert^{2}
\\
&\qquad
+4\sum_{s=0}^{k-1}\mathbb{E}\left\Vert\left(\left(W^{k-s}-\frac{1}{N}1_{N}1_{N}^{T}\right)\otimes I_{d}\right)\beta\left(x^{(s)}-x^{(s-1)}\right)\right\Vert^{2}
\\
&\leq
8\gamma^{2k}\left(4\frac{ V_{S,\alpha}\left(\xi_{0}\right)}{\mu}
+ \frac{2\sigma^2 N \sqrt{\alpha}}{\mu\sqrt{\mu}}\left(2-\lambda_N^W +\alpha L \right) +  \frac{2C_{1}^{2}N\alpha^2}{(1-\gamma)^2}
+\Vert x^{\ast}\Vert^{2}\right)
\\
&\qquad
+4D_{y}^{2}\alpha^{2}\frac{1}{(1-\gamma)^{2}}
+\frac{4\sigma^{2}N\alpha^{2}}{(1-\gamma)^{2}}
+\frac{8C_{0}}{(1-\gamma)^{2}}\alpha.
\end{align*} 
The proof is complete.}
\end{proof}

\subsubsection{Proof of Lemma~\ref{lem:E:D-ASG}}

\begin{proof}
{\color{black}We notice that
\begin{align*}
&\mathbb{E}\left\Vert\mathcal{E}_{k+1}\right\Vert^{2}
\\
&=\mathbb{E}\left\Vert
\frac{1}{N}\sum_{i=1}^{N}\left(\nabla f_{i}\left(y_{i}^{(k)}\right)
-\nabla f_{i}\left(\bar{y}^{(k)}\right)\right)\right\Vert^{2}
\\
&\leq
\frac{1}{N^{2}}\sum_{i=1}^{N}
N\mathbb{E}\left\Vert
\nabla f_{i}\left(y_{i}^{(k)}\right)
-\nabla f_{i}\left(\bar{y}^{(k)}\right)\right\Vert^{2}
\\
&\leq\frac{1}{N}L^{2}\sum_{i=1}^{N}
\mathbb{E}\left\Vert
y_{i}^{(k)}
-\bar{y}^{(k)}\right\Vert^{2}
\\
&\leq\frac{2}{N}L^{2}\sum_{i=1}^{N}
\left((1+\beta)^{2}\mathbb{E}\left\Vert
x_{i}^{(k)}
-\bar{x}^{(k)}\right\Vert^{2}
+\beta^{2}\mathbb{E}\left\Vert
x_{i}^{(k-1)}
-\bar{x}^{(k-1)}\right\Vert^{2}\right)
\\
&\leq
\frac{2}{N}L^{2}
\left((1+\beta)^{2}+\beta^{2}\right)
\Bigg[
4D_{y}^{2}\alpha^{2}\frac{1}{(1-\gamma)^{2}}
+\frac{4\sigma^{2}N\alpha^{2}}{(1-\gamma)^{2}}
+\frac{8C_{0}}{(1-\gamma)^{2}}\alpha
\\
&\qquad\qquad
+8\gamma^{2(k-1)}\left(4\frac{ V_{S,\alpha}\left(\xi_{0}\right)}{\mu}
+ \frac{2\sigma^2 N \sqrt{\alpha}}{\mu\sqrt{\mu}}\left(2-\lambda_N^W +\alpha L \right) +  \frac{2C_{1}^{2}N\alpha^2}{(1-\gamma)^2}
+\Vert x^{\ast}\Vert^{2}\right)\Bigg],
\end{align*} 
where we used Lemma~\ref{gradient:average:error:DASG}.
The proof is complete.}
\end{proof}

\subsubsection{Proof of Lemma~\ref{lem:difference}}

\begin{proof} 
{\color{black}We first rewrite the D-ASG iterations \eqref{eq-centr-asg} as
\begin{equation}
\begin{aligned}
&z^{(k+1)}
= \left(\tilde{M} \otimes I_d\right) z^{(k)} - \alpha \begin{bmatrix}
       \tilde \nabla F\left(Cz^{(k)}\right) \\             0
\end{bmatrix}, 
\quad
z^{(k)} = \begin{bmatrix}
          x^{(k)}   \\ y^{(k)}      
\end{bmatrix},
\end{aligned}
\end{equation}
where 
\beq \tilde M = \begin{bmatrix}
(1+\beta) W & -\beta W \\
I_{N} & 0_{N}
\end{bmatrix}.
\label{eq:tildeM-decomposition}
\eeq
By a reasoning similar to the proof of Proposition \ref{thm:DASG:rho}, we observe that 
$\tilde{M}$ is block diagonalizable with $2\times 2$ blocks satisfying
   \beq \tilde{M} = \tilde{O} \textbf{\mbox{diag}}\left(\{{Z}_i\}_{i=1}^{N}\right)\tilde{O}^T, 
   \label{eq:decomposition-tildeM}
   \eeq
where   
   \begin{equation*} 
\tilde{Z}_i =\left[
\begin{array}{cc}
(1+\beta)\lambda_{i}^{W} & -\beta\lambda_{i}^{W}
\\
1 & 0
\end{array}
\right] \in {\mathbb{R}}^{2\times 2},
\qquad 1\leq i\leq N,
\end{equation*}
$\lambda_{i}^{W}$ are the eigenvalues of $W$ in decreasing order, $\tilde{O} = \tilde{U}\tilde{P}_{\tilde\pi}$ is orthogonal with $\tilde{U}$ and $\tilde{P}_{\tilde{\pi}}$ are defined
as
$\tilde U = \mbox{\textbf{diag}}( V,V)$ where $W = VDV^T$ is the eigenvalue decomposition of $W$ and
$\tilde{P}_{\tilde\pi}$ is the permutation matrix
associated with the permutation $\tilde\pi$ over $\{1,2,\ldots,2N\}$
that satisfies
\begin{equation}
\tilde\pi(i)=
\begin{cases}
2i-1 &\text{if $1\leq i\leq N$},
\\
2(i-N) &\text{if $N+1\leq i\leq 2N$}.
\end{cases}
\label{def-perm-P}
\end{equation}
 We also observe that $\tilde{Z}_i$ has eigenvalues
$$\mu_{i,\pm}:=\frac{(1+\beta)\lambda_{i}^{{W}}\pm\sqrt{(1+\beta)^{2}(\lambda_{i}^{{W}})^{2}-4\beta\lambda_{i}^W}}{2}.$$
In particular, in the special case when $i=1$, we have $\lambda_1^W = 1$ and $\tilde{Z}_1$ has two eigenvalues $\mu_{1,+} = 1$ and $\mu_{1,-} = \beta<1$ for $i=1,2,\dots, d$, admitting the Jordan decomposition
$$ \tilde{Z}_i = S_1 \begin{bmatrix} 1 & 0\\ 0 & \beta
\end{bmatrix}
S_1^{-1}, \quad \mbox{for} \quad i=1,2,\dots,d,
$$
where
 $$ S_1 = \begin{bmatrix}
                     1 & \beta\\
                     1 & 1
 \end{bmatrix}, \quad
 S_1^{-1} = \frac{1}{1-\beta}\begin{bmatrix}
                    1 & -\beta \\
                    -1 & 1
 \end{bmatrix}. 
 $$
Similarly, for $i>1$, we can also write the Jordan decomposition of $\tilde{Z}_i$ as 
\beq\tilde{Z}_i = S_i J_i S_i^{-1} \quad \mbox{for} \quad 1<i\leq N, \label{eq-tilde-Zi}
\eeq
where 
\begin{align}
&S_i = \begin{cases}
  \begin{bmatrix}
        \mu_{i,+} & \mu_{i,-} \\
        1 & 1
  \end{bmatrix} & \mbox{if} \quad \mu_{i,+} \neq \mu_{i,-}, \\
  \begin{bmatrix}
        \mu_{i,+} & 1 \\
        1 & 1
  \end{bmatrix} & \mbox{if} \quad \mu_{i,+} = \mu_{i,-},
\end{cases},
\\
&S_i^{-1} = \begin{cases}
  \frac{1}{\mu_{i,+}-\mu_{i,-}}\begin{bmatrix}
         1& -\mu_{i,-} \\
        -1 & \mu_{i,+}
  \end{bmatrix} & \mbox{if} \quad \mu_{i,+} \neq \mu_{i,-}, \\
  \frac{1}{\mu_{i,+}-1}\begin{bmatrix}
        1 & -1 \\
        -1 & \mu_{i,+}
  \end{bmatrix} & \mbox{if} \quad \mu_{i,+} = \mu_{i,-},
\end{cases}
\end{align}
and 
$$ J_i = \begin{cases}
    \begin{bmatrix}
         \mu_{i,+} & 0 \\
                        0 & \mu_{i,-}
    \end{bmatrix}
& \mbox{if} \quad \mu_{i,+} \neq \mu_{i,-},\\
    \begin{bmatrix}
         \mu_{i,+} & 1 \\
                        0 & \mu_{i,+}
    \end{bmatrix}
& \mbox{if} \quad \mu_{i,+} = \mu_{i,-}.
\end{cases} 
$$
Basically, the structure of the Jordan blocks $J_i$ will depend on the multiplicity of the eigenvalues $s \mu_{i,+}, \mu_{i,-}$ which itself depends on the stepsize chosen and the eigenvalues of the matrix $W$. Next, we introduce 
$$ \bar{Z}_i := \begin{cases}
S_1 \begin{bmatrix} 1 & 0\\ 0 & 0
\end{bmatrix}
S_1^{-1} & \mbox{if} \quad i=1,\\
0 & \mbox{if} \quad 1<i \leq N,
\end{cases}
$$
as well as
$$Z:=\textbf{\mbox{diag}}\left(\{{Z}_i\}_{i=1}^{N}\right), \quad \bar{Z} := \textbf{\mbox{diag}}\left(\{\bar{Z}_i\}_{i=1}^{N}\right).  
$$
Note that the matrices $Z$ and $\bar{Z}$ are block diagonal with $2\times 2$ blocks and they have both $1$ as a simple eigenvalue with the same eigenvectors; however other eigenvalues of $\bar{Z}$ is set to zero. We have also
\beq Z_i - \bar{Z}_i = 
\begin{cases} S_1
\begin{bmatrix} 0 & 0\\ 0 & \beta
\end{bmatrix}
S_1^{-1} & \mbox{if} \quad i=1,\\
\tilde{Z}_i & \mbox{if} \quad 1<i \leq N.
\end{cases} \label{eq-diff-mat-Z}
\eeq
It can also be computed that
$$ \bar{M}:= \tilde O \bar{Z} \tilde{O}^T = \frac{1}{1-\beta}
\begin{bmatrix}
      v_1v_1^T & -\beta v_1 v_1^T  \\
       v_1 v_1^T        &    - \beta v_1 v_1^T  
\end{bmatrix},
$$
where $v_1 = \frac{1}{\sqrt{N}}\ones \in \mathbb{R}^N$ is the eigenvector of $W$ corresponding to the eigenvalue 1.  
Consider
\beq 
\left(\bar{M}\otimes I_d\right) z^{(k)} = \frac{1}{1-\beta}
\begin{bmatrix}
 \bar{x}^{(k)} - \beta \bar{x}^{(k-1)} \\  
 \quad \vdots \quad  \\
 \bar{x}^{(k)} - \beta \bar{x}^{(k-1)} 
\end{bmatrix} = 
\frac{1}{1-\beta}
\begin{bmatrix}
\mathbf{\bar{x}}^{(k)}  - \beta \mathbf{\bar{x}}^{(k-1)}   \\
\mathbf{\bar{x}}^{(k)}  - \beta \mathbf{\bar{x}}^{(k-1)} 
\end{bmatrix}
\in \mathbb{R}^{2Nd},
\label{eq-weighted-average-subtracted}
\eeq
which can be viewed as a weighted average of $\bar{x}^{(k)}$ and $\bar{x}^{(k-1)}$ with $\mathbf{\bar{x}}^{(k)}$ defined as in \eqref{def-bold-bar-xk}. From the definition of $\bar{M}$ and the decomposition \eqref{eq:decomposition-tildeM}, we have 
\beq 
\tilde{M} - \bar{M} = \tilde O \left(Z - \bar{Z}\right) \tilde{O}^T, 
\label{eq-diff-iter-mat}
\eeq
and this matrix has the spectral radius
\begin{align} 
r:&= \rho\left(\tilde{M}-\bar{M}\right) =\max\left(\beta, \max_{j\geq2}\rho(J_j)\right) \\
&= \max\left(\beta,\left|\frac{(1+\beta)\lambda_{2}^W+\sqrt{(1+\beta)^{2}(\lambda_{2}^W)^{2}-4\beta\lambda_{2}^W}}{2}\right|\right)
\\
&< r_3: = \max\left(\beta, \sqrt{\lambda_2^W}\right) \label{ineq:upper bound on spectral rad}
< 1,
\end{align}
where in the last inequality we used the facts that 
the function 
\beq g(\lambda,\beta) := \left|\frac{(1+\beta)\lambda+\sqrt{(1+\beta)^{2}(\lambda)^{2}-4\beta\lambda}}{2}\right|
\label{def-function-g}
\eeq
defined for $\lambda, \beta \in [0,1]$ is increasing in $\lambda$ on the interval $[0,1]$ for fixed $\beta \in [0,1]$ and is increasing in $\beta$ on the interval $[0,1]$ for fixed $\alpha \in [0,1]$ which results in the inequalities $g(\lambda_i^W,\beta) \leq g(\lambda_2^W,\beta) \leq g(\lambda_2^W,1) = \sqrt{\lambda_2^W}$  for $i\geq 2$. From \eqref{ineq:upper bound on spectral rad}, it follows that
\beq \left\| (M-\bar{M})^k \right\| \leq C_3 r_3^k \quad \mbox{for all} \quad k\geq 0, 
\label{ineq-to-satisfy-exp-decay}
\eeq
for some positive constant $C_3$. The constant $C_3$ will depend on the parameter $\alpha$ in general (as $\beta$ and $r_3$ are functions of $\alpha$) but it will not depend on $k$. A natural question would be how the constants $C_3$ and $r_3$ change as a function of $\alpha$ as $\alpha \to 0$. It follows from Lemma \ref{lem:constants-in-alpha} that one can choose $c_3$ and $r_3$ such that 
\beq C_3 = \Theta\left(\frac{1}{\sqrt{\alpha}}\right), \quad r_3 = 1- \Theta(\sqrt{\alpha}) \quad \mbox{as} \quad \alpha \to 0.
\label{eq-order-alpha-exp bounds}
\eeq
Furthermore,
\begin{align} 
&\left( \left(I_{2N}-\bar{M}\right)\otimes I_d\right)z^{(k+1)}
\nonumber
\\
&=  \left( \left(I_{2N}-\bar{M}\right) \tilde{M} \otimes I_d \right) z^{(k)} - \left(\left(I_{2N}-\bar{M}\right)\otimes I_d \right) \alpha \begin{bmatrix}
       \tilde \nabla F\left(Cz^{(k)}\right) \\             0
\end{bmatrix}
\nonumber
\\
&= \left( \left(\tilde M - \bar{M}\right)\left(I_{2N} - \bar{M}\right) \otimes I_d\right) z^{(k)} - \left(\left(I_{2N}-\bar{M}\right)\otimes I_d \right) \alpha \begin{bmatrix}
       \tilde \nabla F\left(Cz^{(k)}\right) \\             0
\end{bmatrix},
\end{align}
where we used the facts that $\tilde M\bar{M} = \bar{M} \tilde M = \bar{M}$ and $\bar{M}^2 = \bar{M}$. 
On the other hand, 
\begin{align}
    \big( (I_{2N}-\bar{M})\otimes I_d \big) z^{(k+1)} &= \big((\tilde M-\bar{M})^{k+1} (I_{2N}-\bar{M}) \otimes I_d \big) z^{(0)}\nonumber \\
&\qquad\qquad- \alpha \sum_{j=0}^k \big((\tilde M-\bar{M})^j (I_{2N}-\bar{M}) \otimes I_d \big) \begin{bmatrix}
       \tilde \nabla F\left(Cz^{(k-j)}\right) \\             0
\end{bmatrix}. 
\end{align}
Therefore, for $z^{(0)} = 0$, 
\begin{align} 
&\mathbb{E}\left\Vert \big( (I_{2N}-\bar{M})\otimes I_d \big) z^{(k+1)} \right\Vert^2\nonumber
\\
&= 
 \alpha^2  \mathbb{E} \left\|\sum_{j=0}^k \bigg( \left(\tilde M-\bar{M}\right)^j \left(I_{2N}-\bar{M}\right) \otimes I_d \bigg) \begin{bmatrix}
       \tilde \nabla  F\left(Cz^{(k-j)}\right) \\             0
\end{bmatrix} \right\|^2 \nonumber\\
&\leq\alpha^2 \mathbb{E} \left\|\sum_{j=0}^k  \bigg( \left(\tilde M-\bar{M}\right)^j \left(I_{2N}-\bar{M}\right) \otimes I_d \bigg) \begin{bmatrix}
       \tilde \nabla  F\left(Cz^{(k-j)}\right) \\             0
\end{bmatrix} \right\|^2 \nonumber\\
&=\alpha^2 \mathbb{E} \left\|\sum_{j=0}^k \bigg( \left( \left(\tilde M-\bar{M}\right)^{j+1} +\left(\tilde M-\bar{M}\right)^{j}\left(I_{2N}-\tilde M\right) \right) \otimes I_d \bigg) \begin{bmatrix}
       \tilde \nabla  F\left(Cz^{(k-j)}\right) \\             0
\end{bmatrix} \right\|^2 \nonumber\\
&\leq \alpha^2\mathbb{E} \left|\sum_{j=0}^k  c_3 r_3^j \left(1 + \left\|I_{2N}-\tilde M\right\|\right) 
\left\| 
       \tilde \nabla  F\left(Cz^{(k-j)}\right) \right\|
       \right|^2 \nonumber\\
&\leq\alpha^2 \left(\sum_{j=0}^k a_j\right)^2 \mathbb{E} \left|\sum_{j=0}^k  \frac{a_j}{(\sum_{j=0}^k a_j)} \left\| 
       \tilde \nabla  F\left(Cz^{(k-j)}\right) \right\|
       \right|^2\nonumber\\
&\leq \alpha^2 \left(\sum_{j=0}^k a_j\right)^2        \sum_{j=0}^k \frac{a_j}{(\sum_{j=0}^k a_j)} \mathbb{E}\left\| \tilde \nabla  F\left(Cz^{(k-j)}\right)\right\|^2 \nonumber\\
&\leq\alpha^2 \left(\sum_{j=0}^k a_j\right)^2 D_y^2, \label{ineq-final-alpha-dep-of-average}
\end{align}
where we used \eqref{ineq-stoc-grad-bound-ASG}, $D_y^2$ is defined in \eqref{D:y:eqn} and
$$ a_j := c_3 r_3^j \left(1+\left\|I_{2N}-\tilde M\right\|\right). $$
Note that
\begin{align}  
\left\|I_{2N}-\tilde M\right\| &\leq \|I_{2N}\| + \left\|\tilde M\right\|\nonumber\\
&\leq  1 + \left\|\tilde M\right\|_F \nonumber\\
&= 1 + \sqrt{\left((1+\beta)^2 + \beta^2\right)\|W\|_F^2 +N},
\end{align}
where we used the definition of $\tilde{M}$. Consequently,
$$ \sum_{j=0}^\infty a_j \leq \sum_{j\geq 0}c_3 r_3^j  \left(1 + \left\|I_{2N}-\tilde{M}\right\|\right) \leq c_3 \frac{1}{(1-r_3)} \left(1 + \sqrt{\left((1+\beta)^2 + \beta^2\right)\|W\|_F^2 +N}\right).
$$
Therefore, we conclude from \eqref{eq-weighted-average-subtracted} and  \eqref{ineq-final-alpha-dep-of-average} that
\begin{align} 
\sup_k \mathbb{E} \left\Vert \big( (I_{2N}-\bar{M}) \otimes I_d \big) z^{(k)}\right\Vert^2 &=\sup_{k}\mathbb{E}\left\|
           \begin{bmatrix}
 x^{(k)} -\frac{1}{1-\beta} \left(\mathbf{\bar{x}}^{(k)} - \beta \mathbf{\bar{x}}^{(k-1)}\right) \nonumber\\    y^{(k)} - \frac{1}{1-\beta}\left(\mathbf{\bar{x}}^{(k)} - \beta \mathbf{\bar{x}}^{(k-1)}\right) 
\end{bmatrix}   \right\|^2 \nonumber\\      
&\leq D_y^2 \alpha^2 \frac{c_3}{(1-r_3)} \left(1 + \sqrt{\left((1+\beta)^2 + \beta^2\right)\|W\|_F^2 +N}\right)\nonumber\\
&\leq D_y^2 \alpha^2 \frac{c_3}{(1-r_3)} \left(1 + \sqrt{5\|W\|_F^2 +N}\right),  \label{ineq-to-prove-alpha-dependency}
\end{align}
where $\mathbf{\bar{x}}^{(k)}$ is defined by \eqref{def-bold-bar-xk} and we used the fact that $\beta\leq 1$. From \eqref{eq-order-alpha-exp bounds}, we observe that the term $\frac{c_3}{1-r_3} = \mathcal{O}(\frac{1}{\alpha})$. We conclude that the right hand-side of \eqref{ineq-to-prove-alpha-dependency} is $O(\alpha)$.
Hence, we conclude that
\begin{align*}
&\sup_{k}\mathbb{E}\left\Vert x^{(k)}-y^{(k)}\right\Vert^{2}
\\
&\leq 
2\sup_{k}\left(\mathbb{E}\left\Vert x^{(k)}-\frac{1}{1-\beta}\left(\mathbf{\bar{x}}^{(k)} - \beta \mathbf{\bar{x}}^{(k-1)}\right)\right\Vert^{2}
+\mathbb{E}\left\Vert y^{(k)}-\frac{1}{1-\beta}\left(\mathbf{\bar{x}}^{(k)} - \beta \mathbf{\bar{x}}^{(k-1)}\right)\right\Vert^{2}\right)
\\
&\leq 
2C_{0}\alpha,
\end{align*}
where $C_{0}$ is some constant such that
$C_{0}=\mathcal{O}(1)$ as $\alpha\rightarrow 0$.
Finally, we notice that 
$y^{(k)}=(1+\beta)x^{(k)}-\beta x^{(k-1)}$,
and this implies that,
\begin{equation}
\sup_{k}\mathbb{E}\left\Vert x^{(k)}-x^{(k-1)}\right\Vert^{2}
=\sup_{k}\mathbb{E}\left\Vert\frac{1}{\beta}\left(x^{(k)}-y^{(k)}\right)\right\Vert^{2}
\leq\frac{2C_{0}}{\beta^{2}}\alpha,
\end{equation}
which completes the proof.
}
\end{proof}

\begin{lemma}\label{lem:constants-in-alpha} 
{\color{black}In the setting of the proof of Lemma  \ref{lem:difference}, 
$$c_3 = \Theta\left(\frac{1}{\sqrt{\alpha}}\right), \quad r_3 = 1- \Theta(\sqrt{\alpha}) \quad \mbox{as} \quad \alpha \to 0.$$}
\end{lemma}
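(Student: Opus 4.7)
The plan is to analyze the block-diagonal decomposition \eqref{eq:decomposition-tildeM} of $\tilde M-\bar M = \tilde O(Z-\bar Z)\tilde O^T$ block-by-block, and identify the precise block responsible for the $\Theta(1/\sqrt{\alpha})$ blowup of $C_3$ and for the $1-\Theta(\sqrt{\alpha})$ decay rate $r_3$.

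First I would track the $\alpha$-dependence of $\beta$. Since $\beta=\frac{1-\sqrt{\alpha\mu}}{1+\sqrt{\alpha\mu}}$, one has $1-\beta = \frac{2\sqrt{\alpha\mu}}{1+\sqrt{\alpha\mu}} = \Theta(\sqrt{\alpha})$, and therefore $\beta = 1-\Theta(\sqrt{\alpha})$. In particular, since $\sqrt{\lambda_2^W}<1$ is a fixed constant bounded away from $1$, for all sufficiently small $\alpha$ we have $\beta > \sqrt{\lambda_2^W}$, and hence $r_3 = \max(\beta,\sqrt{\lambda_2^W}) = \beta = 1-\Theta(\sqrt{\alpha})$, which gives the claimed behavior of $r_3$.

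Next, I would bound $\|(Z_i-\bar Z_i)^k\|$ for each block. For the block $i=1$, recall from \eqref{eq-diff-mat-Z} that $Z_1-\bar Z_1 = S_1 \,\mathrm{diag}(0,\beta)\,S_1^{-1}$, which is a rank-one matrix, and an explicit multiplication of the three factors gives
\[
(Z_1-\bar Z_1)^k \;=\; \frac{\beta^k}{1-\beta}\begin{bmatrix}-\beta & \beta \\ -1 & 1\end{bmatrix},
\]
whose operator norm equals $\frac{\beta^k\sqrt{2(1+\beta^2)}}{1-\beta} = \Theta\!\left(\beta^k/\sqrt{\alpha}\right)$. For $i\geq 2$, as $\alpha\to 0$ we have $(1+\beta)^2\lambda_i^W - 4\beta = 4\lambda_i^W-4+o(1) <0$, so the eigenvalues $\mu_{i,\pm}$ of $\tilde Z_i$ become complex conjugate with $|\mu_{i,\pm}| = \sqrt{\beta\lambda_i^W}\leq \sqrt{\lambda_2^W}$. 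In this regime the eigenvalues are distinct so the diagonalization $\tilde Z_i = S_i J_i S_i^{-1}$ is well defined, and the gap $|\mu_{i,+}-\mu_{i,-}|^2 = |4\beta\lambda_i^W - (1+\beta)^2(\lambda_i^W)^2| \to 4\lambda_i^W(1-\lambda_i^W)>0$ stays bounded away from zero. Hence $\|S_i\|$ and $\|S_i^{-1}\|$ are uniformly bounded in $\alpha$, and $\|(Z_i-\bar Z_i)^k\| \leq C(\sqrt{\lambda_2^W})^k$ for a constant $C$ independent of $\alpha$.

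Putting the two contributions together, $\|(\tilde M-\bar M)^k\| = \|(Z-\bar Z)^k\| = \max_i \|(Z_i-\bar Z_i)^k\|$, and since $\beta>\sqrt{\lambda_2^W}$ for small $\alpha$, the $i=1$ block dominates, yielding $\|(\tilde M-\bar M)^k\| \leq C_3\,r_3^k$ with $r_3=\beta$ and $C_3 = O(1/\sqrt{\alpha})$. Conversely, setting $k=0$ (or applying the bound with $r_3=\beta$ and any fixed $k$ and letting $\alpha\to 0$) against the exact formula for the $i=1$ block forces $C_3 \geq \Omega(1/\sqrt{\alpha})$, which gives the matching lower bound. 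The main delicate point is the behavior of the $i=1$ block: its eigenvalue $1$ is removed by $\bar M$, but the other eigenvalue $\beta$ of $\tilde M$ coalesces with $1$ as $\alpha\to 0$, and the change-of-basis matrix $S_1$ becomes increasingly ill-conditioned with $\|S_1^{-1}\| = \Theta(1/(1-\beta)) = \Theta(1/\sqrt{\alpha})$; everything else is either uniformly bounded or inherits a strictly faster decay.
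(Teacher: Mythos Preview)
Your approach is essentially the same as the paper's: block-diagonalize $\tilde M-\bar M$, isolate the $i=1$ block as the one responsible for the $1/\sqrt{\alpha}$ blowup (via the ill-conditioning of $S_1^{-1}$ as $\beta\to 1$), and show the $i\geq 2$ blocks contribute only a uniformly bounded constant times a rate strictly below $\beta$. Your treatment of the $i\geq 2$ blocks is in fact slightly cleaner than the paper's: by restricting to small $\alpha$ you observe the eigenvalues are complex conjugate and hence distinct with a uniformly positive gap, so you avoid the Jordan-block case and the auxiliary inflated rate $r_2=(1+|\mu_{2,+}|)/2$ that the paper introduces to absorb a polynomial prefactor.

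One small slip: your formula $(Z_1-\bar Z_1)^k=\frac{\beta^k}{1-\beta}\begin{bmatrix}-\beta&\beta\\-1&1\end{bmatrix}$ is only valid for $k\geq 1$; at $k=0$ the left-hand side is the identity, with norm $1$, so ``setting $k=0$'' does not force $C_3=\Omega(1/\sqrt{\alpha})$. Your parenthetical alternative is the correct argument: for any fixed $k\geq 1$ and $r_3=\beta$, the inequality $C_3\beta^k\geq \|(Z_1-\bar Z_1)^k\|=\frac{\beta^k\sqrt{2(1+\beta^2)}}{1-\beta}$ indeed forces $C_3\geq \frac{\sqrt{2(1+\beta^2)}}{1-\beta}=\Omega(1/\sqrt{\alpha})$.
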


\begin{proof}
{\color{black}Note that we have from \eqref{eq-diff-iter-mat}, \eqref{eq-tilde-Zi}, \eqref{eq-diff-mat-Z},
\begin{align}
\left\|(M - \bar{M})^k\right\| 
&= \left\| \tilde{O} \left(Z - \bar{Z}\right)^k \tilde{O}^T\right\| = \max_i \left\| (Z_i - \bar{Z}_i)^k\right\|\\
&\leq \max\left(\|S_1\| \left\|S_1^{-1}\right\| \beta^k,  \max_{2\leq i \leq N} \| S_i\| \left\|J_i^k\right\| \left\|S_i^{-1}\right\|\right),
\label{ineq: decay of the normalized iter matrix}
\end{align}
where we used the fact that $\tilde O$ is orthogonal. Also, 
\begin{equation}
    \|S_i\|_2 \leq \|S_i\|_F \leq 2 \quad \mbox{for} \quad 1\leq i\leq N,
    \label{ineq: Si norm bound}
\end{equation} 
where we used the definition of $S_i$ and the inequalities $|\mu_{i,+}| = g(\lambda_i^W,\beta) \leq g(1,1) = 1$ and $|\mu_{i,-}| \leq 1$ which follow from the definition \eqref{def-function-g} of the function $g(\lambda,\beta)$ and its monotonicity property with respect to $\lambda$ and $\beta$. Similarly,
\beq
\left\|S_1^{-1}\right\|_2 \leq  
\left\|S_1^{-1}\right\|_F \leq \frac{2}{1-\beta},
\label{ineq: S1 inverse norm bound}
\eeq 
and for $1<i\leq N$,
\beq \left\|S_i^{-1}\right\|_2 \leq\left\|S_i^{-1}\right\|_F \leq 
c_i := \begin{cases}
  \frac{2}{|\mu_{i,+}-\mu_{i,-}|} = \frac{2}{\sqrt{|(1+\beta)^{2}(\lambda_{i}^{{W}})^{2}-4\beta\lambda_{i}^W}|} & \mbox{if} \quad \mu_{i,+} \neq \mu_{i,-}, \\
  \frac{2}{|\mu_{i,+}-1|} & \mbox{if} \quad \mu_{i,+} = \mu_{i,-}.
\end{cases}
 \label{ineq: Si-inverse norm bound}
\eeq
In addition, 
$$ J_i^k = \begin{cases}
    \begin{bmatrix}
         \mu_{i,+}^k & 0 \\
                        0 & \mu_{i,-}^k
    \end{bmatrix}
& \mbox{if} \quad \mu_{i,+} \neq \mu_{i,-},\\
    \begin{bmatrix}
         \mu_{i,+}^k & k\mu_{i,+}^{k-1}  \\
                        0 & \mu_{i,+}^k
    \end{bmatrix}
& \mbox{if} \quad \mu_{i,+} = \mu_{i,-},
\end{cases} 
$$
used again the fact that $|\mu_{i,+}|\leq 1$, we have then
\begin{align} 
\left\|J_i^k\right\|_2 \leq \left\|J_i^k\right\|_F &=
\begin{cases}
|\mu_{i,+}|^k & \mbox{if}\quad \mu_{i,+} \neq \mu_{i,-}  \\
\sqrt{2|\mu_{i,+}|^{2k} + k^2 |\mu_{i,+}|^{2k-2}}& \mbox{if} \quad \mu_{i,+} = \mu_{i,-}
\end{cases} \nonumber\\
& \leq d_{i,k}|\mu_{i,+}|^{k-1}, 
\label{ineq: power of Jordan matrix}
\end{align} 
with
     $$ d_{i,k} := \begin{cases}
        1 & \mbox{if}\quad \mu_{i,+} \neq \mu_{i,-}, \\
       \sqrt{2}+k & \mbox{if}  \quad \mu_{i,+} = \mu_{i,-}.
     \end{cases}$$ 
Combining all the estimates \eqref{ineq: Si norm bound}, \eqref{ineq: S1 inverse norm bound}, \eqref{ineq: Si-inverse norm bound}, \eqref{ineq: power of Jordan matrix}  together, we obtain
     \begin{align} 
     \max_{2\leq i \leq N} \|S_i \| \left\|J_i^k\right\| \left\|S_i^{-1}\right\|
     &\leq 2 \left( \max_{2\leq i \leq N} c_i d_{i,k} \right) \left(\max_{2\leq i \leq N}  |\mu_{i,+}|^{k-1} \right)\nonumber \\
     &= 2 \left( \max_{2\leq i \leq N} c_i d_{i,k} \right)  |\mu_{2,+}|^{k-1}. 
     \label{ineq: expoential growth with poly prefac}
     \end{align}
Note that we have $|\mu_{i,+}| = g(\lambda_i^W, \beta) \leq g(\lambda_2^W, \beta) = \mu_{2,+} < 1$. Therefore, 
$$|\mu_{2,+}| < r_2 := \frac{1 + |\mu_{2,+}|}{2}.$$ 
Furthermore, $ \max_{2\leq i \leq N} c_i(\alpha) d_{i,k}  = \mathcal{O}(k)$. Consequently, we can bound  \eqref{ineq: expoential growth with poly prefac} as 
\beq \max_{2\leq i \leq N} \|S_i \| \left\|J_i^k\right\| \left\|S_i^{-1}\right\|
     \leq C_2r_2^{k-1} 
\eeq     
for some constant $C_2 = \mathcal{O}(1)$ that does not depend on $k$. 
Then, from \eqref{ineq: decay of the normalized iter matrix}, it follows that 
\begin{align}
\left\|\left(M - \bar{M}\right)^k\right\| 
&\leq \max\left(\|S_1\| \left\|S_1^{-1}\right\| \beta^k,  \max_{2\leq i \leq N} \| S_i\| \left\|J_i^k\right\| \left\|S_i^{-1}\right\|\right) \nonumber\\
&\leq \max\left( \frac{4}{1-\beta} \beta^k, C_2 r_2^{k-1}
\right) \nonumber\\
&\leq C_3 r_3^k,
\end{align}
where 
$$C_3:= \max\left(\frac{4}{1-\beta}, C_2\right), \quad r_3 = \max(\beta, r_2).$$
Also $|\mu_{2,+}| = g(\lambda_2^W,\beta) < g(\lambda_2^W,1) = \sqrt{\lambda_2^W} $. Therefore, 
$r_2 < \frac{1+\sqrt{\lambda_2^W}}{2}$. Since $\beta = 1-\Theta(\sqrt{\alpha})$, we observe that  
$$C_3 = \Theta\left(\frac{1}{\alpha}\right), \quad r_3 = 1- \Theta(\sqrt{\alpha})$$ as $\alpha \to 0$.
The proof is complete.} 
\end{proof}


\subsubsection{Proof of Lemma~\ref{lem:C:epsilon}}
{\color{black}
\begin{proof}The function $V_{\bar{S},\alpha}$ is $L_\alpha$ smooth where $\bar{L}_\alpha = L + 2\|\bar{S}_\alpha\|$. Note that 
$$ \bar{S}_\alpha = vv^T, \qquad v= 
\left[
\begin{array}{c}
\sqrt{\frac{1}{2\alpha}}
\\
\sqrt{\frac{\mu}{2}}-\sqrt{\frac{1}{2\alpha}}
\end{array}
\right].
$$
Therefore, 
\beq \left\|\bar{S}_\alpha\right\| = \|v\|^2 = \frac{1}{\alpha} + \frac{\mu}{2}-\frac{\sqrt{\mu}}{\sqrt{\alpha}}.
\label{ineq-S-alpha-norm}
\eeq
For any $\xi,\Delta\in\mathbb{R}^{2d}$, by the $\bar{L}_\alpha$-smoothness of $V_{\bar{S},\alpha}$ we have also
\begin{align} 
V_{\bar{S},\alpha}(\xi+\Delta) &\leq
V_{\bar{S},\alpha}(\xi) + \left\langle 
\nabla V_{\bar{S},\alpha}(\xi), \Delta \right\rangle + \frac{\bar{L}_\alpha}{2} \|\Delta\|^2 \nonumber\\
&\leq V_{\bar{S},\alpha}(\xi) + c_1 \left\|\nabla V_{\bar{S},\alpha}(\xi)\right\|^2 + \|\Delta\|^2/(4c_1)  + \frac{\bar{L}_\alpha}{2} \|\Delta\|^2, \label{ineq-helper}
\end{align}
for any $c_1>0$ where we used Cauchy-Schwarz in the last inequality. We have also
\begin{align} 
\left\|\nabla V_{\bar{S},\alpha}(\xi)\right\|^2 & \leq 2\|2\bar{S}_\alpha \xi\|^2 + 2\left\|\nabla f\left(\bar{T}\xi + x_*\right)\right\|^2\nonumber\\
&\leq
8 \xi^T \bar{S}_\alpha^2 \xi + 2L^2\left\|\bar{T}\xi\right\|^2\nonumber\\
&\leq
8\left(\frac{1}{\alpha} + \frac{\mu}{2}-\frac{\sqrt{\mu}}{\sqrt{\alpha}}\right) \xi^T \bar{S}_\alpha \xi + 2L^2\left\|\bar{T}\xi\right\|^2 \nonumber\\
&\leq 8\left(\frac{1}{\alpha} + \frac{\mu}{2}-\frac{\sqrt{\mu}}{\sqrt{\alpha}}\right) \xi^T \bar{S}_\alpha \xi + 2L^2 \frac{f(\bar{T}\xi+x_*) - f(x_*)}{\mu} \nonumber\\
&\leq c_2 V_{\bar{S},\alpha}(\xi),
\end{align}
where $$ c_2 := 2\max\left(4\left(\frac{1}{\alpha} + \frac{\mu}{2}-\frac{\sqrt{\mu}}{\sqrt{\alpha}}\right), \frac{L^2}{\mu}\right),$$
and we used the facts that
$$\bar{S}_\alpha^2 = \|v\|^2 \bar{S}_\alpha = \left(\frac{1}{\alpha} + \frac{\mu}{2}-\frac{\sqrt{\mu}}{\sqrt{\alpha}}\right)\bar{S}_\alpha.$$
Therefore, if choose $c_1 = \epsilon/c_2$; then we get from \eqref{ineq-helper}
\begin{align} 
V_{\bar{S},\alpha}(\xi+\Delta) 
\leq (1+\epsilon) V_{\bar{S},\alpha}(\xi)  + C_\epsilon \|\Delta\|^2, 
\end{align}
with $C_\epsilon = \frac{c_2}{4\epsilon} + \bar{L}_{\alpha}/2$. Finally, if we choose $\xi = \bar{\xi}_{k+1}-D_{k+1}$ and $\Delta= D_{k+1}$; we obtain \eqref{ineq-lyap-pert}. This completes the proof.
\end{proof}
}


\subsection{Proofs of Technical Results in Appendix~\ref{sec:quadratic}}

\subsubsection{Proof of Lemma~\ref{lem:DASG:Q}}
\begin{proof}
We recall from the proof of Proposition~\ref{thm:DASG:rho} that
\begin{equation*}
\mathcal{W} -\alpha Q = {\color{black}R} \textbf{\mbox{diag}}\left([\mu_i]_{i=1}^{Nd}\right){\color{black}R}^{T},
\end{equation*} 
where {\color{black}$R$} is real orthogonal and the eigenvalues $\mu_i$ are listed in non-increasing order. Then we can write
\begin{equation*}
\rev{P_\pi} U A_{\text{dasg},Q} U^T \rev{P_\pi^T} = \textbf{\mbox{diag}}\left(\tilde{T}_i\right) \quad \mbox{where} \quad U = \textbf{\mbox{diag}}\rev{(R,R)}
\end{equation*}
is orthogonal and
\begin{equation*} 
\tilde{T}_i =\left[
\begin{array}{cc}
(1+\beta)\mu_{i} & -\beta\mu_{i}
\\
1 & 0
\end{array}
\right] \in {\mathbb{R}}^{2\times 2},
\qquad 1\leq i\leq Nd.
\end{equation*}

Therefore, we have
\begin{equation*}
\left\Vert A_{\text{dasg},Q}^{k}\right\Vert
\leq\Vert V\Vert^{2}
\max_{1\leq i\leq Nd}
\left\Vert\left(\tilde{T}_{i}\right)^{k}\right\Vert
=\max_{1\leq i\leq Nd}
\left\Vert\left(\tilde{T}_{i}\right)^{k}\right\Vert,
\end{equation*}
where we used the fact that $\Vert V\Vert=1$
since $V$ is orthogonal. 
The remainder of the proof is devoted
to provide an upper bound on $\max_{1\leq i\leq Nd}
\Vert(\tilde{T}_{i})^{k}\Vert$.

Let $\gamma_{i,\pm}:=\frac{(1+\beta)\mu_{i}\pm\sqrt{(1+\beta)^{2}\mu_{i}^{2}-4\beta\mu_{i}}}{2}$
be the eigenvalues of $\tilde{T}_{i}$.

(i) If $\gamma_{i,+}\neq\gamma_{i,-}$,
then by the formula
of $k$-th power of $2\times 2$
matrix with distinct eigenvalues 
(see e.g. \cite{williams2by2}), 
we get
\begin{equation*}
\left(\tilde{T}_{i}\right)^{k}
=\frac{\gamma_{i,+}^{k}}{\gamma_{i,+}-\gamma_{i,-}}\left(\tilde{T}_{i}-\gamma_{i,-}I\right)
+\frac{\gamma_{i,-}^{k}}{\gamma_{i,-}-\gamma_{i,+}}\left(\tilde{T}_{i}-\gamma_{i,+}I\right).
\end{equation*}
This implies that
\begin{equation*}
\left\Vert\left(\tilde{T}_{i}\right)^{k}\right\Vert
\leq\frac{
\max\left\{\left\Vert\tilde{T}_{i}-\gamma_{i,-}I\right\Vert,\left\Vert\tilde{T}_{i}-\gamma_{i,+}I\right\Vert\right\}}{|\gamma_{i,+}-\gamma_{i,-}|}
\max\left\{|\gamma_{i,+}|,|\gamma_{i,-}|\right\}^{k}.
\end{equation*}
We can compute that
\begin{equation*}
\tilde{T}_{i}-\gamma_{i,-}I
=\left[\begin{array}{cc}
\gamma_{i,+} & -\gamma_{i,+}\gamma_{i,-}
\\
1 & -\gamma_{i,-}
\end{array}
\right],
\end{equation*}
which implies that
\begin{equation*}
\left\Vert\tilde{T}_{i}-\gamma_{i,-}I\right\Vert
\leq\left\Vert\left(
\begin{array}{c}
\gamma_{i,+}
\\
1
\end{array}
\right)\right\Vert
\left\Vert\left(\begin{array}{cc}
1 & -\gamma_{i,-}\end{array}\right)\right\Vert
\leq
1+\max\{|\gamma_{i,+}|,|\gamma_{i,-}|\}^{2}.
\end{equation*}
Similarly, we have
\begin{equation*}
\left\Vert\tilde{T}_{i}-\gamma_{i,+}I\right\Vert
\leq\left\Vert\left(
\begin{array}{c}
\gamma_{i,-}
\\
1
\end{array}
\right)\right\Vert
\left\Vert\left(\begin{array}{cc}
1 & -\gamma_{i,+}\end{array}\right)\right\Vert
\leq
1+\max\{|\gamma_{i,+}|,|\gamma_{i,-}|\}^{2}.
\end{equation*}

(ii) If $\gamma_{i,+}=\gamma_{i,-}
=\frac{(1+\beta)\mu_{i}}{2}$, then
by the formula for $k$-th
power of $2\times 2$ matrix
with two identical eigenvalues
(see e.g. \cite{williams2by2}), 
we get
\begin{equation*}
\left(\tilde{T}_{i}\right)^{k}
=\left(\frac{(1+\beta)\mu_{i}}{2}\right)^{k-1}\left(k\tilde{T}_{i}-(k-1)\frac{(1+\beta)\mu_{i}}{2}I\right),
\end{equation*}
so that
\begin{equation*}
\left\Vert\left(\tilde{T}_{i}\right)^{k}
\right\Vert\leq\left(\frac{(1+\beta)|\mu_{i}|}{2}\right)^{k-1}\left(k\Vert\tilde{T}_{i}\Vert+(k-1)\frac{(1+\beta)|\mu_{i}|}{2}\right).
\end{equation*}
Also notice that $\mu_{i}=0$, 
$\gamma_{i,+}=\gamma_{i,-}=0$
and $\tilde{T}_{i}^{k}=0$ for every $k\geq 2$.

Hence, we get
\begin{equation*}
\max_{1\leq i\leq Nd}
\left\Vert\left(\tilde{T}_{i}\right)^{k}
\right\Vert\leq C_{k}\cdot\rho_{\text{dasg}}^{k},
\end{equation*}
where
\begin{equation*}
C_{k}:=
\max\left\{
k\max_{i:\gamma_{i,+}=\gamma_{i,-}, \mu_{i}\neq 0}
\frac{2\Vert\tilde{T}_{i}\Vert}{(1+\beta)|\mu_{i}|}
+k-1
,\max_{i:\gamma_{i,+}\neq\gamma_{i,-}}
\frac{1+\max\{|\gamma_{i,+}|,|\gamma_{i,-}|\}^{2}}{|\gamma_{i,+}-\gamma_{i,-}|}
\right\},
\end{equation*}
and
\begin{equation*}
\rho_{\text{dasg}}=
\max_{1\leq i\leq Nd}\max\{|\gamma_{i,+}|,|\gamma_{i,-}|\}.
\end{equation*}
Moreover, when $\mu_{i}\neq 0$,
\begin{equation*}
\left\Vert\tilde{T}_{i}\right\Vert=\max\left\{|\gamma_{i,-}|,|\gamma_{i,+}|\right\}
=\frac{1+\beta}{2}\mu_{i}.
\end{equation*}
Hence
\begin{equation*}
C_{k}:=
\max\left\{
2k-1,\max_{i:\gamma_{i,+}\neq\gamma_{i,-}}
\frac{1+\max\{|\gamma_{i,+}|,|\gamma_{i,-}|\}^{2}}{|\gamma_{i,+}-\gamma_{i,-}|}
\right\}.
\end{equation*}

Next, let us assume that
$\beta=\frac{1-\sqrt{\alpha\mu}}{1+\sqrt{\alpha\mu}}$,
$\lambda_{N}^{W}>0$ and $\alpha\in(0,\frac{\lambda_{N}^{W}}{L}]$.
Therefore, we get
\begin{equation*}
0\leq\lambda_{N}^{W}-\alpha L\leq\mu_{i}\leq 1-\alpha\mu.
\end{equation*}

When $0<\mu_{i}<1-\alpha\mu$, we claim that
\begin{equation*}
\Delta_{i}:=(1+\beta)^{2}\mu_{i}^{2}-4\beta\mu_{i}<0.
\end{equation*}
To see this, note that since $\mu_{i}>0$ it is equivalent to
\begin{equation*}
\mu_{i}<\frac{4\beta}{(1+\beta)^{2}}
=\frac{4\frac{1-\sqrt{\alpha\mu}}{1+\sqrt{\alpha\mu}}}{(\frac{2}{1+\sqrt{\alpha\mu}})^{2}}
=1-\alpha\mu.
\end{equation*}
Therefore, when $0<\mu_{i}<1-\alpha\mu$, 
we have $\Delta_{i}<0$ and both $\gamma_{i,+}$ and $\gamma_{i,-}$ 
are complex numbers.
In this case, 
\begin{equation*}
|\gamma_{i,-}|=|\gamma_{i,+}|=\sqrt{\beta\mu_{i}}, 
\end{equation*}
and
\begin{align*}
\max_{i:\gamma_{i,+}\neq\gamma_{i,-}}
\frac{1+\max\{|\gamma_{i,+}|,|\gamma_{i,-}|\}^{2}}{|\gamma_{i,+}-\gamma_{i,-}|}
&=\max_{i:0<\mu_{i}<1-\alpha\mu}
\frac{1+\beta\mu_{i}}{\sqrt{-(1+\beta)^{2}\mu_{i}^{2}+4\beta\mu_{i}}}
\\
&=\max_{i:0<\mu_{i}<1-\alpha\mu}
\frac{1+\sqrt{\alpha\mu}+(1-\sqrt{\alpha\mu})\mu_{i}}{2\sqrt{\mu_{i}(1-\alpha\mu-\mu_{i})}}.
\end{align*}
Moreover, when $0<\mu_{i}<1-\alpha\mu$
\begin{equation*}
|\gamma_{i,+}|=|\gamma_{i,-}|
=\sqrt{\beta\mu_{i}}
\leq\sqrt{\frac{1-\sqrt{\alpha\mu}}{1+\sqrt{\alpha\mu}}(1-\alpha\mu)}=1-\sqrt{\alpha\mu}.
\end{equation*}

Next, $\gamma_{i,-}=\gamma_{i,+}$
if and only if $\mu_{i}=0$ or $\mu_{i}=1-\alpha\mu$.
When $\mu_{i}=0$, $|\gamma_{i,-}|=|\gamma_{i,+}|=0$,
and when $\mu_{i}=1-\alpha\mu$,
\begin{equation*}
|\gamma_{i,-}|=|\gamma_{i,+}|
=\frac{(1+\beta)\mu_{i}}{2}
\leq\frac{1}{1+\sqrt{\alpha\mu}}(1-\alpha\mu)=1-\sqrt{\alpha\mu}.
\end{equation*}

Hence, we conclude that when $\beta=\frac{1-\sqrt{\alpha\mu}}{1+\sqrt{\alpha\mu}}$,
$\lambda_{N}^{W}>0$ and $\alpha\in(0,\frac{\lambda_{N}^{W}}{L}]$,
we have
$\rho_{\text{dasg}}=1-\sqrt{\alpha\mu}$,
and
\begin{equation*}
C_{k}=\max\left\{2k-1,\max_{i:0<\mu_{i}<1-\alpha\mu}
\frac{1+\sqrt{\alpha\mu}+(1-\sqrt{\alpha\mu})\mu_{i}}{2\sqrt{\mu_{i}(1-\alpha\mu-\mu_{i})}}\right\}.
\end{equation*}
The proof is complete.
\end{proof}

\subsection{Proofs of Technical Results in Appendix~\ref{sec:general:noise}}
\subsubsection{Proof of Lemma~\ref{E:upper:bound:DSG:unbounded}}

\begin{proof}
{\color{black}The proof follows from Lemma~6 and Lemma~7 in \cite{gurbuzbalaban2020decentralized}.
The difference is that in our case, by applying the proof of Theorem~\ref{thm:DSG:unbounded}
\begin{equation}
\mathbb{E}\left\Vert\nabla F\left(x^{(k)}\right)\right\Vert^{2}
\leq
L^{2}\mathbb{E}\left\Vert x^{(k)}-x^{\infty}\right\Vert^{2}
\leq D_{1}^{2},
\end{equation}
where
\begin{equation}
D_{1}^{2}:=L^{2}\mathbb{E}\left\Vert x^{(0)}-x^{\infty}\right\Vert^{2}
+L^{2}\frac{2\alpha}{\mu}\left(\sigma^{2}+\eta^{2}\frac{\alpha^{2}(C_{1})^{2}}{(1-\gamma)^{2}}\right)N,
\end{equation}
and moreover, by \eqref{E:upper:bound:unbounded}
and the proof of Theorem~\ref{thm:DSG:unbounded}, we get
\begin{align*}
\mathbb{E}\left[\left\Vert\xi^{(k+1)}\right\Vert^{2}\right]
&\leq
\left(\sigma^{2}+\eta^{2}\frac{\alpha^{2}(C_{1})^{2}}{(1-\gamma)^{2}}\right)N
+\eta^{2}\mathbb{E}\left\Vert x^{(k)}-x^{\infty}\right\Vert^{2}
\\
&\leq
\left(\sigma^{2}+\eta^{2}\frac{\alpha^{2}(C_{1})^{2}}{(1-\gamma)^{2}}\right)N
+\eta^{2}\left(\mathbb{E}\left\Vert x^{(0)}-x^{\infty}\right\Vert^{2}
+\frac{2\alpha}{\mu}\left(\sigma^{2}+\eta^{2}\frac{\alpha^{2}(C_{1})^{2}}{(1-\gamma)^{2}}\right)N\right)
\\
&=\left(\sigma^{2}+\eta^{2}\frac{\alpha^{2}(C_{1})^{2}}{(1-\gamma)^{2}}\right)\frac{\mu+2\alpha}{\mu}N
+\eta^{2}\mathbb{E}\left\Vert x^{(0)}-x^{\infty}\right\Vert^{2}.
\end{align*}
The rest of the proof is similar to Lemma~6 and Lemma~7 in \cite{gurbuzbalaban2020decentralized}
and is omitted here.}
\end{proof}

\subsubsection{Proof of Lemma~\ref{gradient:average:error:DASG:general:noise}}

\begin{proof}
{\color{black}The proof follows from Lemma~\ref{gradient:average:error:DASG} and Lemma~\ref{lem:E:D-ASG}.
The difference is that in our case, by applying Theorem~\ref{thm-rate-dasg-unbounded}
\begin{align}
\mathbb{E}\left\Vert\nabla F\left(y^{(k)}\right)\right\Vert^{2}
&\leq
4L^{2}(1+\beta)^{2}\mathbb{E}\left\Vert x^{(k)}-x^{\infty}\right\Vert^{2}
+4L^{2}\beta^{2}\mathbb{E}\left\Vert x^{(k-1)}-x^{\infty}\right\Vert^{2}
+2\Vert\nabla F(x^{\ast})\Vert^{2}
\nonumber
\\
&\leq\tilde{D}_{y}^{2},
\end{align}
where we recall that $\tilde{D}_{y}^{2}$ is defined in \eqref{D:y:eqn:general:noise} as follows:
\begin{equation}
\tilde{D}_{y}^{2}=4L^{2}\left((1+\beta)^{2}+\beta^{2}\right)\left(\frac{2V_{Q,\alpha}\left(\xi_{0}\right)}{\mu}
+ \frac{12\sqrt{\alpha}}{\mu\sqrt{\mu}}\left(\sigma^{2}+\eta^{2}\frac{\alpha^{2}(C_{1})^{2}}{(1-\gamma)^{2}}\right)N\right)
+2\Vert\nabla F(x^{\ast})\Vert^{2},
\end{equation}
and moreover, by applying \eqref{E:upper:bound:unbounded}
to the context of D-ASG
and Theorem~\ref{thm-rate-dasg-unbounded}, we get
\begin{align*}
&\mathbb{E}\left[\left\Vert\xi^{(k+1)}\right\Vert^{2}\right]
\\
&\leq
\left(\sigma^{2}+\eta^{2}\frac{\alpha^{2}(C_{1})^{2}}{(1-\gamma)^{2}}\right)N
+\eta^{2}\mathbb{E}\left\Vert y^{(k)}-x^{\infty}\right\Vert^{2}
\\
&\leq
\left(\sigma^{2}+\eta^{2}\frac{\alpha^{2}(C_{1})^{2}}{(1-\gamma)^{2}}\right)N
+2\eta^{2}(1+\beta)^{2}\mathbb{E}\left\Vert x^{(k)}-x^{\infty}\right\Vert^{2}
+2\eta^{2}\beta^{2}\mathbb{E}\left\Vert x^{(k-1)}-x^{\infty}\right\Vert^{2}
\\
&\leq
\left(\sigma^{2}+\eta^{2}\frac{\alpha^{2}(C_{1})^{2}}{(1-\gamma)^{2}}\right)N
\\
&\qquad\qquad
+2\eta^{2}\left((1+\beta)^{2}+\beta^{2}\right)\left(\frac{2V_{Q,\alpha}\left(\xi_{0}\right)}{\mu}
+ \frac{12\sqrt{\alpha}}{\mu\sqrt{\mu}}\left(\sigma^{2}+\eta^{2}\frac{\alpha^{2}(C_{1})^{2}}{(1-\gamma)^{2}}\right)N\right).
\end{align*}
The rest of the proof is similar to Lemma~\ref{gradient:average:error:DASG} and Lemma~\ref{lem:E:D-ASG} and is omitted here.}
\end{proof}
\subsubsection{Proof of Lemma~\ref{lem:C:epsilon-generalized}}

\begin{proof}
{\color{black}The functions $V_{\bar{Q},\alpha}$ and $V_{\bar{S},\alpha}$ have a similar structure and can be written as the sum of a quadratic term and a term involving the objective. Therefore, the proof of Lemma \ref{lem:C:epsilon} applies with minor modifications. 
We next provide the details for the sake of completeness. We start with observing that the function $V_{\bar{Q},\alpha}$ is $\bar{M}_\alpha$ smooth where $\bar{M}_\alpha = L + 2\|\bar{Q}_\alpha\|$ and  
\beqs
\left\|\bar{Q}_\alpha\right\| = \left\|\tilde{Q}_\alpha \right\| \leq \left\|\bar{S}_\alpha\right\|  +  2\alpha\eta^2 \left((1+\beta)^2 + \beta^2 \right) = \frac{1}{\alpha} + \frac{\mu}{2}-\frac{\sqrt{\mu}}{\sqrt{\alpha}} + 2\alpha\eta^2 \left((1+\beta)^2 + \beta^2 \right),
\eeqs
where we used \eqref{ineq-S-alpha-norm}. For any $\xi,\Delta\in\mathbb{R}^{2d}$, by the $\bar{M}_\alpha$-smoothness of $V_{\bar{Q},\alpha}$ we have also
\begin{align} 
V_{\bar{Q},\alpha}(\xi+\Delta) &\leq
V_{\bar{Q},\alpha}(\xi) + 
\left\langle \nabla V_{\bar{Q},\alpha}(\xi), \Delta \right\rangle + \frac{\bar{M}_\alpha}{2} \|\Delta\|^2 \nonumber\\
&\leq V_{\bar{Q},\alpha}(\xi) + m_1 \|\nabla V_{\bar{S},\alpha}(\xi)\|^2 + \|\Delta\|^2/(4m_1)  + \frac{\bar{M}_\alpha}{2} \|\Delta\|^2, \label{ineq-helper-generalized}
\end{align}
for any $m_1>0$ where we used Cauchy-Schwarz inequality 
in \eqref{ineq-helper-generalized}. Note that the matrix $\tilde{Q}_\alpha$ has some special structure as a sum of two rank-one matrices, i.e. we can write 
$$ \tilde{Q}_\alpha = \sigma_1 a a^T + \sigma_2 b b^T,$$
with 
\begin{align*}
&\sigma_1 :=  \frac{1}{\alpha} + \frac{\mu}{2}-\frac{\sqrt{\mu}}{\sqrt{\alpha}}, \qquad  a :=\frac{1}{\sqrt{\sigma_1}} \left[
\begin{array}{c}
\sqrt{\frac{1}{2\alpha}}
\\
\sqrt{\frac{\mu}{2}}-\sqrt{\frac{1}{2\alpha}} 
\end{array}
\right], \\
&\sigma_2 := 2\alpha\eta^2 \left( (1+\beta)^2 + \beta^2 \right),\qquad b := \frac{1}{\sqrt{(1+\beta)^2 + \beta^2}}\begin{bmatrix} 1+ \beta \\ -\beta \end{bmatrix},
\end{align*}
where $a$ and $b$ are both unit vectors with norm $\|a\|=\|b\|=1$. Using the definition of $\beta$, one can see that $a$ cannot be equal to $b$ up to a multiplicative constant. Therefore, $\bar{Q}_\alpha$ cannot be of rank one; and has to be of rank two and hence is positive definite. In other words, the smallest eigenvalue $m_2$ of $\tilde{Q}_\alpha$ is positive. In this case, we have
$$ \frac{1}{m_2}\tilde{Q}_\alpha \succeq I. $$
We have also
\begin{align} 
\left\|\nabla V_{\bar{Q},\alpha}(\xi)  \right\|^2 &\leq 2\left\|2\bar{Q}_\alpha \xi\right\|^2 + 2\left\|\nabla f\left(\bar{T}\xi + x_*\right)\right\|^2\nonumber\\
&\leq
8 \xi^T \bar{Q}_\alpha^2 \xi+2 L^2\left\|\bar{T}\xi\right\|^2 \nonumber\\
&\leq 8 \xi^T \bar{Q}_\alpha^2 \xi + 2L^2 \frac{f(\bar{T}\xi+x_*) - f(x_*)}{\mu} \\
&\leq m_3 V_{\bar{Q},\alpha}(\xi),
\end{align}
where $$ m_3 := 2\max\left(4/m_2, \frac{L^2}{\mu}\right).$$
Therefore, if choose $m_1 = \epsilon/m_3$; then we get from \eqref{ineq-helper-generalized}
\begin{align} 
V_{\bar{Q},\alpha}(\xi+\Delta) 
\leq (1+\epsilon) V_{\bar{S},\alpha}(\xi)  + M_\epsilon \|\Delta\|^2, 
\end{align}
with $M_\epsilon = \frac{m_3}{4\epsilon} + \bar{M}_{\alpha}/2$. Finally, if we choose $\xi = \bar{\xi}_{k+1}-D_{k+1}$ and $\Delta= D_{k+1}$; we obtain \eqref{ineq-lyap-pert-generalized}. This completes the proof.}
\end{proof}

\end{document}